\def\dated#1{\def\thedate{#1}}%
 \newdimen\xydashw@@
\newdimen\high%
\newdimen\ul%
\newdimen\wdth%
\def\ratchet#1#2{\ifnum#1<#2\global #1=#2\fi}%
\def\ifnextchar#1#2#3{\let\@tempe%
#1\def\@tempa{#2}\def\@tempb{#3}\futurelet%
    \@tempc\@ifnch}%
\def\@ifnch{\ifx \@tempc \@sptoken \let\@tempd\@xifnch%
      \else \ifx \@tempc \@tempe\let\@tempd\@tempa\else\let\@tempd\@tempb\fi%
      \fi \@tempd}%
\def\:{\let\@sptoken= } \:  
\def\:{\@xifnch} \expandafter\def\: {\futurelet\@tempc\@ifnch}%
\let\ifnextchar\@ifnextchar%
\newdimen\axis \axis=\fontdimen22\textfont2%
\def\scalefactor#1{\ul=#1\ul \X@xbase=#1\X@xbase \Y@ybase=#1\Y@ybase}%
\def\fontscale#1{%
\if#1h\relax%
\font\xydashfont=xydash10 scaled \magstephalf%
\font\xyatipfont=xyatip10 scaled \magstephalf%
\font\xybtipfont=xybtip10 scaled \magstephalf%
\font\xybsqlfont=xybsql10 scaled \magstephalf%
\font\xycircfont=xycirc10 scaled \magstephalf%
\else%
\font\xydashfont=xydash10 scaled \magstep#1%
\font\xyatipfont=xyatip10 scaled \magstep#1%
\font\xybtipfont=xybtip10 scaled \magstep#1%
\font\xybsqlfont=xybsql10 scaled \magstep#1%
\font\xycircfont=xycirc10 scaled \magstep#1%
\fi}%
\def\bfig{\vcenter\bgroup\xy}%
\def\efig{\endxy\egroup}%
\def\car#1#2\nil{#1}%
\def\morphism{\ifnextchar({\morphismp}{\morphismp(0,0)}}%
\def\morphismp(#1){\ifnextchar|{\morphismpp(#1)}{\morphismpp(#1)|a|}}%
\def\morphismpp(#1)|#2|{\ifnextchar/{\morphismppp(#1)|#2|}%
    {\morphismppp(#1)|#2|/>/}}%
\def\morphismppp(#1)|#2|/#3/{%
    \ifnextchar<{\morphismpppp(#1)|#2|/#3/}%
    {\morphismpppp(#1)|#2|/#3/<\default,0>}}%
\def\morphismpppp(#1,#2)|#3|/#4/<#5,#6>[#7`#8;#9]{%
\xend#1\advance \xend by #5%
\yend#2\advance \yend by #6%
\domorphism(#1,#2)|#3|/#4/<#5,#6>[{#7}`{#8};{#9}]}%
\def\domorphism(#1,#2)|#3|/#4/<#5,#6>[#7`#8;#9]{%
\def\next{\car#4.\nil}%
\if@\next\relax%
 \if#3l%
  \ifnum #6>0%
   \POS(#1,#2)*+!!<0ex,\axis>{#7}\ar#4^-{#9} (\xend,\yend)*+!!<0ex,\axis>{#8}%
  \else%
   \POS(#1,#2)*+!!<0ex,\axis>{#7}\ar#4_-{#9} (\xend,\yend)*+!!<0ex,\axis>{#8}%
  \fi%
 \else \if#3m%
    \setbox0\hbox{$#9$}%
   \ifdim \wd0=0pt%
     \POS(#1,#2)*+!!<0ex,\axis>{#7}\ar#4 (\xend,\yend)*+!!<0ex,\axis>{#8}%
   \else%
     \POS(#1,#2)*+!!<0ex,\axis>{#7}\ar#4|-*+<1pt,4pt>{\labelstyle#9}%
       (\xend,\yend)*+!!<0ex,\axis>{#8}%
   \fi%
 \else \if#3r%
  \ifnum #6<0%
   \POS(#1,#2)*+!!<0ex,\axis>{#7}\ar#4^-{#9} (\xend,\yend)*+!!<0ex,\axis>{#8}%
  \else%
   \POS(#1,#2)*+!!<0ex,\axis>{#7}\ar#4_-{#9} (\xend,\yend)*+!!<0ex,\axis>{#8}%
  \fi%
 \else \if#3a%
  \ifnum #5>0%
   \POS(#1,#2)*+!!<0ex,\axis>{#7}\ar#4^-{#9} (\xend,\yend)*+!!<0ex,\axis>{#8}%
  \else%
   \POS(#1,#2)*+!!<0ex,\axis>{#7}\ar#4_-{#9} (\xend,\yend)*+!!<0ex,\axis>{#8}%
  \fi%
 \else \if#3b%
  \ifnum #5<0%
   \POS(#1,#2)*+!!<0ex,\axis>{#7}\ar#4^-{#9} (\xend,\yend)*+!!<0ex,\axis>{#8}%
  \else%
   \POS(#1,#2)*+!!<0ex,\axis>{#7}\ar#4_-{#9} (\xend,\yend)*+!!<0ex,\axis>{#8}%
  \fi%
 \else%
   \POS(#1,#2)*+!!<0ex,\axis>{#7}\ar#4 (\xend,\yend)*+!!<0ex,\axis>{#8}%
 \fi\fi\fi\fi\fi%
\else%
 \if#3l%
  \ifnum #6>0%
   \POS(#1,#2)*+!!<0ex,\axis>{#7}\ar@{#4}^-{#9} (\xend,\yend)*+!!<0ex,\axis>{#8}%
  \else%
   \POS(#1,#2)*+!!<0ex,\axis>{#7}\ar@{#4}_-{#9} (\xend,\yend)*+!!<0ex,\axis>{#8}%
  \fi%
 \else \if#3m%
    \setbox0\hbox{$#9$}%
   \ifdim \wd0=0pt%
     \POS(#1,#2)*+!!<0ex,\axis>{#7}\ar@{#4} (\xend,\yend)*+!!<0ex,\axis>{#8}%
   \else%
     \POS(#1,#2)*+!!<0ex,\axis>{#7}\ar@{#4}|-*+<1pt,4pt>{\labelstyle#9}%
         (\xend,\yend)*+!!<0ex,\axis>{#8}%
   \fi%
 \else \if#3r%
  \ifnum #6<0%
   \POS(#1,#2)*+!!<0ex,\axis>{#7}\ar@{#4}^-{#9} (\xend,\yend)*+!!<0ex,\axis>{#8}%
  \else%
   \POS(#1,#2)*+!!<0ex,\axis>{#7}\ar@{#4}_-{#9} (\xend,\yend)*+!!<0ex,\axis>{#8}%
  \fi%
 \else \if#3a%
  \ifnum #5>0%
   \POS(#1,#2)*+!!<0ex,\axis>{#7}\ar@{#4}^-{#9} (\xend,\yend)*+!!<0ex,\axis>{#8}%
  \else%
   \POS(#1,#2)*+!!<0ex,\axis>{#7}\ar@{#4}_-{#9} (\xend,\yend)*+!!<0ex,\axis>{#8}%
  \fi%
 \else \if#3b%
  \ifnum #5<0%
   \POS(#1,#2)*+!!<0ex,\axis>{#7}\ar@{#4}^-{#9} (\xend,\yend)*+!!<0ex,\axis>{#8}%
  \else%
   \POS(#1,#2)*+!!<0ex,\axis>{#7}\ar@{#4}_-{#9} (\xend,\yend)*+!!<0ex,\axis>{#8}%
  \fi%
 \else%
   \POS(#1,#2)*+!!<0ex,\axis>{#7}\ar@{#4} (\xend,\yend)*+!!<0ex,\axis>{#8}%
 \fi\fi\fi\fi\fi%
\fi\ignorespaces}%
\def\vect(#1,#2)/#3/<#4,#5>{%
 \xend#1 \yend#2 \advance\xend by #4 \advance\yend by #5%
     \POS(#1,#2)\ar#3 (\xend,\yend)}%
\def\squarepppp(#1,#2)|#3|/#4`#5`#6`#7/<#8>[#9]{%
\xpos#1\ypos#2%
\def\next|##1##2##3##4|{%
 \def\xa{##1}\def\xb{##2}\def\xc{##3}\def\xd{##4}\ignorespaces}%
\next|#3|%
\def\next<##1,##2>{\deltax=##1\deltay=##2\ignorespaces}%
\next<#8>%
\def\next[##1`##2`##3`##4;##5`##6`##7`##8]{%
    \def\nodea{##1}\def\nodeb{##2}\def\nodec{##3}\def\noded{##4}%
    \def\labela{##5}\def\labelb{##6}\def\labelc{##7}\def\labeld{##8}\ignorespaces}%
\next[#9]%
\morphism(\xpos,\ypos)|\xd|/{#7}/<\deltax,0>[\nodec`\noded;\labeld]%
\advance \ypos by \deltay%
\morphism(\xpos,\ypos)|\xb|/{#5}/<0,-\deltay>[\nodea`\nodec;\labelb]%
\morphism(\xpos,\ypos)|\xa|/{#4}/<\deltax,0>[\nodea`\nodeb;\labela]%
 \advance \xpos by \deltax%
\morphism(\xpos,\ypos)|\xc|/{#6}/<0,-\deltay>[\nodeb`\noded;\labelc]%
\ignorespaces}%
\def\square{\ifnextchar({\squarep}{\squarep(0,0)}}%
\def\squarep(#1){\ifnextchar|{\squarepp(#1)}{\squarepp(#1)|alrb|}}%
\def\squarepp(#1)|#2|{\ifnextchar/{\squareppp(#1)|#2|}%
    {\squareppp(#1)|#2|/>`>`>`>/}}%
\def\squareppp(#1)|#2|/#3`#4`#5`#6/{%
    \ifnextchar<{\squarepppp(#1)|#2|/#3`#4`#5`#6/}%
    {\squarepppp(#1)|#2|/#3`#4`#5`#6/<\default,\default>}}%
\def\diamondpppp(#1,#2)|#3|/#4`#5`#6`#7/<#8>[#9]{%
\xpos#1\ypos#2%
\def\next|##1##2##3##4|{%
 \def\xa{##1}\def\xb{##2}\def\xc{##3}\def\xd{##4}\ignorespaces}%
\next|#3|%
\def\next<##1,##2>{\deltax=##1\deltay=##2\ignorespaces}%
\next<#8>%
\def\next[##1`##2`##3`##4;##5`##6`##7`##8]{%
    \def\nodea{##1}\def\nodeb{##2}\def\nodec{##3}\def\noded{##4}%
    \def\labela{##5}\def\labelb{##6}\def\labelc{##7}%
\def\labeld{##8}\ignorespaces}%
\next[#9]%
\advance\ypos\deltay
\morphism(\xpos,\ypos)|\xc|/{#6}/<\deltax,-\deltay>[\nodeb`\noded;\labelc]%
\advance\xpos \deltax
\advance\xpos \deltax
\morphism(\xpos,\ypos)|\xd|/{#7}/<-\deltax,-\deltay>[\nodec`\noded;\labeld]%
\advance\ypos\deltay \advance\xpos -\deltax
\morphism(\xpos,\ypos)|\xa|/{#4}/<-\deltax,-\deltay>[\nodea`\nodeb;\labela]%
\morphism(\xpos,\ypos)|\xb|/{#5}/<\deltax,-\deltay>[\nodea`\nodec;\labelb]%
}
\def\diamondp(#1){\ifnextchar|{\diamondpp(#1)}{\diamondpp(#1)|lrlr|}}%
\def\diamondpp(#1)|#2|{\ifnextchar/{\diamondppp(#1)|#2|}%
    {\diamondppp(#1)|#2|/>`>`>`>/}}%
\def\diamondppp(#1)|#2|/#3`#4`#5`#6/{%
    \ifnextchar<{\diamondpppp(#1)|#2|/#3`#4`#5`#6/}%
    {\diamondpppp(#1)|#2|/#3`#4`#5`#6/<400,400>}}%
\def\ptrianglepppp(#1,#2)|#3|/#4`#5`#6/<#7>[#8]{%
\xpos#1\ypos#2%
\def\next|##1##2##3|{\def\xa{##1}\def\xb{##2}\def\xc{##3}}%
\next|#3|%
\def\next<##1,##2>{\deltax=##1\deltay=##2\ignorespaces}%
\next<#7>%
\def\next[##1`##2`##3;##4`##5`##6]{%
    \def\nodea{##1}\def\nodeb{##2}\def\nodec{##3}%
    \def\labela{##4}\def\labelb{##5}\def\labelc{##6}}%
\next[#8]%
\advance\ypos by \deltay%
\morphism(\xpos,\ypos)|\xa|/{#4}/<\deltax,0>[\nodea`\nodeb;\labela]%
\morphism(\xpos,\ypos)|\xb|/{#5}/<0,-\deltay>[\nodea`\nodec;\labelb]%
\advance\xpos by \deltax%
\morphism(\xpos,\ypos)|\xc|/{#6}/<-\deltax,-\deltay>[\nodeb`\nodec;\labelc]%
\ignorespaces}%
\def\qtrianglepppp(#1,#2)|#3|/#4`#5`#6/<#7>[#8]{%
\xpos#1\ypos#2%
\def\next|##1##2##3|{\def\xa{##1}\def\xb{##2}\def\xc{##3}}%
\next|#3|%
\def\next<##1,##2>{\deltax=##1\deltay=##2\ignorespaces}%
\next<#7>%
\def\next[##1`##2`##3;##4`##5`##6]{%
    \def\nodea{##1}\def\nodeb{##2}\def\nodec{##3}%
    \def\labela{##4}\def\labelb{##5}\def\labelc{##6}}%
\next[#8]%
\advance\ypos by \deltay%
\morphism(\xpos,\ypos)|\xa|/{#4}/<\deltax,0>[\nodea`\nodeb;\labela]%
\morphism(\xpos,\ypos)|\xb|/{#5}/<\deltax,-\deltay>[\nodea`\nodec;\labelb]%
\advance\xpos by \deltax%
\morphism(\xpos,\ypos)|\xc|/{#6}/<0,-\deltay>[\nodeb`\nodec;\labelc]%
\ignorespaces}%
\def\dtrianglepppp(#1,#2)|#3|/#4`#5`#6/<#7>[#8]{%
\xpos#1\ypos#2%
\def\next|##1##2##3|{\def\xa{##1}\def\xb{##2}\def\xc{##3}}%
\next|#3|%
\def\next<##1,##2>{\deltax=##1\deltay=##2\ignorespaces}%
\next<#7>%
\def\next[##1`##2`##3;##4`##5`##6]{%
    \def\nodea{##1}\def\nodeb{##2}\def\nodec{##3}%
    \def\labela{##4}\def\labelb{##5}\def\labelc{##6}}%
\next[#8]%
\morphism(\xpos,\ypos)|\xc|/{#6}/<\deltax,0>[\nodeb`\nodec;\labelc]%
\advance\ypos by \deltay\advance \xpos by \deltax%
\morphism(\xpos,\ypos)|\xa|/{#4}/<-\deltax,-\deltay>[\nodea`\nodeb;\labela]%
\morphism(\xpos,\ypos)|\xb|/{#5}/<0,-\deltay>[\nodea`\nodec;\labelb]%
\ignorespaces}%
\def\btrianglepppp(#1,#2)|#3|/#4`#5`#6/<#7>[#8]{%
\xpos#1\ypos#2%
\def\next|##1##2##3|{\def\xa{##1}\def\xb{##2}\def\xc{##3}}%
\next|#3|%
\def\next<##1,##2>{\deltax=##1\deltay=##2\ignorespaces}%
\next<#7>%
\def\next[##1`##2`##3;##4`##5`##6]{%
    \def\nodea{##1}\def\nodeb{##2}\def\nodec{##3}%
    \def\labela{##4}\def\labelb{##5}\def\labelc{##6}}%
\next[#8]%
\morphism(\xpos,\ypos)|\xc|/{#6}/<\deltax,0>[\nodeb`\nodec;\labelc]%
\advance\ypos by \deltay%
\morphism(\xpos,\ypos)|\xa|/{#4}/<0,-\deltay>[\nodea`\nodeb;\labela]%
\morphism(\xpos,\ypos)|\xb|/{#5}/<\deltax,-\deltay>[\nodea`\nodec;\labelb]%
\ignorespaces}%
\def\Atrianglepppp(#1,#2)|#3|/#4`#5`#6/<#7>[#8]{%
\xpos#1\ypos#2%
\def\next|##1##2##3|{\def\xa{##1}\def\xb{##2}\def\xc{##3}}%
\next|#3|%
\def\next<##1,##2>{\deltax=##1\deltay=##2\ignorespaces}%
\next<#7>%
\def\next[##1`##2`##3;##4`##5`##6]{%
    \def\nodea{##1}\def\nodeb{##2}\def\nodec{##3}%
    \def\labela{##4}\def\labelb{##5}\def\labelc{##6}}%
\next[#8]%
\multiply\deltax by 2%
\morphism(\xpos,\ypos)|\xc|/{#6}/<\deltax,0>[\nodeb`\nodec;\labelc]%
\divide\deltax by 2%
\advance\ypos by \deltay\advance\xpos by \deltax%
\morphism(\xpos,\ypos)|\xa|/{#4}/<-\deltax,-\deltay>[\nodea`\nodeb;\labela]%
\morphism(\xpos,\ypos)|\xb|/{#5}/<\deltax,-\deltay>[\nodea`\nodec;\labelb]%
\ignorespaces}%
\def\Vtrianglepppp(#1,#2)|#3|/#4`#5`#6/<#7>[#8]{%
\xpos#1\ypos#2%
\def\next|##1##2##3|{\def\xa{##1}\def\xb{##2}\def\xc{##3}}%
\next|#3|%
\def\next<##1,##2>{\deltax=##1\deltay=##2\ignorespaces}%
\next<#7>%
\def\next[##1`##2`##3;##4`##5`##6]{%
    \def\nodea{##1}\def\nodeb{##2}\def\nodec{##3}%
    \def\labela{##4}\def\labelb{##5}\def\labelc{##6}}%
\next[#8]%
\advance\ypos by \deltay%
\morphism(\xpos,\ypos)|\xb|/{#5}/<\deltax,-\deltay>[\nodea`\nodec;\labelb]%
\multiply\deltax by 2%
\morphism(\xpos,\ypos)|\xa|/{#4}/<\deltax,0>[\nodea`\nodeb;\labela]%
\advance\xpos by \deltax \divide \deltax by 2%
\morphism(\xpos,\ypos)|\xc|/{#6}/<-\deltax,-\deltay>[\nodeb`\nodec;\labelc]%
\ignorespaces}%
\def\Ctrianglepppp(#1,#2)|#3|/#4`#5`#6/<#7>[#8]{%
\xpos#1\ypos#2%
\def\next|##1##2##3|{\def\xa{##1}\def\xb{##2}\def\xc{##3}}%
\next|#3|%
\def\next<##1,##2>{\deltax=##1\deltay=##2\ignorespaces}%
\next<#7>%
\def\next[##1`##2`##3;##4`##5`##6]{%
    \def\nodea{##1}\def\nodeb{##2}\def\nodec{##3}%
    \def\labela{##4}\def\labelb{##5}\def\labelc{##6}}%
\next[#8]%
\advance \ypos by \deltay%
\morphism(\xpos,\ypos)|\xc|/{#6}/<\deltax,-\deltay>[\nodeb`\nodec;\labelc]%
\advance\ypos by \deltay \advance \xpos by \deltax%
\morphism(\xpos,\ypos)|\xa|/{#4}/<-\deltax,-\deltay>[\nodea`\nodeb;\labela]%
\multiply\deltay by 2%
\morphism(\xpos,\ypos)|\xb|/{#5}/<0,-\deltay>[\nodea`\nodec;\labelb]%
\ignorespaces}%
\def\Dtrianglepppp(#1,#2)|#3|/#4`#5`#6/<#7>[#8]{%
\xpos#1\ypos#2%
\def\next|##1##2##3|{\def\xa{##1}\def\xb{##2}\def\xc{##3}}%
\next|#3|%
\def\next<##1,##2>{\deltax=##1\deltay=##2\ignorespaces}%
\next<#7>%
\def\next[##1`##2`##3;##4`##5`##6]{%
    \def\nodea{##1}\def\nodeb{##2}\def\nodec{##3}%
    \def\labela{##4}\def\labelb{##5}\def\labelc{##6}}%
\next[#8]%
\advance\xpos by \deltax \advance\ypos by \deltay%
\morphism(\xpos,\ypos)|\xc|/{#6}/<-\deltax,-\deltay>[\nodeb`\nodec;\labelc]%
\advance\xpos by -\deltax \advance\ypos by \deltay%
\morphism(\xpos,\ypos)|\xb|/{#5}/<\deltax,-\deltay>[\nodea`\nodeb;\labelb]%
\multiply \deltay by 2%
\morphism(\xpos,\ypos)|\xa|/{#4}/<0,-\deltay>[\nodea`\nodec;\labela]%
\ignorespaces}%
\def\ptrianglep(#1){\ifnextchar|{\ptrianglepp(#1)}{\ptrianglepp(#1)|alr|}}%
\def\ptrianglepp(#1)|#2|{\ifnextchar/{\ptriangleppp(#1)|#2|}%
    {\ptriangleppp(#1)|#2|/>`>`>/}}%
\def\ptriangleppp(#1)|#2|/#3`#4`#5/{%
    \ifnextchar<{\ptrianglepppp(#1)|#2|/#3`#4`#5/}%
    {\ptrianglepppp(#1)|#2|/#3`#4`#5/<\default,\default>}}%
\def\qtrianglep(#1){\ifnextchar|{\qtrianglepp(#1)}{\qtrianglepp(#1)|alr|}}%
\def\qtrianglepp(#1)|#2|{\ifnextchar/{\qtriangleppp(#1)|#2|}%
    {\qtriangleppp(#1)|#2|/>`>`>/}}%
\def\qtriangleppp(#1)|#2|/#3`#4`#5/{%
    \ifnextchar<{\qtrianglepppp(#1)|#2|/#3`#4`#5/}%
    {\qtrianglepppp(#1)|#2|/#3`#4`#5/<\default,\default>}}%
\def\dtrianglep(#1){\ifnextchar|{\dtrianglepp(#1)}{\dtrianglepp(#1)|lrb|}}%
\def\dtrianglepp(#1)|#2|{\ifnextchar/{\dtriangleppp(#1)|#2|}%
    {\dtriangleppp(#1)|#2|/>`>`>/}}%
\def\dtriangleppp(#1)|#2|/#3`#4`#5/{%
    \ifnextchar<{\dtrianglepppp(#1)|#2|/#3`#4`#5/}%
    {\dtrianglepppp(#1)|#2|/#3`#4`#5/<\default,\default>}}%
\def\btrianglep(#1){\ifnextchar|{\btrianglepp(#1)}{\btrianglepp(#1)|lrb|}}%
\def\btrianglepp(#1)|#2|{\ifnextchar/{\btriangleppp(#1)|#2|}%
    {\btriangleppp(#1)|#2|/>`>`>/}}%
\def\btriangleppp(#1)|#2|/#3`#4`#5/{%
    \ifnextchar<{\btrianglepppp(#1)|#2|/#3`#4`#5/}%
    {\btrianglepppp(#1)|#2|/#3`#4`#5/<\default,\default>}}%
\def\Atrianglep(#1){\ifnextchar|{\Atrianglepp(#1)}{\Atrianglepp(#1)|lrb|}}%
\def\Atrianglepp(#1)|#2|{\ifnextchar/{\Atriangleppp(#1)|#2|}%
    {\Atriangleppp(#1)|#2|/>`>`>/}}%
\def\Atriangleppp(#1)|#2|/#3`#4`#5/{%
    \ifnextchar<{\Atrianglepppp(#1)|#2|/#3`#4`#5/}%
    {\Atrianglepppp(#1)|#2|/#3`#4`#5/<\default,\default>}}%
\def\Vtriangle{\ifnextchar({\Vtrianglep}{\Vtrianglep(0,0)}}%
\def\Vtrianglep(#1){\ifnextchar|{\Vtrianglepp(#1)}{\Vtrianglepp(#1)|alb|}}%
\def\Vtrianglepp(#1)|#2|{\ifnextchar/{\Vtriangleppp(#1)|#2|}%
    {\Vtriangleppp(#1)|#2|/>`>`>/}}%
\def\Vtriangleppp(#1)|#2|/#3`#4`#5/{%
    \ifnextchar<{\Vtrianglepppp(#1)|#2|/#3`#4`#5/}%
    {\Vtrianglepppp(#1)|#2|/#3`#4`#5/<\default,\default>}}%
\def\Ctrianglep(#1){\ifnextchar|{\Ctrianglepp(#1)}{\Ctrianglepp(#1)|arb|}}%
\def\Ctrianglepp(#1)|#2|{\ifnextchar/{\Ctriangleppp(#1)|#2|}%
    {\Ctriangleppp(#1)|#2|/>`>`>/}}%
\def\Ctriangleppp(#1)|#2|/#3`#4`#5/{%
    \ifnextchar<{\Ctrianglepppp(#1)|#2|/#3`#4`#5/}%
    {\Ctrianglepppp(#1)|#2|/#3`#4`#5/<\default,\default>}}%
\def\Dtrianglep(#1){\ifnextchar|{\Dtrianglepp(#1)}{\Dtrianglepp(#1)|lab|}}%
\def\Dtrianglepp(#1)|#2|{\ifnextchar/{\Dtriangleppp(#1)|#2|}%
    {\Dtriangleppp(#1)|#2|/>`>`>/}}%
\def\Dtriangleppp(#1)|#2|/#3`#4`#5/{%
    \ifnextchar<{\Dtrianglepppp(#1)|#2|/#3`#4`#5/}%
    {\Dtrianglepppp(#1)|#2|/#3`#4`#5/<\default,\default>}}%
\def\Atrianglepairpppp(#1)|#2|/#3`#4`#5`#6`#7/<#8>[#9]{%
\def\next(##1,##2){\xpos##1\ypos##2}%
\next(#1)%
\def\next|##1##2##3##4##5|{\def\xa{##1}\def\xb{##2}%
\def\xc{##3}\def\xd{##4}\def\xe{##5}}%
\next|#2|%
\def\next<##1,##2>{\deltax=##1\deltay=##2\ignorespaces}%
\next<#8>%
\def\next[##1`##2`##3`##4;##5`##6`##7`##8`##9]{%
 \def\nodea{##1}\def\nodeb{##2}\def\nodec{##3}\def\noded{##4}%
 \def\labela{##5}\def\labelb{##6}\def\labelc{##7}\def\labeld{##8}\def\labele{##9}}%
\next[#9]%
\morphism(\xpos,\ypos)|\xd|/{#6}/<\deltax,0>[\nodeb`\nodec;\labeld]%
\advance\xpos by \deltax%
\morphism(\xpos,\ypos)|\xe|/{#7}/<\deltax,0>[\nodec`\noded;\labele]%
\advance\ypos by \deltay%
\morphism(\xpos,\ypos)|\xa|/{#3}/<-\deltax,-\deltay>[\nodea`\nodeb;\labela]%
\morphism(\xpos,\ypos)|\xb|/{#4}/<0,-\deltay>[\nodea`\nodec;\labelb]%
\morphism(\xpos,\ypos)|\xc|/{#5}/<\deltax,-\deltay>[\nodea`\noded;\labelc]%
\ignorespaces}%
\def\Vtrianglepairpppp(#1)|#2|/#3`#4`#5`#6`#7/<#8>[#9]{%
\def\next(##1,##2){\xpos##1\ypos##2}%
\next(#1)%
\def\next|##1##2##3##4##5|{\def\xa{##1}\def\xb{##2}%
\def\xc{##3}\def\xd{##4}\def\xe{##5}}%
\next|#2|%
\def\next<##1,##2>{\deltax=##1\deltay=##2\ignorespaces}%
\next<#8>%
\def\next[##1`##2`##3`##4;##5`##6`##7`##8`##9]{%
 \def\nodea{##1}\def\nodeb{##2}\def\nodec{##3}\def\noded{##4}%
 \def\labela{##5}\def\labelb{##6}\def\labelc{##7}\def\labeld{##8}\def\labele{##9}}%
\next[#9]%
\advance\ypos by \deltay%
\morphism(\xpos,\ypos)|\xa|/{#3}/<\deltax,0>[\nodea`\nodeb;\labela]%
\morphism(\xpos,\ypos)|\xc|/{#5}/<\deltax,-\deltay>[\nodea`\noded;\labelc]%
\advance\xpos by \deltax%
\morphism(\xpos,\ypos)|\xb|/{#4}/<\deltax,0>[\nodeb`\nodec;\labelb]%
\morphism(\xpos,\ypos)|\xd|/{#6}/<0,-\deltay>[\nodeb`\noded;\labeld]%
\advance\xpos by \deltax%
\morphism(\xpos,\ypos)|\xe|/{#7}/<-\deltax,-\deltay>[\nodec`\noded;\labele]%
\ignorespaces}%
\def\Ctrianglepairpppp(#1)|#2|/#3`#4`#5`#6`#7/<#8>[#9]{%
\def\next(##1,##2){\xpos##1\ypos##2}%
\next(#1)%
\def\next|##1##2##3##4##5|{\def\xa{##1}\def\xb{##2}%
\def\xc{##3}\def\xd{##4}\def\xe{##5}}%
\next|#2|%
\def\next<##1,##2>{\deltax=##1\deltay=##2\ignorespaces}%
\next<#8>%
\def\next[##1`##2`##3`##4;##5`##6`##7`##8`##9]{%
 \def\nodea{##1}\def\nodeb{##2}\def\nodec{##3}\def\noded{##4}%
 \def\labela{##5}\def\labelb{##6}\def\labelc{##7}\def\labeld{##8}\def\labele{##9}}%
\next[#9]%
\advance\ypos by \deltay%
\morphism(\xpos,\ypos)|\xe|/{#7}/<0,-\deltay>[\nodec`\noded;\labele]%
\advance\xpos by -\deltax%
\morphism(\xpos,\ypos)|\xc|/{#5}/<\deltax,0>[\nodeb`\nodec;\labelc]%
\morphism(\xpos,\ypos)|\xd|/{#6}/<\deltax,-\deltay>[\nodeb`\noded;\labeld]%
\advance\ypos by \deltay%
\advance\xpos by \deltax%
\morphism(\xpos,\ypos)|\xa|/{#3}/<-\deltax,-\deltay>[\nodea`\nodeb;\labela]%
\morphism(\xpos,\ypos)|\xb|/{#4}/<0,-\deltay>[\nodea`\nodec;\labelb]%
\ignorespaces}%
\def\Dtrianglepairpppp(#1)|#2|/#3`#4`#5`#6`#7/<#8>[#9]{%
\def\next(##1,##2){\xpos##1\ypos##2}%
\next(#1)%
\def\next|##1##2##3##4##5|{\def\xa{##1}\def\xb{##2}%
\def\xc{##3}\def\xd{##4}\def\xe{##5}}%
\next|#2|%
\def\next<##1,##2>{\deltax=##1\deltay=##2\ignorespaces}%
\next<#8>%
\def\next[##1`##2`##3`##4;##5`##6`##7`##8`##9]{%
 \def\nodea{##1}\def\nodeb{##2}\def\nodec{##3}\def\noded{##4}%
 \def\labela{##5}\def\labelb{##6}\def\labelc{##7}\def\labeld{##8}\def\labele{##9}}%
\next[#9]%
\advance\ypos by \deltay%
\morphism(\xpos,\ypos)|\xc|/{#5}/<\deltax,0>[\nodeb`\nodec;\labelc]%
\morphism(\xpos,\ypos)|\xd|/{#6}/<0,-\deltay>[\nodeb`\noded;\labeld]%
\advance\ypos by \deltay%
\morphism(\xpos,\ypos)|\xa|/{#3}/<0,-\deltay>[\nodea`\nodeb;\labela]%
\morphism(\xpos,\ypos)|\xb|/{#4}/<\deltax,-\deltay>[\nodea`\nodec;\labelb]%
\advance\ypos by -\deltay%
\advance\xpos by \deltax%
\morphism(\xpos,\ypos)|\xe|/{#7}/<-\deltax,-\deltay>[\nodec`\noded;\labele]%
\ignorespaces}%
\def\Atrianglepairp(#1){\ifnextchar|{\Atrianglepairpp(#1)}%
{\Atrianglepairpp(#1)|lmrbb|}}%
\def\Atrianglepairpp(#1)|#2|{\ifnextchar/{\Atrianglepairppp(#1)|#2|}%
    {\Atrianglepairppp(#1)|#2|/>`>`>`>`>/}}%
\def\Atrianglepairppp(#1)|#2|/#3`#4`#5`#6`#7/{%
    \ifnextchar<{\Atrianglepairpppp(#1)|#2|/#3`#4`#5`#6`#7/}%
    {\Atrianglepairpppp(#1)|#2|/#3`#4`#5`#6`#7/<\default,\default>}}%
\def\Vtrianglepairp(#1){\ifnextchar|{\Vtrianglepairpp(#1)}%
{\Vtrianglepairpp(#1)|aalmr|}}%
\def\Vtrianglepairpp(#1)|#2|{\ifnextchar/{\Vtrianglepairppp(#1)|#2|}%
    {\Vtrianglepairppp(#1)|#2|/>`>`>`>`>/}}%
\def\Vtrianglepairppp(#1)|#2|/#3`#4`#5`#6`#7/{%
    \ifnextchar<{\Vtrianglepairpppp(#1)|#2|/#3`#4`#5`#6`#7/}%
    {\Vtrianglepairpppp(#1)|#2|/#3`#4`#5`#6`#7/<\default,\default>}}%
\def\Ctrianglepairp(#1){\ifnextchar|{\Ctrianglepairpp(#1)}%
{\Ctrianglepairpp(#1)|lrmlr|}}%
\def\Ctrianglepairpp(#1)|#2|{\ifnextchar/{\Ctrianglepairppp(#1)|#2|}%
    {\Ctrianglepairppp(#1)|#2|/>`>`>`>`>/}}%
\def\Ctrianglepairppp(#1)|#2|/#3`#4`#5`#6`#7/{%
    \ifnextchar<{\Ctrianglepairpppp(#1)|#2|/#3`#4`#5`#6`#7/}%
    {\Ctrianglepairpppp(#1)|#2|/#3`#4`#5`#6`#7/<\default,\default>}}%
\def\Dtrianglepairp(#1){\ifnextchar|{\Dtrianglepairpp(#1)}%
{\Dtrianglepairpp(#1)|lrmlr|}}%
\def\Dtrianglepairpp(#1)|#2|{\ifnextchar/{\Dtrianglepairppp(#1)|#2|}%
    {\Dtrianglepairppp(#1)|#2|/>`>`>`>`>/}}%
\def\Dtrianglepairppp(#1)|#2|/#3`#4`#5`#6`#7/{%
    \ifnextchar<{\Dtrianglepairpppp(#1)|#2|/#3`#4`#5`#6`#7/}%
    {\Dtrianglepairpppp(#1)|#2|/#3`#4`#5`#6`#7/<\default,\default>}}%
\def\pplace[#1](#2,#3)[#4]{\POS(#2,#3)*+!!<0ex,\axis>!#1{#4}\ignorespaces}%
\def\cplace(#1,#2)[#3]{\POS(#1,#2)*+!!<0ex,\axis>{#3}\ignorespaces}%
\def\place{\ifnextchar[{\pplace}{\cplace}}%
\def\pullback#1]#2]{\square#1]\trident#2]\ignorespaces}%
\def\tridentppp|#1#2#3|/#4`#5`#6/<#7,#8>[#9]{%
\def\next[##1;##2`##3`##4]{\def\nodee{##1}\def\labele{##2}%
   \def\labelf{##3}\def\labelg{##4}}%
\next[#9]%
\advance \xpos by -\deltax%
\advance \xpos by -#7\advance \ypos by #8%
\advance\deltax by #7%
\morphism(\xpos,\ypos)|#1|/{#4}/<\deltax,-#8>[\nodee`\nodeb;\labele]%
\advance\deltax by -#7%
\morphism(\xpos,\ypos)|#2|/{#5}/<#7,-#8>[\nodee`\nodea;\labelf]%
\advance\deltay by #8%
\morphism(\xpos,\ypos)|#3|/{#6}/<#7,-\deltay>[\nodee`\nodec;\labelg]%
\ignorespaces}%
\def\trident{\ifnextchar|{\tridentp}{\tridentp|amb|}}%
\def\tridentp|#1|{\ifnextchar/{\tridentpp|#1|}{\tridentpp|#1|/{>}`{>}`{>}/}}%
\def\tridentpp|#1|/#2/{\ifnextchar<{\tridentppp|#1|/#2/}%
  {\tridentppp|#1|/#2/<500,500>}}%
\def\setmorphismwidth#1#2#3#4{%
 \setbox0=\hbox{$#1{\labelstyle#3#3}#2$}#4=\wd0%
 \divide #4 by 2 \divide #4 by \ul%
 \advance #4 by 350 \ratchet{#4}{500}}%
\def\setSquarewidth[#1`#2`#3`#4;#5`#6`#7`#8]{%
 \setmorphismwidth{#1}{#2}{#5}{\topw}%
 \setmorphismwidth{#3}{#4}{#8}{\botw}%
\ratchet{\topw}{\botw}}%
\def\Squarepppp(#1)|#2|/#3/<#4>[#5]{%
 \setSquarewidth[#5]%
 \squarepppp(#1)|#2|/#3/<\topw,#4>[#5]%
\ignorespaces}%
\def\Squarep(#1){\ifnextchar|{\Squarepp(#1)}{\Squarepp(#1)|alrb|}}%
\def\Squarepp(#1)|#2|{\ifnextchar/{\Squareppp(#1)|#2|}%
    {\Squareppp(#1)|#2|/>`>`>`>/}}%
\def\Squareppp(#1)|#2|/#3`#4`#5`#6/{%
    \ifnextchar<{\Squarepppp(#1)|#2|/#3`#4`#5`#6/}%
    {\Squarepppp(#1)|#2|/#3`#4`#5`#6/<\default>}}%
\def\hsquarespppp(#1,#2)|#3|/#4/<#5>[#6;#7]{%
\Xpos=#1\Ypos=#2%
\def\next|##1##2##3##4##5##6##7|{%
 \def\Xa{##1}\def\Xb{##2}\def\Xc{##3}\def\Xd{##4}%
 \def\Xe{##5}\def\Xf{##6}\def\Xg{##7}}%
\next|#3|%
\def\next<##1,##2,##3>{\deltaX=##1\deltaXprime=##2\deltaY=##3}%
\next<#5>%
\def\next[##1`##2`##3`##4`##5`##6]{%
 \def\Nodea{##1}\def\Nodeb{##2}\def\Nodec{##3}%
 \def\Noded{##4}\def\Nodee{##5}\def\Nodef{##6}}%
\next[#6]%
\def\next[##1`##2`##3`##4`##5`##6`##7]{%
 \def\Labela{##1}\def\Labelb{##2}\def\Labelc{##3}\def\Labeld{##4}%
 \def\Labele{##5}\def\Labelf{##6}\def\Labelg{##7}}%
\next[#7]%
\dohsquares/#4/}%
\def\dohsquares/#1`#2`#3`#4`#5`#6`#7/{%
\squarepppp(\Xpos,\Ypos)|\Xa\Xc\Xd\Xf|/#1`#3`#4`#6/<\deltaX,\deltaY>%
 [\Nodea`\Nodeb`\Noded`\Nodee;\Labela`\Labelc`\Labeld`\Labelf]%
 \advance \Xpos by \deltaX%
\squarepppp(\Xpos,\Ypos)|\Xb\Xd\Xe\Xg|/#2``#5`#7/<\deltaXprime,\deltaY>%
[\Nodeb`\Nodec`\Nodee`\Nodef;\Labelb``\Labele`\Labelg]%
\ignorespaces}%
\def\hsquaresp(#1){\ifnextchar|{\hsquarespp(#1)}{\hsquarespp%
(#1)|aalmrbb|}}%
\def\hsquarespp(#1)|#2|{\ifnextchar/{\hsquaresppp(#1)|#2|}%
    {\hsquaresppp(#1)|#2|/>`>`>`>`>`>`>/}}%
\def\hsquaresppp(#1)|#2|/#3/{%
    \ifnextchar<{\hsquarespppp(#1)|#2|/#3/}%
    {\hsquarespppp(#1)|#2|/#3/<\default,\default,\default>}}%
\def\hSquarespppp(#1,#2)|#3|/#4/<#5>[#6;#7]{%
\Xpos=#1\Ypos=#2%
\def\next|##1##2##3##4##5##6##7|{%
 \def\Xa{##1}\def\Xb{##2}\def\Xc{##3}\def\Xd{##4}%
 \def\Xe{##5}\def\Xf{##6}\def\Xg{##7}}%
\next|#3|%
\deltaY=#5%
\def\next[##1`##2`##3`##4`##5`##6]{%
 \def\Nodea{##1}\def\Nodeb{##2}\def\Nodec{##3}%
 \def\Noded{##4}\def\Nodee{##5}\def\Nodef{##6}}%
\next[#6]%
\def\next[##1`##2`##3`##4`##5`##6`##7]{%
 \def\Labela{##1}\def\Labelb{##2}\def\Labelc{##3}\def\Labeld{##4}%
 \def\Labele{##5}\def\Labelf{##6}\def\Labelg{##7}}%
\next[#7]%
\dohSquares/#4/}%
\def\dohSquares/#1`#2`#3`#4`#5`#6`#7/{%
\Squarepppp(\Xpos,\Ypos)|\Xa\Xc\Xd\Xf|/#1`#3`#4`#6/<\deltaY>%
 [\Nodea`\Nodeb`\Noded`\Nodee;\Labela`\Labelc`\Labeld`\Labelf]%
 \advance \Xpos by \topw%
\Squarepppp(\Xpos,\Ypos)|\Xb\Xd\Xe\Xg|/#2``#5`#7/<\deltaY>%
[\Nodeb`\Nodec`\Nodee`\Nodef;\Labelb``\Labele`\Labelg]%
\ignorespaces}%
\def\hSquaresp(#1){\ifnextchar|{\hSquarespp(#1)}{\hSquarespp%
(#1)|aalmrbb|}}%
\def\hSquarespp(#1)|#2|{\ifnextchar/{\hSquaresppp(#1)|#2|}%
    {\hSquaresppp(#1)|#2|/>`>`>`>`>`>`>/}}%
\def\hSquaresppp(#1)|#2|/#3/{%
    \ifnextchar<{\hSquarespppp(#1)|#2|/#3/}%
    {\hSquarespppp(#1)|#2|/#3/<\default>}}%
\def\vsquarespppp(#1,#2)|#3|/#4/<#5>[#6;#7]{%
\Xpos=#1\Ypos=#2%
\def\next|##1##2##3##4##5##6##7|{%
 \def\Xa{##1}\def\Xb{##2}\def\Xc{##3}\def\Xd{##4}%
 \def\Xe{##5}\def\Xf{##6}\def\Xg{##7}}%
\next|#3|%
\def\next<##1,##2,##3>{\deltaX=##1\deltaY=##2\deltaYprime=##3}%
\next<#5>%
\def\next[##1`##2`##3`##4`##5`##6]{%
 \def\Nodea{##1}\def\Nodeb{##2}\def\Nodec{##3}%
 \def\Noded{##4}\def\Nodee{##5}\def\Nodef{##6}}%
\next[#6]%
\def\next[##1`##2`##3`##4`##5`##6`##7]{%
 \def\Labela{##1}\def\Labelb{##2}\def\Labelc{##3}\def\Labeld{##4}%
 \def\Labele{##5}\def\Labelf{##6}\def\Labelg{##7}}%
\next[#7]%
\dovsquares/#4/}%
\def\dovsquares/#1`#2`#3`#4`#5`#6`#7/{%
\squarepppp(\Xpos,\Ypos)|\Xd\Xe\Xf\Xg|/`#5`#6`#7/<\deltaX,\deltaYprime>%
[\Nodec`\Noded`\Nodee`\Nodef;`\Labele`\Labelf`\Labelg]%
 \advance\Ypos by \deltaYprime%
\squarepppp(\Xpos,\Ypos)|\Xa\Xb\Xc\Xd|/#1`#2`#3`#4/<\deltaX,\deltaY>%
 [\Nodea`\Nodeb`\Nodec`\Noded;\Labela`\Labelb`\Labelc`\Labeld]%
\ignorespaces}%
\def\vsquaresp(#1){\ifnextchar|{\vsquarespp(#1)}{\vsquarespp%
(#1)|aalmrbb|}}%
\def\vsquarespp(#1)|#2|{\ifnextchar/{\vsquaresppp(#1)|#2|}%
    {\vsquaresppp(#1)|#2|/>`>`>`>`>`>`>/}}%
\def\vsquaresppp(#1)|#2|/#3/{%
    \ifnextchar<{\vsquarespppp(#1)|#2|/#3/}%
    {\vsquarespppp(#1)|#2|/#3/<\default,\default,\default>}}%
\def\vSquarespppp(#1,#2)|#3|/#4/<#5,#6>[#7;#8]{%
\Xpos=#1\Ypos=#2%
\def\next|##1##2##3##4##5##6##7|{%
 \def\Xa{##1}\def\Xb{##2}\def\Xc{##3}\def\Xd{##4}%
 \def\Xe{##5}\def\Xf{##6}\def\Xg{##7}}%
\next|#3|%
\deltaX=#5%
\deltaY=#6%
\def\next[##1`##2`##3`##4`##5`##6]{%
 \def\Nodea{##1}\def\Nodeb{##2}\def\Nodec{##3}%
 \def\Noded{##4}\def\Nodee{##5}\def\Nodef{##6}}%
\next[#7]%
\def\next[##1`##2`##3`##4`##5`##6`##7]{%
 \def\Labela{##1}\def\Labelb{##2}\def\Labelc{##3}\def\Labeld{##4}%
 \def\Labele{##5}\def\Labelf{##6}\def\Labelg{##7}}%
\next[#8]%
\dovSquares/#4/\ignorespaces}%
\def\dovSquares/#1`#2`#3`#4`#5`#6`#7/{%
\setmorphismwidth{\Nodea}{\Nodeb}{\Labela}{\topw}%
\setmorphismwidth{\Nodec}{\Noded}{\Labeld}{\botw}%
\ratchet{\topw}{\botw}%
\setmorphismwidth{\Nodee}{\Nodef}{\Labelg}{\botw}%
\ratchet{\topw}{\botw}%
\square(\Xpos,\Ypos)|\Xd\Xe\Xf\Xg|/`#5`#6`#7/<\topw,\deltaY>%
 [\Nodec`\Noded`\Nodee`\Nodef;`\Labele`\Labelf`\Labelg]%
\advance \Ypos by \deltaY%
\square(\Xpos,\Ypos)|\Xa\Xb\Xc\Xd|/#1`#2`#3`#4/<\topw,\deltaX>%
 [\Nodea`\Nodeb`\Nodec`\Noded;\Labela`\Labelb`\Labelc`\Labeld]%
}%
\def\vSquaresp(#1){\ifnextchar|{\vSquarespp(#1)}{\vSquarespp%
(#1)|alrmlrb|}}%
\def\vSquarespp(#1)|#2|{\ifnextchar/{\vSquaresppp(#1)|#2|}%
    {\vSquaresppp(#1)|#2|/>`>`>`>`>`>`>/}}%
\def\vSquaresppp(#1)|#2|/#3/{%
    \ifnextchar<{\vSquarespppp(#1)|#2|/#3/}%
    {\vSquarespppp(#1)|#2|/#3/<\default,\default>}}%
\def\osquarepppp(#1)|#2|/#3`#4`#5`#6/<#7>[#8]{\squarepppp%
 (#1)|#2|/#3`#4`#5`#6/<#7>[#8]%
 \let\Nodea\nodea\let\Nodeb\nodeb%
\let\Nodec\nodec\let\Noded\noded\Xpos=\xpos\Ypos=\ypos%
\deltaX=\deltax \deltaY=\deltay \isquare}%
\def\osquarep(#1){\ifnextchar|{\osquarepp(#1)}{\osquarepp(#1)|alrb|}}%
\def\osquarepp(#1)|#2|{\ifnextchar/{\osquareppp(#1)|#2|}%
    {\osquareppp(#1)|#2|/>`>`>`>/}}%
\def\osquareppp(#1)|#2|/#3`#4`#5`#6/{%
    \ifnextchar<{\osquarepppp(#1)|#2|/#3`#4`#5`#6/}%
    {\osquarepppp(#1)|#2|/#3`#4`#5`#6/<1500,1500>}}%
\def\isquarepppp(#1)|#2|/#3`#4`#5`#6/<#7>[#8]{%
 \squarepppp(#1)|#2|/#3`#4`#5`#6/<#7>[#8]%
\ifnextchar|{\cubep}{\cubep|mmmm|}}%
\def\cubep|#1|{\ifnextchar/{\cubepp|#1|}{\cubepp|#1|/>`>`>`>/}}%
\def\isquare{\ifnextchar({\isquarep}{\isquarep(\default,\default)}}%
\def\isquarep(#1){\ifnextchar|{\isquarepp(#1)}{\isquarepp(#1)|alrb|}}%
\def\isquarepp(#1)|#2|{\ifnextchar/{\isquareppp(#1)|#2|}%
    {\isquareppp(#1)|#2|/>`>`>`>/}}%
\def\isquareppp(#1)|#2|/#3`#4`#5`#6/{%
    \ifnextchar<{\isquarepppp(#1)|#2|/#3`#4`#5`#6/}%
    {\isquarepppp(#1)|#2|/#3`#4`#5`#6/<500,500>}}%
\def\cubepp|#1#2#3#4|/#5`#6`#7`#8/[#9]{%
\def\next[##1`##2`##3`##4]{\gdef\Labela{##1}%
\gdef\Labelb{##2}\gdef\Labelc{##3}\gdef\Labeld{##4}}\next[#9]%
\xend\xpos \yend\ypos%
\Xend\xend\advance\Xend by -\Xpos%
\Yend\yend\advance\Yend by -\Ypos%
\domorphism(\Xpos,\Ypos)|#2|/#6/<\Xend,\Yend>[\Nodeb`\nodeb;\Labelb]%
\advance\Xpos by-\deltaX%
\advance\xend by-\deltax%
\Xend\xend\advance\Xend by -\Xpos%
\domorphism(\Xpos,\Ypos)|#1|/#5/<\Xend,\Yend>[\Nodea`\nodea;\Labela]%
\advance\Ypos by-\deltaY%
\advance\yend by-\deltay%
\Yend\yend\advance\Yend by -\Ypos%
\domorphism(\Xpos,\Ypos)|#3|/#7/<\Xend,\Yend>[\Nodec`\nodec;\Labelc]%
\advance\Xpos by\deltaX%
\advance\xend by\deltax%
\Xend\xend\advance\Xend by -\Xpos%
\domorphism(\Xpos,\Ypos)|#4|/#8/<\Xend,\Yend>[\Noded`\noded;\Labeld]%
\ignorespaces}%
\def\setwdth#1#2{\setbox0\hbox{$\labelstyle#1$}\wdth=\wd0%
\setbox0\hbox{$\labelstyle#2$}\ifnum\wdth<\wd0 \wdth=\wd0 \fi}%
\def\topppp/#1/<#2>^#3_#4{\:%
\ifnum#2=0%
   \setwdth{#3}{#4}\deltax=\wdth \divide \deltax by \ul%
   \advance \deltax by \defaultmargin  \ratchet{\deltax}{100}%
\else \deltax #2%
\fi%
\xy\ar@{#1}^{#3}_{#4}(\deltax,0) \endxy%
\:}%
\def\toppp/#1/<#2>^#3{\ifnextchar_{\topppp/#1/<#2>^{#3}}{\topppp/#1/<#2>^{#3}_{}}}%
\def\topp/#1/<#2>{\ifnextchar^{\toppp/#1/<#2>}{\toppp/#1/<#2>^{}}}%
\def\toop/#1/{\ifnextchar<{\topp/#1/}{\topp/#1/<0>}}%
\def\to{\ifnextchar/{\toop}{\toop/>/}}%
\def\rlimto{{%
\font\xyatipfont=xyatip10 scaled 800
\font\xybtipfont=xybtip10 scaled 800
\raise 2pt\hbox{\,\xy\ar@{->}(100,0) \endxy}\,}}
\def\llimto{{%
\font\xyatipfont=xyatip10 scaled 800
\font\xybtipfont=xybtip10 scaled 800
\raise 2pt\hbox{\,\xy\ar@{<-}(100,0) \endxy}\,}}
\def\twopppp/#1`#2/<#3>^#4_#5{\:%
\ifnum0=#3%
  \setwdth{#4}{#5}\deltax=\wdth \divide \deltax by \ul \advance \deltax%
  by \defaultmargin \ratchet{\deltax}{200}%
\else \deltax#3 \fi%
\xy\ar@{#1}@<2.5pt>^{#4}(\deltax,0)%
\ar@{#2}@<-2.5pt>_{#5}(\deltax,0)\endxy\:}%
\def\twoppp/#1`#2/<#3>^#4{\ifnextchar_{\twopppp/#1`#2/<#3>^{#4}}%
  {\twopppp/#1`#2/<#3>^{#4}_{}}}%
\def\twopp/#1`#2/<#3>{\ifnextchar^{\twoppp/#1`#2/<#3>}{\twoppp/#1`#2/<#3>^{}}}%
\def\twop/#1`#2/{\ifnextchar<{\twopp/#1`#2/}{\twopp/#1`#2/<0>}}%
\def\threeppppp/#1`#2`#3/<#4>^#5|#6_#7{\:%
\ifnum0=#4%
\setbox0\hbox{$\labelstyle#5$}\wdth=\wd0%
\setbox0\hbox{$\labelstyle#6$}\ifnum\wdth<\wd0 \wdth=\wd0 \fi%
\setbox0\hbox{$\labelstyle#7$}\ifnum\wdth<\wd0 \wdth=\wd0 \fi%
\deltax=\wdth \divide \deltax by \ul \advance \deltax by%
\defaultmargin \ratchet{\deltax}{300}%
\else\deltax#4 \fi%
    \xy \ifnum\wd0=0 \ar@{#2}(\deltax,0)%
    \else \ar@{#2}|{#6}(\deltax,0)\fi%
\ar@{#1}@<4.5pt>^{#5}(\deltax,0)%
\ar@{#3}@<-4.5pt>_{#7}(\deltax,0)\endxy\:}%
\def\threepppp/#1`#2`#3/<#4>^#5|#6{\ifnextchar_{\threeppppp%
  /#1`#2`#3/<#4>^{#5}|{#6}}{\threeppppp/#1`#2`#3/<#4>^{#5}|{#6}_{}}}%
\def\threeppp/#1`#2`#3/<#4>^#5{\ifnextchar|{\threepppp%
  /#1`#2`#3/<#4>^{#5}}{\threepppp/#1`#2`#3/<#4>^{#5}|{}}}%
\def\threepp/#1`#2`#3/<#4>{\ifnextchar^{\threeppp/#1`#2`#3/<#4>}%
  {\threeppp/#1`#2`#3/<#4>^{}}}%
\def\threep/#1`#2`#3/{\ifnextchar<{\threepp/#1`#2`#3/}%
  {\threepp/#1`#2`#3/<0>}}%
\def\twoar(#1,#2){{%
 \scalefactor{0.1}%
 \deltax#1\deltay#2%
 \deltaX=\ifnum\deltax<0-\fi\deltax%
 \deltaY=\ifnum\deltay<0-\fi\deltay%
 \Xend\deltax \multiply \Xend by \deltax%
 \Yend\deltay \multiply \Yend by \deltay%
 \advance\Xend by \Yend \multiply \Xend by 3%
 \ifnum \deltaX > \deltaY%
    \multiply \deltaX by 3 \advance \deltaX by \deltaY%
 \else%
    \multiply \deltaY by 3 \advance \deltaX by \deltaY%
 \fi%
 \multiply\deltax by 500%
 \multiply\deltay by 500%
 \xpos\deltax \multiply \xpos by 3 \divide\xpos by \deltaX%
 \Xpos\deltax \multiply \Xpos by \deltaX \divide \Xpos by \Xend%
 \advance \xpos by \Xpos%
 \ypos\deltay \multiply \ypos by 3 \divide\ypos by \deltaX%
 \Ypos\deltay \multiply \Ypos by \deltaX \divide \Ypos by \Xend%
 \advance \ypos by \Ypos%
 \xy \ar@{=>}(\xpos,\ypos) \endxy%
}\ignorespaces}%
\def\iiixiiipppppp(#1,#2)|#3|/#4/<#5>#6<#7>[#8;#9]{%
 \xpos#1\ypos#2\relax%
 \def\next|##1##2##3##4##5##6##7|{\def\xa{##1}\def\xb{##2}%
 \def\xc{##3}\def\xd{##4}\def\xe{##5}\def\xf{##6}\nextt|##7|}%
 \def\nextt|##1##2##3##4##5##6|{\def\xg{##1}\def\xh{##2}%
 \def\xi{##3}\def\xj{##4}\def\xk{##5}\def\xl{##6}}%
 \next|#3|%
 \def\next<##1,##2>{\deltax##1\deltay##2}%
 \next<#5>%
 \def\next<##1,##2>{\deltaX##1\deltaY##2}%
 \next<#7>%
 \def\next##1{\topw##1\relax%
 \ifodd\topw \def\za{}\else\def\za{\relax}\fi \divide\topw by 2
 \ifodd\topw \def\zb{}\else\def\zb{\relax}\fi \divide\topw by 2
 \ifodd\topw \def\zc{}\else\def\zc{\relax}\fi \divide\topw by 2
 \ifodd\topw \def\zd{}\else\def\zd{\relax}\fi \divide\topw by 2
 \ifodd\topw \def\ze{}\else\def\ze{\relax}\fi \divide\topw by 2
 \ifodd\topw \def\zf{}\else\def\zf{\relax}\fi \divide\topw by 2
 \ifodd\topw \def\zg{}\else\def\zg{\relax}\fi \divide\topw by 2
 \ifodd\topw \def\zh{}\else\def\zh{\relax}\fi \divide\topw by 2
 \ifodd\topw \def\zi{}\else\def\zi{\relax}\fi \divide\topw by 2
 \ifodd\topw \def\zj{}\else\def\zj{\relax}\fi \divide\topw by 2
 \ifodd\topw \def\zk{}\else\def\zk{\relax}\fi \divide\topw by 2
 \ifodd\topw \def\zl{}\else\def\zl{\relax}\fi}%
 \next{#6}%
 \def\next[##1`##2`##3`##4`##5`##6`##7`##8`##9]{%
 \def\nodeA{##1}\def\nodeB{##2}\def\nodeC{##3}%
 \def\nodeD{##4}\def\nodeE{##5}\def\nodeF{##6}%
 \def\nodeG{##7}\def\nodeH{##8}\def\nodeI{##9}}%
 \next[#8]%
 \def\next[##1`##2`##3`##4`##5`##6`##7]{%
 \def\labela{##1}\def\labelb{##2}\def\labelc{##3}%
 \def\labeld{##4}\def\labele{##5}\def\labelf{##6}\nextt[##7]}%
 \def\nextt[##1`##2`##3`##4`##5`##6]{%
 \def\labelg{##1}\def\labelh{##2}\def\labeli{##3}%
 \def\labelj{##4}\def\labelk{##5}\def\labell{##6}}%
 \next[#9]%
 \def\next/##1`##2`##3`##4`##5`##6`##7`##8/{%
 \advance\ypos\deltay
    \ifx\zf\empty \morphism(\xpos,\ypos)/<-/<-\deltaX,0>[\nodeD`0;]\fi
 \morphism(\xpos,\ypos)|\xf|/{##6}/<\deltax,0>[\nodeD`\nodeE;\labelf]%
    \advance \xpos\deltax
    \morphism(\xpos,\ypos)|\xg|/{##7}/<\deltax,0>[\nodeE`\nodeF;\labelg]%
    \ifx\zg\empty \advance\xpos \deltax
        \morphism(\xpos,\ypos)<\deltaX,0>[\nodeF`0;]\fi
    \xpos#1 \advance\ypos\deltay
    \ifx\zd\empty \morphism(\xpos,\ypos)/<-/<-\deltaX,0>[\nodeA`0;]\fi
    \ifx\za\empty \morphism(\xpos,\ypos)/<-/<0,\deltaY>[\nodeA`0;]\fi
    \morphism(\xpos,\ypos)|\xa|/{##1}/<\deltax,0>[\nodeA`\nodeB;\labela]%
 \morphism(\xpos,\ypos)|\xc|/{##3}/<0,-\deltay>[\nodeA`\nodeD;\labelc]%
    \advance \xpos\deltax
     \morphism(\xpos,\ypos)|\xb|/{##2}/<\deltax,0>[\nodeB`\nodeC;\labelb]%
     \morphism(\xpos,\ypos)|\xd|/{##4}/<0,-\deltay>[\nodeB`\nodeE;\labeld]%
     \ifx\zb\empty \morphism(\xpos,\ypos)/<-/<0,\deltaY>[\nodeB`0;]\fi
     \advance\xpos\deltax
 \morphism(\xpos,\ypos)|\xe|/{##5}/<0,-\deltay>[\nodeC`\nodeF;\labele]%
     \ifx\zc\empty \morphism(\xpos,\ypos)/<-/<0,\deltaY>[\nodeC`0;]\fi
     \ifx\ze\empty \morphism(\xpos,\ypos)<\deltaX,0>[\nodeC`0;]\fi
   \nextt/##8/}%
 \def\nextt/##1`##2`##3`##4`##5/{%
 \xpos#1\ypos#2\relax%
   \ifx\zh\empty \morphism(\xpos,\ypos)/<-/<-\deltaX,0>[\nodeG`0;]\fi
   \ifx\zj\empty \morphism(\xpos,\ypos)<0,-\deltaY>[\nodeG`0;]\fi
   \morphism(\xpos,\ypos)|\xk|/{##4}/<\deltax,0>[\nodeG`\nodeH;\labelk]%
   \advance\xpos\deltax
   \morphism(\xpos,\ypos)|\xl|/{##5}/<\deltax,0>[\nodeH`\nodeI;\labell]%
   \ifx\zk\empty \morphism(\xpos,\ypos)<0,-\deltaY>[\nodeH`0;]\fi
   \advance\xpos\deltax
   \ifx\zi\empty \morphism(\xpos,\ypos)<\deltaX,0>[\nodeI`0;]\fi
   \ifx\zl\empty \morphism(\xpos,\ypos)<0,-\deltaY>[\nodeI`0;]\fi
   \xpos#1 \advance\ypos\deltay
    \morphism(\xpos,\ypos)|\xh|/{##1}/<0,-\deltay>[\nodeD`\nodeG;\labelh]%
    \advance \xpos\deltax
    \morphism(\xpos,\ypos)|\xi|/{##2}/<0,-\deltay>[\nodeE`\nodeH;\labeli]%
    \advance \xpos\deltax
 \morphism(\xpos,\ypos)|\xj|/{##3}/<0,-\deltay>[\nodeF`\nodeI;\labelj]}%
 \next/#4/\ignorespaces}%
\def\iiixiiip(#1){\ifnextchar|{\iiixiiipp(#1)}%
  {\iiixiiipp(#1)|aalmrmmlmrbb|}}%
\def\iiixiiipp(#1)|#2|{\ifnextchar/{\iiixiiippp(#1)|#2|}%
    {\iiixiiippp(#1)|#2|/>`>`>`>`>`>`>`>`>`>`>`>/}}%
\def\iiixiiippp(#1)|#2|/#3/{%
    \ifnextchar<{\iiixiiipppp(#1)|#2|/#3/}%
    {\iiixiiipppp(#1)|#2|/#3/<\default,\default>}}%
\def\iiixiiipppp(#1)|#2|/#3/<#4>{\ifnextchar[{\iiixiiippppp(#1)|#2|/#3/%
   <#4>0<0,0>}{\iiixiiippppp(#1)|#2|/#3/<#4>}}%
\def\iiixiiippppp(#1)|#2|/#3/<#4>#5{\ifnextchar<%
   {\iiixiiipppppp(#1)|#2|/#3/<#4>{#5}}%
   {\iiixiiipppppp(#1)|#2|/#3/<#4>{#5}<400,400>}}%
\def\iiixiipppppp(#1,#2)|#3|/#4/<#5>#6<#7>[#8;#9]{%
 \xpos#1\ypos#2\relax%
 \def\next|##1##2##3##4##5##6##7|{\def\xa{##1}\def\xb{##2}%
 \def\xc{##3}\def\xd{##4}\def\xe{##5}\def\xf{##6}\def\xg{##7}}%
 \next|#3|%
 \def\next<##1,##2>{\deltax##1\deltay##2}%
 \next<#5>%
 \deltaX#7
 \topw#6
 \def\next{%
 \ifodd\topw \def\za{}\else\def\za{\relax}\fi \divide\topw by 2
 \ifodd\topw \def\zb{}\else\def\zb{\relax}\fi \divide\topw by 2
 \ifodd\topw \def\zc{}\else\def\zc{\relax}\fi \divide\topw by 2
 \ifodd\topw \def\zd{}\else\def\zd{\relax}\fi}%
 \next%
 \def\next[##1`##2`##3`##4`##5`##6]{%
 \def\nodea{##1}\def\nodeb{##2}\def\nodec{##3}%
 \def\noded{##4}\def\nodee{##5}\def\nodef{##6}}%
 \next[#8]%
 \def\next[##1`##2`##3`##4`##5`##6`##7]{%
 \def\labela{##1}\def\labelb{##2}\def\labelc{##3}%
 \def\labeld{##4}\def\labele{##5}\def\labelf{##6}\def\labelg{##7}}%
 \next[#9]%
 \def\next/##1`##2`##3`##4`##5`##6`##7/{%
 {\ifx\zc\empty\advance\xpos -\deltaX
\relax\morphism(\xpos,\ypos)<\deltaX,0>[0`\noded;]\fi}%
 \morphism(\xpos,\ypos)|\xf|/##6/<\deltax,0>[\noded`\nodee;\labelf]%
 \advance\xpos by \deltax%
 \morphism(\xpos,\ypos)|\xg|/##7/<\deltax,0>[\nodee`\nodef;\labelg]%
 {\ifx\zd\empty \advance\xpos by \deltax
\relax  \morphism(\xpos,\ypos)<\deltaX,0>[\nodef`0;]\fi}%
 \advance\xpos by -\deltax  \advance\ypos by \deltay
 {\ifx\za\empty\advance \xpos by -\deltaX
\relax\morphism(\xpos,\ypos)<\deltaX,0>[0`\nodea;]\fi}%
 \morphism(\xpos,\ypos)|\xa|/##1/<\deltax,0>[\nodea`\nodeb;\labela]%
 \morphism(\xpos,\ypos)|\xc|/##3/<0,-\deltay>[\nodea`\noded;\labelc]%
 \advance\xpos by \deltax%
 \morphism(\xpos,\ypos)|\xb|/##2/<\deltax,0>[\nodeb`\nodec;\labelb]%
 \morphism(\xpos,\ypos)|\xd|/##4/<0,-\deltay>[\nodeb`\nodee;\labeld]%
 \advance\xpos by \deltax%
 \morphism(\xpos,\ypos)|\xe|/##5/<0,-\deltay>[\nodec`\nodef;\labele]%
 \ifx\zb\empty\relax \morphism(\xpos,\ypos)<\deltaX,0>[\nodec`0;]\fi}%
 \next/#4/\ignorespaces}%
\def\iiixiip(#1){\ifnextchar|{\iiixiipp(#1)}%
  {\iiixiipp(#1)|aalmrbb|}}%
\def\iiixiipp(#1)|#2|{\ifnextchar/{\iiixiippp(#1)|#2|}%
    {\iiixiippp(#1)|#2|/>`>`>`>`>`>`>/}}%
\def\iiixiippp(#1)|#2|/#3/{%
    \ifnextchar<{\iiixiipppp(#1)|#2|/#3/}%
    {\iiixiipppp(#1)|#2|/#3/<\default,\default>}}%
\def\iiixiipppp(#1)|#2|/#3/<#4>{\ifnextchar[{\iiixiippppp(#1)|#2|/#3/%
   <#4>{0}<0>}{\iiixiippppp(#1)|#2|/#3/<#4>}}%
\def\iiixiippppp(#1)|#2|/#3/<#4>#5{\ifnextchar<%
   {\iiixiipppppp(#1)|#2|/#3/<#4>{#5}}%
   {\iiixiipppppp(#1)|#2|/#3/<#4>{#5}<400>}}%
\def\node#1(#2,#3)[#4]{%
\expandafter\gdef\csname x@#1\endcsname{#2}%
\expandafter\gdef\csname y@#1\endcsname{#3}%
\expandafter\gdef\csname ob@#1\endcsname{#4}%
\ignorespaces}%
\def\arrow{\ifnextchar|{\arrowp}{\arrowp|a|}}%
\def\arrowp|#1|{\ifnextchar/{\arrowpp|#1|}{\arrowpp|#1|/>/}}%
\def\arrowpp|#1|/#2/[#3`#4;#5]{%
\xfinish=\csname x@#4\endcsname%
\yfinish=\csname y@#4\endcsname%
\advance\xfinish by -\csname x@#3\endcsname%
\advance\yfinish by -\csname y@#3\endcsname%
\morphism(\csname x@#3\endcsname,\csname y@#3\endcsname)|#1|/{#2}/%
<\xfinish,\yfinish>[\csname ob@#3\endcsname`\csname ob@#4\endcsname;#5]%
}%
\def\Loop(#1,#2)#3(#4,#5){\POS(#1,#2)*+!!<0ex,\axis>{#3}\ar@(#4,#5)}%
\def\iloop#1(#2,#3){\xy\Loop(0,0)#1(#2,#3)\endxy}%
     \let \PATHafterPOS\PATHafterPOS@default%
     \let \arsavedPATHafterPOS@@\relax%
     \let\afterar@@\relax%
\xydef@\endxyobj{\if\inxy@\else\xyerror@{Unexpected \string\endxy}{}\fi%
>  \relax%
>   \dimen@=\Y@max \advance\dimen@-\Y@min%
>   \ifdim\dimen@<\z@ \dimen@=\z@ \Y@min=\z@ \Y@max=\z@ \fi%
>   \dimen@=\X@max \advance\dimen@-\X@min%
>   \ifdim\dimen@<\z@ \dimen@=\z@ \X@min=\z@ \X@max=\z@ \fi%
>   \edef\tmp@{\egroup%
>     \setboxz@h{\kern-\the\X@min \boxz@}%
>     \ht\z@=\the\Y@max \dp\z@=-\the\Y@min \wdz@=\the\dimen@%
>     \noexpand\maybeunraise@ \raise\dimen@\boxz@%
>     \noexpand\recoverXyStyle@ \egroup \noexpand\xy@end%
>     \U@c=\the\Y@max \advance\U@c-\the\Y@c%
>     \D@c=-\the\Y@min \advance\D@c\the\Y@c%
>     \L@c=-\the\X@min  \advance\L@c\the\X@c%
>     \R@c=\the\X@max  \advance\R@c-\the\X@c%
>    }\tmp@}%
\gdef\xymerge@MinMax{}%
\xydef@\twocell{\hbox\bgroup\xysave@MinMax\@twocell}%
\xydef@\uppertwocell{\hbox\bgroup\xysave@MinMax\@uppertwocell}%
\xydef@\lowertwocell{\hbox\bgroup\xysave@MinMax\@lowertwocell}%
\xydef@\compositemap{\hbox\bgroup\xysave@MinMax\@compositemap}%
\xydef@\xysave@MinMax{\xdef\xymerge@MinMax{%
   \noexpand\ifdim\X@max<\the\X@max \X@max=\the\X@max\noexpand\fi%
   \noexpand\ifdim\X@min>\the\X@min \X@min=\the\X@min\noexpand\fi%
   \noexpand\ifdim\Y@max<\the\Y@max \Y@max=\the\Y@max\noexpand\fi%
   \noexpand\ifdim\Y@min>\the\Y@min \Y@min=\the\Y@min\noexpand\fi%
  }}%
\xydef@\drop@Twocell{\boxz@ \xymerge@MinMax}%
\xydef@\twocell@DONE{%
  \edef\tmp@{\egroup%
   \X@min=\the\X@min \X@max=\the\X@max%
   \Y@min=\the\Y@min \Y@max=\the\Y@max}\tmp@%
  \L@c=\X@c \advance\L@c-\X@min \R@c=\X@max \advance\R@c-\X@c%
  \D@c=\Y@c \advance\D@c-\Y@min \U@c=\Y@max \advance\U@c-\Y@c%
  \ht\z@=\U@c \dp\z@=\D@c \dimen@=\L@c \advance\dimen@\R@c \wdz@=\dimen@%
  \computeLeftUpness@%
  \setboxz@h{\kern-\X@p \raise-\Y@c\boxz@ }%
  \dimen@=\L@c \advance\dimen@\R@c \wdz@=\dimen@ \ht\z@=\U@c \dp\z@=\D@c%
  \Edge@c={\rectangleEdge}\Invisible@false \Hidden@false%
  \edef\Drop@@{\noexpand\drop@Twocell%
   \noexpand\def\noexpand\Leftness@{\Leftness@}%
   \noexpand\def\noexpand\Upness@{\Upness@}}%
  \edef\Connect@@{\noexpand\connect@Twocell%
   \noexpand\ifdim\X@max<\the\X@max \X@max=\the\X@max\noexpand\fi%
   \noexpand\ifdim\X@min>\the\X@min \X@min=\the\X@min\noexpand\fi%
   \noexpand\ifdim\Y@max<\the\Y@max \Y@max=\the\Y@max\noexpand\fi%
   \noexpand\ifdim\Y@min>\the\Y@min \Y@min=\the\Y@min\noexpand\fi }%
  \xymerge@MinMax%
}%
\author{O. Braunling, M. Groechenig, A. Heleodoro, J. Wolfson}
\thanks{O.B.\ was supported by DFG GK 1821 \textquotedblleft Cohomological Methods in
Geometry\textquotedblright and a Junior Fellowship at the Freiburg Institute for Advanced Studies (FRIAS). M.G.\ was partially supported by EPSRC Grant
No.\ EP/G06170X/1 and has received funding from the European Union's Horizon 2020 research and innovation programme under the Marie Sk\l odowska-Curie Grant Agreement No. 701679. J.W.\ was partially supported by an NSF Post-doctoral
Research Fellowship under Grant No.\ DMS-1400349. Our research was supported
in part by NSF Grant No.\ DMS-1303100 and EPSRC Mathematics Platform grant EP/I019111/1.\\ \includegraphics[height=1cm,right]{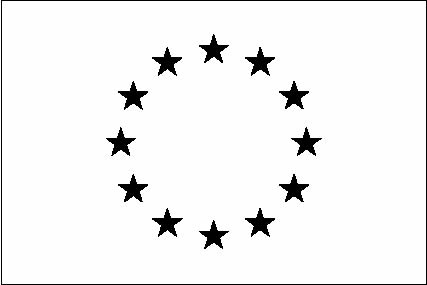}
}
\address{Freiburg Institute for Advanced Studies (FRIAS),  University of
	Freiburg}
\email{oliver.braeunling@math.uni-freiburg.de}
\address{Department of Mathematics, University of Toronto}
\email{michael.groechenig@utoronto.ca}
\address{Department of Mathematics, Northwestern University}
\email{aronah@math.northwestern.edu}
\address{Department of Mathematics, University of California-Irvine}
\email{wolfson@uci.edu}
\keywords{Tate vector space, Tate object, normally ordered product, higher adeles, higher local fields}
\newtheorem{theorem}{Theorem}[section]
\newtheorem{theoremannounce}{Theorem}
\newtheorem{proposition}[theorem]{Proposition}
\newtheorem{corollary}[theorem]{Corollary}
\newtheorem{lemma}[theorem]{Lemma}
\theoremstyle{definition}
\newtheorem{definition}[theorem]{Definition}
\numberwithin{equation}{section}
\newtheorem{remark}[theorem]{Remark}
\newtheorem{elab}[theorem]{Elaboration}
\newtheorem{example}[theorem]{Example}
\let\pf\proof
\let\epf\endproof
\DeclareMathOperator*{\colim}{colim}
\def\limfunc#1{\mathop{\rm #1}}%
\newcommand{\elTate}{\mathsf{Tate}^{el}}
\newcommand{\elTateb}{\mathsf{Tate}^{el,\flat}}
\newcommand{\nelTate}{\mathit{n}\text{-}\mathsf{Tate}^{el}}
\newcommand{\nelTateb}{\mathit{n}\text{-}\mathsf{Tate}^{el,\flat}}
\newcommand{\Tate}{\mathsf{Tate}}
\newcommand{\Tateb}{\mathsf{Tate}^{\flat}}
\newcommand{\Tatec}{\mathsf{Tate}_{\aleph_0}}
\newcommand{\nTate}{\mathit{n}\text{-}\mathsf{Tate}}
\newcommand{\nTateb}{\mathit{n}\text{-}\mathsf{Tate}^\flat}
\newcommand{\nTatec}{\mathit{n}\text{-}\mathsf{Tate}_{\aleph_0}}
\newcommand{\Ind}{\mathsf{Ind}^a}
\newcommand{\Pro}{\mathsf{Pro}^a}
\newcommand{\Prob}{\mathsf{Pro}^{a,\flat}}
\newcommand{\lex}{\mathsf{Lex}}
\newcommand{\Gr}{Gr}
\newcommand{\Vect}{\mathsf{Vect}}
\newcommand{\Fun}{\mathsf{Fun}}
\newcommand{\into}{\hookrightarrow}
\newcommand{\onto}{\twoheadrightarrow}
\newcommand{\op}{op}
\newcommand{\Cc}{\mathcal{C}}
\newcommand{\Dc}{\mathcal{D}}
\newcommand{\Ec}{\mathcal{E}}
\newcommand{\Oc}{\mathcal{O}}
\newcommand{\Nb}{\mathbb{N}}
\newcommand{\Hom}{\underline{Hom}}
\newcommand{\rotimes}{\overrightarrow{\otimes}}
\begin{document}
\title[Tensor products and duality for Tate objects]{On the normally ordered tensor product\linebreak and duality for Tate objects}
\maketitle
\begin{abstract}
This paper generalizes the normally ordered tensor product from Tate vector spaces to Tate objects
over arbitrary exact categories. We show how to lift bi-right exact monoidal
structures, duality functors, and construct external Homs. We list some applications: (1) Ad\`{e}les of a flag
can be written as ordered tensor products; (2) Intersection numbers can be interpreted via these tensor products; (3) Pontryagin duality uniquely extends to $n$-Tate objects in locally compact abelian groups.
\end{abstract}
\section{Introduction}

Over a field $k$, the tensor product $k[[s]]\otimes k[[t]]$ is much smaller
than $k[[s,t]]$. This behaviour is usually not wanted and resolved by using a
\textsl{completed} tensor product. The corresponding problem for Laurent
series $k((s))$ and $k((t))$ is more difficult.\ It is asymmetric due to%
\begin{equation}
k((s))((t))\neq k((t))((s))\text{.}\label{lmi1}%
\end{equation}
This issue is also well-understood: Formal Laurent series can be axiomatized
as Tate vector spaces, and Beilinson and Drinfeld introduced the
\emph{normally ordered tensor product}, which has the desired property
$k((s))\overrightarrow{\otimes}k((t))=k((s))((t))$ in \cite[\S 3.6.1]%
{BeD:04}, \cite{Bei:08}.

We generalize this construction to Tate objects over arbitrary exact categories:

\begin{theoremannounce}
Let $\mathcal{C}$ be an exact category with a bi-exact monoidal structure
$\otimes$.

\begin{enumerate}
\item Then for all $n,m\geq0$, there exists a canonical bi-exact functor%
\begin{equation}
-\overrightarrow{\otimes}-:\left.  n\text{-}\mathsf{Tate}(\mathcal{C})\right.
\times\left.  m\text{-}\mathsf{Tate}(\mathcal{C})\right.  \longrightarrow
\left.  (n+m)\text{-}\mathsf{Tate}(\mathcal{C})\right.  \text{.}\tag{$%
\diamond$}\label{lmi2}%
\end{equation}
The associativity constraint for $\otimes$ naturally induces an associativity
constaint for $\overrightarrow{\otimes}$.

\item For $\mathcal{C}:=\mathsf{Vect}_{f}(k)$, the realization functor from Tate objects
in $\mathcal{C}$ to topological $k$-vector spaces sends $\overrightarrow
{\otimes}$-products to the normally ordered tensor products of \cite[\S 3.6.1]%
{BeD:04}.

\item If $\mathcal{C}$ is idempotent complete, closed monoidal and
$\underline{Hom}_{\mathcal{C}}(-,-)$ is bi-exact as well, then for all
$n,m\geq0$, there exists a canonical bi-exact functor%
\[
\underline{Hom}(-,-):\left.  n\text{-}\mathsf{Tate}(\mathcal{C})^{op}\right.
\times\left.  m\text{-}\mathsf{Tate}(\mathcal{C})\right.  \longrightarrow
\left.  (n+m)\text{-}\mathsf{Tate}(\mathcal{C})\right.  \text{.}%
\]
Moreover, there is a form of a Hom-tensor adjunction.
\end{enumerate}
\end{theoremannounce}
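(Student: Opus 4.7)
The plan is to construct $\rotimes$ by induction on $n+m$, with base case $n=m=0$ given by the assumed bi-exact $\otimes$ on $\mathcal{C}$. The main building block is the observation that for any exact category $\mathcal{D}$ with a bi-exact tensor product, the formulas
\[
\colim_i X_i \otimes \colim_j Y_j := \colim_{(i,j)} X_i \otimes Y_j, \qquad \lim_i X_i \otimes \lim_j Y_j := \lim_{(i,j)} X_i \otimes Y_j
\]
equip $\Ind(\mathcal{D})$ and $\Pro(\mathcal{D})$ with bi-exact tensor products. Bi-exactness lifts because admissible short exact sequences in these categories are represented by (co)filtered diagrams of admissible short exact sequences in $\mathcal{D}$, and the level-wise bi-exactness of $\otimes$ is preserved by filtered (co)limits. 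Iterating gives bi-exact tensor products on all the nested Ind/Pro categories.

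For the inductive step, I use the characterization $\elTate(\mathcal{D}) \subset \Ind(\Pro(\mathcal{D})) \cap \Pro(\Ind(\mathcal{D}))$ with $\mathcal{D} := (n-1)\text{-}\mathsf{Tate}(\mathcal{C})$, and combine the Ind--Pro extension above with the inductive hypothesis to obtain a candidate functor on elementary Tate objects; passing to the idempotent completion then handles general Tate objects. The \emph{normal ordering} — which breaks the symmetry and encodes the asymmetry $k((s))((t)) \neq k((t))((s))$ — specifies a particular convention for interleaving the Ind/Pro layers of the two factors: the outer (Ind--Pro) structure of one factor wraps around the tensor product of the other factor with the inner layers. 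One then exhibits an explicit lattice filtration on the output, built from the input lattices, to confirm it lies in $(n+m)\text{-}\mathsf{Tate}(\mathcal{C})$ rather than in a larger nested Ind--Pro iterate.

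Associativity follows because the associator of $\otimes$ propagates naturally through each Ind, Pro, and Tate step, and the normal-ordering convention is preserved under iteration of $\rotimes$. Part (2) is then a direct comparison: the inductive formula in $\Vect_f(k)$ matches Beilinson--Drinfeld's definition after applying the realization functor, which respects Ind and Pro by construction. For part (3), the internal Hom is built dually by swapping the roles of Ind and Pro: $\Hom(V, W)$ uses a Pro of $V$'s lattices outside and an Ind of $W$'s lattices inside; bi-exactness is again lifted level-wise, and the Hom-tensor adjunction is inherited from $\mathcal{C}$ via the standard fact that $\Hom$ sends colimits in the first variable to limits and preserves limits in the second.

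The principal obstacle is independence of presentation: a Tate object admits many lattice filtrations, and the construction must not depend on them — neither the functor $\rotimes$ itself, nor its bi-exactness, nor the associator, nor the adjunction with $\Hom$. This is handled by checking that admissible refinement maps between lattice choices are compatible with $\rotimes$ up to canonical isomorphism, so that any two presentations yield canonically isomorphic outputs. A secondary difficulty is confirming that the output really lies in $(n+m)\text{-}\mathsf{Tate}(\mathcal{C})$ rather than in a strictly larger Ind--Pro iterate, which requires a careful combinatorial lattice analysis at each inductive step.
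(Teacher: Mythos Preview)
Your plan is essentially the paper's approach, but the paper organizes the induction more efficiently in a way that dissolves what you flag as the ``secondary difficulty.'' Rather than inducting on $n+m$ and then verifying after the fact that the output lands in $(n+m)\text{-}\mathsf{Tate}(\mathcal{C})$, the paper runs a two-stage induction: first establish $(n,0)$ for all $n$ by unwinding the left factor, then establish $(n,m)$ by unwinding \emph{only the right factor} one Tate layer at a time, i.e.\ $V\rotimes W := \colim_I \lim_{J_i} V\rotimes W_{i,j}$ where $W$ is presented as an Ind of Pro of $(m-1)$-Tate objects $W_{i,j}$. Because $V\rotimes(-)$ is already known to be exact on $(m-1)$-Tate objects by the inductive hypothesis, the resulting diagram is \emph{automatically} an elementary Tate diagram in $(n+m-1)\text{-}\mathsf{Tate}(\mathcal{C})$, so no separate lattice bookkeeping is needed. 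The ``straightening'' lemmas you allude to (for morphisms and for exact sequences) are exactly what the paper invokes for well-definedness and bi-exactness. Your symmetric formulas $\colim_{(i,j)} X_i\otimes Y_j$ on $\Ind(\mathcal{D})$ and $\Pro(\mathcal{D})$ are correct as auxiliary observations but are not the construction itself; the normal ordering is precisely the choice to peel layers from one side only.

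For part~(3) your sketch understates the work. Dualizing the $\rotimes$ construction is not enough: $\underline{Hom}(V,W)$ for $V,W\in n\text{-}\mathsf{Tate}(\mathcal{C})$ lands a priori in $\Pro\,\Ind\,\Pro\,\Ind((2n-2)\text{-}\mathsf{Tate}(\mathcal{C}))$, and one must genuinely argue that it is a $2n$-Tate object. The paper does this via a three-branch induction --- $(n,n)\Rightarrow(n+m,n)$, $(n,n)\Rightarrow(n,n+m)$, and $(n-1,n-1)\Rightarrow(n,n)$ --- and in the diagonal step uses the exact sequence $\underline{Hom}(V/L,W)\hookrightarrow\underline{Hom}(V,W)\twoheadrightarrow\underline{Hom}(L,W)$ for a lattice $L\hookrightarrow V$, together with closure of the Tate subcategory under extensions, to exhibit the required lattice structure. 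This is the one place where an explicit lattice argument is unavoidable, and it is more delicate than ``swap Ind and Pro.''
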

For precise statements, see \S \ref{sect:biexacttens} and
\S \ref{externalhoms}. We prove more, but we defer the relevant
statements to the main body of the paper. Even if $\otimes$ is symmetric,
$\overrightarrow{\otimes}$ will not be symmetric, reflecting the issue in \eqref{lmi1}.
This failure of symmetry allows one to define a different product $\rotimes_\sigma$ for every $(n,m)$-shuffle $\sigma$. We discuss this in more detail in \S\ref{sect:biexacttens}.

For general exact categories $\mathcal{C}$ new problems arise. Notably, it is rare that a monoidal structure $\otimes$ is bi-exact. To
this end, we introduce the full sub-category of `flat Tate objects', $\left.
\mathsf{Tate}^{\flat}(\mathcal{C})\right.  $, in \S \ref{sect:rightexacttens}
and show that bi-right exact monoidal structures extend to flat Tate objects
as in \eqref{lmi2}. As the category of vector spaces is split exact, none
of these problems occur in the situation of \cite[\S 3.6.1]{BeD:04}.
Next, we address duality. We show that Tate objects can also be realized inside Pro-Ind
objects, even though they are usually viewed as living inside Ind-Pro objects:
\begin{theoremannounce}
(Prop. \ref{prop:TateinProInd}) Suppose $\mathcal{C}$ is idempotent complete. Then there is a canonical fully
exact embedding%
\[
\mathsf{Tate}^{el}(\mathcal{C})\hookrightarrow\mathsf{Pro}^{a}\mathsf{Ind}%
^{a}\mathcal{C}\text{.}%
\]
The essential image of this embedding consists of all such admissible
Pro-Ind-objects which admit a lattice, i.e. a Pro sub-object with an Ind-object quotient.
\end{theoremannounce}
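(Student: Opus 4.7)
The plan is to build the embedding via quotients by sub-lattices, and then separately identify its essential image. By definition, every $V \in \elTate(\Cc)$ admits a lattice, i.e.\ sits in a conflation $0 \to L \to V \to V/L \to 0$ with $L \in \Pro(\Cc)$ and $V/L \in \Ind(\Cc)$. The admissible Pro-object $L$ is canonically presented as $L = \lim_{K} L/K$ over the cofiltered system of admissible sub-objects $K \subseteq L$ with $L/K \in \Cc$. For each such $K$, the quotient $V/K$ fits in a conflation
\[
0 \to L/K \to V/K \to V/L \to 0,
\]
exhibiting $V/K$ as an extension of $V/L \in \Ind(\Cc)$ by $L/K \in \Cc$, so $V/K \in \Ind(\Cc)$. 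Assembling these yields the candidate
\[
\Phi(V) := \lim_{K} V/K \in \Pro\Ind(\Cc).
\]

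The next step is well-definedness and functoriality. Independence of the lattice uses the standard fact that any two lattices in $V$ are contained in a common lattice (for instance their sum), producing a canonical zigzag of isomorphisms at the Pro-Ind level. For a morphism $f : V \to W$, I choose lattices $L \subseteq V$ and $M \subseteq W$ and enlarge $M$ so that $f(L) \subseteq M$; then pulling back admissible sub-lattices of $M$ under $f$ induces a morphism $\Phi(V) \to \Phi(W)$ in $\Pro\Ind(\Cc)$, which one checks to be independent of choices.

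With $\Phi$ in hand, I would establish that it is fully exact and fully faithful. For full faithfulness, the Hom formula in $\Pro\Ind(\Cc)$
\[
\mathrm{Hom}_{\Pro\Ind(\Cc)}(\Phi(V), \Phi(W)) = \lim_{K'} \colim_{K} \mathrm{Hom}_{\Ind(\Cc)}(V/K, W/K')
\]
(with $K \subseteq L$ and $K' \subseteq M$ sub-lattices of chosen lattices) matches the intrinsic description of $\mathrm{Hom}_{\elTate(\Cc)}(V, W)$: any morphism $V \to W$ composed with a projection $W \to W/K'$ must eventually factor through some $V/K$ by admissibility, yielding the desired bijection. Fully exactness reduces to the observation that a conflation in $\elTate(\Cc)$ admits compatible lattice choices on all three terms, producing a termwise conflation of Pro-Ind systems.

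Finally, I turn to the essential image. One direction is tautological: $\Phi(V)$ carries the Pro sub-object $L$ with Ind quotient $V/L$ by construction. Conversely, suppose $X \in \Pro\Ind(\Cc)$ admits a lattice, i.e.\ an admissible Pro sub-object $P \hookrightarrow X$ whose quotient $I := X/P$ lies in $\Ind(\Cc)$. Presenting $P = \lim_{K} P/K$ as above, each $X/K$ lies in $\Ind(\Cc)$ as an extension of $I$ by $P/K \in \Cc$, and one verifies $X \cong \lim_{K} X/K$ in $\Pro\Ind(\Cc)$, exhibiting $X$ as $\Phi$ of an elementary Tate object with lattice $P$ and colattice $I$. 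I expect the main obstacle here to be the careful verification that the level-wise quotients $X/K$ are admissible inside $\Pro\Ind(\Cc)$ and assemble compatibly into a Pro-system; this will invoke the exact structure on $\Pro\Ind(\Cc)$ established earlier, and the hypothesis that $\Cc$ is idempotent complete ensures that $\Pro(\Cc)$ and $\Ind(\Cc)$ are themselves exact categories, so that all relevant admissible quotients exist and behave functorially.
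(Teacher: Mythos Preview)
Your approach is correct in outline but differs substantively from the paper's. You index $\Phi(V)$ over sub-lattices of a single chosen lattice $L$, whereas the paper indexes over the entire Sato Grassmannian $\Gr(V)$ of all lattices, which is co-filtered precisely because $\Cc$ is idempotent complete (this is where the hypothesis actually enters---not in making $\Pro(\Cc)$ or $\Ind(\Cc)$ exact, which they always are). More significantly, the paper proves full faithfulness and identifies the essential image in one stroke via a duality trick: applying the same construction to $\Cc^{\op}$ yields a functor $\Phi^{\op}\colon \elTate(\Cc^{\op})^{\op} \to \Ind(\Pro(\Cc)) \supset \elTate(\Cc)$, and unwinding shows that $\Phi$ and $\Phi^{\op}$ are mutually inverse equivalences onto their images. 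The essential image is then identified with $\elTate(\Cc^{\op})^{\op}$ inside $\Pro(\Ind(\Cc))$, which by a cited structural result is exactly the smallest extension-closed sub-category containing $\Pro(\Cc)$ and $\Ind(\Cc)$, i.e.\ the objects admitting a lattice. Your direct Hom computation and explicit converse construction are workable but require more bookkeeping---choice-independence, factoring Pro-to-Ind maps through $\Cc$, and verifying that the quotients $X/K$ in your converse are admissible and assemble correctly---all of which the paper bypasses via the $\Cc \leftrightarrow \Cc^{\op}$ symmetry. Exactness is likewise handled in the paper via the equivalence $\Ec\,\elTate(\Cc) \simeq \elTate(\Ec\,\Cc)$ rather than by straightening a single conflation by hand.
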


Moreover, duality functors always lift to Tate categories:

\begin{theoremannounce}(Prop. \ref{prop:duals})
Suppose $\mathcal{C}$ is idempotent complete. Every exact equivalence
$\mathcal{C}^{op}\overset{\sim}{\longrightarrow}\mathcal{C}$ extends to an
exact equivalence%
\[
\mathsf{Tate}^{el}(\mathcal{C})^{op}\overset{\sim}{\longrightarrow
}\mathsf{Tate}^{el}(\mathcal{C})\text{.}%
\]
This duality restricts to exact equivalences%
\[
\mathsf{Ind}^{a}(\mathcal{C})^{op}\overset{\sim}{\longrightarrow}%
\mathsf{Pro}^{a}(\mathcal{C})\qquad\text{and}\qquad\mathsf{Pro}^{a}%
(\mathcal{C})^{op}\overset{\sim}{\longrightarrow}\mathsf{Ind}^{a}%
(\mathcal{C})\text{.}%
\]
\end{theoremannounce}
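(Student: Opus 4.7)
The plan is to reduce the statement to formal manipulations on $\Ind$ and $\Pro$ combined with the embedding $\iota\colon\elTate(\Cc)\hookrightarrow\Pro\Ind\Cc$ from the previous proposition. First I would establish the two formal exact equivalences
\[
(\Ind\Cc)^{\op}\;\simeq\;\Pro(\Cc^{\op})\qquad\text{and}\qquad(\Pro\Cc)^{\op}\;\simeq\;\Ind(\Cc^{\op}),
\]
obtained by reversing arrows in admissible indexing diagrams; conflations correspond bijectively. Since $\Ind$ and $\Pro$ are $2$-functorial on idempotent complete exact categories with exact functors, the given exact equivalence $D\colon\Cc^{\op}\to\Cc$ lifts to exact equivalences $\Ind(D)$ and $\Pro(D)$, and composition with the formal identifications above yields
\[
\Ind(\Cc)^{\op}\;\simeq\;\Pro(\Cc^{\op})\;\xrightarrow{\Pro(D)}\;\Pro(\Cc),
\]
which is one of the two restricted equivalences; the other comes by applying the same construction to $D^{-1}$, or symmetrically by swapping the roles of $\Ind$ and $\Pro$.

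For the Tate case, I would iterate the op-equivalence and apply $D$ one level deeper:
\[
\elTate(\Cc)^{\op}\;\xrightarrow{\iota^{\op}}\;(\Pro\Ind\Cc)^{\op}\;\simeq\;\Ind\Pro(\Cc^{\op})\;\xrightarrow{\Ind\Pro(D)}\;\Ind\Pro\Cc.
\]
To close the loop I would invoke the mirror version of the previous proposition: by the same argument applied with the roles of $\Pro$ and $\Ind$ exchanged, $\elTate(\Cc)$ also embeds fully exactly into $\Ind\Pro\Cc$, with essential image consisting of those admissible Ind-Pro objects that admit an admissible Pro sub-object with Ind-object quotient. The two embeddings give the same isomorphism classes of objects, since by definition an elementary Tate object is precisely an extension of an Ind by a Pro object in $\Cc$.

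The main obstacle is verifying that the displayed composite lands in this essential image, i.e.\ that lattices map to lattices. Given $V\in\elTate(\Cc)$ and a lattice $L\into V\onto V/L$ with $L\in\Pro\Cc$ and $V/L\in\Ind\Cc$, passing to opposites reverses arrows and swaps the sub/quotient roles, producing a conflation $V/L\into V\onto L$ in $(\Pro\Ind\Cc)^{\op}$. Under the identification $(\Pro\Ind\Cc)^{\op}\simeq\Ind\Pro(\Cc^{\op})$, the Pro object $L$ becomes an Ind object of $\Cc^{\op}$ and the Ind object $V/L$ becomes a Pro object of $\Cc^{\op}$; applying $D$ sends these to an Ind respectively a Pro object of $\Cc$. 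Hence the image of $V$ in $\Ind\Pro\Cc$ admits the Pro sub-object $D(V/L)$ with Ind quotient $DL$, placing it in the essential image of the mirror embedding. Running the same construction with $D^{-1}$ yields an inverse up to natural isomorphism, giving the claimed equivalence $\elTate(\Cc)^{\op}\simeq\elTate(\Cc)$. The restrictions to $\Ind$ and $\Pro$ then follow as the special cases where $L=V$ or $L=0$.
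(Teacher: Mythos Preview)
Your argument is correct and follows essentially the same route as the paper. The paper is more concise because the previous proposition already identifies the essential image of $\Phi$ as the subcategory $\elTate(\Cc^{\op})^{\op}\subset\Pro\Ind\Cc$, so one obtains directly an exact equivalence $\elTate(\Cc)^{\op}\simeq\elTate(\Cc^{\op})$ and then simply composes with the functor $\elTate(\Cc^{\op})\to\elTate(\Cc)$ induced by $D$; your explicit lattice-tracking and appeal to a separate mirror embedding into $\Ind\Pro\Cc$ are just an unpacking of that identification.
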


These results form the foundation of a calculus of tensor products, Homs and duality
in Tate categories.\
\medskip

\begin{center} \emph{Summary of Applications}\ \end{center}
\medskip
Section \ref{sectadeles} of the paper is devoted to adelic geometry. Suppose $X/k$ is an
integral scheme of finite type. Ad\`{e}les of a flag of points $\triangle=(\eta
_{0}>\cdots>\eta_{n})$ in a scheme will be denoted by $A(\triangle
,\mathcal{O}_{X})$. The latter has a canonical structure as an object in
$\left.  n\text{-}\mathsf{Tate}(k)\right.  $. In fact, we will show that it is
a normally ordered tensor product:

\begin{theoremannounce}(Cor. \ref{cor:tubedecomp})
Suppose $X/k$ is an integral scheme of finite type and pure dimension $n$. If
$\triangle$ is a saturated flag, then%
\[
A(\triangle,\mathcal{O}_{X})=\left.  _{\eta_{n}}\Lambda_{\eta_{n-1}}\right.
\overrightarrow{\otimes}\cdots\overrightarrow{\otimes}\left.  _{\eta_{1}%
}\Lambda_{\eta_{0}}\right.  \qquad\text{for}\qquad\left.  _{x}\Lambda
_{y}\right.  :=\left.  \underset{\mathcal{F}\subseteq\mathcal{O}_{y}%
}{\operatorname*{colim}}\right.  \left.  \underset{j}{\lim}\right.
\mathcal{F}/\mathcal{I}_{x}^{j},%
\]
where $\left.  _{x}\Lambda_{y}\right.  $ are flat Tate objects in $\operatorname*{Coh}\nolimits_{\overline{\{x\}}}(X)$, $\mathcal{I}_{x}$ is
the ideal sheaf of $\overline{\{x\}}$, and $\mathcal{F}$ runs through the
coherent sub-sheaves of $\mathcal{O}_{y}$.
\end{theoremannounce}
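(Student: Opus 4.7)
The plan is to induct on $n$, using the Beilinson--Parshin recursive description of adeles. For the base case $n=1$, the flag is $(\eta_0 > \eta_1)$ and Beilinson's construction directly yields
\[
A(\triangle, \mathcal{O}_X) \;=\; \colim_{\mathcal{F} \subseteq \mathcal{O}_{\eta_0}} \lim_{j} \mathcal{F}/\mathcal{I}_{\eta_1}^{j}\mathcal{F} \;=\; {}_{\eta_1}\Lambda_{\eta_0},
\]
obtained by first localizing at the generic point $\eta_0$ (the colimit over coherent subsheaves) and then completing along $\eta_1$ (the inverse limit). A direct verification, using the reduced pure-dimensional hypothesis on $X$, shows that this is a flat Tate object in $\Coh_{\overline{\{\eta_1\}}}(X)$: the coherent subsheaves $\mathcal{F}$ together with their ideal-adic completions provide the Pro sub-object required by Proposition~\ref{prop:TateinProInd}.

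For the inductive step, write $\triangle' := (\eta_1 > \cdots > \eta_n)$. The Beilinson--Parshin recursion expresses
\[
A(\triangle, \mathcal{O}_X) \;\cong\; \colim_{\mathcal{F} \subseteq \mathcal{O}_{\eta_0}} \lim_{j} A\bigl(\triangle', \mathcal{F}/\mathcal{I}_{\eta_1}^{j}\mathcal{F}\bigr),
\]
where the outer colim-lim absorbs the step $(\eta_0 > \eta_1)$ of the flag and the inner adele is the construction for the truncated flag applied to the coherent layers $\mathcal{F}/\mathcal{I}_{\eta_1}^{j}\mathcal{F}$. By the inductive hypothesis, each inner term identifies with the iterated ordered tensor product ${}_{\eta_n}\Lambda_{\eta_{n-1}} \overrightarrow{\otimes} \cdots \overrightarrow{\otimes} {}_{\eta_2}\Lambda_{\eta_1}$, interpreted as a functor on coherent sheaves with the appropriate support. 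Unwinding the construction of $\overrightarrow{\otimes}$ from Section~\ref{sect:biexacttens}, which for an Ind-Pro presentation is built by applying $\otimes_{\mathcal{O}_X}$ levelwise on the defining diagrams and then reassembling, the outer colim-lim is precisely the ${}_{\eta_1}\Lambda_{\eta_0}$ factor entering on the right of the ordered tensor product. This yields the claimed identification.

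The main obstacle will be compatibility of the Ind-Pro bookkeeping with tensoring. Tensoring with a coherent subsheaf $\mathcal{F}$ must commute with the limit $\lim_j$ inside the inner $A(\triangle', -)$ up to admissible equivalences of Tate objects, and this relies crucially on the \emph{flat Tate} framework of Section~\ref{sect:rightexacttens}: since $\otimes_{\mathcal{O}_X}$ is merely bi-right-exact on $\Coh(X)$, one must check that the relevant diagrams stay within the flat Tate sub-category. This is where the reduced pure-dimensional hypothesis on $X$ intervenes: the coherent subsheaves $\mathcal{F}$ of the generic localization $\mathcal{O}_{\eta_0}$ are torsion-free, hence flat at the generic points of the successive supports. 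Saturation of the flag guarantees that each completion step lands in $\Coh_{\overline{\{\eta_k\}}}(X)$ without spurious contributions from other strata, so the layered tensor structure composes correctly as one descends the flag. Once these compatibilities are verified, the induction closes.
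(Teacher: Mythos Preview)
Your approach is essentially the paper's: both unwind the Beilinson--Parshin recursion on one side and the inductive construction of $\overrightarrow{\otimes}$ on the other, and observe that the resulting nested colimit--limit expressions coincide. The paper does this in one shot (fully expanding both sides, then invoking symmetry of $\otimes_{\mathcal{O}_X}$ to match the orderings), while you organize it as an induction on $n$; the content is the same.

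There is, however, a genuine gap in your inductive setup. Your hypothesis is the statement for $A(\triangle',\mathcal{O}_X)$, but the recursion feeds $A(\triangle',-)$ the sheaves $\mathcal{F}/\mathcal{I}_{\eta_1}^{j}\mathcal{F}$, not $\mathcal{O}_X$. You patch this by saying the iterated $\overrightarrow{\otimes}$ should be ``interpreted as a functor on coherent sheaves,'' but that is exactly the stronger statement you need to \emph{prove}, not assume. The paper handles this cleanly: it formulates and proves the functorial statement first (the commuting diagram with the functors ${}_x\Omega_y(\mathcal{G})=\mathcal{G}\overrightarrow{\otimes}{}_x\Lambda_y$ and $A(\triangle,-)$), and then the corollary you are after drops out by evaluating at $\mathcal{G}_0=\cdots=\mathcal{G}_{n-1}=\mathcal{O}_X$. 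If you strengthen your induction hypothesis to this functorial form, the step closes immediately and the ``compatibility'' worries in your last paragraph dissolve: the normally ordered tensor product is \emph{defined} by applying $\otimes_{\mathcal{O}_X}$ levelwise to the Ind--Pro presentations, so no separate commutation argument is needed. The flatness of the ${}_x\Lambda_y$ (justified in the paper via Artin--Rees rather than your torsion-freeness remark) is what ensures the levelwise tensor stays inside the flat Tate sub-category.
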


This has the following application:\medskip

Recall that on a curve $\pi:X \rightarrow \mathbb{F}_{q}$ the action of any id\`{e}le representative of a
line bundle $L$ on the ad\`{e}les rescales any choice of a Haar measure by $q^{\operatorname{deg}(L)}$. This stays true over an arbitrary base field $k$ if one works additively and
replaces the Haar measure by a renormalized dimension theory as in Kapranov
\cite{KapranovSemiInfinite}. Phrased in terms of Tate objects: Given any
element $f\in\mathcal{O}_{y}^{\times }$, it acts through multiplication by $f$ on $\left.
_{x}\Lambda_{y}\right.  $ as a Tate object, and any such automorphism defines
a canonical $K$-theory class%
\begin{equation}\label{rf_line}
\underset{\circlearrowright\left.  _{x}\Lambda_{y}\right.  }{\left[  f\right]
}\in K_{1}(\left.  \mathsf{Tate}^{\flat}(\operatorname*{Coh}\nolimits_{\overline{\{x\}}}(X))\right.
)\text{.}%
\end{equation}
We properly introduce this notation in Definition \ref{def:UnitActingOnPartialAdeleFactor}. There is an exact push-forward functor%
\[
\pi_{\ast}:\mathsf{Tate}^{\flat}(\operatorname*{Coh}\nolimits_{0}%
(X))\longrightarrow\mathsf{Tate}(k)\text{,}%
\]
where $\operatorname*{Coh}\nolimits_{0}(X)$ are coherent sheaves of
zero-dimensional support. Using this notation, recall the following classical degree formula:
\begin{theoremannounce}\textsc{(Weil)}
Let $\pi : X \rightarrow k$ be an integral smooth proper curve with generic point $\eta_{0}$ and
$(f_{\nu\mu})_{\nu\mu}\in H^{1}(X,\mathbb{G}_{m})$ an alternating \v{C}ech representative
of a line bundle $L$ in a finite open cover $\mathfrak{U}=(U_{\alpha})_{\alpha\in I}$, $I$ totally
ordered. For any $x\in X$, let $\alpha(x)$ be the smallest element of $I$ such
that $x\in U_{\alpha(x)}$. Then%
\begin{equation*}
\deg(L)=-\sum_{\eta_{1}}\pi_{\ast} \underset{\circlearrowright\left.  _{\eta_{1}}%
\Lambda_{\eta_{0}}\right.  }{\left[  f_{\alpha(\eta_{1})\alpha(\eta_{0}%
)}\right]  }%
\end{equation*}
and the sum has only finitely many non-zero summands. The right-hand side defines an element of
$K_{1}(\left.  \mathsf{Tate}(k)\right.  )\cong\mathbb{Z}$, and this integer is
the degree of $L$. Here the sum runs over all closed points $\eta_{1}\in X$.
\end{theoremannounce}
%

We provide a higher-dimensional generalization. We define an external product in $K$-theory such
that the external product of $n$ classes $\left[f \right]$ as in \eqref{rf_line} lies in $K_{n}$ of a flat $n$-Tate category. Suppose $\pi:X \rightarrow k$ is
a purely $n$-dimensional integral smooth proper variety. Let $L_{1},\ldots,L_{n}$ be
line bundles which are represented by alternating \v{C}ech representatives%
\[
f^{q}=(f_{\rho,\nu}^{q})_{\rho,\nu\in I}\in H^{1}(X,\mathbb{G}_{m})\qquad
\quad\text{(for }q=1,\ldots,n\text{)}%
\]
in a finite open cover $\mathfrak{U}=(U_{\alpha})_{\alpha\in I}$, $I$ totally
ordered. As before, for any $x\in X$, let $\alpha(x)$ be the smallest element of $I$ such
that $x\in U_{\alpha(x)}$.

\begin{theoremannounce}(Theorem \ref{thm_multiplicity})
Under these assumptions, the sum%
\[
(-1)^{\frac{n(n+1)}{2}}\sum_{\triangle=(\eta_{0}>\cdots>\eta_{n})} \pi_{\ast }%
\underset{\circlearrowright\left.  _{\eta_{n}}\Lambda_{\eta_{n-1}}\right.
}{\left[  f_{\alpha(\eta_{n})\alpha(\eta_{n-1})}^{n}\right]  }\left.
\overrightarrow{\otimes}\right.  \cdots\left.  \overrightarrow{\otimes
}\right.  \underset{\circlearrowright\left.  _{\eta_{1}}\Lambda_{\eta_{0}%
}\right.  }{\left[  f_{\alpha(\eta_{1})\alpha(\eta_{0})}^{1}\right]  }%
\]
has only finitely many non-zero summands, defines an element in $K_{n}(\left.
n\text{-}\mathsf{Tate}(k)\right.  )\cong\mathbb{Z}$, and this integer
is the intersection multiplicity $L_{1}\cdots L_{n}$.
\end{theoremannounce}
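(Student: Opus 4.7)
The plan is to reduce the statement to Weil's $n=1$ formula (quoted above) by an induction on $n$. At each stage one "innermost" $K_1$-factor is handled by Weil's formula applied to the curve $\overline{\{\eta_{n-1}\}}$, and the remaining external product is peeled off using bi-exactness of $\overrightarrow{\otimes}$. The normally ordered tensor product (first theorem) and the tube decomposition (Corollary~\ref{cor:tubedecomp}) are the structural inputs that make this iteration geometrically meaningful.

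\textbf{Step 1 (Rewriting each summand as an adelic $K_n$-class).} By Corollary~\ref{cor:tubedecomp}, the ordered tensor product inside each summand is canonically the flag adèle $A(\triangle,\mathcal{O}_X)$ in $n\text{-}\mathsf{Tate}(\Coh_{\overline{\{\eta_n\}}}(X))$. Since $\eta_{q-1}$ is the generic point of $\overline{\{\eta_{q-1}\}}$, which contains $\eta_q$, it lies in every open set containing $\eta_q$; hence $\eta_{q-1}\in U_{\alpha(\eta_q)}\cap U_{\alpha(\eta_{q-1})}$, so each Čech value $f^q_{\alpha(\eta_q)\alpha(\eta_{q-1})}$ lies in $\mathcal{O}_{\eta_{q-1}}^{\times}$ and acts by multiplication on $_{\eta_q}\Lambda_{\eta_{q-1}}$. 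By naturality of the external $K$-product, the resulting $K_n$-class coincides with the class of the diagonal action of $(f^1,\ldots,f^n)$ on $A(\triangle,\mathcal{O}_X)$.

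\textbf{Step 2 (Finiteness of the sum).} A summand vanishes whenever $\alpha(\eta_q)=\alpha(\eta_{q-1})$ for some $q$, because then $f^q_{\alpha\alpha}=1$ (alternating Čech convention) and external products are multilinear. So only flags with $\eta_q\notin U_{\alpha(\eta_{q-1})}$ for every $q$ can contribute. The complement $\overline{\{\eta_{q-1}\}}\setminus U_{\alpha(\eta_{q-1})}$ is closed of codimension at least one in $\overline{\{\eta_{q-1}\}}$, hence contains only finitely many codimension-one irreducible subvarieties; iterating in $q$ and invoking properness bounds the number of contributing flags.

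\textbf{Step 3 (Induction reducing to Weil).} Factor $\pi_{\ast}$ through the pushforwards along the tower
\[
X\supset\overline{\{\eta_1\}}\supset\cdots\supset\overline{\{\eta_{n-1}\}}\supset\{\eta_n\}.
\]
At the innermost step, $\overline{\{\eta_{n-1}\}}$ is a curve and $_{\eta_n}\Lambda_{\eta_{n-1}}$ is its adelic Tate object near the closed point $\eta_n$; Weil's formula identifies the corresponding $K_1$-class with $-\mathrm{ord}_{\eta_n}(f^n|_{\overline{\{\eta_{n-1}\}}})$. Iterating upward, the external $K_n$-class on the pushed-forward adelic tower produces the local intersection multiplicity of $L_1,\ldots,L_n$ at $\eta_n$; summing over flags recovers the global intersection $L_1\cdots L_n$. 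The overall sign $(-1)^{n(n+1)/2}$ is the product of $n$ Weil signs with the $\binom{n}{2}$ transpositions needed to convert the reversed order of $\overrightarrow{\otimes}$ into the standard intersection ordering.

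\textbf{Main obstacle.} The principal technical hurdle is making the induction precise at the level of $K$-theory: one must verify that the external product on flat $n$-Tate categories is compatible with the iterated Tate boundary maps attached to the successive filtrations of each $_{\eta_q}\Lambda_{\eta_{q-1}}$, so that peeling off the innermost factor produces a Weil-type boundary class at each stage. Sign bookkeeping between $\overrightarrow{\otimes}$, the alternating Čech representatives, and the shuffle permutations is then mechanical once this $K$-theoretic compatibility is established.
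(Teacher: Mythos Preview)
Your approach is genuinely different from the paper's, and as written it has real gaps.

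\textbf{What the paper does.} The paper does not induct on $n$ and does not invoke Weil's formula as an input; in fact, Weil's formula (Proposition~\ref{propweildegree}) is stated as the $n=1$ special case of the theorem, so using it as your base case is circular unless you supply an independent proof. Instead, the paper introduces three ``higher valuation'' maps $V_{Tate}$, $V_{alg}$, $V_{Ger}\colon K_n(F)\to K_0(k)$ and proves they coincide. The geometric input is Proposition~\ref{prop_intersection_via_boundaryformula}, which expresses $L_1\cdots L_n$ via the Bloch--Quillen formula as a sum of iterated Gersten boundary maps (this is $V_{Ger}$). Lemma~\ref{vger_vs_sum_valg} rewrites $V_{Ger}$ as $\sum_\triangle V_{alg}(\triangle)$ using the norm compatibility of boundary maps along the normalizations of the closures $\overline{\{\eta_i\}}$. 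Lemma~\ref{vtate_vs_valg} identifies $V_{alg}(\triangle)$ with $V_{Tate}(\triangle)$, the $n$-fold iterate of Saito's delooping map $D$, by quoting \cite[Theorem~1.1]{bgwRelativeTateObjects}. Finally, Corollary~\ref{cor_ProductCompat} is used to identify the external $K$-product in the statement with the Tate realization of the ordinary $K$-theory product $\{f^n,\ldots,f^1\}$; the sign $(-1)^{n(n+1)/2}$ then drops out of the alternating \v{C}ech convention plus graded commutativity.

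\textbf{Where your sketch breaks.} In Step~3 you write that Weil's formula identifies the innermost $K_1$-class with $-\mathrm{ord}_{\eta_n}(f^n|_{\overline{\{\eta_{n-1}\}}})$. That is not Weil's formula; Weil's formula is the \emph{global} identity that the sum of these valuations over all closed points equals $\deg(L)$. What you need at each stage is the \emph{local} identity ``index map $=$ boundary map $=$ valuation'', which is precisely the content of the paper's Lemma~\ref{vtate_vs_valg} and is not a consequence of Weil. Second, the curves $\overline{\{\eta_{n-1}\}}$ (and more generally the intermediate closures) need not be smooth, so your inductive geometry does not place you back in the hypotheses of the theorem; the paper absorbs this by working with the Gersten boundary $\partial^x_y$, which is defined via normalization. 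Third, your induction hypothesis is never stated: after peeling off one factor you have neither an $(n-1)$-dimensional smooth proper variety nor $n-1$ line bundles on it in any evident way, so it is unclear what statement you are iterating. Your ``Main obstacle'' paragraph correctly isolates the compatibility of the external product with the iterated Tate boundary; but that compatibility \emph{is} the heart of the proof, and the paper establishes it by the chain $V_{Tate}=V_{alg}=V_{Ger}$ rather than by induction.
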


Here the sum runs over all saturated chains $\triangle$ in $X$. For $n=1$ this is Weil's degree formula.\medskip

In Section \ref{sect:TopGroups}, we discuss an entirely different application: We develop the relation between our duality mechanism on Tate categories and Pontryagin Duality for locally compact abelian (LCA) groups, and we show how our formalism extends the Pontryagin Duality of LCA groups to a duality for $n$-Tate objects in LCA groups.

\subsection*{Acknowledgments}
We thank A. Yekutieli and M. Kapranov for helpful comments on an earlier version of this note. Moreover, we thank the anonymous referee for his or her very valuable suggestions. 

The published version of this article contained an erroneous definition of a ``shuffle product'' on a category of $\infty$-Tate objects.  We thank Dougal Davis for alerting us to our mistake.  We have removed the erroneous definition, along with the  statements which depended upon it: Corollary 2.6 and Remarks 2.7 and 2.8 in the published version.

\section{Tensor products}
Let $k$ be a field. The inclusion
\begin{equation*}
    k((t_1))\otimes_k k((t_2))\into k((t_1))((t_2))
\end{equation*}
allows us to view the 2-variable Laurent series as a completion of the tensor product of $k((t_1))\otimes_k k((t_2))$. This is an instance of a general phenomenon for Tate objects. We begin by developing the theory of bi-exact tensor products. We then extend this to consider tensor products which are only right exact.

\subsection*{Bi-exact tensor products}\label{sect:biexacttens}
\begin{proposition}\label{prop:tens1}\mbox{}
    \begin{enumerate}
        \item Let $\Cc$ be an exact category with a bi-exact monoidal structure $\otimes$. Let $n$ and $m$ be natural numbers. Then there exists a bi-exact functor
            \begin{equation*}
                -\rotimes -\colon \nTate(\Cc)\times m\text{-}\Tate(\Cc)\to (n+m)\text{-}\Tate(\Cc).
            \end{equation*}
            The associativity constraint for $\otimes$ determines an associativity constraint for $\rotimes$ in a natural fashion, and the unit object for $\otimes$ in $\Cc$ gives the unit object for $\rotimes $. Further, if $\otimes$ is faithful in both variables, then so is $\rotimes $.
        \item For $\Cc=\Vect_f(k)$, denote by
            \begin{equation*}
                \tau\colon\nTatec(k)\to\Vect_{top}(k),
            \end{equation*}
            the natural functor from countable $n$-Tate spaces to the category $\Vect_{top}(k)$ of complete, separated topological $k$-vector spaces with linear topologies. Let $V\in\nTatec(k)$ and $W\in m\text{-}\Tatec(k)$. Then there exists a canonical natural isomorphism
            \begin{equation*}
                \tau(V\rotimes  W)\cong\tau(V)\rotimes \tau(W)
            \end{equation*}
            where the tensor product on the right is the one considered by Beilinson in \cite{Bei:08} (with the same notation).\footnote{See also \cite[Section 3.6.1]{BeD:04}.}
    \end{enumerate}
\end{proposition}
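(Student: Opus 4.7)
My plan is induction on $n + m$. The base case $n = m = 0$ is the given bi-exact $\otimes$ on $\Cc$. For the inductive step, I would rely on a general completion principle: if $F \colon \Ac \times \mathcal{B} \to \Dc$ is a bi-exact functor, then $F$ extends canonically to bi-exact functors on the Ind-, Pro-, and Tate-completions of its inputs. On $\Ind(\Ac) \times \Ind(\mathcal{B})$ and $\Pro(\Ac) \times \Pro(\mathcal{B})$ one uses the familiar formulas $\colim F(A_i, B_j)$ and $\lim F(A_i, B_j)$. On the mixed input $\Ind(\Ac) \times \Pro(\mathcal{B})$ one prescribes the normal ordering $F(A,B) := \colim_i \lim_j F(A_i, B_j)$, landing in $\Ind \Pro \Dc \subset \elTate(\Dc)$; the opposite ordering produces a distinct output, which reflects the asymmetry of $\rotimes$. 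Bi-exactness in each case is checked by observing that admissible short exact sequences in a completion are detected by compatible families of admissible short exact sequences at the component level, where bi-exactness of $F$ applies.

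Since $\nTate(\Cc) = \Tate((n-1)\text{-}\Tate(\Cc))$ and an elementary Tate object $V$ at a given level is an admissible extension $L \into V \onto V/L$ with $L \in \Pro$ and $V/L \in \Ind$ of the previous level, the tensor $V \rotimes W$ for elementary Tate $V, W$ with chosen lattices $L, M$ is built by combining the four partial tensors $L \rotimes M$, $L \rotimes (W/M)$, $(V/L) \rotimes M$, $(V/L) \rotimes (W/M)$ --- each of which is already defined by the inductive hypothesis --- and assembling them into an admissible extension of an Ind-object by a Pro-object one level higher. Independence of the lattice choice follows by a standard cofinality argument: any two lattices in an elementary Tate object are dominated by a common lattice, and the resulting systems agree up to canonical isomorphism. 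Associativity, unit, and faithfulness properties descend from the corresponding data on $\otimes$ by naturality of each extension step.

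For part (2), comparison with Beilinson: the realization $\tau$ sends admissible Pro-systems in $\Vect_f(k)$ to their inverse limits with the linearly compact topology and admissible Ind-systems to their directed unions with the discrete topology. Unwinding the formula $\colim_i \lim_j V_i \otimes W_j$ at the level of these topological realizations reproduces Beilinson's normally ordered tensor product verbatim, and the induction on $n + m$ then matches his iterated construction.

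The main obstacle is verifying that the assembly of the four pieces yields a well-defined bi-exact functor landing in $(n+m)$-Tate. One must check that the pieces glue to an admissible extension of an Ind by a Pro, that the construction is natural in the inputs and independent of the lattice choice, and, most delicately, that bi-exactness propagates through the simultaneous use of admissible Ind-colimits, Pro-limits, and the asymmetric interlocking imposed by the normal ordering. Once (1) is established, (2) is a direct unpacking in $\Vect_f(k)$.
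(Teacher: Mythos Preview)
Your general completion principle---extending a bi-exact $F$ to Ind, Pro, and mixed completions via termwise $\colim$/$\lim$---is correct and is the engine behind the paper's argument as well. The gap lies in your inductive step: decomposing \emph{both} $V$ and $W$ simultaneously via lattices into four pieces does not lead cleanly to an $(n+m)$-Tate object, and the ``assembly'' you flag as the main obstacle is in fact the entire content of the proof, not a residual verification.

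The paper sidesteps the assembly problem by treating the two variables asymmetrically. It first handles $m=0$ by induction on $n$: for $V \in \nelTate(\Cc)$ represented by an elementary Tate diagram $I \to \Pro((n-1)\text{-}\elTate(\Cc))$ and $X \in \Cc$, apply the $(n-1,0)$ tensor termwise; the result is again an elementary Tate diagram. Then for the step $m-1 \Rightarrow m$ it leaves $V$ untouched and expands only $W$: represent $W$ by an elementary Tate diagram $I \to \Pro((m-1)\text{-}\elTate(\Cc))$, apply the already-built $(n,m-1)$ tensor to each $W_{i,j}$, and the assignment $i \mapsto \lim_{J_i} V \rotimes W_{i,j}$ is \emph{manifestly} an elementary Tate diagram in $(n+m-1)\text{-}\elTate(\Cc)$. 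No gluing is needed; functoriality and bi-exactness come from the straightening constructions.

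This asymmetry is not incidental---it is precisely what encodes the normal ordering. In your four-piece scheme the mixed terms $L \rotimes (W/M)$ and $(V/L) \rotimes M$ land in different completions (one in $\Ind\Pro$, the other in $\Pro\Ind$ of the previous level), and deciding how to splice them with $L \rotimes M$ and $(V/L) \rotimes (W/M)$ into a single Tate object one level higher amounts to choosing the ordering that the paper bakes in from the outset. There is also a bookkeeping slip in your induction on $n+m$: the lattice $L$ lies in $\Pro((n-1)\text{-}\Tate(\Cc))$, not in $(n-1)\text{-}\Tate(\Cc)$, so the $(n-1,m-1)$ hypothesis does not apply directly to $L \rotimes M$; you must first pass to the diagram level, at which point you have recovered the paper's one-variable-at-a-time recursion.

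For part (2) your sketch is essentially right; the paper executes it by the same induction, identifying at each stage the $\colim$/$\lim$ description of $\tau(V \rotimes W)$ with the completion that defines Beilinson's topological $\rotimes$.
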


We deduce the proposition from the following lemma, which we will use repeatedly throughout.
\begin{lemma}\label{lemma:induct}
    $F\colon \Cc\times\Dc\to\Ec$ be a bi-exact functor. Then $F$ extends canonically to bi-exact functors
    \begin{equation*}
        F\colon \Tate(\Cc)\times\Dc\to\Ec \qquad \text{and} \qquad F\colon \Cc\times\Tate(\Dc)\to \Tate(\Ec).
    \end{equation*}
    If $F$ is faithful in both variables, the extensions of $F$ are as well.
\end{lemma}
\pf
    By the universal property of idempotent completion, it suffices to construct bi-exact functors
    \begin{equation*}
       F\colon \elTate(\Cc)\times\Dc\to\elTate(\Ec) \qquad \text{and} \qquad F\colon \Cc\times\elTate(\Dc)\to \elTate(\Ec).
    \end{equation*}
    For the first, let $V\in\elTate(\Cc)$ and let $X\in \Dc$. Let $V\colon I\to \Pro(\Cc)$ be an elementary Tate diagram representing $V$. For each $V_i$, let $V_i\colon J_i\to \Cc$ be an admissible Pro-diagram representing $V_i$. By the bi-exactness of $F$, we have an admissible Pro-diagram
    \begin{equation*}
       F(V_i,X)\colon J_i\to\Ec
    \end{equation*}
    for each $i\in I$, and thus an elementary Tate diagram
    \begin{align*}
        F(V,X)\colon I &\to \Pro(\Ec)\\
        i &\mapsto \lim_{J_i} F(V_{i,j},X)
    \end{align*}
    as well.  Define $F(V,X)$ to be the elementary Tate object determined by this diagram.  This defines the functor $F(-,-)$ on objects.  The definition for morphisms follows from the straightening construction (cf. \cite[Lemma 3.9]{MR3510209}); further, we immediately see that if $F$ is faithful in both variables, so is its extension. To complete the proof, it remains to show that this functor is exact. But this follows by the straightening construction for exact sequences (cf. \cite[Proposition 3.12]{MR3510209}), so we are done with the first extension.

    A similar construction defines $F\colon \Cc\to\elTate(\Dc)\to\elTate(\Ec)$. In detail, let $V\in\Cc$ and let $W\in\elTate(\Dc)$. Let $W\colon I\to \Pro(\Dc))$ be an elementary Tate diagram representing $W$. For each $i\in I$, let $W_i\colon J_i\to \Dc$ be an admissible Pro-diagram representing $W_i$. Then the bi-exactness of $F$ guarantees that we have an admissible Pro-diagram
    \begin{equation*}
        F(V,W_i)\colon J_i\to\Ec,
    \end{equation*}
    and an admissible Pro-object in $\Ec$ denoted by the same. The straightening argument for exact sequences and the inductive hypothesis shows that the assignment $W_i\mapsto F(V,W_i)$ is exact, so we obtain an elementary Tate diagram
    \begin{equation*}
        F(V,W)\colon I\to \Pro(\Ec)
    \end{equation*}
    and thus an object $F(V,W)\in\elTate(\Ec)$. This constructs the functor on objects. The definition for morphisms again follows from the straightening construction for morphisms, which also shows the faithfulness in both variables. And, by the straightening argument for exact sequences, we see that it is exact in both variables as claimed.
\epf

\pf[of Proposition~\ref{prop:tens1}]
    By the universal property of idempotent completion, it suffices to construct a bi-exact functor
    \begin{equation*}
        -\rotimes -\colon \nelTate(\Cc)\times m\text{-}\elTate(\Cc)\to (n+m)\text{-}\elTate(\Cc),
    \end{equation*}
    This follows from Lemma \ref{lemma:induct} by induction on $n$ and $m$. The key point is that we first conduct all extensions in the first variable, then conduct all extensions in the second variable. The case $n=m=0$ is trivial. Now suppose that we have constructed
    \begin{equation*}
        -\rotimes -\colon (n-1)\text{-}\Tate(\Cc)\times\Cc\to(n-1)\Tate(\Cc).
    \end{equation*}
    Then by Lemma \ref{lemma:induct}, we obtain a bi-exact functor
    \begin{equation*}
        -\rotimes -\colon \nTate(\Cc)\times\Cc\to\nTate(\Cc).
    \end{equation*}
    Now suppose we have constructed
    \begin{equation*}
        -\rotimes-\colon\nTate(\Cc)\times(m-1)\text{-}\Tate(\Cc)\to(n+m-1)\text{-}\Tate(\Cc).
    \end{equation*}
    Then by Lemma \ref{lemma:induct}, we obtain a bi-exact functor
    \begin{equation*}
        -\rotimes -\colon \nTate(\Cc)\times m\text{-}\Tate(\Cc)\to(n+m)\text{-}\Tate(\Cc).
    \end{equation*}

    Denote by $\alpha$ the associativity constraint for $\otimes$. We now show that, given an $n$-Tate object $U$, an $m$-Tate object $V$, and an $\ell$-Tate object $W$, then $\alpha$ determines a natural isomorphism
    \begin{equation*}
        \alpha\colon U\rotimes (V\rotimes  W)\to^\cong(U\rotimes  V)\rotimes  W.
    \end{equation*}
    This follows from the above construction of $\rotimes $ by induction on $n$, $m$ and $\ell$. Indeed, for the base case of $n=m=\ell=0$, there is nothing to show. Further, the construction above immediately implies that an associativity constraint for $(n,0,0)$ determines an associativity constraint for $(n+1,0,0)$. Similarly, it implies that an associativity constraint for $(n,m,0)$ determines an associativity constraint for $(n,m+1,0)$. It remains to show that an associativity constraint for $(n,m,\ell)$ determines one for $(n,m,\ell+1)$. As above, let
    \begin{equation*}
        W\colon I\to \Pro((\ell-1)\text{-}\elTate(\Cc))
    \end{equation*}
    be an elementary Tate diagram representing $W$. For each $i\in I$, let
    \begin{equation*}
        W_i\colon J_i\to (\ell-1)\text{-}\elTate(\Cc)
    \end{equation*}
    be an admissible Pro-diagram representing $W_i$. By the inductive hypothesis, for each $i$, $\alpha$ determines a natural isomorphism of admissible Pro-diagrams
    \begin{equation*}
        \begin{xy}
            \Vtriangle/=`>`>/<350,500>[J_i`J_i`(n+m+\ell-1)\text{-}\elTate(\Cc);`(U\rotimes  V)\rotimes  W_i`U\rotimes  (V\rotimes  W_i)]
            \place(350,250)[\twoar(1,0)]
            \place(350,350)[_{\cong}]
        \end{xy}.
    \end{equation*}
    This determines a natural isomorphism of elementary Tate diagrams
    \begin{equation*}
        \begin{xy}
            \Vtriangle/=`>`>/<350,500>[I`I`\Pro((n+m+\ell-1)\text{-}\elTate(\Cc));`(U\rotimes  V)\rotimes  W`U\rotimes  (V\rotimes  W)]
            \place(350,250)[\twoar(1,0)]
            \place(350,350)[_{\cong}]
        \end{xy},
    \end{equation*}
    and thus a natural isomorphism of elementary Tate objects
    \begin{equation*}
        \alpha\colon (U\rotimes V)\rotimes W\to^\cong U\rotimes(V\rotimes W)
    \end{equation*}
    which completes the induction step.  A similar induction proves that the unit object for $(\Cc,\otimes)$ provides a unit object for $\rotimes$ (with the unitality constraints for $\rotimes$ determined by those for $\otimes$ under an analogous induction as the above).

    It remains to prove the comparison of topological tensor products. Let $V\in\nTatec(k)$ and $W\in m\text{-}\Tatec(k)$. We construct, by induction on $n$ and $m$, a natural isomorphism
    \begin{equation*}
        \tau(V)\rotimes\tau(W)\to^\cong \tau(V\rotimes W).
    \end{equation*}
    For $m=0$, the existence of this isomorphism follows immediately from the definition of the topology on both sides. We now show that a natural isomorphism for $(n,m)$ determines one for $(n,m+1)$. Let
    \begin{equation*}
        W\colon \Nb\to \Pro(m\text{-}\Tatec(k))
    \end{equation*}
    be an elementary Tate diagram representing $W$, and for each $i\in\Nb$, let
    \begin{equation*}
        W_i\colon \Nb\to m\text{-}\Tatec(k)
    \end{equation*}
    be an admissible Pro-diagram representing $W_i$. By definition, we have
    \begin{equation*}
        \tau(V\rotimes W)\cong\colim_{i\in\Nb}\lim_{j\in\Nb} \tau(V\rotimes W_{i,j})
    \end{equation*}
    in $\Vect_{top}(k)$. Further, by inductive hypothesis, for all $i\in\Nb$, we have a natural isomorphism of diagrams
    \begin{equation*}
        \begin{xy}
            \Vtriangle/=`>`>/<350,500>[\Nb`\Nb`\Vect_{top}(k);`\tau(V)\rotimes\tau(W_i)`\tau(V\rotimes W_i)]
            \place(350,250)[\twoar(1,0)]
            \place(350,350)[_{\cong}]
        \end{xy}.
    \end{equation*}
    This determines a natural isomorphism of diagrams
    \begin{equation*}
        \begin{xy}
            \Vtriangle/=`>`>/<350,500>[\Nb`\Nb`\Vect_{top}(k);`\lim_{j\in\Nb}\tau(V)\rotimes \tau(W_{i,j})`\tau(V\rotimes W_i)]
            \place(350,250)[\twoar(1,0)]
            \place(350,350)[_{\cong}]
        \end{xy},
    \end{equation*}
    and thus a natural isomorphism
    \begin{equation*}
        \colim_{i\in\Nb}\lim_{j\in\Nb}\tau(V)\rotimes \tau(W_{i,j})\to^\cong\tau(V\rotimes W).
    \end{equation*}
    It remains to construct a natural isomorphism
    \begin{equation*}
        \tau(V)\rotimes\tau(W)\to^\cong\colim_{i\in\Nb}\lim_{j\in\Nb}\tau(V)\rotimes\tau(W_{i,j}).
    \end{equation*}
    Denote by
    \begin{equation*}
        \delta\colon\Vect_{top}(k)\to\Vect_{top}(k)
    \end{equation*}
    the functor which sends a topological vector space to the underlying discrete vector space. Because $\otimes_k$ preserves arbitrary colimits, we have a natural map of discrete $k$-vector spaces
    \begin{equation*}
        \delta\tau(V)\otimes_k\delta\tau(W)\to\colim_{i\in\Nb}\lim_{j\in\Nb}\delta\tau(V)\otimes_k\delta\tau(W_{i,j}).
    \end{equation*}
    By definition, the tensor product $\tau(V)\rotimes\tau(W)$ is obtained by completing $\delta\tau(V)\otimes_k\delta\tau(W)$ with respect to the topology generated by subsets of the form $P\otimes_k w$ and $P\otimes Q$ for $P\subset \tau(V)$ open, $w\in W$, and $Q\subset \tau(W)$ open. By completing with respect to subsets of the form $P\otimes_k w$, we obtain
    \begin{equation*}
        \tau(V)\otimes_k\delta\tau(W).
    \end{equation*}
    By definition, an open subset $Q\subset \tau(W)$ arises as $Q\cong \colim_{i\in\Nb} Q_i$ for $Q_i\subset \tau(W_i)$ open. Therefore, completing with respect to the subsets $P\otimes_k Q$ determines a natural isomorphism
    \begin{equation*}
        \tau(V)\rotimes\tau(W)\to^\cong \colim_{i\in\Nb} \tau(V)\rotimes\tau(W_i).
    \end{equation*}
    Similarly, we see that for all $i$, we have a natural isomorphism
    \begin{equation*}
        \tau(V)\rotimes\tau(W_i)\to^\cong \lim_{j\in\Nb}\tau(V)\rotimes\tau(W_{i,j}).
    \end{equation*}
    Combining these, we obtain a natural isomorphism
    \begin{equation*}
        \tau(V)\rotimes\tau(W)\to^\cong \colim_{i\in\Nb}\lim_{j\in\Nb}\tau(V)\rotimes\tau(W_{i,j})
    \end{equation*}
    as claimed.
\epf

An $n$-Tate object is a linear algebraic analogue of an $n$-dimensional scheme $X$ equipped with a complete flag of closed subschemes
\begin{equation*}
    \xi=\left(X=Z_n\supset Z_{n-1}\cdots\supset Z_1\supset Z_0\right)
\end{equation*}
with $\dim Z_i=i$. The tensor product of Proposition \ref{prop:tens1} corresponds to the product of schemes with flags
\begin{equation*}
    ((X_1,\xi_1),(X_2,\xi_2))\mapsto(X_1\times X_2, Z_{1,n}\times Z_{2,m}\supset \cdots \supset Z_{1,n}\times Z_{2,0}\supset\cdots \supset Z_{1,0}\times Z_{2,0}).
\end{equation*}
However, we also obtain a flag in $X_1\times X_2$ for any $(n,m)$-shuffle. A similar phenomenon exists for Tate objects. To state this, we need to recall a fact about shuffles.
\begin{lemma}
    Let $\sigma$ be an $(n,m)$-shuffle, and let $\tau$ be an $(n+m,\ell)$-shuffle. Then there is a unique $(m,\ell)$-shuffle $\tau'$ and a unique $(n,m+\ell)$-shuffle $\sigma'$ such that
    \begin{equation*}
        \tau\circ(\sigma\sqcup \mathsf{id}_{\{1<\cdots<\ell\}})= \sigma'\circ(\mathsf{id}_{\{1<\cdots<n\}}\sqcup \tau').
    \end{equation*}
\end{lemma}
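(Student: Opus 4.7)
\emph{Proof plan.} I would interpret both sides as $(n,m,\ell)$-\emph{shuffles}, i.e.\ permutations of $\{1,\ldots,n+m+\ell\}$ whose restrictions to the three consecutive blocks of sizes $n$, $m$, and $\ell$ are all order-preserving. The key reduction is to show that every $(n,m,\ell)$-shuffle admits a unique factorization of each of the two forms appearing in the statement; the lemma then follows by applying the second factorization to the $(n,m,\ell)$-shuffle produced by the left-hand side.

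First, I would verify that any $(n,m,\ell)$-shuffle $\pi$ factors uniquely as $\tau\circ(\sigma\sqcup\mathsf{id}_{\{1<\cdots<\ell\}})$ with $\sigma$ an $(n,m)$-shuffle and $\tau$ an $(n+m,\ell)$-shuffle. Since a $(p,q)$-shuffle is determined by the subset of $\{1,\ldots,p+q\}$ onto which it sends the first $p$ inputs, $\tau$ is forced to be the unique $(n+m,\ell)$-shuffle with image $\pi(\{1,\ldots,n+m\})$ on the first $n+m$ inputs. The composite $\tau^{-1}\circ\pi$ then fixes $\{n+m+1,\ldots,n+m+\ell\}$ pointwise and restricts to an $(n,m)$-shuffle on $\{1,\ldots,n+m\}$, which I take as $\sigma$. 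By the symmetric argument (swapping the roles of the first and last block, so that $\sigma'$ is forced by the subset $\pi(\{1,\ldots,n\})$), the same $\pi$ also factors uniquely as $\sigma'\circ(\mathsf{id}_{\{1<\cdots<n\}}\sqcup\tau')$ with $\sigma'$ an $(n,m+\ell)$-shuffle and $\tau'$ an $(m,\ell)$-shuffle.

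To conclude, I would observe that $\tau\circ(\sigma\sqcup\mathsf{id})$ is manifestly an $(n,m,\ell)$-shuffle, since each of its two factors preserves the order of each of the three blocks. Applying the second factorization to this shuffle produces the required $\sigma'$ and $\tau'$, which are unique by the uniqueness of that factorization. The argument is purely combinatorial, resting only on the bijection between $(p,q)$-shuffles and $p$-element subsets of $\{1,\ldots,p+q\}$, so I do not anticipate any real obstacle; the only care needed is in checking that the two forced subsets (the image of $\{1,\ldots,n+m\}$ on one side and of $\{1,\ldots,n\}$ on the other) extract the correct factors from the same underlying $(n,m,\ell)$-shuffle.
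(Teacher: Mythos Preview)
Your proposal is correct and follows essentially the same approach as the paper: both extract $\sigma'$ from the image of the $n$-block under the composite $\pi=\tau\circ(\sigma\sqcup\mathsf{id})$, and $\tau'$ from the relative ordering of the $m$- and $\ell$-blocks in that image. Your framing via unique factorizations of $(n,m,\ell)$-shuffles is slightly more systematic than the paper's direct construction (and makes the uniqueness claim more explicit), but the underlying combinatorics is identical.
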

\pf
    Given $\sigma$ and $\tau$, we define $\tau'$ to be the $(m,\ell)$-shuffle obtained from the linearly ordered set $I$ consisting of the images, under $\tau\circ(\sigma\sqcup 1_{\{1<\cdots<\ell\}})$, of $\{1<\cdots<m\}$ and $\{1<\cdots<\ell\}$ in $\{1<\cdots<n+m+\ell\}$. Similarly, define $\sigma'$ to be the $(n,m+\ell)$-shuffle obtained by identifying the linearly ordered set $I$ with $\{1<\cdots<m+\ell\}$ and considering the images of $I$ and $\{1<\cdots<n\}$ in $\{1<\cdots<n+m+\ell\}$.
\epf

\begin{proposition}
    Let $\Cc$ be an exact category with a bi-exact symmetric monoidal structure $\otimes$. Let $n$ and $m$ be natural numbers, and let $\sigma$ be an $(n,m)$-shuffle. Then there exists a bi-exact functor
    \begin{equation*}
        -\rotimes_{\sigma}-\colon \nTate(\Cc)\times m\text{-}\Tate(\Cc)\to (n+m)\text{-}\Tate(\Cc).
    \end{equation*}
    Further, given $n$, $m$, and $\ell$, an $(n,m)$-shuffle $\sigma$ and an $(n+m,\ell)$-shuffle $\tau$, then the associativity constraint for $(\Cc,\otimes)$ determines a natural isomorphism
    \begin{equation*}
        (-\rotimes_\sigma-)\rotimes_\tau - \cong -\rotimes_{\sigma'}(-\rotimes_{\tau'}-)
    \end{equation*}
    where $\tau'$ and $\sigma'$ are as in the previous lemma.
\end{proposition}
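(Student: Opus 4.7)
The plan is to construct $\rotimes_\sigma$ by an induction on $n+m$ that runs in parallel to the construction in Proposition \ref{prop:tens1}, but lets the shuffle $\sigma$ dictate at each stage whether to peel off the outermost Ind-Pro structure from the left factor or from the right factor. In this language, the functor $\rotimes$ of Proposition \ref{prop:tens1} corresponds to the extremal shuffle that places all $W$-levels outside all $V$-levels; a general $(n,m)$-shuffle simply interleaves the peel-off order. As before, it suffices to treat $\nelTate(\Cc)$ and then pass to $\nTate(\Cc)$ via the universal property of idempotent completion.

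Concretely, for $\sigma$ an $(n,m)$-shuffle with $n+m \geq 1$, I consider the position $\sigma^{-1}(n+m)$, which is either the top index of the $V$-block or the top index of the $W$-block (since $\sigma$ is a shuffle, it must preserve the order of each block). If $\sigma^{-1}(n+m)$ lies in the $W$-block, then $\sigma$ restricts to an $(n, m-1)$-shuffle $\sigma''$, and I represent $W$ by an elementary Tate diagram $W \colon I \to \Pro((m-1)\text{-}\elTate(\Cc))$ with admissible Pro-diagrams $W_i \colon J_i \to (m-1)\text{-}\elTate(\Cc)$. The inductive hypothesis supplies the inner functor $V \rotimes_{\sigma''} -$, and one extracts $V \rotimes_\sigma W$ from the resulting elementary Tate diagram $(i,j) \mapsto V \rotimes_{\sigma''} W_{i,j}$, exactly as in Proposition \ref{prop:tens1}. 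If instead $\sigma^{-1}(n+m)$ lies in the $V$-block, one proceeds symmetrically by peeling off the outermost Ind-Pro structure of $V$; the hypothesis that $\otimes$ on $\Cc$ is symmetric ensures that bi-exactness and the straightening arguments apply equally to the left factor. Bi-exactness, faithfulness, and the behaviour of the unit for $\rotimes_\sigma$ then follow as in Proposition \ref{prop:tens1}, mutatis mutandis.

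For the associativity statement I induct on $n + m + \ell$, with the base case $n = m = \ell = 0$ reducing to associativity of $\otimes$ on $\Cc$. For the inductive step I apply the peel-off analysis simultaneously to $(U \rotimes_\sigma V) \rotimes_\tau W$ and to $U \rotimes_{\sigma'} (V \rotimes_{\tau'} W)$. The preceding lemma guarantees that the outermost output position is tracked consistently through the two decompositions: whether that position originates from $U$, from $V$, or from $W$, the outermost peel-off on the two sides agrees, and the restrictions of $\sigma, \tau, \sigma', \tau'$ to the remaining $n + m + \ell - 1$ positions again form a compatible quadruple in the sense of the lemma. The inductive hypothesis then supplies a natural isomorphism between the inner triple products, which the straightening construction reassembles into the desired associativity constraint.

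I expect the main obstacle to be the combinatorial bookkeeping in this last step: at each peel-off move one must verify that the induced restrictions of $\sigma, \tau, \sigma', \tau'$ still satisfy the compatibility relation of the preceding lemma, in each of the three cases specifying which factor contributes the outermost level. Once that case analysis is carefully set up, the actual associativity isomorphism reduces, level by level, to the associativity of $\otimes$ on $\Cc$ as transported through the straightening construction.
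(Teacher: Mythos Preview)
Your proposal is correct and follows essentially the same approach as the paper: both determine which factor to peel by looking at the preimage of the top position $n+m$ under the shuffle, reduce to a shuffle of smaller total order, and invoke the straightening constructions for functoriality and bi-exactness exactly as in Proposition~\ref{prop:tens1}. The only cosmetic difference is that the paper organizes the induction via the reverse lexicographic order on $(n,m)$ (with a separate warm-up for $(n,1)$-shuffles), whereas your induction on $n+m$ is slightly more direct; the content is the same.
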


\begin{remark}
    In the notation of the previous proposition, the functor $\rotimes$ of Proposition \ref{prop:tens1} corresponds to the trivial $(n,m)$-shuffle, i.e. the map
    \begin{equation*}
        \{1<\cdots<n\}\sqcup\{1<\cdots<m\}\to \{1<\cdots<n<1\cdots<m\}.
    \end{equation*}
\end{remark}

\pf
    We construct the functors $-\rotimes_\sigma-$ by induction on $(n,m)$. For $n,m\le 1$, the tensor products were constructed in Proposition \ref{prop:tens1}. Now suppose we have constructed $\rotimes_\sigma$ for any $(\ell,1)$-shuffle with $(\ell,1)<(n,1)$. Let $\sigma$ be an $(n,1)$-shuffle. If $\sigma$ is the trivial shuffle
    \begin{equation*}
        \sigma\colon\{1<\cdots<n\}\sqcup\{1\}\to\{1<\cdots<n<1\}
    \end{equation*}
    then the tensor product was constructed in Proposition \ref{prop:tens1}. If $\sigma(\{1\})<n$, we obtain the functor $-\rotimes_\sigma-$ by applying Lemma~\ref{lemma:induct} to the functor
    \begin{equation*}
        -\rotimes_{\sigma'}-\colon (n-1)\text{-}\Tate(\Cc)\times\Cc\to(n-1)\text{-}\Tate(\Cc)
    \end{equation*}
    where $\sigma'$ is the $(n-1,1)$-shuffle given by restricting $\sigma$ to $\{1<\cdots<n-1\}\sqcup\{1\}\subset\{1<\cdots<n\}\sqcup\{1\}$.

    Now suppose we have defined $\rotimes _\sigma$ for all $(k,\ell)$-shuffles with $(k,\ell)<(n,m)$ in reverse lexicographical ordering. Let $\sigma$ be an $(n,m)$-shuffle. Suppose $\sigma(m)=n+m$. Denote by $\sigma'$ the $(n,m-1)$-shuffle given by restricting $\sigma$ to $\{1<\cdots<n\}\sqcup\{1<\cdots<m-1\}$. We now obtain the functor $-\rotimes_\sigma-$ by applying Lemma~\ref{lemma:induct} to the functor
    \begin{equation*}
        -\rotimes_{\sigma'}-\colon\nTate(\Cc)\times(m-1)\text{-}\Tate(\Cc)\to(n+m-1)\text{-}\Tate(\Cc).
    \end{equation*}
    If $n+m\notin\sigma(\{1<\cdots<m\})$, then we define $\rotimes _{\sigma}$ by applying Lemma~\ref{lemma:induct} to the functor
    \begin{equation*}
        -\rotimes_{\sigma'}-\colon(n-1)\text{-}\Tate(\Cc)\times m\text{-}\Tate(\Cc)\to(n+m-1)\text{-}\Tate(\Cc)
    \end{equation*}
    where now $\sigma'$ is the $(n-1,m)$-shuffle obtained by restricting $\sigma$ to $\{1<\cdots<n-1\}\sqcup\{1<\cdots<m\}$.

    It remains to establish the associativity. This follows by inductive argument analogous to the one which showed the associativity of the tensor product of Proposition \ref{prop:tens1}.
\epf

\subsection*{Right exact tensor products}\label{sect:rightexacttens}
We now consider the situation when $\Cc$ has only a right exact tensor product.  We show that we still obtain a good category of ``flat'' admissible Pro-objects, and, from this, good categories of ``flat'' $n$-Tate objects on which we can define bi-exact tensor products. A primary motivation for this is to be able to work with adically complete modules, which are not, in general, Pro-objects in a category of flat modules.

\begin{definition}
    Let $\Cc$ be an exact category.  We say that a tensor product $\otimes$ is \emph{bi-right exact} if, for any $A\in \Cc$ and for any short exact sequence
    \begin{equation*}
        0\to X\into^i Y\onto^p Z\to 0,
    \end{equation*}
    the maps
    \begin{align*}
        A\otimes Y &\to^{1\otimes p} A\otimes Z\\
        Y\otimes A &\to^{p\otimes 1} Z\otimes A
    \end{align*}
    are admissible epics, and the natural maps
    \begin{align*}
        A\otimes X &\to \ker(1\otimes p)\\
        X\otimes A &\to \ker(p\otimes 1)
    \end{align*}
    are admissible epics as well.
\end{definition}

A bi-right exact tensor product $\otimes$ on $\Cc$ canonically extends to a bi-right exact tensor product
\begin{equation*}
    \hat{\otimes}\colon\Pro(\Cc)\times\Pro(\Cc)\to\Pro(\Cc).
\end{equation*}

\begin{definition}\label{def:flatadmpro}
    Define the category $\Prob(\Cc)$ of \emph{flat admissible Pro-objects} to be the full sub-category of $\Pro(\Cc)$ consisting of all $X\in\Pro(\Cc)$ such that the functors
    \begin{align*}
        -\hat{\otimes} X\colon\Pro(\Cc)&\to\Pro(\Cc)\\
        X\hat{\otimes} -\colon\Pro(\Cc)&\to\Pro(\Cc)
    \end{align*}
    are exact.
\end{definition}

\begin{proposition}\label{prop:prob}
    Let $\Cc$ be an exact category with a bi-right exact tensor product $\otimes$. Then:
    \begin{enumerate}
        \item The sub-category $\Prob(\Cc)\subset\Pro(\Cc)$ is closed under extensions.  In particular, $\Prob(\Cc)$ is a fully exact sub-category of $\Pro(\Cc)$.
        \item The functors
            \begin{align*}
                -\hat{\otimes}-\colon\Pro(\Cc)\times\Prob(\Cc)&\to\Pro(\Cc)\\
                -\hat{\otimes}-\colon\Prob(\Cc)\times\Pro(\Cc)&\to\Pro(\Cc)\intertext{are bi-exact, and restrict to a bi-exact functor}
                -\hat{\otimes}-\colon\Prob(\Cc)\times\Prob(\Cc)&\to\Prob(\Cc).
            \end{align*}
    \end{enumerate}
\end{proposition}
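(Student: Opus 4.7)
For part (1), the plan is to show that $\Prob(\Cc)$ is closed under extensions via a 3x3 diagram chase. Let $0 \to X \to Y \to Z \to 0$ be an admissible short exact sequence in $\Pro(\Cc)$ with $X, Z \in \Prob(\Cc)$. To show $Y \in \Prob(\Cc)$, I need to verify that both $Y \hat{\otimes} -$ and $- \hat{\otimes} Y$ are exact functors on $\Pro(\Cc)$; by symmetry, I focus on $Y \hat{\otimes} -$. Given any admissible short exact sequence $0 \to A \to B \to C \to 0$ in $\Pro(\Cc)$, I form the $3\times 3$ commutative diagram whose rows are the sequences obtained by tensoring the given sequence with $X$, $Y$, and $Z$ respectively. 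The top and bottom rows are short exact by the flatness of $X$ and $Z$, while the middle row and all three columns are merely right exact in the bi-right-exact sense (admissible epics onto the appropriate kernels).

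The goal thus reduces to showing the middle row is short exact, equivalently that the map $Y \hat{\otimes} A \to Y \hat{\otimes} B$ is an admissible monomorphism. Setting $N := \ker(Y \hat{\otimes} A \to Y \hat{\otimes} B)$, I plan a two-stage chase. First, the composite $N \hookrightarrow Y \hat{\otimes} A \to Z \hat{\otimes} A \to Z \hat{\otimes} B$ vanishes by commutativity; since $Z \hat{\otimes} A \to Z \hat{\otimes} B$ is an admissible monomorphism (flatness of $Z$), it follows that $N \to Z \hat{\otimes} A$ is zero, so $N$ embeds into $\ker(Y \hat{\otimes} A \to Z \hat{\otimes} A)$, which by bi-right exactness is the admissible image of $X \hat{\otimes} A$. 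Forming a pullback of $N$ along the admissible epic from $X \hat{\otimes} A$ lifts $N$ to an admissible subobject of $X \hat{\otimes} A$; tracking this lift through the top row and exploiting that $X \hat{\otimes} A \to X \hat{\otimes} B$ is an admissible monomorphism (flatness of $X$) together with the admissible epic structure on the columns then forces $N = 0$.

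For part (2), the bi-exactness of $-\hat{\otimes}-\colon \Pro(\Cc) \times \Prob(\Cc) \to \Pro(\Cc)$ is immediate from the definition of $\Prob(\Cc)$: for fixed $P \in \Prob(\Cc)$ the functor $-\hat{\otimes} P$ is exact by definition, and for fixed $W \in \Pro(\Cc)$ the functor $W \hat{\otimes} -$ restricts to an exact functor on $\Prob(\Cc)$ by repeating the argument of part (1), now with both ends of a short exact sequence in $\Prob(\Cc)$ being flat. The symmetric statement for $\Prob(\Cc) \times \Pro(\Cc) \to \Pro(\Cc)$ follows analogously. That the functor restricts to $\Prob(\Cc) \times \Prob(\Cc) \to \Prob(\Cc)$ follows from associativity: for $P_1, P_2 \in \Prob(\Cc)$, the functor $-\hat{\otimes}(P_1 \hat{\otimes} P_2)$ is identified with the composition $(-\hat{\otimes} P_1)\hat{\otimes} P_2$ of two exact functors, and similarly on the other side.

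The main obstacle is the diagram chase in (1). There is no off-the-shelf 3x3 lemma applicable here, since the columns are only right exact rather than fully short exact. One must carefully exploit the specific bi-right exactness data---namely the admissible epics from the leftmost term onto the kernel of the next map---to close the chase, and the interplay between the flatness of $X$ (accessed through the top row) and the flatness of $Z$ (accessed through the bottom row) is essential for producing both the embedding of $N$ into the admissible image of $X \hat{\otimes} A$ and the final vanishing.
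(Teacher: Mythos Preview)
Your chase in part~(1) does not close. After lifting $n \in N = \ker(Y\hat\otimes A \to Y\hat\otimes B)$ to some $x \in X\hat\otimes A$ (using flatness of $Z$ and exactness of the column at $Y\hat\otimes A$), you observe that the image $x' \in X\hat\otimes B$ lies in $\ker(X\hat\otimes B \to Y\hat\otimes B)$. But you have no control over this kernel: the map $X\hat\otimes B \to Y\hat\otimes B$ is \emph{not} known to be monic, since $B$ is not assumed flat. Flatness of $X$ only gives you that $X\hat\otimes A \to X\hat\otimes B$ is monic, which tells you nothing further about $x'$. Concretely, the pure diagram-chase principle you are invoking is false: take $Z_\ast = 0$, top row $\mathbb{Z}\xrightarrow{2}\mathbb{Z}\to\mathbb{Z}/2$, surjections to $Y_\ast = \mathbb{Z}/2$ everywhere; the middle row is $\mathbb{Z}/2\xrightarrow{0}\mathbb{Z}/2\xrightarrow{\mathrm{id}}\mathbb{Z}/2$, which is right exact but not left exact.

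What is missing is input from the flatness of $X$ and $Z$ beyond the single sequence $0\to A\to B\to C\to 0$. The paper supplies this by passing to the abelian envelope $\lex(\Cc^{\op})^{\op}$, which has enough projectives, and replacing $A,B,C$ by projective resolutions $A_\bullet, B_\bullet, C_\bullet$. At each level the sequence $0\to A_n\to B_n\to C_n\to 0$ is split (as $C_n$ is projective), so all rows \emph{and} columns of the resulting $3\times 3$ are genuinely short exact. Flatness of $X$ and $Z$ then says the outer complexes $(-)_\bullet\hat\otimes X$ and $(-)_\bullet\hat\otimes Z$ are acyclic in positive degrees, and the long exact sequence forces the same for $(-)_\bullet\hat\otimes Y$, yielding $\mathrm{Tor}_1(C,Y)=0$ and hence exactness of $0\to A\hat\otimes Y\to B\hat\otimes Y\to C\hat\otimes Y\to 0$. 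Your part~(2) inherits the same gap: the exactness of $W\hat\otimes(-)$ on a short exact sequence in $\Prob(\Cc)$ is again a $\mathrm{Tor}_1$-vanishing statement that needs the resolution argument, not a repetition of the (incomplete) chase. Your associativity argument for the restriction to $\Prob(\Cc)\times\Prob(\Cc)$ is fine and matches the paper.
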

\pf
    We first show that $\Prob(\Cc)$ is closed under extensions. Let
    \begin{equation}\label{flatclosed1}
        0\to X\to Y\to Z\to 0
    \end{equation}
    be a short exact sequence of admissible Pro-objects with $X$ and $Z$ in $\Prob(\Cc)$. Let
    \begin{equation}\label{flatclosed2}
        0\to A\to B\to C\to 0
    \end{equation}
    be any short exact sequence in $\Pro(\Cc)$. The category $\lex(\Cc)$ of left exact functors to abelian groups is a Grothendieck abelian category, see \cite[Definition 2.20]{MR3510209}. In particular, it has enough injectives. By duality, $\lex(\Cc^{\op})^{\op}$ has enough projectives. By \cite[Theorem 4.2]{MR3510209}, $\Pro(\Cc)$ is a fully exact sub-category of $\lex(\Cc^{\op})^{\op}$. We can therefore choose a projective resolution in $\lex(\Cc^{\op})^{\op}$
    \begin{equation*}
        0\to A_\bullet\to B_\bullet\to C_\bullet\to 0
    \end{equation*}
    of the sequence \eqref{flatclosed2}. Tensoring with \eqref{flatclosed1}, we get a $3\times 3$ diagram with exact rows and columns
    \begin{equation*}
        \xymatrix{
        & 0 \ar[d] & 0 \ar[d] & 0\ar[d]  \\
        0 \ar[r] & A_\bullet\hat{\otimes} X \ar[r] \ar[d] &  A_\bullet\hat{\otimes} Y \ar[r] \ar[d] & A_\bullet\hat{\otimes} Z \ar[r] \ar[d] & 0\\
        0 \ar[r] & B_\bullet\hat{\otimes} X \ar[r] \ar[d] &  B_\bullet\hat{\otimes} Y \ar[r] \ar[d] & B_\bullet\hat{\otimes} Z \ar[r] \ar[d] & 0\\
        0 \ar[r] & C_\bullet\hat{\otimes} X \ar[r] \ar[d] &  C_\bullet\hat{\otimes} Y \ar[r] \ar[d] & C_\bullet\hat{\otimes} Z \ar[r] \ar[d] & 0\\
        & 0 & 0 & 0
        }
    \end{equation*}
    The left and right columns consist solely of acyclic complexes because $X$ and $Z$ are flat admissible Pro-objects. Thus, by the exactness of the above diagram, the middle column consists solely of acyclic complexes.  The same argument applies to tensoring on the left. We conclude that $Y\in\Prob(\Cc)$.

    It remains to show the bi-exactness.  By the definition of $\Prob(\Cc)$, the functor
    \begin{equation}\label{flatex1}
        -\hat{\otimes}-\colon\Pro(\Cc)\times\Prob(\Cc)\to\Pro(\Cc)
    \end{equation}
    is exact in the first variable. It remains to show that it is exact in the second variable. To see this, observe that the functor $-\hat{\otimes}-$ is (by construction) the restriction of the right Kan extension of $\otimes$ to $\lex(\Cc^{\op})^{\op}$. Because $\lex(\Cc^{\op})^{\op}$ has enough projectives, to compute $-\hat{\otimes}^{\mathbb{L}}-$, we can resolve in either variable.  In particular, given $A\in\Pro(\Cc)$ and a short exact sequence in $\Prob(\Cc)$
    \begin{equation*}
        0\to X\to Y\to Z\to 0
    \end{equation*}
    we obtain a short exact sequence of complexes
    \begin{equation*}
        0\to A_\bullet\hat{\otimes} X\to A_\bullet\hat{\otimes} Y\to A_\bullet\hat{\otimes}  Z\to 0
    \end{equation*}
    for any projective resolution $A_\bullet$ of $A$. However, because the functors $-\hat{\otimes}X$, $-\hat{\otimes}Y$, and  $-\hat{\otimes}Z$ are exact, each of the complexes in the above sequence is acyclic. In particular, they are all equivalent to their $H_0$, and we have an exact sequence
    \begin{equation*}
        0\to A\hat{\otimes} X\to A\hat{\otimes} Y\to A\hat{\otimes}  Z\to 0.
    \end{equation*}
    We conclude that the functor \eqref{flatex1} is bi-exact, and the same argument shows the bi-exactness of
    \begin{equation*}
        -\hat{\otimes}-\colon\Prob(\Cc)\times\Pro(\Cc)\to\Pro(\Cc).
    \end{equation*}
    Finally, that $A\hat{\otimes}B\in\Prob(\Cc)$ if $A,B\in\Prob(\Cc)$ follows directly from the associativity of $\hat{\otimes}$.
\epf

\begin{definition}
    Let $\Cc$ be an exact category with a bi-right exact tensor product. Define the category $\elTateb(\Cc)$ of \emph{flat elementary Tate objects} to be the full sub-category of $\elTate(\Cc)$ consisting of all elementary Tate objects which can be represented by an admissible Ind-diagram in $\Prob(\Cc)$.
\end{definition}

\begin{proposition}
    The sub-category $\elTateb(\Cc)\subset\elTate(\Cc)$ is closed under extensions.  In particular, $\elTateb(\Cc)$ is a fully exact sub-category of $\elTate(\Cc)$.
\end{proposition}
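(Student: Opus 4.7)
The plan is to show that $\elTateb(\Cc)$ is closed under extensions in $\elTate(\Cc)$; the ``fully exact sub-category'' conclusion is then automatic, since a full additive extension-closed sub-category of an exact category inherits a canonical exact structure in which the admissible sequences are those already admissible in the ambient category. So let $0\to V\to W\to U\to 0$ be a short exact sequence in $\elTate(\Cc)$ with $V,U\in\elTateb(\Cc)$; I want to produce an admissible Ind-diagram in $\Prob(\Cc)$ representing $W$.

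First I would pick admissible Ind-diagrams $V\cong\colim_i V_i$ and $U\cong\colim_j U_j$ with all $V_i,U_j\in\Prob(\Cc)$, using the hypothesis that $V$ and $U$ are flat elementary Tate. The key step is to lift these two presentations to a compatible presentation of $W$. More precisely, I would apply the straightening construction for short exact sequences of elementary Tate objects (cf.\ the proof of Proposition \ref{prop:tens1} and \cite[Proposition 3.12]{MR3510209}) to the sequence $0\to V\to W\to U\to 0$. This produces an admissible Ind-diagram $(W_{i,j})$ in $\Pro(\Cc)$ representing $W$, such that for every pair $(i,j)$ there is an induced short exact sequence in $\Pro(\Cc)$
\[
0\to V_i\to W_{i,j}\to U_j\to 0,
\]
and such that the transition maps $W_{i,j}\to W_{i',j'}$ are admissible monics, induced by the admissible monics $V_i\into V_{i'}$ and $U_j\into U_{j'}$.

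Granting such a compatible presentation, the conclusion is immediate from Proposition \ref{prop:prob}(1): since $\Prob(\Cc)$ is closed under extensions in $\Pro(\Cc)$, each $W_{i,j}$ lies in $\Prob(\Cc)$. Hence the admissible Ind-diagram $(W_{i,j})$ takes values in $\Prob(\Cc)$, exhibiting $W$ as a flat elementary Tate object. Hence $\elTateb(\Cc)\subset\elTate(\Cc)$ is closed under extensions and so is a fully exact sub-category.

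The main obstacle is the construction of the double-indexed compatible presentation $(W_{i,j})$. Although it feels natural, producing it rigorously requires the straightening argument for exact sequences in elementary Tate categories used elsewhere in this paper, namely the statement that an extension in $\elTate(\Cc)$ of elementary Tate objects with fixed admissible Ind-of-Pro presentations can itself be presented by an admissible Ind-of-Pro diagram in which each term is an extension of the corresponding Pro-objects. Once this is in hand, the rest of the argument is purely formal, using only the extension-closure of $\Prob(\Cc)$ in $\Pro(\Cc)$ from Proposition \ref{prop:prob}.
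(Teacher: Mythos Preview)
Your approach is essentially the paper's: straighten the short exact sequence to obtain a levelwise presentation by short exact sequences in $\Pro(\Cc)$ with flat outer terms, and then invoke closure of $\Prob(\Cc)$ under extensions (Proposition \ref{prop:prob}) to conclude that the middle terms are flat. One technical correction: the straightening of \cite[Proposition 3.12]{MR3510209} yields a presentation over a \emph{single} directed index set, giving sequences $0\to X_i\to Y_i\to Z_i\to 0$ with $X_i,Z_i\in\Prob(\Cc)$, rather than a bi-indexed family $(W_{i,j})$ in which the outer terms are the independently chosen $V_i$ and $U_j$; once you replace your double-index description by this single-index one, your argument is exactly the paper's.
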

\pf
    Let
    \begin{equation*}
        0\to X\to Y\to Z\to 0
    \end{equation*}
    be a short exact sequence of elementary Tate objects with $X,Z\in\elTateb(\Cc)$. By straightening exact sequences \cite[Proposition 3.12]{MR3510209}, we can represent this as the colimit of an admissible Ind-diagram of short exact sequences in $\Pro(\Cc)$
    \begin{equation*}
        0\to X_i\to Y_i\to Z_i\to 0
    \end{equation*}
    with $X_i$ and $Z_i$ in $\Prob(\Cc)$. Because $\Cc\subset\Pro(\Cc)$ is closed under extensions (\cite[Theorem 4.2(2)]{MR3510209}), the $Y_i$ are lattices of $Y$. Further, because $\Prob(\Cc)\subset\Pro(\Cc)$ is closed under extensions (Proposition \ref{prop:prob}), the $Y_i$ are flat admissible Pro-objects, i.e. $Y\in\elTateb(\Cc)$.
\epf

\begin{definition}
    Define the category $\Tateb(\Cc)$ of \emph{flat Tate objects} to be the idempotent completion of $\elTateb(\Cc)$. For $n>1$, define the category $\nelTateb(\Cc)$ of \emph{flat elementary $n$-Tate objects} to be the category of elementary $(n-1)$-Tate objects in $\Tateb(\Cc)$. Define the category $\nTateb(\Cc)$ of \emph{flat $n$-Tate objects} to be the idempotent completion of $\nelTateb(\Cc)$ (equivalently, the category of $(n-1)$-Tate objects in $\Tateb(\Cc)$).
\end{definition}

\begin{proposition}
    Let $\Cc$ be an exact category with a bi-right exact tensor product $\otimes$. Then for any $n,m$, there exists a bi-exact functor
    \begin{align*}
        -\rotimes-\colon\nTateb(\Cc)\times m\text{-}\Tate(\Cc)&\to(n+m)\text{-}\Tate(\Cc)\\
        \intertext{which restricts to a bi-exact functor}
        -\rotimes -\colon \nTateb(\Cc)\times m\text{-}\Tateb(\Cc)&\to (n+m)\text{-}\Tateb(\Cc).
    \end{align*}
\end{proposition}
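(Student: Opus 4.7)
The plan is to run the same double induction on $(n,m)$ as in the proof of Proposition \ref{prop:tens1}, compensating for the failure of bi-exactness of $\otimes$ on $\Cc$ by the flatness hypothesis on the first argument together with the bi-exact extension
\[
-\hat{\otimes}- \colon \Prob(\Cc) \times \Pro(\Cc) \to \Pro(\Cc)
\]
provided by Proposition \ref{prop:prob}. By the universal property of idempotent completion, it suffices to produce the bi-exact functor
\[
-\rotimes- \colon \nelTateb(\Cc) \times m\text{-}\elTate(\Cc) \to (n+m)\text{-}\elTate(\Cc),
\]
together with its flat-valued restriction.

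For the base case $n=1$, $m=0$, pick $V \in \elTateb(\Cc)$ presented as an admissible Ind-diagram $V \colon I \to \Prob(\Cc)$ and $X \in \Cc$. Proposition \ref{prop:prob} yields an admissible Ind-diagram $i \mapsto V_i \hat{\otimes} X$ in $\Pro(\Cc)$, whose Ind-colimit we declare to be $V \rotimes X$. Exactness in the $V$-variable follows by applying the exact functor $-\hat{\otimes}X$ level-by-level to a straightened short exact sequence of elementary Tate objects (cf.\ \cite[Prop.~3.12]{MR3510209}); exactness in the $X$-variable follows directly from the flatness of each $V_i$. The inductive step then proceeds exactly as in Proposition \ref{prop:tens1}: given $V$ of level $n$ and $W$ of level $m$, one unpacks the Ind-Pro presentation of $W$ by one level (for the $m\mapsto m+1$ step), or of $V$ (for the $n\mapsto n+1$ step), applies the inductive hypothesis level-by-level, and uses straightening to reassemble the outcome into an admissible elementary $(n+m)$-Tate diagram. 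The new ingredient relative to Proposition \ref{prop:tens1} is that admissibility of the reassembled Pro- and Ind-diagrams, as well as the bi-exactness of the resulting functor, rest on Proposition \ref{prop:prob}, which is available precisely because at every level the first factor is drawn from the flat sub-category. When both factors lie in $\Tateb$, a parallel induction using the closure $\Prob \times \Prob \to \Prob$ from Proposition \ref{prop:prob} ensures that the result lies in $(n+m)\text{-}\Tateb(\Cc)$.

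The main technical obstacle is threading flatness through the Ind-Pro hierarchy: when a flat elementary $n$-Tate object $V$ is unpacked into lattices that are admissible Pro-objects of $(n-1)$-Tate objects, one needs the lattices themselves to inherit flatness in the appropriate sense so that the inductive hypothesis can be applied at the next level. This is handled by combining the extension-closure of $\elTateb(\Cc)\subset\elTate(\Cc)$ (established in the proposition immediately preceding the statement) with the flat--flat closure of Proposition \ref{prop:prob}, which together let us choose representing diagrams that remain inside the flat Pro-category at each stage. Once this is secured, the remainder of the argument is a book-keeping exercise running in parallel to the proof of Proposition \ref{prop:tens1}.
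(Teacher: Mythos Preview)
Your approach is essentially the paper's: the same double induction on $(n,m)$, the same base case via Proposition~\ref{prop:prob}, and the same inductive unpacking of the Ind--Pro structure of the second (resp.\ first) factor for the $m$-step (resp.\ $n$-step). One remark: your final paragraph about ``threading flatness through the Ind--Pro hierarchy'' overstates the difficulty. For $n\geq 2$ the category $\nelTateb(\Cc)$ is \emph{defined} as $(n-1)\text{-}\elTate(\Tateb(\Cc))$, so unpacking one level of Ind--Pro automatically produces lattices in $\Pro\bigl((n-1)\text{-}\Tateb(\Cc)\bigr)$; no extra argument is needed to recover flatness at the next level. Relatedly, Proposition~\ref{prop:prob} is invoked only at the base case $(1,0)$; at higher levels the required bi-exactness comes straight from the inductive hypothesis, not from a fresh appeal to $\Prob$. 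For the restriction to flat objects, the paper phrases the justification via associativity of $\rotimes$ rather than the closure $\Prob\times\Prob\to\Prob$ you cite, but the underlying induction is the same.
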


\pf
    We prove these statements by an induction on $n$ and $m$ which follows the same logic, {\em mutatis mutandis}, as the proofs of Lemma~\ref{lemma:induct} and Proposition~\ref{prop:tens1}.  We leave the details to the interested reader.
\epf

\section{Duality}
Let $V$ be an elementary Tate object in $\Cc$, and consider the diagram
\begin{align*}
    \Gr(V)&\to\Ind(\Cc)\\
    L&\mapsto V/L
\end{align*}
When $\Cc$ is idempotent complete, the poset $\Gr(V)$ is co-filtered by \cite[Theorem 6.7]{MR3510209}, and the above defines an admissible Pro-diagram in $\Ind(\Cc)$. In this section, we will show that the assignment
\begin{equation*}
    V\mapsto \lim_{\Gr(V)} V/L
\end{equation*}
defines a fully exact embedding
\begin{equation*}
    \Phi\colon\elTate(\Cc)\to \Pro(\Ind(\Cc)).
\end{equation*}
This embedding sends Pro-objects to Pro-objects and Ind-objects to Ind-objects. From this, we will deduce that if $\Cc$ is idempotent complete, then $\Ind(\Cc)$ is right s-filtering in $\elTate(\Cc)$ (thus clearing up a loose end in \cite{MR3510209}). More important, we deduce that if $\Cc$ has an exact duality, then so does $\elTate(\Cc)$.

\begin{proposition}\label{prop:TateinProInd}
    Let $\Cc$ be idempotent complete. Then the assignment above defines a fully exact embedding
    \begin{equation}\label{TateinProInd}
        \Phi\colon\elTate(\Cc)\into\Pro(\Ind(\Cc)).
    \end{equation}
    The essential image of this embedding consists of all admissible Pro-Ind objects which admit a lattice, or equivalently, by \cite[Theorem 5.6]{MR3510209}, the essential image is the sub-category
    \begin{equation*}
        \elTate(\Cc^{\op})^{\op}\subset\Ind(\Pro(\Cc^{\op}))^{\op}=\Pro(\Ind(\Cc)).
    \end{equation*}
    Further, this embedding restricts to the canonical embeddings
    \begin{align*}
        \Ind(\Cc)&\into\Pro(\Ind(\Cc))\intertext{and}
        \Pro(\Cc)&\into\Pro(\Ind(\Cc)).
    \end{align*}
\end{proposition}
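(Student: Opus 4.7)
The plan is to verify the several claims of the proposition in turn: (i) $\Phi$ is well-defined on objects; (ii) $\Phi$ extends to a functor; (iii) $\Phi$ is fully faithful and exact in both directions; (iv) the essential image is as described; (v) the embedding restricts to the canonical ones on $\Ind(\Cc)$ and $\Pro(\Cc)$. For (i), by Theorem 6.7 of \cite{MR3510209}, idempotent completeness ensures $\Gr(V)$ is cofiltered. For $L\subseteq L'$ in $\Gr(V)$, the sub-quotient $L'/L$ is simultaneously a Pro-quotient of lattices and an Ind-sub-object of $V/L$; since $\Pro(\Cc)\cap\Ind(\Cc)=\Cc$ under idempotent completeness, $L'/L\in\Cc$. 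Hence $V/L\twoheadrightarrow V/L'$ is an admissible epic in $\Ind(\Cc)$ and $L\mapsto V/L$ is an admissible Pro-diagram. For (ii), a morphism $f\colon V\to W$ together with a lattice $L'\subseteq W$ yields a composite $V\to W/L'$ landing in an Ind-object; expressing $V$ as a filtered colimit of its lattices in $\Ind(\Pro(\Cc))$ and using that morphisms from a Pro-object to an Ind-object factor through $\Cc$, this composite factors through $V/L$ for some $L\in\Gr(V)$, and cofiltered-ness of $\Gr(V)$ makes these choices compatible.

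For (iii), full faithfulness follows by unwinding
\[
\Hom_{\Pro(\Ind(\Cc))}(\Phi(V),\Phi(W))=\lim_{L'\in\Gr(W)}\colim_{L\in\Gr(V)}\Hom(V/L,W/L')
\]
and matching it to the lattice-filtration description of $\Hom_{\elTate(\Cc)}(V,W)$. Exactness in both directions follows from the straightening construction for exact sequences \cite[Prop.~3.12]{MR3510209}: a short exact sequence of elementary Tate objects can be represented by an admissible diagram of short exact sequences whose lattice quotients assemble into a short exact sequence of admissible Pro-diagrams in $\Ind(\Cc)$, and conversely the straightening recovers a short exact sequence in $\elTate(\Cc)$ from one in the image.

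For (iv), given an admissible Pro-Ind object $X$ with a lattice $L\subseteq X$, the data $0\to L\to X\to X/L\to 0$ is an admissible short exact sequence with $L\in\Pro(\Cc)$ and $X/L\in\Ind(\Cc)$, so it arises from an extension $V\in\elTate(\Cc)$ formed in $\Ind(\Pro(\Cc))$; the two extension problems carry the same data because admissibility is determined by the lattice itself. The identification with $\elTate(\Cc^{\op})^{\op}$ is then just the reformulation of this characterization via Theorem 5.6 of \cite{MR3510209} applied to $\Cc^{\op}$ and dualized. For (v), if $V\in\Ind(\Cc)$ then $0\in\Gr(V)$ is initial, so $\{0\}$ is a cofinal one-object subdiagram and $\Phi(V)\cong V$ as a constant Pro-object; dually, if $V\in\Pro(\Cc)$ then the lattices coming from the defining Pro-system are cofinal in $\Gr(V)$ with quotients in $\Cc$, so $\Phi(V)$ realizes the canonical embedding $\Pro(\Cc)\hookrightarrow\Pro(\Ind(\Cc))$.

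The main obstacle I expect is the Hom comparison in (iii) together with the extension-class reconstruction in (iv): one must verify that a morphism (resp.\ extension) of elementary Tate objects is faithfully and completely detected by the corresponding cofiltered-colimit formula in $\Pro(\Ind(\Cc))$. The apparent asymmetry between Ind-Pro and Pro-Ind has to be reconciled, and the core of the argument is recognizing that the lattice data is exactly what makes both ambient Hom- and Ext-formulas collapse to the same computation, so that no spurious morphisms or extensions are introduced when passing between the two presentations.
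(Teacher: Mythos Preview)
Your approach diverges from the paper's at exactly the point you flagged as the main obstacle, and the paper's route avoids that obstacle entirely rather than confronting it.

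The paper does not attempt a direct Hom comparison. Instead, it notes that by \cite[Theorem~5.6]{MR3510209} applied to $\Cc^{\op}$, the subcategory $\elTate(\Cc^{\op})^{\op}\subset\Pro(\Ind(\Cc))$ is the smallest extension-closed full subcategory containing both $\Pro(\Cc)$ and $\Ind(\Cc)$; hence $\Phi$ factors through it. Since $\Cc^{\op}$ is also idempotent complete, the identical construction applied to $\Cc^{\op}$ yields a functor $\Phi^{\op}\colon\elTate(\Cc^{\op})^{\op}\to\Ind(\Pro(\Cc))$ going the other way, and one checks that $\Phi$ and $\Phi^{\op}$ are \emph{mutually inverse}. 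This single observation delivers full faithfulness and the essential-image description simultaneously, with no Hom formula in sight. Exactness is then obtained by running the same argument with $\Ec\Cc$ in place of $\Cc$, via the equivalence $\Ec\elTate(\Cc)\simeq\elTate(\Ec\Cc)$.

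Your direct route is not doomed, but the Hom comparison is more than an unwinding. On the $\Ind(\Pro(\Cc))$ side one has
\[
\hom(V,W)\;\cong\;\lim_{L\in\Gr(V)}\;\colim_{L'\in\Gr(W)}\;\hom_{\Pro(\Cc)}(L,L'),
\]
whereas your formula on the $\Pro(\Ind(\Cc))$ side reads
\[
\hom(\Phi V,\Phi W)\;\cong\;\lim_{L'\in\Gr(W)}\;\colim_{L\in\Gr(V)}\;\hom_{\Ind(\Cc)}(V/L,W/L').
\]
The limit/colimit order is swapped \emph{and} the inner Hom-groups live in different categories; reconciling them is a genuine argument, not bookkeeping, and your sketch does not supply it. The same issue recurs in your (iv): asserting that an extension of an Ind-object by a Pro-object yields isomorphic results whether formed in $\Ind(\Pro(\Cc))$ or in $\Pro(\Ind(\Cc))$ is precisely the content you are trying to prove. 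The paper's inverse-functor trick is what buys you (iii) and (iv) together without having to resolve this head-on.
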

\pf
    We start by showing that the assignment above is functorial. Indeed, let
    \begin{equation*}
        f\colon V\to W
    \end{equation*}
    be a map of elementary Tate objects. Let $\Delta^1$ denote the category with two objects $0$ and $1$ and one non-identity morphism $0\to 1$. Recall that the inclusion
    \begin{equation*}
        \elTate(\Fun(\Delta^1,\Cc))\into\Fun(\Delta^1,\elTate(\Cc))
    \end{equation*}
    is an exact equivalence (by the straightening construction). Under this equivalence, we can view the map $f$ as an elementary Tate object in $\Fun(\Delta^1,\Cc)$. Note that a lattice in $V\to^f W$ consists of a lattice $L$ of $V$ and a lattice $L'$ of $W$ fitting into a commuting square
    \begin{equation*}
        \begin{xy}
            \square[L`L'`V`W;```f].
        \end{xy}
    \end{equation*}
    The idempotent completeness of $\Cc$ implies that $\Fun(\Delta^1,\Cc)$ is also idempotent complete. Therefore, the Sato Grassmannian $\Gr(f\colon V\rightarrow W)$ is also co-directed, and the assignment above determines a map of Pro-Ind objects
    \begin{equation}\label{Gr(f)1}
        \lim_{\Gr(f\colon V\rightarrow W)} \left(V/L\to W/L'\right).
    \end{equation}
    The projection $\Gr(f\colon V\rightarrow W)\to \Gr(V)$ determines a map
    \begin{equation*}
        \lim_{\Gr(f\colon V\rightarrow W)} V/L \to \lim_{\Gr(V)} V/L.
    \end{equation*}
    It remains to show that this map is an isomorphism, or equivalently that the projection $\Gr(f\colon V\rightarrow W)\to \Gr(V)$ is cofinal. However, this is obvious, as the projection is surjective, since every lattice of $V$ factors through some lattice of $W$ (because Pro-objects are left filtering in Tate objects). We can therefore define the functor $\Phi$ on morphisms by specifying that $f$ is sent to the map
    \begin{equation*}
        \lim_{\Gr(V)}V/L\to^\cong\lim_{\Gr(f\colon V\rightarrow W)} V/L\to\lim_{\Gr(W)} W/L'
    \end{equation*}
    where the last map is the composite of \eqref{Gr(f)1} with the map of limits induced from the projection $\Gr(f\colon V\rightarrow W)\to \Gr(W)$. The associativity of this assignment now follows by similar lines as above, and unitality is immediate; i.e. we have indeed defined a functor.

    We now show that the functor we have constructed is a fully exact embedding, i.e. an exact equivalence onto its essential image.  For this, we observe that, after passing to opposite categories, \cite[Theorem 5.6]{MR3510209} shows that the category \begin{align*}
        \elTate(\Cc^{\op})^{\op}&\subseteq \Ind(\Pro(\Cc^{\op}))^{\op} \\
        &=:\Pro(\Pro(\Cc^{\op})^{\op})\\
        &=:\Pro(\Ind(\Cc))
    \end{align*}
    is the smallest full sub-category of $\Pro(\Ind(\Cc))$ which contains $\Pro(\Cc)$, $\Ind(\Cc)$ and is closed under extensions. In particular, the construction of the functor \eqref{TateinProInd} shows that it factors through the sub-category $\elTate(\Cc^{\op})^{\op}$. Because $\Cc$ is idempotent complete if and only if $\Cc^{\op}$ is, we have also defined a functor
    \begin{align*}
        \Phi^{\op}\colon \elTate(\Cc^{\op})^{\op}\to & \Pro(\Ind(\Cc^{\op}))^{\op}\\
        &=:\Ind(\Ind(\Cc^{\op})^{\op})\\
        &=:\Ind(\Pro(\Cc)).
    \end{align*}
    Unwinding the definitions, we see that $\Phi^{\op}$ and $\Phi$ are inverse equivalences.

    It remains to show that $\Phi$ preserves exact sequences. To see this, recall the equivalence $\Ec\elTate(\Cc)\simeq\elTate(\Ec\Cc)$ \cite[Proposition 5.12]{MR3510209}, and observe that $\Ec\Cc$ is idempotent complete if and only if $\Cc$ is. The construction above therefore defines an equivalence
    \begin{equation*}
        \Ec\elTate(\Cc)\simeq\elTate(\Ec\Cc)\to^{\Phi}_\simeq \elTate(\Ec\Cc^{\op})^{\op} \simeq (\Ec\elTate(\Cc^{\op}))^{\op}\subset \Ec(\Pro(\Ind(\Cc)),
    \end{equation*}
    i.e. $\Phi$ is fully exact.
\epf

\begin{corollary}
    Let $\Cc$ be idempotent complete, and let $V\in\elTate(\Cc)$. Then the intersection $\lim_{\Gr(V)} L$ of all lattices in $V$  is the zero object.
\end{corollary}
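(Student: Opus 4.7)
The plan is to interpret the intersection inside the abelian envelope of $\Pro(\Ind(\Cc))$ and realize it as the kernel of the canonical map $V \to \Phi(V) = \lim_{\Gr(V)} V/L$, which is an isomorphism by the very construction of $\Phi$ in Proposition \ref{prop:TateinProInd}.

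First, for each lattice $L \in \Gr(V)$, I would apply the fully exact embedding $\Phi$ to the admissible short exact sequence
\[ 0 \to L \to V \to V/L \to 0 \]
in $\elTate(\Cc)$. Since $\Phi$ restricts to the canonical embeddings of $\Pro(\Cc)$ and $\Ind(\Cc)$, the outer terms are sent to $L$ and $V/L$ respectively, and exactness of $\Phi$ produces a short exact sequence in $\Pro(\Ind(\Cc))$, and hence in its ambient abelian category $\lex(\Ind(\Cc)^{\op})^{\op}$, which is complete.

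Next, since $\Gr(V)$ is cofiltered by \cite[Theorem 6.7]{MR3510209} and limits are left exact in any complete abelian category, taking the limit over $L \in \Gr(V)$ produces an exact sequence
\[ 0 \to \lim_{\Gr(V)} L \to V \to \lim_{\Gr(V)} V/L. \]
By definition of $\Phi$, the right-hand object is $\Phi(V)$, and the map from $V$ is induced by the cone of projections $V \twoheadrightarrow V/L$, which is precisely the universal cone exhibiting $\Phi(V)$ as this limit. Under the embedding $\Phi$ the map $V \to \lim_{\Gr(V)} V/L$ is therefore the identity on $\Phi(V)$; its kernel vanishes, so $\lim_{\Gr(V)} L = 0$.

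The one subtlety I would verify carefully is that the family $\{V \twoheadrightarrow V/L\}_{L \in \Gr(V)}$ of projections in $\elTate(\Cc)$ is sent by $\Phi$ to the universal cone that exhibits $\Phi(V)$ as $\lim_{\Gr(V)} V/L$ inside $\Pro(\Ind(\Cc))$. This is essentially tautological from the construction of $\Phi$ on morphisms in Proposition \ref{prop:TateinProInd} (where morphisms $V \to W$ were handled by the cofinality of $\Gr(f\colon V \to W) \to \Gr(V)$ applied to the identity $f = \operatorname{id}_V$ composed with each projection), but it is the one point where a short bookkeeping is needed; once established, the rest reduces to standard left exactness of limits.
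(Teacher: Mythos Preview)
Your argument is correct and rests on the same key input the paper uses---that $\Phi$ is a fully faithful exact embedding with $\Phi(V)=\lim_{\Gr(V)}V/L$---but you package the conclusion differently. The paper's proof stays inside $\elTate(\Cc)$: a cone on the diagram $L\mapsto L$ is a map $f\colon X\to V$ factoring through every lattice; unwinding the definition of $\Phi$ on morphisms directly shows each constituent map $X/M\to V/L$ vanishes (since $X\to V\to V/L$ is zero and $X\twoheadrightarrow X/M$ is epi), hence $\Phi(f)=0$, and faithfulness gives $f=0$. Your route instead passes to the abelian envelope $\lex(\Ind(\Cc)^{\op})^{\op}$, uses left exactness of cofiltered limits to identify $\lim L$ with $\ker(V\to\lim V/L)$, and then invokes the bookkeeping that this map is an isomorphism. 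Both are valid; the paper's argument is shorter precisely because it computes $\Phi(f)$ from its defining components rather than first identifying $\Phi(p_L)$ with the canonical projection---so it sidesteps the one verification you flag. One small point you leave implicit: once $\lim L=0$ in the abelian envelope, full faithfulness of the embedding ensures that $0$ is also the limit in $\elTate(\Cc)$ itself, which is what the corollary asserts.
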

\pf
    A cone on $\Gr(V)$ is equivalent to a map $f\colon X\to V$ such that $f$ factors through $L\into V$ for every lattice $L\in\Gr(V)$. The definition of the functor \eqref{TateinProInd} shows that $\Phi(f)$ is the zero map. But, $\Phi$ is fully faithful, so therefore $f$ is the zero map and $0\to V$ is the terminal cone on $\Gr(V)$.
\epf

\begin{remark}
    We interpret the corollary as follows.  Tate objects are a categorical abstraction of locally linearly compact topological vector spaces.  In such a vector space, lattices form a basis of neighborhoods of the origin; in particular the intersection of all lattices is the origin itself.  The corollary shows that the same holds for lattices in arbitrary Tate objects, provided the category $\Cc$ is idempotent complete.
\end{remark}

\begin{corollary}
    If $\Cc$ is idempotent complete, then $\Ind(\Cc)$ is right s-filtering in $\elTate(\Cc)$.
\end{corollary}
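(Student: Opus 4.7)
The plan is to deduce this from the corresponding known fact about $\Pro$ via duality, using Proposition \ref{prop:TateinProInd} as the transport mechanism. Specifically, the companion paper \cite{MR3510209} establishes that for any idempotent complete exact category $\Dc$, the subcategory $\Pro(\Dc) \subseteq \elTate(\Dc)$ is left s-filtering. Since idempotent completeness is self-dual, this applies to $\Dc = \Cc^{\op}$, yielding that $\Pro(\Cc^{\op}) \subseteq \elTate(\Cc^{\op})$ is left s-filtering.

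Next, I would pass to opposite categories. By construction, the notions of left and right s-filtering are dual to one another: a fully exact subcategory $\mathcal{U} \subseteq \mathcal{A}$ is left s-filtering if and only if $\mathcal{U}^{\op} \subseteq \mathcal{A}^{\op}$ is right s-filtering. Applied to the previous paragraph, this gives that $\Ind(\Cc) = \Pro(\Cc^{\op})^{\op}$ is right s-filtering in $\elTate(\Cc^{\op})^{\op}$.

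Finally, I would transport this property along the fully exact equivalence $\Phi$ of Proposition \ref{prop:TateinProInd}, which identifies $\elTate(\Cc)$ with the subcategory $\elTate(\Cc^{\op})^{\op} \subset \Pro(\Ind(\Cc))$. Because being right s-filtering is formulated purely in terms of the exact structure, it transfers along any fully exact equivalence that identifies the relevant subcategories. Thus $\Ind(\Cc) \subseteq \elTate(\Cc)$ inherits the right s-filtering property from $\Ind(\Cc) \subseteq \elTate(\Cc^{\op})^{\op}$.

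The only bookkeeping step is to verify that $\Phi$ sends the canonical inclusion $\Ind(\Cc) \hookrightarrow \elTate(\Cc)$ onto the inclusion of $\Pro(\Cc^{\op})^{\op} = \Ind(\Cc)$ into $\elTate(\Cc^{\op})^{\op}$; this is exactly the content of the final assertion of Proposition \ref{prop:TateinProInd}, that $\Phi$ restricts to the canonical embedding $\Ind(\Cc) \hookrightarrow \Pro(\Ind(\Cc))$. I do not expect any substantial obstacle: the argument is purely formal, and the work lies entirely in the duality theorem that swaps the roles of $\Pro$ and $\Ind$.
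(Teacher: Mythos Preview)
Your argument is correct, and it uses the same key input as the paper's proof—namely the fully exact embedding $\Phi$ of Proposition~\ref{prop:TateinProInd}—but it deploys it differently. The paper splits the claim into two halves: right filtering of $\Ind(\Cc)$ in $\elTate(\Cc)$ is taken directly from \cite[Proposition~5.10(2)]{MR3510209}, and right specialness is obtained by observing that $\Ind(\Cc)$ is right special in the larger category $\Pro(\Ind(\Cc))$ (a general fact about $\Dc\subset\Pro(\Dc)$) and then restricting along the fully exact embedding $\elTate(\Cc)\hookrightarrow\Pro(\Ind(\Cc))$. You instead transport the entire left s-filtering statement for $\Pro(\Cc^{\op})\subset\elTate(\Cc^{\op})$ across the exact equivalence $\Phi\colon\elTate(\Cc)\simeq\elTate(\Cc^{\op})^{\op}$ in one stroke. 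Your route is slightly more conceptual and packages both conditions at once; the paper's route is more modular and makes the right-special half independent of whether left s-filtering of $\Pro$ was already established in full in \cite{MR3510209}. Either way, the substance is the duality encoded in $\Phi$.
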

\pf
    The sub-category $\Ind(\Cc)$ is right filtering in $\elTate(\Cc)$ by \cite[Proposition 5.10 (2)]{MR3510209}. Further, because $\Ind(\Cc)$ is right special in $\Pro(\Ind(\Cc))$ it is right special in $\elTate(\Cc)$.
\epf

\begin{proposition}\label{prop:duals}
    Let $\Cc$ be idempotent complete. An exact equivalence $\Cc^{\op}\to^\simeq \Cc$ extends to an exact equivalence
    \begin{equation*}
        \elTate(\Cc)^{\op}\to^\simeq \elTate(\Cc).
    \end{equation*}
    This duality restricts to exact equivalences
    \begin{align*}
        \Ind(\Cc)^{\op}&\to^\simeq\Pro(\Cc)\intertext{and}
        \Pro(\Cc)^{\op}&\to^\simeq\Ind(\Cc).
    \end{align*}
If $\mathcal{D},\mathcal{D}^{\prime}$ are full sub-categories such that the
equivalence restricts to $\mathcal{D}^{op}\rightarrow\mathcal{D}^{\prime}$ and
$\mathcal{D}^{\prime op}\rightarrow\mathcal{D}$, then this property is preserved:
\begin{equation*}
        \elTate(\mathcal{D})^{\op}\to^\simeq \elTate(\mathcal{D^{\prime }})\qquad
        \elTate(\mathcal{D^{\prime }})^{\op}\to^\simeq \elTate(\mathcal{D}).
    \end{equation*}
\end{proposition}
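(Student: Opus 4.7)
The plan is to combine Proposition \ref{prop:TateinProInd} with the functoriality of $\elTate$ in exact functors. Proposition \ref{prop:TateinProInd} identifies the essential image of $\Phi$ as $\elTate(\Cc^{op})^{op}$, so it produces a canonical exact equivalence
\begin{equation*}
    \Phi_\Cc\colon \elTate(\Cc) \overset{\simeq}{\longrightarrow} \elTate(\Cc^{op})^{op}.
\end{equation*}
Passing to opposites gives $\elTate(\Cc)^{op} \simeq \elTate(\Cc^{op})$. Since $\Ind$ and $\Pro$ send exact functors to exact functors, so does $\elTate$, and an exact equivalence $D\colon \Cc^{op} \overset{\simeq}{\to} \Cc$ induces an exact equivalence $\elTate(D)\colon \elTate(\Cc^{op}) \overset{\simeq}{\to} \elTate(\Cc)$. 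The desired duality is the composite
\begin{equation*}
    \elTate(\Cc)^{op} \simeq \elTate(\Cc^{op}) \overset{\elTate(D)}{\longrightarrow} \elTate(\Cc),
\end{equation*}
which is an exact equivalence as a composite of exact equivalences.

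For the restriction to $\Ind$ and $\Pro$, I would invoke the tautological identities $\Ind(\mathcal{A}^{op})^{op} = \Pro(\mathcal{A})$ and $\Pro(\mathcal{A}^{op})^{op} = \Ind(\mathcal{A})$. Then $\Ind(D)\colon \Ind(\Cc^{op}) \overset{\simeq}{\to} \Ind(\Cc)$ rewrites as the desired $\Pro(\Cc)^{op} \overset{\simeq}{\to} \Ind(\Cc)$, and analogously $\Pro(D)$ gives $\Ind(\Cc)^{op} \overset{\simeq}{\to} \Pro(\Cc)$. That these equivalences are restrictions of the Tate-level duality is a direct consequence of the last clause of Proposition \ref{prop:TateinProInd}: under $\Phi$, the full subcategories $\Ind(\Cc)$ and $\Pro(\Cc)$ of $\elTate(\Cc)$ correspond to the canonical copies of $\Ind(\Cc)$ and $\Pro(\Cc)$ inside $\Pro(\Ind(\Cc))$, and $\elTate(D)$ clearly preserves these loci.

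For the final clause, both $\Phi$ and $\elTate$ are natural with respect to exact functors, so $D|_{\mathcal{D}^{op}}\colon \mathcal{D}^{op} \to \mathcal{D}'$ induces an exact functor $\elTate(\mathcal{D}^{op}) \to \elTate(\mathcal{D}')$. Composing with the equivalence $\elTate(\mathcal{D})^{op} \simeq \elTate(\mathcal{D}^{op})$ (from Proposition \ref{prop:TateinProInd} applied to $\mathcal{D}$, passing to idempotent completion if necessary) yields the required $\elTate(\mathcal{D})^{op} \to \elTate(\mathcal{D}')$; the symmetric argument covers the other restriction. The step I expect to require the most care is verifying the naturality of $\Phi$ with respect to exact functors, i.e.\ that formation of the Sato Grassmannian $\Gr(V)$ and of the admissible Pro-diagram $L \mapsto V/L$ commute with induced exact functors. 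Once this bookkeeping is in place, the rest of the argument is essentially formal.
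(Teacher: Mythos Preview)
Your proposal is correct and follows essentially the same route as the paper: the paper's proof is precisely the composition $\elTate(\Cc)^{\op}\xrightarrow{\Phi}\elTate(\Cc^{\op})\xrightarrow{\elTate(D)}\elTate(\Cc)$, citing \cite[Proposition 5.16]{MR3510209} for the second step, and then declares the restriction statements immediate. Your treatment is in fact more thorough than the paper's, which does not spell out the $\Ind/\Pro$ identification or the subcategory clause; your caution about idempotent completeness of $\mathcal{D}$ and about naturality of $\Phi$ is well placed but does not indicate a gap in the strategy.
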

\pf
    The duality is given by the composition of exact equivalences
    \begin{equation*}
        \elTate(\Cc)^{\op}\to^\Phi_\simeq \elTate(\Cc^{\op})\to^\simeq \elTate(\Cc)
    \end{equation*}
    where the first equivalence is that induced by $\Phi$, and the second is that induced by the duality on $\Cc$ (cf. \cite[Proposition 5.16]{MR3510209}). The statement about restrictions is now immediate.
\epf

\begin{corollary}
\label{cor_induce_selfequivalences}Suppose $\mathcal{C}$ is an exact category
with an exact equivalence $\Cc^{\op}\to^\simeq \Cc$. For every $n\geq0$, there is a canonical exact equivalence%
\[
\left.  n\text{-}\mathsf{Tate}(\mathcal{C}^{ic})^{op}\right.  \overset{\sim
}{\longrightarrow}\left.  n\text{-}\mathsf{Tate}(\mathcal{C}^{ic})\right.  \text{.}%
\]

\end{corollary}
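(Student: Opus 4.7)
The plan is to induct on $n$, using Proposition \ref{prop:duals} as the engine at each step, after first promoting the given exact equivalence $\mathcal{C}^{op} \overset{\sim}{\to} \mathcal{C}$ to one on the idempotent completion $\mathcal{C}^{ic}$.

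\medskip

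\noindent\textbf{Setup.} First I would observe that idempotent completion commutes with passing to the opposite category: an idempotent in $\mathcal{C}$ is the same datum as an idempotent in $\mathcal{C}^{op}$, and splittings of one correspond to splittings of the other, so $(\mathcal{C}^{op})^{ic}$ is canonically identified with $(\mathcal{C}^{ic})^{op}$. Using the functoriality of idempotent completion with respect to exact equivalences, the given equivalence $\mathcal{C}^{op} \overset{\sim}{\to} \mathcal{C}$ extends canonically to an exact equivalence $(\mathcal{C}^{ic})^{op} \overset{\sim}{\to} \mathcal{C}^{ic}$. Since $\mathcal{C}^{ic}$ is by construction idempotent complete, Proposition \ref{prop:duals} applies and provides the key building block: an exact equivalence $\mathsf{Tate}^{el}(\mathcal{C}^{ic})^{op} \overset{\sim}{\to} \mathsf{Tate}^{el}(\mathcal{C}^{ic})$, which after idempotent completion (equivalences extend uniquely to idempotent completions) yields $\mathsf{Tate}(\mathcal{C}^{ic})^{op} \overset{\sim}{\to} \mathsf{Tate}(\mathcal{C}^{ic})$.

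\medskip

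\noindent\textbf{Induction.} The base case $n=0$ is the equivalence $(\mathcal{C}^{ic})^{op} \overset{\sim}{\to} \mathcal{C}^{ic}$ from the Setup. For the inductive step, suppose that we have constructed an exact equivalence $n\text{-}\mathsf{Tate}(\mathcal{C}^{ic})^{op} \overset{\sim}{\to} n\text{-}\mathsf{Tate}(\mathcal{C}^{ic})$. The category $n\text{-}\mathsf{Tate}(\mathcal{C}^{ic})$ is by definition an idempotent completion, hence idempotent complete. Applying Proposition \ref{prop:duals} with the role of $\mathcal{C}$ played by $n\text{-}\mathsf{Tate}(\mathcal{C}^{ic})$, and using the inductive hypothesis as input, I obtain an exact equivalence
\[
\mathsf{Tate}^{el}\!\left(n\text{-}\mathsf{Tate}(\mathcal{C}^{ic})\right)^{op} \overset{\sim}{\longrightarrow} \mathsf{Tate}^{el}\!\left(n\text{-}\mathsf{Tate}(\mathcal{C}^{ic})\right).
\]
Unwinding the definition, the right-hand side is precisely $(n+1)\text{-}\mathsf{Tate}^{el}(\mathcal{C}^{ic})$. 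Passing to idempotent completions delivers the desired exact equivalence $(n+1)\text{-}\mathsf{Tate}(\mathcal{C}^{ic})^{op} \overset{\sim}{\to} (n+1)\text{-}\mathsf{Tate}(\mathcal{C}^{ic})$, completing the induction.

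\medskip

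\noindent\textbf{Main obstacle.} There is no substantive obstacle: Proposition \ref{prop:duals} does essentially all the work, and the induction is powered by the observation that $n\text{-}\mathsf{Tate}$ is automatically idempotent complete at each stage. The only genuinely delicate point is to ensure the word ``canonical'' in the statement is justified, i.e.\ that the equivalences obtained at successive levels are compatible with the embeddings $\mathcal{C}^{ic} \hookrightarrow \mathsf{Tate}(\mathcal{C}^{ic}) \hookrightarrow 2\text{-}\mathsf{Tate}(\mathcal{C}^{ic}) \hookrightarrow \cdots$; this follows from the naturality of the embedding $\Phi$ in Proposition \ref{prop:TateinProInd} (which underlies Proposition \ref{prop:duals}) together with the universal property of idempotent completion.
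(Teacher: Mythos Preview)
Your proof is correct and follows essentially the same route as the paper's: extend the duality to the idempotent completion, then iterate Proposition~\ref{prop:duals}, using at each step that $n\text{-}\mathsf{Tate}(\mathcal{C}^{ic})$ is idempotent complete. You are slightly more explicit than the paper about passing to idempotent completions between stages, which is a welcome clarification.
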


\pf
    This follows by induction. Firstly, if $(-)^{\vee}:\mathcal{C}^{op}\overset{\sim}{\rightarrow}\mathcal{C}$ is an exact equivalence, then the idempotent completion $\mathcal{C}^{ic}$ also carries such an exact equivalence $(\mathcal{C}^{ic})^{op}\overset{\sim}{\rightarrow}\mathcal{C}^{ic}$, by sending $(X,p)$ to $(X^{\vee},p^{\vee})$, where $p^{\vee}$ is the idempotent induced on the dual \cite[\S 5.1]{MR2600285}. By Proposition \ref{prop:duals} for every idempotent complete exact category, we get an equivalence on elementary Tate objects, $\elTate(\mathcal{C})^{op}\overset{\sim}{\longrightarrow}\elTate(\mathcal{C})$. Taking $\mathsf{Tate}^{el}(\mathcal{C})$ for $\mathcal{C}$ and repeating the above $n$ times yields the claim.
\epf

\section{External Homs}\label{externalhoms}

We now consider extensions of internal homs in $\Cc$ to higher Tate objects. If $U\in\nTate(\Cc)$ and $V\in m\text{-}\Tate(\Cc)$, we will construct an $n+m$-Tate object $\underline{Hom}(V,W)$,  which we think of as an ``external hom''.\footnote{These homs are `external' in a sense analogous to the external tensor product of modules over different $k$-algebras.}  We then explain in what sense the external hom $\Hom(U,-)$ provides a right adjoint to $U\rotimes -$ when $U$ is a (higher) Tate object. If $\Cc$ is in fact a rigid tensor category, we also show that, in a natural sense, $-\rotimes  U^\vee$ is right adjoint to $-\rotimes  U$. 

\begin{proposition}
    Let $\Cc$ be an idempotent complete, closed monoidal, exact category in which both $-\otimes-$ and $\Hom_{\Cc}(-,-)$ are bi-exact. Then there exists a bi-exact functor
    \begin{equation*}
        \Hom(-,-)\colon \nTate(\Cc)^{\op}\times m\text{-}\Tate(\Cc)\to (n+m)\text{-}\Tate(\Cc).
    \end{equation*}
\end{proposition}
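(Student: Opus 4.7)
The plan is to adapt the double induction of Proposition \ref{prop:tens1} to the setting where the first argument is contravariant. By the universal property of idempotent completion, it suffices to construct a bi-exact functor
\begin{equation*}
    \Hom(-,-)\colon\nelTate(\Cc)^{\op}\times m\text{-}\elTate(\Cc)\to(n+m)\text{-}\elTate(\Cc).
\end{equation*}
The guiding principle is that $\Hom_{\Cc}(-,X)$ sends colimits to limits, so every Ind-direction in the first argument becomes a Pro-direction in the output and every Pro-direction becomes an Ind-direction, while $\Hom_{\Cc}(V,-)$ preserves both Ind- and Pro-directions. Hence the resulting object will indeed live in $(n+m)\text{-}\elTate(\Cc)$.

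First I would induct on $n$ and $m$, taking the base $n=m=0$ to be $\Hom_{\Cc}$ itself. For the step raising $m$: given $V\in\nelTate(\Cc)$ and $W\in m\text{-}\elTate(\Cc)$ with an elementary Tate presentation $W\colon I\to\Pro((m-1)\text{-}\elTate(\Cc))$ and each $W_i$ presented by an admissible Pro-diagram $W_i\colon J_i\to(m-1)\text{-}\elTate(\Cc)$, set
\begin{equation*}
    \Hom(V,W_i):=\lim_{J_i}\Hom(V,W_{i,j}),\qquad\Hom(V,W):=\colim_I\Hom(V,W_i).
\end{equation*}
The covariance and (inductive) exactness of $\Hom(V,-)$, together with the straightening construction \cite[Prop.~3.12]{MR3510209}, ensure that the inner diagram is an admissible Pro-diagram and the outer is an elementary Tate diagram in $(n+m-1)\text{-}\elTate(\Cc)$. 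For the step raising $n$: given $V\colon I\to\Pro((n-1)\text{-}\elTate(\Cc))$ with $V_i\colon K_i\to(n-1)\text{-}\elTate(\Cc)$ admissible Pro-diagrams, contravariance dictates
\begin{equation*}
    \Hom(V_i,W):=\colim_{K_i}\Hom(V_{i,k},W),\qquad\Hom(V,W):=\lim_I\Hom(V_i,W),
\end{equation*}
so that the admissible Pro-diagram $K_i$ yields an admissible Ind-diagram after applying $\Hom(-,W)$, and the outer Ind-system over $I$ yields an admissible Pro-system. Functoriality on morphisms and bi-exactness at each step follow from the straightening construction exactly as in Proposition \ref{prop:tens1}.

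The hard part will be to verify that contravariance really exchanges admissibility conditions as claimed: namely, that applying $\Hom(-,W)$ to the admissible transition maps of a Pro-diagram in $(n-1)\text{-}\elTate(\Cc)$ does produce admissible transition maps of an Ind-diagram in $(n+m-1)\text{-}\elTate(\Cc)$. This relies on bi-exactness of $\Hom_{\Cc}$ turning admissible monics into admissible epics (and conversely) together with the inductive hypothesis, and the bookkeeping through the double induction is the most delicate step. A cleaner route which absorbs this bookkeeping into earlier results is to observe that the extension procedure of Proposition \ref{prop:tens1} applies verbatim to any bi-exact functor $\Cc_1\times\Cc_2\to\Cc_3$ between exact categories (its proof never uses that source and target coincide), apply this to $\Hom_{\Cc}\colon\Cc^{\op}\times\Cc\to\Cc$, and then pre-compose with the exact equivalence $\nTate(\Cc)^{\op}\simeq\nTate(\Cc^{\op})$ obtained by iterating Proposition \ref{prop:TateinProInd} $n$ times.
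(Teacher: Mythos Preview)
Your primary inductive scheme has a real gap in the $n$-step. When you set
\[
\Hom(V,W):=\lim_I\colim_{K_i}\Hom(V_{i,k},W),
\]
you correctly observe that this lands in $\Pro(\Ind((n+m-1)\text{-}\elTate(\Cc)))$. But elementary Tate objects sit inside $\Ind\Pro$, not $\Pro\Ind$; by Proposition~\ref{prop:TateinProInd} the two agree only on the locus of objects admitting a lattice, and you never verify this. The issue you flag as ``the hard part''---that bi-exactness of $\Hom$ exchanges admissible monics and epics on transition maps---is in fact routine by induction; the genuine difficulty is showing that your $\Pro\Ind$ object is an $(n+m)$-Tate object. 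This is exactly what the paper spends its effort on: its induction scheme is arranged as $(n,n)\Rightarrow(n+m,n)$, $(n,n)\Rightarrow(n,n+m)$, and $(n-1,n-1)\Rightarrow(n,n)$, and at each step it explicitly exhibits a lattice or co-lattice (e.g.\ $\Hom(V,W)\twoheadrightarrow\Hom(L,W)$ for $L\hookrightarrow V$ a lattice, with kernel $\Hom(V/L,W)$ lying in the correct Pro-category). Your $n$-step would need the same verification.

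Your alternative route at the end, however, is correct and in fact more economical than the paper's argument. The proof of Proposition~\ref{prop:tens1} does extend verbatim to any bi-exact functor $\Cc_1\times\Cc_2\to\Cc_3$, yielding $\nTate(\Cc_1)\times m\text{-}\Tate(\Cc_2)\to(n+m)\text{-}\Tate(\Cc_3)$; applying this to $\Hom_{\Cc}\colon\Cc^{\op}\times\Cc\to\Cc$ and precomposing with the exact equivalence $\nTate(\Cc)^{\op}\simeq\nTate(\Cc^{\op})$ (iterated Proposition~\ref{prop:TateinProInd}, using that $\Cc$ and all higher Tate categories are idempotent complete) gives the desired bi-exact functor. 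The advantage is that the lattice-existence bookkeeping is absorbed once and for all into Proposition~\ref{prop:TateinProInd}, rather than being redone at each inductive stage as in the paper. The paper's more hands-on approach, on the other hand, makes the lattice structure of $\Hom(V,W)$ explicit in terms of lattices of $V$ and $W$, which is useful for the adjunction statements that follow.
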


\begin{remark}
    The construction of the functors $\Hom$ follows by an induction similar to the proof of Proposition \ref{prop:tens1}. The key is that the diagrams defining the Tate objects in the first variable sandwich the diagrams defining the Tate objects in the second variable, i.e. 
    \begin{equation*}
        \hom_{\Tate(\Cc)}(\colim_I\lim_{J_i} V_{ij},\colim_K\lim_{L_k}W_{kl})=\lim_I\colim_K\lim_{L_k}\colim_{J_i}\hom_{\Cc}(V_{ij},W_{kl}).
    \end{equation*}
    In particular, while the logic is very similar, we cannot just quote Lemma \ref{lemma:induct}.
\end{remark}
\pf
    By the universal property of idempotent completeness, it suffices to construct $\Hom$ for elementary Tate objects. We construct the functors using three related inductions on $(n,m)$.  First, we show that the construction for $(n,n)$ implies it for $(n+m,n)$ for all $m$. Second, we show that the construction for $(n,n)$ implies it for $(n,n+m)$ for all $m$. And last, we show that the construction for $(n-1,n-1)$ implies it for $(n,n)$.

    For the base case $n=m=0$, there is nothing to show. Now suppose that we have constructed $\Hom(-,-)$ for $(n+m,n)$. Let $W\in\nelTate(\Cc)$ and let $V\in(n+m+1)\text{-}\elTate(\Cc)$ be represented by an elementary Tate diagram
    \begin{equation*}
        V\colon I\to\Pro((n+m)\text{-}\elTate(\Cc)).
    \end{equation*}
    For $i\in I$, let $V_i$ be represented by an admissible Pro-diagram
    \begin{equation*}
        V_i\colon J_i\to(n+m)\text{-}\elTate(\Cc).
    \end{equation*}
    Then we define
    \begin{equation*}
        \Hom(V,W):=\lim_{I^{\op}}\colim_{J_i^{\op}}\Hom(V_{i,j},W)\in\Pro(\Ind((2n+m)\text{-}\elTate(\Cc))).
    \end{equation*}
    This is invariant under different choices of representing diagrams. By the straightening construction for morphisms, we see the assignment $(V,W)\mapsto\Hom(V,W)$ is indeed functorial. And, by the inductive hypothesis and the straightening construction for exact sequences, we see that it is bi-exact. In order to show that it factors through the inclusion
    \begin{equation*}
        (2n+m+1)\text{-}\elTate(\Cc)\into\Pro(\Ind((2n+m)\text{-}\elTate(\Cc))),
    \end{equation*}
    it suffices to exhibit a co-lattice of $\Hom(V,W)$. And indeed, the construction shows that
    \begin{equation*}
        \Hom(V,W)\onto\Hom(L,W)
    \end{equation*}
    is a co-lattice, for any lattice $L\into V$. This completes the first induction.

    The second induction follows by a similar argument. Suppose we have constructed $\Hom(-,-)$ for $(n,n+m)$. Let $V\in\nelTate(\Cc)$ and let $W\in(n+m+1)\text{-}\elTate(\Cc)$ be represented by an elementary Tate diagram
    \begin{equation*}
        W\colon I\to\Pro((n+m)\text{-}\elTate(\Cc)).
    \end{equation*}
    For $i\in I$, let $V_i$ be represented by an admissible Pro-diagram
    \begin{equation*}
        W_i\colon J_i\to(n+m)\text{-}\elTate(\Cc).
    \end{equation*}
    Then we define
    \begin{equation*}
        \Hom(V,W):=\colim_I\lim_{J_i}\Hom(V,W_{i,j})\in\Ind(\Pro((2n+m)\text{-}\elTate(\Cc))).
    \end{equation*}
    As above, this assignment is functorial and bi-exact. We see that it takes values in the category $2n+m+1$-Tate objects by observing that for any lattice $L\into W$, the inclusion
    \begin{equation*}
        \Hom(V,L)\into\Hom(V,W)
    \end{equation*}
    is a lattice.

    It remains to show the third induction. Suppose that we have constructed $\Hom(-,-)$ for $(n-1,n-1)$. Let $V, W\in\nelTate(\Cc)$, and let
    \begin{align*}
        V\colon I \to\Pro((n-1)\text{-}\elTate(\Cc)) \qquad \text{and} \qquad
        W\colon L \to\Pro((n-1)\text{-}\elTate(\Cc))
    \end{align*}
    be elementary Tate diagrams representing $V$ and $W$. For each $i\in I$ and $\ell\in L$, let
    \begin{align*}
        V_i\colon J_i &\to\Pro((n-1)\text{-}\elTate(\Cc))\intertext{and}
        W_\ell\colon K_\ell &\to\Pro((n-1)\text{-}\elTate(\Cc))
    \end{align*}
    be admissible Pro-diagrams representing $V_i$ and $W_\ell$. Then we define
    \begin{align*}
        \Hom(V,W)&:=\lim_{I^{\op}}\colim_L\lim_{K_\ell}\colim_{J_i^{\op}}\Hom(V_{i,j},W_{\ell,k})\\
        &\in \Pro(\Ind(\Pro(\Ind((2n-2)\text{-}\elTate(\Cc))))).
    \end{align*}
    This assignment is invariant under different choices of representing diagrams. By the straightening construction for morphisms, we see the assignment $V\mapsto\Hom(V,W)$ is indeed functorial. And, by the inductive hypothesis and the straightening construction for exact sequences, we see that it is bi-exact.

    It remains to show that it factors through the sub-category of $2n$-Tate objects. For any lattice $L\into V$, we obtain, by the bi-exactness of $\Hom(-,-)$, an exact sequence
    \begin{equation*}
        \Hom(V/L,W)\into \Hom(V,W)\onto\Hom(L,W).
    \end{equation*}
    Because the sub-category of elementary $2n$-Tate objects is closed under extensions in
\[
\Pro(\Ind(\Pro(\Ind((2n-2)\text{-}\elTate(\Cc)))))
\]
by \cite[Theorem 5.6]{MR3510209} and Proposition \ref{prop:TateinProInd}, it suffices to show that the kernel and cokernel terms of this exact sequence are $2n$-Tate objects. We see this directly, as follows. Let
    \begin{equation*}
        V/L\colon I\to (n-1)\text{-}\Tate(\Cc)
    \end{equation*}
    be an admissible Ind-diagram representing $V/L$. Then, by definition, we have
    \begin{align*}
        \Hom(V/L,W)\cong &\lim_{I^{\op}}\Hom((V/L)_i,W)\\
        &\in\Pro((2n-1)\text{-}\elTate(\Cc))\\
        &\subset(2n)\text{-}\elTate(\Cc)
    \end{align*}
    where the assertion that this is a Pro-object follows by inductive hypothesis and the first statement we showed. Similarly, let
    \begin{align*}
        L\colon J &\to(n-1)\text{-}\Tate(\Cc)\intertext{be an admissible Pro-diagram representing $L$. Let}
        W\colon A &\to \Pro((n-1)\text{-}\Tate(\Cc))\intertext{be an elementary Tate diagram representing $W$, and for each $a\in A$ let}
        W_a\colon B_a&\to (n-1)\text{-}\Tate(\Cc)
    \end{align*}
    be an admissible Pro-diagram representing $W_a$. Then, for each $a\in A$, we have
    \begin{align*}
        \Hom(L,W_a)\cong &\lim_{B_a}\colim_{J^{\op}}\Hom(L_j,W_{a,b})\\
        &\in \Pro(\Ind((2n-2)\text{-}\Tate(\Cc)))\\
        &\subset \Pro((2n-1)\text{-}\Tate(\Cc)).
    \end{align*}
    Thus, the assignment $a\mapsto \Hom(L,W_a)$ defines an admissible Ind-diagram in $\Pro((2n-1)\text{-}\Tate(\Cc))$. To see that it is in fact an elementary Tate diagram, we observe that, for each $a<a'$, we have an exact sequence
    \begin{equation*}
        \Hom(L,W_a)\into\Hom(L,W_{a'})\onto\Hom(L,W_a/W_{a'}),
    \end{equation*}
    Because $W_a/W_{a'}\in(n-1)\text{-}\Tate(\Cc)$ and $L\in\Pro((n-1)\text{-}\Tate(\Cc))$, we conclude, by inductive hypothesis, that the cokernel $\Hom(L,W_a/W_{a'})$ is a $(2n-1)$-Tate object, and therefore that
    \begin{equation*}
        \Hom(L,W)\cong\colim_A\Hom(L,W_a)
    \end{equation*}
    is an elementary $2n$-Tate object as claimed.
\epf

\begin{proposition}\label{prop:homclosed}
    Let $\Cc$ be an idempotent complete, closed monoidal exact category in which both $-\otimes-$ and $\Hom_{\Cc}(-,-)$ are bi-exact.
    \begin{enumerate}
        \item Given $U\in\Cc$, and $V,W\in\nTate(\Cc)$, there exists a canonical natural isomorphism
            \begin{equation*}
                \hom_{\nTate(\Cc)}(U\rotimes  V,W)\cong\hom_{\nTate(\Cc)}(V,\Hom(U,W)).
            \end{equation*}
        \item Given $V\in\Cc$, and $U,W\in\nTate(\Cc)$, there exists a canonical natural isomorphism
            \begin{equation*}
                \hom_{\nTate(\Cc)}(U\rotimes  V,W)\cong\hom_{2n\text{-}\Tate(\Cc)}(V,\Hom(U,W)).
            \end{equation*}
    \end{enumerate}
\end{proposition}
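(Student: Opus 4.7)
The plan is to prove both statements by induction on $n$, using the explicit constructions of $\rotimes$ from Proposition \ref{prop:tens1} and of $\Hom$ from the preceding proposition. By the universal property of idempotent completion, it suffices to treat elementary Tate objects throughout. The base case $n=0$ of each statement is precisely the given adjunction $\hom_{\Cc}(U \otimes V, W) \cong \hom_{\Cc}(V, \Hom_{\Cc}(U,W))$ provided by the closed monoidal structure on $\Cc$.

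For the inductive step of (1), I would fix $U \in \Cc$ and $V, W \in \nelTate(\Cc)$, and represent $V$ by an elementary Tate diagram $V \colon I \to \Pro((n-1)\text{-}\elTate(\Cc))$ with each $V_i$ represented by an admissible Pro-diagram $V_i \colon J_i \to (n-1)\text{-}\elTate(\Cc)$. By construction $U \rotimes V$ is the Tate object associated to the diagram $i \mapsto \lim_{J_i} U \otimes V_{i,j}$, and $\Hom(U, W)$ is constructed compatibly from $W$ via the analogous limits and colimits. The idea is to build both the unit $V \to \Hom(U, U \rotimes V)$ and the counit $U \rotimes \Hom(U, W) \to W$ by lifting the classical unit and counit in $\Cc$ through these nested diagrams level by level, where the lift at each level is precisely the one supplied by the inductive hypothesis at $(n-1)$. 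Verifying the triangle identities then reduces to their validity at the base level in $\Cc$ via the straightening construction for morphisms.

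For (2), I would fix $V \in \Cc$ and $U, W \in \nelTate(\Cc)$, and this time unwind $U$ and $W$ via their elementary Tate decompositions. The construction of $\rotimes$ shows that $U \rotimes V$ is built from $U_{i,j} \otimes V$ under the same outer colim-lim diagrams as $U$, while the construction of $\Hom(U, W)$ interlaces the diagrams of $U$ and $W$ with opposite variances, landing in $(2n)$-Tate. The same strategy as in (1), but applied to $U$ rather than $V$, yields the desired adjunction; the extra layers correspond exactly to the $n$ new layers of $\Hom(U,W)$ coming from unwinding $U$, which accounts for the target being $(2n)\text{-}\Tate(\Cc)$ on the right-hand side.

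The main obstacle will be bookkeeping. The nested $\Pro$'s and $\Ind$'s appearing in the constructions of $\rotimes$ and $\Hom$ must be handled carefully, and at each stage one needs hom/colim and hom/lim commutation formulas to hold in the appropriate sense, exploiting that lattice inclusions are admissible monics and admissible Pro-diagrams have admissible epic transitions (as used already in the straightening arguments of \cite{MR3510209}). A secondary subtlety in (2) is that the right-hand side lies in $(2n)\text{-}\Tate$: one must identify which sub-diagram of $\Hom(U, W)$ receives the adjoint of a given map and verify that the resulting assignment is natural in all variables, rather than just a pointwise bijection of hom-sets.
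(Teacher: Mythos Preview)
Your inductive framework is correct, and the base case and the idea of reducing to lower $n$ are exactly right. However, the route you take through unit and counit is more laborious than the paper's, and for part (2) it runs into a structural obstacle you yourself flag but do not resolve.

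The paper's argument is a direct hom-set computation. For part (1), it represents \emph{both} $V$ and $W$ (not just $V$) by elementary Tate diagrams, writes
\[
\hom_{(n+1)\text{-}\Tate(\Cc)}(U\rotimes V,W)\cong \lim_{I^{\op}}\colim_A\lim_{B_a}\colim_{J_i^{\op}}\hom_{\nTate(\Cc)}(U\rotimes V_{i,j},W_{a,b}),
\]
applies the inductive hypothesis at the innermost level, and then re-reads the resulting $\lim\colim\lim\colim$ as $\hom_{(n+1)\text{-}\Tate(\Cc)}(V,\Hom(U,W))$. Part (2) is handled identically, representing $U$ and $W$. The ``hom/colim and hom/lim commutation formulas'' you list as an obstacle are in fact the entire proof: they are not auxiliary lemmas supporting a unit/counit construction, but the mechanism itself.

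For part (2) your unit/counit strategy does not straightforwardly apply, because the isomorphism is not the hom-set manifestation of an adjunction between two fixed categories: the left side lives in $\nTate(\Cc)$ while the right side lives in $(2n)\text{-}\Tate(\Cc)$. A putative counit $U\rotimes\Hom(U,W)\to W$ would have source in $(3n)\text{-}\Tate(\Cc)$, so the triangle identities have no obvious home. You note this as a ``secondary subtlety'', but it is really the reason to abandon the unit/counit framing altogether in favour of the direct hom-set identification. Once you represent $U$ and $W$ by Tate diagrams and expand both sides as iterated $\lim$/$\colim$ of hom-sets, the issue evaporates: both sides are the same four-fold iterated (co)limit of sets, with the inductive adjunction applied termwise.
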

\pf
    The proof follows by induction on $n$ and formal manipulation of limits and colimits. For $n=0$, there is nothing to show.  Now suppose we have shown the statement for $n$. In the first case, suppose $U\in\Cc$ and let $V,W\in(n+1)\text{-}\Tate(\Cc)$ be represented by elementary Tate diagrams
    \begin{align*}
        V\colon I \to \Pro(\nTate(\Cc)) \qquad \text{and} \qquad W\colon A \to \Pro(\nTate(\Cc))
    \end{align*}
    and for each $i\in I$ and $a\in A$, let $V_i$ be represented by an admissible Pro-diagram
    \begin{align*}
        V_i\colon J_i &\to \nTate(\Cc)\intertext{and let $W_a$ be represented by an admissible Pro-diagram}
        W_a\colon B_a &\to \nTate(\Cc).
    \end{align*}
    Then we have
    \begin{align*}
        \hom_{(n+1)\text{-}\Tate(\Cc))(\Cc)}(U\rotimes  V,W)&\cong \lim_{I^{\op}}\colim_A\lim_{B_a}\colim_{J_i^{\op}}\hom_{\nTate(\Cc)}(U\rotimes  V_{i,j},W_{a,b})\\
        &\cong \lim_{I^{\op}}\colim_A\lim_{B_a}\colim_{J_i^{\op}}\hom_{\nTate(\Cc)}(V_{i,j},\Hom(U,W_{a,b}))\\
        &\cong \hom_{(n+1)\text{-}\Tate(\Cc))}(V,\Hom(U,W)).
    \end{align*}
    Similarly, if $V\in\Cc$ and $U,W\in (n+1)\text{-}\Tate(\Cc)$ are represented by elementary Tate diagrams
    \begin{align*}
        U\colon I \to \Pro(\nTate(\Cc))\qquad \text{and}\qquad  W\colon A \to \Pro(\nTate(\Cc))
    \end{align*}
    and for each $i\in I$ and $a\in A$, let $U_i$ be represented by an admissible Pro-diagram
    \begin{align*}
        U_i\colon J_i &\to \nTate(\Cc)\intertext{and let $W_a$ be represented by an admissible Pro-diagram}
        W_a\colon B_a &\to \nTate(\Cc).
    \end{align*}
    Then we have
    \begin{align*}
        \hom_{(n+1)\text{-}\Tate(\Cc))(\Cc)}(U\rotimes  V,W)&\cong \lim_{I^{\op}}\colim_A\lim_{B_a}\colim_{J_i^{\op}}\hom_{\nTate(\Cc)}(U_{i,j}\rotimes  V,W_{a,b})\\
        &\cong \lim_{I^{\op}}\colim_A\lim_{B_a}\colim_{J_i^{\op}}\hom_{2n\text{-}Tate(\Cc)}(V,\Hom(U_{i,j},W_{a,b}))\\
        &\cong \hom_{(2n+2)\text{-}\Tate(\Cc)}(V,\Hom(U,W)).
    \end{align*}
\epf

\begin{proposition}
    Let $\Cc$ be an idempotent complete rigid tensor category in which the tensor product is bi-exact, i.e. suppose there exists an exact duality
    \begin{align*}
        (-)^\vee\colon\Cc^{\op}&\to^\simeq \Cc\intertext{along with a natural isomorphism} \Hom_{\Cc}(-,-)&\cong (-)^\vee\otimes -.
    \end{align*}
    \begin{enumerate}
        \item For $U\in\Cc$ and $V\in\nTate(\Cc)$, there is a canonical natural isomorphism
            \begin{equation*}
                V\rotimes  U^\vee\cong \Hom(U,V),
            \end{equation*}
            where $V^\vee$ denotes the extension of the duality $(-)^\vee$ to $\nTate(\Cc)$ as in Proposition \ref{prop:duals}.
        \item For $V\in\nTate(\Cc)$ and $U,W\in m\text{-}\Tate(\Cc)$, there is a canonical natural isomorphism
            \begin{equation*}
                \hom_{(n+m)\text{-}\Tate(\Cc)}(U\rotimes  V,W)\cong\hom_{(n+m)\text{-}\Tate(\Cc)}(U,W\rotimes  V^\vee).
            \end{equation*}
    \end{enumerate}
\end{proposition}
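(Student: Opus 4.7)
Both parts are proved by induction on $n$, leveraging the inductive constructions of $\rotimes$ (Proposition \ref{prop:tens1}), of $\Hom$ (the preceding proposition), and of the duality on Tate objects (Proposition \ref{prop:duals}). For part (1), the case $n=0$ is exactly the rigidity hypothesis $\Hom_\Cc(U,V)\cong V\otimes U^\vee$. For the inductive step, let $V\in\nelTate(\Cc)$ be presented by an elementary Tate diagram $V\colon I\to\Pro((n-1)\text{-}\elTate(\Cc))$ with $V_i\colon J_i\to (n-1)\text{-}\elTate(\Cc)$. The construction of $\rotimes$ in Proposition \ref{prop:tens1} presents $V\rotimes U^\vee$ by the diagram $i\mapsto \lim_{J_i} V_{i,j}\rotimes U^\vee$, while the second of the three inductions in the construction of $\Hom$ (fixing the contravariant slot in $\Cc$) presents $\Hom(U,V)$ by $i\mapsto \lim_{J_i}\Hom(U,V_{i,j})$. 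The inductive hypothesis provides natural isomorphisms $V_{i,j}\rotimes U^\vee\cong \Hom(U,V_{i,j})$ in $(n-1)\text{-}\Tate(\Cc)$, and the straightening constructions of \cite[\S 3]{MR3510209} assemble these into an isomorphism of the two diagrams, hence of the corresponding $n$-Tate objects; passage to the idempotent completion is automatic.

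Part (2) also proceeds by induction on $n$. The base case $n=0$ requires one preparatory observation: $\rotimes$ is symmetric whenever one of its arguments lies in $\Cc$, which follows by a subsidiary induction on the Tate level of the other argument, using the symmetry of $\otimes$ on $\Cc$ propagated through the $\colim/\lim$ in the definition of $\rotimes$. Granting this, for $V\in\Cc$ and $U,W\in m\text{-}\Tate(\Cc)$ one has
\begin{equation*}
    \hom(U\rotimes V,W)\cong \hom(V\rotimes U,W)\cong \hom(U,\Hom(V,W))\cong \hom(U,W\rotimes V^\vee),
\end{equation*}
where all $\hom$'s are taken in $m\text{-}\Tate(\Cc)$, the middle isomorphism is Proposition \ref{prop:homclosed}(1), and the last is part (1) of the present proposition.

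For the inductive step, present $V\in\nelTate(\Cc)$ by $V\colon I\to\Pro((n-1)\text{-}\elTate(\Cc))$ and $V_i\colon J_i\to (n-1)\text{-}\elTate(\Cc)$. By Proposition \ref{prop:duals} (and its iteration, Corollary \ref{cor_induce_selfequivalences}), $V^\vee$ is presented by the dualised data in which $\colim$ and $\lim$ exchange roles and each $V_{i,j}$ is replaced by $V_{i,j}^\vee$. Consequently both $U\rotimes V$ and $W\rotimes V^\vee$ can be written as iterated colimits and limits over $I,I^{\op},J_i,J_i^{\op}$ with pieces $U\rotimes V_{i,j}$ and $W\rotimes V_{i,j}^\vee$ respectively. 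Computing the two hom-sets via the Pro-Ind formulas for $\hom$ in a Tate category (as in the proof of Proposition \ref{prop:homclosed}) reduces the desired identity to $\hom_{(n-1+m)\text{-}\Tate(\Cc)}(U\rotimes V_{i,j},W')\cong \hom_{(n-1+m)\text{-}\Tate(\Cc)}(U,W'\rotimes V_{i,j}^\vee)$ at each index, which is exactly the inductive hypothesis. The main obstacle lies precisely here: one must carefully verify that Proposition \ref{prop:duals} transforms the Tate presentation of $V$ into the expected dualised presentation of $V^\vee$, and then check that both sides of the sought isomorphism, when unwound as iterated $\colim$'s and $\lim$'s, produce matching expressions at each level. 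The bi-exactness of $\rotimes$ and $\Hom_\Cc$, together with the idempotent completeness of $\Cc$, are what guarantee these limits and colimits commute with $\hom$ and with $\rotimes$ in the manner the argument requires.
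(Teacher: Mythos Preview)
Your argument follows the same inductive strategy as the paper: both parts induct on $n$, unwinding an elementary Tate presentation of $V$ and applying the inductive hypothesis levelwise, with the last isomorphism in part~(2) coming from the dual presentation of $V^\vee$. For the base case $n=0$ of part~(2) you are in fact more explicit than the paper, which simply says it ``follows from the first statement combined with Proposition~\ref{prop:homclosed}''; your observation that one first needs $U\rotimes V\cong V\rotimes U$ for $V\in\Cc$ (so that Proposition~\ref{prop:homclosed}(1) can be applied with the variables in the correct slots) is exactly the missing step. Your $W'$ in the inductive step should be $W$---with that correction your chain of isomorphisms is identical to the paper's, and your closing remarks are an honest acknowledgment of the point (the final identification $\lim_{I^{\op}}\colim_{J_i^{\op}}\hom(U,W\rotimes V_{i,j}^\vee)\cong\hom(U,W\rotimes V^\vee)$) that the paper passes over without comment.
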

\pf
    Both of these statements follow from induction and formal manipulation of limits and colimits. For the first statement, for $n=0$, there is nothing to show. Now suppose we have shown the first statement for $n$. Let $U\in\Cc$ and let $V\in(n+1)\text{-}\Tate(\Cc)$ be represented by an elementary Tate diagram
    \begin{align*}
        V\colon I &\to \Pro(\nTate(\Cc))\intertext{and for each $i\in I$, let $V_i$ be represented by an admissible Pro-diagram}
        V_i\colon J_i&\to \nTate(\Cc).
    \end{align*}
    Then we have
    \begin{align*}
        V\rotimes  U^\vee &\cong \colim_I\lim_{J_i} V_{i,j}\rotimes  U^\vee\\
        &\cong \colim_I\lim_{J_i}\Hom(U,V_{i,j})\\
        &\cong \Hom(U,V).
    \end{align*}
    This completes the induction step and the proof of the first statement.

    For the second statement, the case for $n=0$ and arbitrary $m$ follows from the first statement combined with Proposition \ref{prop:homclosed}. Now suppose we have shown the second statement for $n$ and $m$. Let $U,W\in m\text{-}\Tate(\Cc)$ and let $V\in(n+1)\text{-}\Tate(\Cc)$ be represented by an elementary Tate diagram
    \begin{align*}
        V\colon I &\to \Pro(\nTate(\Cc))\intertext{and for each $i\in I$, let $V_i$ be represented by an admissible Pro-diagram}
        V_i\colon J_i&\to \nTate(\Cc).
    \end{align*}
    Then we have
    \begin{align*}
        \hom_{(n+m+1)\text{-}\Tate(\Cc)}(U\rotimes  V,W)&\cong \lim_{I^{\op}}\colim_{J_i^{\op}}\hom_{(n+m)\text{-}\Tate(\Cc)}(U\rotimes  V_{i,j},W)\\
        &\cong \lim_{I^{\op}}\colim_{J_i^{\op}}\hom_{(n+m)\text{-}\Tate(\Cc)}(U,W\otimes V_{i,j}^\vee)\\
        &\cong \hom_{(n+m+1)\text{-}\Tate(\Cc)}(U,W\otimes V^\vee).
    \end{align*}
    This completes the induction and the proof of the second statement.
\epf

\begin{remark}\mbox{}
    \begin{enumerate}
        \item For $U\in\nTate(\Cc)$, $V\in m\text{-}\Tate(\Cc)$ and $W\in\ell\text{-}\Tate(\Cc)$ with $n\ge m+\ell$, the abelian groups
            \begin{align*}
                &\hom_{(n+m)\text{-}\Tate(\Cc)}(U\rotimes  V,W)\intertext{and}
                &\hom_{\nTate(\Cc)}(U,W\rotimes V^\vee)
            \end{align*}
            are not isomorphic in general. It suffices to take $n=2$, $m=\ell=1$ to see this.
        \item For $U\in\nTate(\Cc)$, $V\in m\text{-}\Tate(\Cc)$ and $W\in\ell\text{-}\Tate(\Cc)$ with $\ell\ge n+m$, the abelian groups
            \begin{align*}
                &\hom_{\ell\text{-}\Tate(\Cc)}(U\rotimes  V,W)\intertext{and}
                &\hom_{(m+\ell)\text{-}Tate(\Cc)}(U,W\rotimes V^\vee)
            \end{align*}
            are not isomorphic in general. Similarly, it suffices to take $n=m=1$ and $\ell=2$ to see this.
    \end{enumerate}
\end{remark}




\section{Formal tubular neighbourhoods}
One of the most prominent applications of Tate categories is the study of the geometry of a variety using adelic methods. In this section, we shall show that the Beilinson ad\`{e}les along a flag of points in a variety can be written as a normally ordered tensor product. In the literature this seems to have gone unnoticed so far.

Recall that $\nTate(k)$ is a shorthand for $\nTate(\mathsf{Vect}_{f}(k))$, the category of $n$-Tate objects over the category of finite-dimensional $k$-vector spaces, and note that because the tensor product of vector spaces is bi-exact, the categories of flat (higher) Tate objects and all (higher) Tate objects coincide.

As we have seen in \S \ref{sect:rightexacttens}, every bi-right exact functor
$\mathcal{C}\times\mathcal{C}\rightarrow\mathcal{C}$ induces a bi-exact
monoidal structure on flat Tate objects, i.e.
\[
-\overrightarrow{\otimes}-:\left.  n\text{-}\mathsf{Tate}^{\flat}%
(\mathcal{C})\right.  \times\left.  m\text{-}\mathsf{Tate}^{\flat}%
(\mathcal{C})\right.  \longrightarrow\left.  (n+m)\text{-}\mathsf{Tate}%
^{\flat}(\mathcal{C})\right.%
\]
and following Waldhausen \cite[\S 1.5]{MR802796} every bi-exact functor
induces a pairing in algebraic $K$-theory:

\begin{definition}
\label{def_ExternalProd}On the level of $K$-theory groups, this takes the form%
\[
\overrightarrow{\otimes}:K_{p}(\left.  n\text{-}\mathsf{Tate}^{\flat
}(\mathcal{C})\right.  )\times K_{q}(\left.  m\text{-}\mathsf{Tate}^{\flat
}(\mathcal{C})\right.  )\longrightarrow K_{p+q}(\left.  (n+m)\text{-}%
\mathsf{Tate}^{\flat}(\mathcal{C})\right.  )\text{.}%
\]
We will also call this product \textquotedblleft$\overrightarrow{\otimes}%
$\textquotedblright\ the \emph{external product} in the $K$-theory of flat
Tate objects.
\end{definition}

\medskip
\textit{Conventions for geometry and ad\`{e}les:} \thinspace All morphisms
of schemes will tacitly be assumed to be separated. For us a \emph{variety} is a scheme of finite type over a field. If $X$ is a scheme and $x,y\in X$ scheme points, we write $x\geq y$ if
$y\in\overline{\{x\}}$, and $x>y$ if additionally $x\neq y$. Write $S(X)_{\bullet}$ for
the simplicial set of flags of points in a scheme, $A(K,\mathcal{F})$ for the
Parshin--Beilinson ad\`{e}les for a subset $K_{\bullet}\subseteq
S(X)_{\bullet}$, and a quasi-coherent sheaf $\mathcal{F}$ (see \cite[\S 2]%
{MR565095}, \cite[\S 2.1]{MR3536437} for details). If $X$ is a purely
$n$-dimensional scheme, an element $\triangle=(\eta_{0}>\cdots>\eta_{n})\in
S(X)_{n}$ with $\operatorname*{codim}_{X}\overline{\{\eta_{i}\}}=i$ will be
called a \emph{saturated} flag.\
\medskip

\begin{definition}
\label{l_Def_LambdaOneTateObjects}Let $X$ be an integral Noetherian scheme. For scheme
points $x,y\in X$, we define a $1$-Tate object in coherent sheaves supported
on the Zariski closure $\overline{\{x\}}$:%
\begin{equation}
\left.  _{x}\Lambda_{y}\right.  :=\left.  \underset{\mathcal{F}\subseteq
\mathcal{O}_{y}}{\operatorname*{colim}}\right.  \left.  \underset{j}{\lim
}\right.  \mathcal{F}/\mathcal{I}_{x}^{j}\in\left.  1\text{-}\mathsf{Tate}%
^{\flat}\right.  (\operatorname*{Coh}\nolimits_{\overline{\{x\}}%
}(X)), \label{lp_1}%
\end{equation}
where $\mathcal{F}$ runs through all coherent sub-sheaves of the
quasi-coherent sheaf $\mathcal{O}_{y}$, $j\in\mathbb{Z}_{\geq1}$, and
$\mathcal{I}_{x}$ denotes the ideal sheaf of $\overline{\{x\}}$. This defines
an exact functor%
\begin{align*}
\left.  _{x}\Omega_{y}\right.  :\operatorname*{Coh}X  & \longrightarrow
\left.  1\text{-}\mathsf{Tate}^{\flat}\right.  (\operatorname*{Coh}%
\nolimits_{\overline{\{x\}}}(X))\\
\mathcal{G}  & \longmapsto\mathcal{G}\overrightarrow{\otimes}\left.
_{x}\Lambda_{y}\right.  \text{,}%
\end{align*}
where $\overrightarrow{\otimes}$ is the tensor product with $\mathcal{G}$,
viewed as a $0$-Tate object. More explicitly, this just means that
\[
\left.  _{x}\Omega_{y}\right.  (\mathcal{G})=\left.  \underset{\mathcal{F}%
\subseteq\mathcal{O}_{y}}{\operatorname*{colim}}\right.  \left.  \underset
{j}{\lim}\right.  (\mathcal{G}\otimes_{\mathcal{O}_{X}}\mathcal{F)}%
/\mathcal{I}_{x}^{j}%
\]
with the same notation as in Equation \ref{lp_1}.
\end{definition}

\pf
Let us justify why this definition makes sense: Firstly, each $\mathcal{F}%
/\mathcal{I}_{x}^{j}$ is a coherent sheaf with support contained in
$\overline{\{x\}}$ since $j\geq1$. Thus, for every coherent sub-sheaf
$\mathcal{F}$ of $\mathcal{O}_{y}$, $\left.  \underset{j}{\lim}\right.
\mathcal{F}/\mathcal{I}_{x}^{j}$ defines an object in $\left.  \mathsf{Pro}%
^{a}\right.  (\operatorname*{Coh}\nolimits_{\overline{\{x\}}}(X))$. By the
Artin--Rees Lemma this object is actually flat in the sense of Definition
\ref{def:flatadmpro}. Hence, $\left.  _{x}\Lambda_{y}\right.  $ is an
Ind-object of flat admissible Pro-objects.
For any scheme point $y$, the morphism $t:\operatorname*{Spec}\mathcal{O}%
_{y}\rightarrow X$ gives rise to a natural morphism $\mathcal{O}%
_{X}\rightarrow t_{\ast}t^{\ast}\mathcal{O}_{X}\cong\mathcal{O}_{y}$. On
stalks this morphism amounts to a localization, i.e. the kernel $\ker
(\mathcal{O}_{X}\rightarrow\mathcal{O}_{y})$ consists only of torsion elements
of the multiplicative system of the localization, and since $X$ is an integral
scheme, no non-zero such can exist. We may therefore regard $\mathcal{O}_{X}$
as a subsheaf of $\mathcal{O}_{y}$.
Then the concrete choice $\mathcal{F}%
:=\mathcal{O}_{X}$ gives rise to the short exact sequence
\[
0\longrightarrow\left.  \underset{j}{\lim}\right.  \mathcal{O}_{X}%
/\mathcal{I}_{x}^{j}\longrightarrow\left.  _{x}\Lambda_{y}\right.
\longrightarrow\left.  \underset{\mathcal{F}\subseteq\mathcal{O}_{y}%
}{\operatorname*{colim}}\right.  \mathcal{F}/\mathcal{O}_{X}\longrightarrow0
\]
in the category of admissible Ind-Pro objects. As the first term is a
Pro-object and the last an Ind-object, it follows that $\left.  _{x}%
\Lambda_{y}\right.  $ is a (flat) Tate object, and thus lies in $\left.
1\text{-}\mathsf{Tate}^{\flat}\right.  (\operatorname*{Coh}%
\nolimits_{\overline{\{x\}}}(X))$. The exactness of the functor $\left.
_{x}\Omega_{y}\right.  $ follows from the flatness of $\mathcal{O}_{y}$ over
$\mathcal{O}_{X}(U)$ for any affine open $U$ containing $y$, and again the
Artin--Rees Lemma.
\epf

\begin{remark}
Following a suggestion of the referee, let us point out that one may think of
this definition as%
\[
\text{\textquotedblleft}\left.  _{x}\Lambda_{y}\right.  \sim\mathcal{O}%
_{X,\widehat{\overline{\{x\}}}\cap y}\text{\textquotedblright,}%
\]
at least philosophically.
\end{remark}

\begin{theorem}Let $k$ be a field. Suppose $X/k$ is a purely
$n$-dimensional integral Noetherian $k$-scheme and $\triangle=(\eta_{0}>\cdots>\eta
_{n})$ a saturated flag. Then the diagram of functors%
\[%
\xymatrix{
\operatorname*{Coh}X \ar[d]^{{\left. _{\eta_{n}}\Omega_{\eta_{n-1}}\right.}}
& \times\cdots\times& \operatorname*{Coh}X \ar[d]_{{\left
. _{\eta_{1}}\Omega_{\eta_{0}}\right.}} \ar[r]^{\otimes}    & \operatorname
*{Coh}X \ar[d]_{A(\triangle,-)} \\
\left.  1\text{-}\mathsf{Tate}^{\flat}(\operatorname*{Coh}\nolimits
_{\overline{\{\eta_{n}\}}}(X))\right.  & \times\cdots\times
& \left.  1\text{-}\mathsf{Tate}^{\flat}(\operatorname*{Coh}\nolimits
_{\overline{\{\eta_{1}\}}}(X))\right.
\ar[r]_{\overrightarrow{\otimes}} & \left.  n\text{-}\mathsf{Tate}^{\flat
}(\operatorname*{Coh}\nolimits_{\overline{\{\eta_{n}\}}}(X))\right.
}%
\]
is commutative. Here ${A(\triangle,-)}$ denotes the Beilinson ad\`{e}les (as in \cite{MR565095}).
\end{theorem}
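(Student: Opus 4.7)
The plan is to induct on $n$, the length of the flag. The base case $n=1$ should be essentially tautological: for a two-term flag $(\eta_0 > \eta_1)$, unwinding the definition of the Parshin--Beilinson ad\`ele produces exactly the colim-lim expression defining $_{\eta_1}\Omega_{\eta_0}(\mathcal{G})$, so no content beyond Definition \ref{l_Def_LambdaOneTateObjects} is needed.

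For the inductive step, I would peel off the outermost colim-lim from both sides in parallel. The Parshin--Beilinson construction admits a recursive description of the form
\[
A(\triangle, \mathcal{H}) \cong \colim_{\mathcal{F}\subseteq \mathcal{H}_{\eta_0}}\lim_j A(\triangle', \mathcal{F}/\mathcal{I}_{\eta_1}^j),
\]
where $\triangle' = (\eta_1 > \cdots > \eta_n)$ is the truncated sub-flag. On the right-hand side, the construction of $\overrightarrow{\otimes}$ from the proof of Proposition \ref{prop:tens1}, applied with innermost Tate factor $_{\eta_1}\Omega_{\eta_0}(\mathcal{G}_1)$, unwinds the normally ordered tensor product into a colim over coherent sub-sheaves $\mathcal{F}\subseteq(\mathcal{G}_1)_{\eta_0}$ and a limit over $j$ of the $(n-1)$-fold ordered tensor product whose innermost $0$-Tate coefficient is $(\mathcal{G}_1\otimes \mathcal{F})/\mathcal{I}_{\eta_1}^j$. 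Using bi-exactness of $\overrightarrow{\otimes}$ I would absorb this coefficient into the tensor string, and apply the inductive hypothesis to $\triangle'$ with the coherent sheaf $\mathcal{G}_n\otimes\cdots\otimes\mathcal{G}_2\otimes (\mathcal{G}_1\otimes\mathcal{F})/\mathcal{I}_{\eta_1}^j$; this identifies the inner expression with the Beilinson ad\`ele along $\triangle'$.

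The hard part will be reconciling the outer indexing of the two colimits: on the ad\`ele side, $\mathcal{F}$ ranges over coherent sub-sheaves of $(\mathcal{G}_n\otimes\cdots\otimes\mathcal{G}_1)_{\eta_0}$, whereas the unpacking above only sees sub-sheaves of $(\mathcal{G}_1)_{\eta_0}$. I expect to handle this via a cofinality argument: since each $\mathcal{G}_i$ is coherent and $\mathcal{O}_{\eta_0}$ is a localization at a generic point, sub-sheaves of the form $\mathcal{G}_n\otimes\cdots\otimes\mathcal{G}_2\otimes\mathcal{F}$ should be cofinal in the poset of all coherent sub-sheaves of the full tensor stalk. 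Compatibility of the inner completion at $\eta_1$ with tensor products, needed to align the indexing of the limits, will follow from flatness of $\mathcal{O}_{\eta_0}$ over $\mathcal{O}_X(U)$ for affine neighbourhoods $U$ of $\eta_0$, together with the Artin--Rees lemma---the same tools invoked in the justification of Definition \ref{l_Def_LambdaOneTateObjects}. Once the two iterated colim-lim structures are identified term by term and verified to be natural in the $\mathcal{G}_i$, the inductive hypothesis closes the argument, and specialising all $\mathcal{G}_i = \mathcal{O}_X$ recovers Corollary \ref{cor:tubedecomp} stated in the introduction.
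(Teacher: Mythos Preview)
Your proposal is correct and amounts to the same argument the paper gives, just packaged inductively rather than by a single direct unwinding. The paper unravels the definition of $\overrightarrow{\otimes}$ all at once to obtain an explicit iterated $\colim_{\mathcal{F}_r\subseteq\mathcal{O}_{\eta_r}}\lim_{j_{r+1}}$ formula for the bottom-row composite, then unravels the recursive definition of $A(\triangle,-)$ as an $n$-Tate object to obtain literally the same expression (up to reordering the innermost $\mathcal{O}_X$-tensor factors, which is harmless by symmetry of $\otimes$ on $\mathrm{Coh}\,X$); your induction on $n$ is just this unwinding performed one layer at a time.

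One remark on your ``hard part'': the cofinality concern you flag --- that the ad\`ele recursion indexes over coherent sub-sheaves of $(\mathcal{G}_n\otimes\cdots\otimes\mathcal{G}_1)_{\eta_0}$ rather than of $(\mathcal{G}_1)_{\eta_0}$ --- is real if you use Beilinson's original indexing, but the paper sidesteps it by invoking the form of the ad\`ele recursion (from \cite{MR3510209}, \cite{bgwTateModule}) in which every colimit runs over $\mathcal{F}_r\subseteq\mathcal{O}_{\eta_r}$ from the start. With that convention both sides have identical indexing diagrams at every stage and no separate cofinality argument is required; the equivalence of the two indexing conventions is exactly the cofinality statement you sketch, and is absorbed into the cited references rather than reproved here.
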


Before proving this result, we note the most important consequence:\ The Beilinson ad\`{e}les along a flag can functorially be written as
a normally ordered tensor product:

\begin{corollary}\textsc{(Tubular decomposition)}\label{cor:tubedecomp}
With the same assumptions as in the theorem, we have an isomorphism%
\[
A(\triangle,\mathcal{O}_{X})\cong\left.  _{\eta_{n}}\Lambda_{\eta_{n-1}%
}\right.  \overrightarrow{\otimes}\cdots\overrightarrow{\otimes}\left.
_{\eta_{1}}\Lambda_{\eta_{0}}\right.
\]
in the category of $n$-Tate objects of coherent sheaves with support in the
closed point $\eta_{n}$.
\end{corollary}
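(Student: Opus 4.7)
The plan is to deduce the Corollary as an immediate specialization of the preceding Theorem. I would apply the commuting diagram of the Theorem to the $n$-tuple of inputs $(\mathcal{O}_{X}, \ldots, \mathcal{O}_{X})$ entering the upper-left corner. The top-right composite then yields $A(\triangle, \mathcal{O}_{X} \otimes \cdots \otimes \mathcal{O}_{X}) \cong A(\triangle, \mathcal{O}_{X})$, using that $\mathcal{O}_{X}$ is the unit for the (bi-right exact) tensor product on $\operatorname{Coh}X$.

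For the left-bottom composite, I would unwind the definitions. By Definition \ref{l_Def_LambdaOneTateObjects},
\[
\left. _{x}\Omega_{y} \right. (\mathcal{O}_{X}) \;=\; \underset{\mathcal{F}\subseteq \mathcal{O}_{y}}{\operatorname{colim}} \; \underset{j}{\lim} \; (\mathcal{O}_{X} \otimes_{\mathcal{O}_{X}} \mathcal{F})/\mathcal{I}_{x}^{j} \;=\; \left. _{x}\Lambda_{y} \right.,
\]
so the left-bottom composite, applied to $(\mathcal{O}_{X}, \ldots, \mathcal{O}_{X})$, produces $\left. _{\eta_{n}}\Lambda_{\eta_{n-1}} \right. \overrightarrow{\otimes} \cdots \overrightarrow{\otimes} \left. _{\eta_{1}}\Lambda_{\eta_{0}} \right.$. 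The commutativity asserted in the Theorem then provides the required isomorphism in the category of flat $n$-Tate objects of coherent sheaves supported on $\overline{\{\eta_{n}\}}$.

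In short, there is no genuine obstacle beyond invoking the Theorem: all the geometric content, namely the identification of Beilinson adèles along a flag with iterated formal tubular neighbourhoods, has already been built into the commutativity of that diagram, and the Corollary simply records the value of the functorial statement at the monoidal unit. The only verification needed is the trivial observation that $\left. _{x}\Omega_{y} \right. $ sends the structure sheaf to $\left. _{x}\Lambda_{y} \right.$, which is immediate from the explicit formula in Definition \ref{l_Def_LambdaOneTateObjects}.
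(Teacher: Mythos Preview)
Your proposal is correct and matches the paper's own argument essentially verbatim: the paper derives the Corollary by evaluating the commutative diagram of the Theorem at the object $\mathcal{O}_{X}\times\cdots\times\mathcal{O}_{X}$ and comparing the two routes to the lower-right corner, exactly as you describe.
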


\pf
We prove the theorem. The horizontal arrow in the top row is the tensor
product of $\mathcal{O}_{X}$-modules. This is a right $n$-polyexact functor,
i.e. for any bracketing $((\ldots)\ldots)$ decomposing it into a concatenation
of $n-1$ functors in two arguments, all these two-argument functors are
bi-right exact. This holds as the individual tensor product%
\[
\operatorname*{Coh}X\times\operatorname*{Coh}X\longrightarrow
\operatorname*{Coh}X
\]
is bi-right exact. The horizontal arrow in the bottom row is the external
product of flat Tate objects, induced from the tensor product of
$\mathcal{O}_{X}$-modules with support,%
\[
\operatorname*{Coh}\nolimits_{Z_{1}}X\times\operatorname*{Coh}\nolimits_{Z_{2}%
}X\longrightarrow\operatorname*{Coh}\nolimits_{Z_{1}\cap Z_{2}}X
\]
for $Z_{1},Z_{2}$ arbitrary closed subsets of $X$. As this is also bi-right
exact, the results of \S \ref{sect:rightexacttens} apply. Thus, all
ingredients of the diagram are well-defined. Next, we need to check
commutativity. Suppose we start from the object $\mathcal{G}_{n-1}\times
\cdots\times\mathcal{G}_{0}$ in the product category in the upper left corner
of the diagram. For $0\leq r<n$ we let%
\[
T^{r}:=\left.  _{\eta_{r+1}}\Lambda_{\eta_{r}}\right.  =\left.  \underset
{\mathcal{F}_{r}\subseteq\mathcal{O}_{\eta_{r}}}{\operatorname*{colim}%
}\right.  \left.  \underset{j_{r+1}}{\lim}\right.  \frac{\mathcal{G}%
_{r}\otimes\mathcal{F}_{r}}{\mathcal{I}_{\eta_{r+1}}^{j_{r+1}}}\in\left.
1\text{-}\mathsf{Tate}^{\flat}(\operatorname*{Coh}\nolimits_{\overline
{\{\eta_{r+1}\}}}(X))\right.  \text{,}%
\]
where $\mathcal{F}_{r}$ runs through the coherent sub-sheaves of
$\mathcal{O}_{\eta_{r}}$. Unravel the construction of the tensor product
following \S \ref{sect:biexacttens}. This yields%
\begin{equation}
T^{n-1}\overrightarrow{\otimes}\cdots\overrightarrow{\otimes}T^{0}=\left.
\underset{\mathcal{F}_{0}\subseteq\mathcal{O}_{\eta_{0}}}%
{\operatorname*{colim}}\right.  \left.  \underset{j_{1}}{\lim}\right.
\cdots\left.  \underset{\mathcal{F}_{n-1}\subseteq\mathcal{O}_{\eta_{n-1}}%
}{\operatorname*{colim}}\right.  \left.  \underset{j_{n}}{\lim}\right.
(\mathcal{G}_{n-1}\otimes\cdots\otimes\mathcal{G}_{0})\otimes\frac
{\mathcal{F}_{n-1}}{\mathcal{I}_{\eta_{n}}^{j_{n}}}\otimes\cdots\otimes
\frac{\mathcal{F}_{0}}{\mathcal{I}_{\eta_{1}}^{j_{1}}}\text{,}\label{lca1}%
\end{equation}
where the tensor products on the right are those of $\mathsf{Mod}%
_{\mathcal{O}_{X}}$. On the other hand, following the inductive definition of
the ad\`{e}les as an $n$-Tate object (see for example \cite{MR3510209} or
\cite{bgwTateModule}), we get%
\begin{align*}
A(\eta_{0}  & >\cdots>\eta_{n},\mathcal{G}_{n-1}\otimes\cdots\otimes
\mathcal{G}_{0})=\left.  \underset{\mathcal{F}_{0}\subseteq\mathcal{O}%
_{\eta_{0}}}{\operatorname*{colim}}\right.  \left.  \underset{j_{1}}{\lim
}\right.  A(\eta_{2}>\cdots>\eta_{n},\mathcal{F}_{0}\otimes\mathcal{O}%
_{\eta_{1}}/\mathcal{I}_{\eta_{1}}^{j_{1}})\\
& =\cdots=\left.  \underset{\mathcal{F}_{0}\subseteq\mathcal{O}_{\eta_{0}}%
}{\operatorname*{colim}}\right.  \left.  \underset{j_{1}}{\lim}\right.
\cdots\left.  \underset{\mathcal{F}_{n-1}\subseteq\mathcal{O}_{\eta_{n-1}}%
}{\operatorname*{colim}}\right.  \left.  \underset{j_{n}}{\lim}\right.
(\mathcal{G}_{n-1}\otimes\cdots\otimes\mathcal{G}_{0})\otimes\frac
{\mathcal{F}_{0}}{\mathcal{I}_{\eta_{1}}^{j_{1}}}\otimes\cdots\otimes
\frac{\mathcal{F}_{n-1}}{\mathcal{I}_{\eta_{n}}^{j_{n}}}\text{,}%
\end{align*}
which is of course literally the same object, thanks to the symmetry of the
tensor product in $\mathcal{O}_{X}$-module sheaves. If $\mathcal{G}%
,\mathcal{G}^{\prime}$ are coherent sheaves with supports in $Z$ and
$Z^{\prime}$, then $\mathcal{G}\otimes\mathcal{G}^{\prime}$ has support in
$Z\cap Z^{\prime}$ and thus this $n$-Tate object actually lies in $\left.
n\text{-}\mathsf{Tate}^{\flat}(\operatorname*{Coh}\nolimits_{W}(X))\right.  $
with $W=\overline{\{\eta_{n}\}}\cap\cdots\cap\overline{\{\eta_{1}\}}%
=\overline{\{\eta_{n}\}}$. While this was just a verification of the
commutativity on the level of objects, all our steps were natural in all
objects $\mathcal{G}_{n-1},\ldots,\mathcal{G}_{0}$, so morphisms between
objects get induced compatibly as well. The corollary follows by evaluation of
the object $\mathcal{O}_{X}\times\cdots\times\mathcal{O}_{X}$ from the upper
left corner in the lower right corner in the two compatible ways.
\epf

\begin{corollary}
\label{cor_ProductCompat}With the same assumptions as in the theorem, the
restriction to vector bundles in the top row%
\[%
\xymatrix{
\operatorname*{VB}X \ar[d]^{{\left. _{\eta_{n}}\Omega_{\eta_{n-1}}\right.}}
& \times\cdots\times& \operatorname*{VB}X \ar[d]_{{\left. _{\eta
_{1}}\Omega_{\eta_{0}}\right.}} \ar[r]^{\otimes}    & \operatorname*{VB}%
X \ar[d]_{A(\triangle,-)} \\
\left.  1\text{-}\mathsf{Tate}^{\flat}(\operatorname*{Coh}\nolimits
_{\overline{\{\eta_{n}\}}}(X))\right.  & \times\cdots\times
& \left.  1\text{-}\mathsf{Tate}^{\flat}(\operatorname*{Coh}\nolimits
_{\overline{\{\eta_{1}\}}}(X))\right.
\ar[r]_{\overrightarrow{\otimes}} & \left.  n\text{-}\mathsf{Tate}^{\flat
}(\operatorname*{Coh}\nolimits_{\overline{\{\eta_{n}\}}}(X))\right.
}%
\]
is a commutative diagram whose horizontal functors are $n$-polyexact and
downward functors exact.
\end{corollary}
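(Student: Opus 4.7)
The plan is to leverage the commutativity statement of the preceding theorem, which is already established over the full sub-category of coherent sheaves, and then to upgrade the exactness claims. Since $\operatorname*{VB}X \subseteq \operatorname*{Coh}X$ is a full sub-category, commutativity of the restricted diagram is automatic: one simply restricts the inputs of the theorem to vector bundles in each factor of the product domain.

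First I would handle the vertical arrows. The functor $\left._{\eta_{r+1}}\Omega_{\eta_r}\right.$ was already shown, in the justification following Definition \ref{l_Def_LambdaOneTateObjects}, to be exact on all of $\operatorname*{Coh}X$; restricting to $\operatorname*{VB}X$ trivially preserves this exactness, and the target still lies in flat $1$-Tate objects supported on $\overline{\{\eta_{r+1}\}}$, since tensoring with a vector bundle preserves flatness of the relevant admissible Pro-objects in the sense of Definition \ref{def:flatadmpro} (the Artin--Rees argument used there applies verbatim).

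Next I would address the polyexactness of the horizontal arrows. For the top row, the ordinary tensor product of two vector bundles on $X$ is again a vector bundle, and since vector bundles are locally free, hence flat, this two-argument functor is bi-\emph{exact}, not merely bi-right exact. Consequently, any bracketing of the $n$-fold tensor product $\operatorname*{VB}X \times \cdots \times \operatorname*{VB}X \to \operatorname*{VB}X$ decomposes into a composition of bi-exact two-argument tensor products, which establishes $n$-polyexactness. For the bottom row, the bi-exactness of each two-argument external tensor product
\[
\left.n\text{-}\mathsf{Tate}^{\flat}(\mathcal{C})\right. \times \left.m\text{-}\mathsf{Tate}^{\flat}(\mathcal{C})\right. \longrightarrow \left.(n+m)\text{-}\mathsf{Tate}^{\flat}(\mathcal{C})\right.
\]
is precisely the content of the construction in \S\ref{sect:rightexacttens}, so the analogous bracketing argument yields $n$-polyexactness of the $n$-fold external tensor product.

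The main (and really only) potential obstacle is bookkeeping: verifying that after restriction to vector bundles one still lands in the flat sub-category and that each bi-right exact two-argument functor appearing in the relevant bracketing is genuinely bi-exact. Both points reduce immediately to the flatness of vector bundles over $\mathcal{O}_X$, so no new argument is required beyond what has already been assembled in \S\ref{sect:rightexacttens} and in the preceding theorem.
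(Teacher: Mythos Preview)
Your proposal is correct and follows essentially the same approach as the paper: commutativity is inherited by restriction from the theorem, the downward functors are already exact on all coherent sheaves, the bottom horizontal arrow is bi-exact by the construction of \S\ref{sect:rightexacttens} for flat Tate objects, and the top horizontal arrow becomes bi-exact (not merely bi-right exact) because vector bundles are locally free and hence flat. The paper's proof says exactly this in three sentences; your version is just more explicit about the bookkeeping.
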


\pf
The downward functors are always exact, even for coherent sheaves. The bottom
horizontal arrow is exact as we work with flat Tate objects. The top
horizontal functor now is exact by the local flatness of vector bundles.
\epf

\begin{remark}
\label{rmk_UnitsDefineK1ClassesOfTateObjects}We may also read%
\[
L:=\left.  \underset{\mathcal{F}\subseteq\mathcal{O}_{y}}%
{\operatorname*{colim}}\right.  \left.  \underset{j}{\lim}\right.
\mathcal{F}/\mathcal{I}_{x}^{j}%
\]
as a $k$-algebra \textit{if} we decide to carry out the limit and colimit. We get a
canonical $k$-algebra homomorphism $\mathcal{O}_{y}\rightarrow L$. In
particular, every unit $f\in\mathcal{O}_{y}^{\times}$ acts by multiplication
on $L$. As any such multiplication is compatible with the Ind- and Pro-limit,
it also determines an automorphism of $\left.  _{x}\Lambda_{y}\right.  $ as an
object in $\left.  1\text{-}\mathsf{Tate}^{\flat}\right.  (\operatorname*{Coh}%
\nolimits_{\overline{\{x\}}}(X))$. Finally, every automorphism of an object in
an exact category defines a canonical element in the $K_{1}$-group of this
category. Thus, we get a canonical group homomorphism%
\[
\underset{\circlearrowright\left.  _{x}\Lambda_{y}\right.  }{\left[  -\right]
}:\mathcal{O}_{y}^{\times}\longrightarrow K_{1}(\left.  1\text{-}%
\mathsf{Tate}^{\flat}\right.  (\operatorname*{Coh}\nolimits_{\overline{\{x\}}%
}(X)))\text{.}%
\]

\end{remark}

\begin{definition}
\label{def:UnitActingOnPartialAdeleFactor}In the situation of the previous
remark, write%
\[
\underset{\circlearrowright\left.  _{x}\Lambda_{y}\right.  }{\left[  f\right]
}\in K_{1}(\left.  \mathsf{Tate}^{\flat}(\operatorname*{Coh}%
\nolimits_{\overline{\{x\}}}(X))\right.  )%
\]
for the image of an element $f\in\Oc^\times_y$.
\end{definition}

\section{Ad\`{e}les}\label{sectadeles}

\subsection{Motivation/the classical case}

Let us recall the relation between the degree of a line bundle on a curve and
the ad\`{e}les. To this end, let $k$ be a field, $\pi : X \rightarrow k$ an integral smooth proper curve
and $L$ a line bundle on $X$. Using the Weil uniformization of the Picard
group, the isomorphism class of $L$ has a unique representative in%
\begin{equation}
\operatorname*{Pic}X=\left.  k(X)^{\times}\right\backslash \left.
\mathbf{A}^{\times}\right.  \left/  \mathbf{O}^{\times}\right.  \text{,}%
\label{lcpic}%
\end{equation}
where $\mathbf{A}$ denotes the ad\`{e}les, so that $\mathbf{A}^{\times}$ are
the id\`{e}les, $\mathbf{O}$ the integral ad\`{e}les and $k(X)$ the rational
function field. The multiplicative group $\mathbf{A}^{\times}$ acts on the
ad\`{e}les via multiplication,%
\[
\mathbf{A}^{\times}\circlearrowright\mathbf{A}\text{,}%
\]
but the ad\`{e}les can also be regarded as a $1$-Tate object of finite-dimensional
$k$-vector spaces, and this action induces an automorphism of this $1$-Tate
object. Like any automorphism, this pins down a unique element in the $K_{1}%
$-group of the category. In other words, we get a group homomorphism%
\begin{equation}
\mathbf{A}^{\times}\longrightarrow K_{1}(\left.  \mathsf{Tate}(k)\right.
)\cong\mathbb{Z}\text{.}\label{lcpic2}%
\end{equation}
It is a classical computation that this morphism sends any representative of
the line bundle, as in Equation \ref{lcpic}, to the degree of the line bundle:

\begin{theorem}\textsc{(Weil)} This morphism sends a line bundle $L$ to its degree $\deg L$.
\end{theorem}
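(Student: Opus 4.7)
The plan is to decompose $\mathbf{A}$ as a restricted direct sum of local Tate objects via Corollary \ref{cor:tubedecomp}, then reduce the $K_1$-class of multiplication by an id\`ele to a finite sum of local index contributions computed at each closed point of $X$.

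First, I would apply Corollary \ref{cor:tubedecomp} to each saturated flag $\triangle = (\eta_0 > v)$, where $\eta_0$ is the generic point of $X$ and $v$ ranges over closed points, to identify $A(\triangle, \mathcal{O}_X) = {}_v\Lambda_{\eta_0} \cong k(X)_v$, the completion of $k(X)$ at $v$. The full ad\`eles then assemble as the restricted product $\mathbf{A} = \prod'_v k(X)_v$ with respect to the integral lattices $\widehat{\mathcal{O}}_{X,v}$, exhibiting $\mathbf{A}$ as a Tate object in $\Vect_f(k)$ with distinguished lattice $\mathbf{O} := \prod_v \widehat{\mathcal{O}}_{X,v}$.

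Second, I would invoke the standard description of $K_1(\Tate(k)) \cong \mathbb{Z}$ by the index map: for an automorphism $\phi$ of a Tate object with lattice $L$, its class is the relative dimension
\[
\dim_k L/(L \cap \phi L) - \dim_k \phi L/(L \cap \phi L),
\]
independent of the choice of $L$. Taking $L = \mathbf{O}$ and $\phi =$ multiplication by an id\`ele $(f_v)$, the lattices $\mathbf{O}$ and $(f_v)\mathbf{O}$ differ only at the finitely many places where $f_v \notin \widehat{\mathcal{O}}_{X,v}^{\times}$, so the index decomposes as a finite sum of local contributions. Writing $f_v = u_v \pi_v^{n_v}$ with $u_v \in \widehat{\mathcal{O}}_{X,v}^{\times}$ and $n_v = \operatorname{ord}_v(f_v)$, the local contribution at $v$ equals $-n_v [k(v):k] = -n_v \deg(v)$ (the case $n_v < 0$ being handled symmetrically by reversing the roles of the two lattices). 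Summing over $v$ yields $\sum_v \operatorname{ord}_v(f_v)\deg(v) = \deg L$, up to the overall sign fixed by the orientation of the index isomorphism, using the classical formula for the degree of the divisor associated to any id\`elic representative of $L$ under the Weil uniformization \eqref{lcpic}.

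The main obstacle is making rigorous the identification of $\mathbf{A}$ with the restricted product $\prod'_v k(X)_v$ in the Tate category and verifying the additivity of the index across this decomposition --- equivalently, that an id\`elic automorphism with all components in the local unit groups $\widehat{\mathcal{O}}_{X,v}^{\times}$ contributes zero. Once this compatibility is established, the remainder is a direct residue-field computation together with the standard formula $\deg L = \sum_v \operatorname{ord}_v(f_v)\deg(v)$.
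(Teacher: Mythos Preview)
Your argument is correct and is essentially the classical computation that the paper only sketches in Elaboration~\ref{elab_WeilDisrupt} and Example~\ref{ex_IndexMapComp}. The paper does not give a self-contained proof of this theorem; it treats it as known and instead deduces the rephrased version (Proposition~\ref{propweildegree}) as the one-dimensional case of Theorem~\ref{thm_multiplicity}. That route is quite different from yours: it passes through the \v{C}ech--Gersten description of the intersection product (Proposition~\ref{prop_intersection_via_boundaryformula}), then through the chain of comparisons $V_{Tate}=V_{alg}$ (Lemma~\ref{vtate_vs_valg}, relying on \cite{bgwRelativeTateObjects}) and $V_{Ger}=\sum_\triangle V_{alg}(\triangle)$ (Lemma~\ref{vger_vs_sum_valg}). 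Your approach computes the index directly against the single lattice $\mathbf{O}$ and localizes by hand, which is more elementary and entirely adequate for $n=1$; the paper's route buys uniformity in $n$ at the cost of the full $K$-theoretic comparison machinery.

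Two minor remarks. First, invoking Corollary~\ref{cor:tubedecomp} for a single saturated flag on a curve is harmless but adds nothing: in that case it degenerates to the definition of ${}_v\Lambda_{\eta_0}$, and the identification with the completed local field is classical rather than a consequence of the tubular decomposition. Second, the ``main obstacle'' you flag---additivity of the index over the restricted product---is not a genuine obstacle: since $(f_v)\mathbf{O}$ and $\mathbf{O}$ agree at all but finitely many $v$, both quotients $\mathbf{O}/(\mathbf{O}\cap f\mathbf{O})$ and $f\mathbf{O}/(\mathbf{O}\cap f\mathbf{O})$ are finite direct sums of the corresponding local quotients, so the dimension count decomposes on the nose. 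Equivalently, an id\`ele with all components integral units fixes the lattice $\mathbf{O}$ and hence has index zero. The sign you leave open is exactly the one fixed in Example~\ref{ex_IndexMapComp} (so that $D([t^r])=r\cdot[k]$), consistent with the minus sign appearing in Proposition~\ref{propweildegree}.
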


\begin{elab}\label{elab_WeilDisrupt}Of course, classically this fact has been formulated without $K$-theory.
Indeed, \eqref{lcpic2} can be made concrete as follows: Choose a splitting
$\mathbf{A}\simeq E\oplus\mathbf{O}$ as $k$-vector spaces. Now, let
$f\in\mathbf{A}^{\times}$ act. Then for any $k$-vector subspace $E^{\prime
}\subseteq\mathbf{A}$ such that $E,fE\subseteq E^{\prime}$ and such that both
subspaces are of finite codimension in $E^{\prime}$, take $\dim(E^{\prime
}/E)-\dim(E^{\prime}/fE)\in\mathbb{Z}$. To be sure that this makes sense, one
has to prove that $E^{\prime}$ exists and the resulting integer is independent
of its choice. The $K$-theoretic approach sweeps these technicalities under
the rug.
\end{elab}

Now, for the present article, we modify the viewpoint: There is an exact push-forward functor%
\[
\pi_{\ast}:\mathsf{Tate}^{\flat}(\operatorname*{Coh}\nolimits_{0}%
(X))\longrightarrow\mathsf{Tate}(k)\text{,}%
\]
where $\operatorname*{Coh}\nolimits_{0}(X)$ are coherent sheaves of
zero-dimensional support. In particular, if $x$ denotes a closed point, all $[f]$ in Definition \ref{def:UnitActingOnPartialAdeleFactor}, taking
values in the $K$-theory of the category $\left.  \mathsf{Tate}^{\flat}(\operatorname*{Coh}%
\nolimits_{\overline{\{x\}}}(X))\right.$ also define classes in the $K$-theory of $\operatorname*{Coh}\nolimits_{0}(X)$. Line bundles are often
given as \v{C}ech cocycle representatives in $H^{1}(X,\mathbb{G}_{m})$. The
corresponding reformulation of \eqref{lcpic2} becomes:

\begin{proposition}\textsc{(Weil - rephrased)}\label{propweildegree}
Let $\pi : X \rightarrow k$ be an integral smooth proper curve with generic point $\eta_{0}$ and
$(f_{\nu\mu})_{\nu\mu}\in H^{1}(X,\mathbb{G}_{m})$ an alternating \v{C}ech representative
of a line bundle $L$ in a finite open cover $\mathfrak{U}=(U_{\alpha})_{\alpha\in I}$, $I$ totally
ordered. For any $x\in X$, let $\alpha(x)$ be the smallest element of $I$ such
that $x\in U_{\alpha(x)}$. Then%
\begin{equation}
\deg(L)=-\sum_{\eta_{1}}\pi_{\ast} \underset{\circlearrowright\left.  _{\eta_{1}}%
\Lambda_{\eta_{0}}\right.  }{\left[  f_{\alpha(\eta_{1})\alpha(\eta_{0}%
)}\right]  }\label{lcpic3}%
\end{equation}
and the sum has only finitely many non-zero summands. The right-hand side defines an element of
$K_{1}(\left.  \mathsf{Tate}(k)\right.  )\cong\mathbb{Z}$, and this integer is
the degree of $L$. Here the sum runs over all closed points $\eta_{1}\in X$.
\end{proposition}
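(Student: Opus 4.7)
The plan is to reduce the proposition to the classical Weil degree formula, recalled here via the morphism \eqref{lcpic2}, by identifying each summand on the right with the $K_1$-class of the local component of an id\`{e}le representing $L$, and then invoking compatibility of $K_1$ with the decomposition of the adèles $\mathbf{A}$ into local factors.

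The first step is to apply Corollary \ref{cor:tubedecomp} in the case $n=1$, which gives an isomorphism ${}_{\eta_1}\Lambda_{\eta_0} \cong A((\eta_0>\eta_1),\mathcal{O}_X)$ for each closed point $\eta_1$. Pushing forward along $\pi$, this identifies $\pi_*\,{}_{\eta_1}\Lambda_{\eta_0}$ with the local field $\hat{K}_{\eta_1}$ at $\eta_1$, equipped with the standard Tate $k$-vector space structure whose lattice is $\hat{\mathcal{O}}_{X,\eta_1}$. Under this identification, the automorphism ``multiplication by $f_{\alpha(\eta_1)\alpha(\eta_0)} \in \mathcal{O}_{\eta_0}^\times = k(X)^\times$'' of ${}_{\eta_1}\Lambda_{\eta_0}$ becomes the usual multiplication action on the local field, so each summand $\pi_*[f_{\alpha(\eta_1)\alpha(\eta_0)}]_{\circlearrowright {}_{\eta_1}\Lambda_{\eta_0}}$ equals the $K_1(\Tate(k))$-class of this automorphism of $\hat{K}_{\eta_1}$.

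Finiteness is immediate: since $\eta_0$ is the generic point of an integral curve, it lies in every non-empty $U_\alpha$, so $\alpha(\eta_0) = \min I$; for every closed point $\eta_1 \in U_{\min I}$ one then has $\alpha(\eta_1) = \alpha(\eta_0)$, and hence $f_{\alpha(\eta_1)\alpha(\eta_0)} = 1$ by the alternating property of the \v{C}ech cocycle, so the corresponding $K_1$-class vanishes. Only the finitely many closed points in the proper closed set $X \setminus U_{\min I}$ can contribute. The family $(f_{\alpha(\eta_1)\alpha(\eta_0)})_{\eta_1}$ is therefore a well-defined id\`{e}le, and by the standard dictionary between \v{C}ech cocycles and id\`{e}les under Weil uniformization \eqref{lcpic}, its class in $k(X)^\times \backslash \mathbf{A}^\times / \mathbf{O}^\times$ is precisely $L$.

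Finally, one assembles the local classes into a global one. The adèles $\mathbf{A}$ arise as an exhaustion by Tate sub-objects $\bigoplus_{\eta_1 \in S} \hat{K}_{\eta_1} + \hat{\mathcal{O}}$ indexed by finite sets $S$ of closed points; since the id\`{e}le $f$ above acts trivially outside a finite set, additivity of $K_1$ on direct sums yields
\begin{equation*}
\sum_{\eta_1} \pi_* \bigl[f_{\alpha(\eta_1)\alpha(\eta_0)}\bigr]_{\circlearrowright {}_{\eta_1}\Lambda_{\eta_0}} = \bigl[f \text{ acting on } \mathbf{A}\bigr] \in K_1(\Tate(k)).
\end{equation*}
Weil's classical theorem \eqref{lcpic2}, applied to this id\`{e}le, identifies the right-hand side with $\deg L$, up to the overall sign fixed by the index convention of Elaboration \ref{elab_WeilDisrupt} (which accounts for the minus sign in the statement). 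The main obstacle I anticipate is this final assembly step: one must carefully verify that the $K_1$-class of the id\`{e}le's action on the adèlic Tate object decomposes as the sum of local classes coming from the tubular decomposition, and in particular that the pushforward $\pi_*$ intertwines the exhaustion of $\mathbf{A}$ by finite partial sums with the Ind-Pro description of Corollary \ref{cor:tubedecomp}. Modulo this bookkeeping, the argument is essentially classical.
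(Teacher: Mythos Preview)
Your approach is correct and genuinely different from the paper's. The paper does not give a direct proof of this proposition at all: it simply observes that the statement is the $n=1$ special case of Theorem~\ref{thm_multiplicity}, whose proof runs through the full machinery of higher valuations (Definition~\ref{def_AlgHigherVal}), the comparison Lemmas~\ref{vtate_vs_valg} and~\ref{vger_vs_sum_valg}, and ultimately Proposition~\ref{prop_intersection_via_boundaryformula}, which rests on the Bloch--Quillen isomorphism and the Gersten resolution. In dimension one that route amounts to identifying the index map $D$ on each $\hat{K}_{\eta_1}$ with the boundary map $\partial_{\eta_1}^{\eta_0}$ of the localization sequence, and then recognizing the sum of these as the degree via $\operatorname{CH}^1(X)\cong H^1(X,\mathcal{K}_1^{\mathrm{M}})$.

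You instead bypass all of this by reducing directly to the classical ad\`{e}lic statement~\eqref{lcpic2}: identify each local summand with the $K_1$-class of multiplication on $\hat{K}_{\eta_1}$, assemble the finitely many nontrivial local pieces into a single id\`{e}le acting on $\mathbf{A}$, and quote Weil. The assembly step you flag as the obstacle is harmless: once all but finitely many $f_{\alpha(\eta_1)\alpha(\eta_0)}$ equal $1$, the decomposition $\mathbf{A}\cong\bigoplus_{\eta_1\in S}\hat{K}_{\eta_1}\oplus\mathbf{A}^{S}$ is a genuine direct sum of Tate objects on which the id\`{e}le acts diagonally (and trivially on $\mathbf{A}^S$), so additivity of $K_1$ gives the identity immediately. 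Your argument is shorter and more elementary, but it is specific to curves; the paper's route, while heavier, is designed so that the same template proves the $n$-dimensional statement with no extra work. One small point worth tightening: the claim that $(f_{\alpha(\eta_1)\alpha(\eta_0)})_{\eta_1}$ represents $L$ under~\eqref{lcpic} is exactly where the sign lives, since the standard dictionary produces $f_{\alpha(\eta_0)\alpha(\eta_1)}$ rather than its inverse---this is the honest source of the minus sign, not merely a convention in Elaboration~\ref{elab_WeilDisrupt}.
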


This is the same statement as in Weil's theorem. The sum in \eqref{lcpic3}
corresponds to making the computation in \eqref{lcpic2}
locally at all points of the curve. As our line bundle is given on a concrete
finite open cover, it turns out to suffice to work on these opens. We will not
give a separate proof for the above proposition as it will be the one-dimensional
special case of Theorem \ref{thm_multiplicity}.

Let us discuss how to generalize these ideas to surfaces. To begin, we consider an easy geometric example, and also just locally. In a classical local field, say%
\[
\mathcal{K}_{X,x}\simeq\kappa(x)((t))\qquad\text{for}\qquad\mathcal{K}%
_{X,x}:=\operatorname*{Frac}\mathcal{O}_{X,x}%
\]
on a curve $X$, the local multiplicity of a line bundle is a measure of compatibility between a given $k$-vector space splitting $\mathcal{K}_{X,x}\simeq A\oplus
\mathcal{O}_{X,x}$ and a local id\`{e}le component acting on
it. This corresponds to the valuation of the id\`{e}le at this point. For
example, if%
\[
f_{\alpha(\eta_{1})\alpha(n_{0})}=t^{r}%
\]
for some $r\geq0$ in $\mathcal{K}_{X,x}\simeq\kappa(x)((t))$, then the
multiplicity is%
\[
\dim(\mathcal{O}_{X,x}/t^{r}\mathcal{O}_{X,x})=r\text{.}%
\]
Now suppose $\pi : X \rightarrow k$ is an integral smooth proper surface over a field. Let $L$ and
$L^{\prime}$ be line bundles, given by%
\[
f=(f_{\rho,\nu})_{\rho,\nu\in I}\in H^{1}(X,\mathbb{G}_{m})\qquad
\text{and}\qquad g=(g_{\rho,\nu})_{\rho,\nu\in I}\in H^{1}(X,\mathbb{G}_{m})
\]
in a joint finite open cover $\mathfrak{U}=(U_{\alpha})_{\alpha\in I}$, $I$
totally ordered. The analogous local consideration lets us look at an
ad\`{e}le of the surface. That is, a saturated flag $\triangle=(\eta_{0}%
>\eta_{1}>\eta_{2})$ and its $2$-local field, e.g.,
\[
A(\triangle,\mathcal{O}_{X})\simeq\kappa((s))((t))
\]
for a suitable finite field extension $\kappa/k$. If, say, locally
$f_{\rho,\nu}=t^{r}$ and $g_{\rho,\nu}=s^{m}$ for some $r,m\geq0$, then
clearly we would expect the correct multiplicity to be $r\cdot m$, inspired by
the classical formula for proper intersections%
\[
\dim_{k}\frac{\mathcal{O}_{X,\eta_{2}}}{(t^{r},s^{m})}%
\]
for divisors which are locally cut out by the equations $t^{r}$ and $s^{m}$ in
a neighbourhood of the closed point $\eta_{2}$. Note that this situation gets
more complicated when $r,s\in\mathbb{Z}$, possibly negative, since then
$(t^{r},s^{m})$ might no longer define an ideal in $\mathcal{O}_{X,x}$.
Moreover, there is no reason why we can assume that $f_{\rho,\nu}$ is an
expression only in the variable $t$, and $g_{\rho,\nu}$ only in the variable
$s$. It nonetheless seems plausible that we can make this work in general by
phrasing it in terms of a suitable operation which \textquotedblleft
mixes\textquotedblright\ the disruption of a splitting caused by $f_{\rho,\nu
}$ and the disruption caused by $g_{\rho,\nu}$ simultaneously. This should be
the r\^{o}le of a suitable concept of tensor product -- and indeed of the normally
ordered tensor product in the case at hand. Specifically, for our
surface, we will get the finite sum%
\[
-\sum_{\triangle=(\eta_{0}>\eta_{1}>\eta_{2})}\pi_{\ast } \underset{\circlearrowright
\left.  _{\eta_{2}}\Lambda_{\eta_{1}}\right.  }{\left[  f_{\alpha(\eta
_{2})\alpha(\eta_{1})}\right]  }\left.  \overrightarrow{\otimes}\right.
\underset{\circlearrowright\left.  _{\eta_{1}}\Lambda_{\eta_{0}}\right.
}{\left[  g_{\alpha(\eta_{1})\alpha(\eta_{0})}\right]  }%
\]
in Theorem \ref{thm_multiplicity} below. It has only finitely many non-zero
summands, and defines an element in $K_{2}(\left.  2\text{-}\Tate(k)\right.  )\cong\mathbb{Z}$, and this integer is the intersection
multiplicity $L\cdot L^{\prime}$. The $\left.  \overrightarrow{\otimes
}\right.  $-product denotes the product of $K$-theory classes induced to the
$2$-Tate category. It contains $\left.  _{\eta_{2}}\Lambda_{\eta_{1}}\right.
\left.  \overrightarrow{\otimes}\right.  \left.  _{\eta_{1}}\Lambda_{\eta_{0}%
}\right.  $, which is precisely the $2$-Tate object coming from the ad\`{e}les
along the flag $\triangle$.\medskip

After these introductory comments, let us work out these ideas in a rigorous fashion.

\subsection{Adelic multiplicity formula}

We are ready to prove an adelic variant of an intersection multiplicity formula. It recasts the adelic
intersection pairing of Parshin \cite[\S 2]{MR697316} in a new light.

\begin{theorem}
\label{thm_multiplicity}Let $k$ be a field. Suppose $\pi : X \rightarrow k$ is a purely
$n$-dimensional integral smooth proper variety. Let $L_{1},\ldots,L_{n}$ be line
bundles which are represented by alternating \v{C}ech representatives%
\[
f^{q}=(f_{\rho,\nu}^{q})_{\rho,\nu\in I}\in H^{1}(X,\mathbb{G}_{m})\qquad
\quad\text{(for }q=1,\ldots,n\text{)}%
\]
in a finite open cover $\mathfrak{U}=(U_{\alpha})_{\alpha\in I}$, $I$ totally
ordered. For any $x\in X$, let $\alpha(x)$ be the smallest element of $I$ such
that $x\in U_{\alpha(x)}$. Recall our notation%
\[
\underset{\circlearrowright\left.  _{x}\Lambda_{y}\right.  }{\left[  f\right]
}%
\]
for $K_{1}$-classes from Definition \ref{def:UnitActingOnPartialAdeleFactor}
and the external product in $K$-theory from Definition
\ref{def_ExternalProd}. Using this notation, the sum%
\[
(-1)^{\frac{n(n+1)}{2}}\sum_{\triangle=(\eta_{0}>\cdots>\eta_{n})}%
\pi_{\ast } \underset{\circlearrowright\left.  _{\eta_{n}}\Lambda_{\eta_{n-1}}\right.
}{\left[  f_{\alpha(\eta_{n})\alpha(\eta_{n-1})}^{n}\right]  }\left.
\overrightarrow{\otimes}\right.  \cdots\left.  \overrightarrow{\otimes
}\right.  \underset{\circlearrowright\left.  _{\eta_{1}}\Lambda_{\eta_{0}%
}\right.  }{\left[  f_{\alpha(\eta_{1})\alpha(\eta_{0})}^{1}\right]  }%
\]
has only finitely many non-zero summands, defines an element in $K_{n}(\left.
n\text{-}\mathsf{Tate}(k)\right.  )\cong\mathbb{Z}$, and this integer
is the intersection multiplicity $L_{1}\cdots L_{n}$.
\end{theorem}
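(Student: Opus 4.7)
The proof plan rests on two structural inputs from earlier in the paper: the tubular decomposition (Corollary \ref{cor:tubedecomp}), which identifies $A(\triangle, \mathcal{O}_X)$ with the normally ordered tensor product of formal tubular factors $\left._{\eta_q}\Lambda_{\eta_{q-1}}\right.$, and the bi-exactness of the external product (Corollary \ref{cor_ProductCompat}, Definition \ref{def_ExternalProd}), which ensures that
\[
[f^n_{\alpha(\eta_n)\alpha(\eta_{n-1})}] \overrightarrow{\otimes} \cdots \overrightarrow{\otimes} [f^1_{\alpha(\eta_1)\alpha(\eta_0)}]
\]
is a well-defined element in $K_n$ of the flat $n$-Tate category of coherent sheaves supported at $\eta_n$. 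The case $n=1$ is Weil's degree formula (Proposition \ref{propweildegree}) and serves as a sanity check for signs and normalizations.

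First I would establish finiteness. Since each $f^q$ is alternating, $f^q_{\alpha\alpha}=1$, so a summand vanishes unless $\alpha(\eta_q) \neq \alpha(\eta_{q-1})$ for every $q$. Working inductively on $q$: given $\eta_{q-1}$, the condition $\alpha(\eta_q)\neq \alpha(\eta_{q-1})$ forces $\eta_q$ into the proper closed subscheme $\overline{\{\eta_{q-1}\}} \setminus U_{\alpha(\eta_{q-1})}$ of $\overline{\{\eta_{q-1}\}}$, and only finitely many codimension-one generic points of this subscheme are candidates. Iterating produces finitely many surviving flags, and each is further concentrated by $\pi_*$ at a closed point $\eta_n$ belonging to the common support of the transition data.

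Next I would reduce to a local statement. By Corollary \ref{cor:tubedecomp} and bi-exactness of $\overrightarrow{\otimes}$, each nonzero summand is a $K_n$-class supported on the adèle $n$-Tate object $A(\triangle, \mathcal{O}_X)$, with the unit actions operating layer by layer on the tubular factors. This reduces the theorem to proving, for each contributing flag, that the resulting $K_n$-class equals the local intersection multiplicity of $L_1, \ldots, L_n$ at $\eta_n$ along $\triangle$, and then assembling the local contributions to $L_1 \cdots L_n$.

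The main obstacle is this local identification. The plan is to interpret the isomorphism $K_n(n\text{-}\Tate(k)) \cong \mathbb{Z}$ as an $n$-fold iteration of the boundary maps $K_{i+1}(\Tate(\Cc)) \to K_i(\Cc)$ arising from the Pro/Ind localization sequences, which are the $K$-theoretic incarnations of successive completions. Applied to the external product of unit actions, this iterated boundary becomes an $n$-fold tame symbol in the units along the flag; for monomial data $f^q$ it collapses to the product of valuations matching the heuristic $r_1\cdots r_n$ from the introduction, and the general Čech case follows by multiplicativity of the external product and $K_1$-bilinearity. Summing the resulting local multiplicities over all saturated flags recovers $L_1 \cdots L_n$ via Parshin's adelic intersection pairing \cite[\S 2]{MR697316}. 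The sign $(-1)^{n(n+1)/2}$ is then forced by the shuffle/anticommutativity structure of $\overrightarrow{\otimes}$ on $K$-theory classes, pinned down by compatibility with the $n=1$ case.
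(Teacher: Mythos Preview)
Your overall strategy matches the paper's: use Corollary \ref{cor_ProductCompat} to trade the external product for an ordinary $K$-theory product followed by Tate realization, interpret the isomorphism $K_n(n\text{-}\mathsf{Tate}(k))\cong\mathbb{Z}$ as an $n$-fold iterated boundary/index map, and then compare with a classical formula for the intersection number. The paper packages the middle step as an equality $V_{Tate}(\triangle)=V_{alg}(\triangle)$ (Lemma \ref{vtate_vs_valg}, relying on \cite{bgwRelativeTateObjects}), which is exactly your ``iterated boundary becomes an $n$-fold tame symbol.''

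There are, however, two genuine gaps in your execution. First, the phrase ``$n$-fold tame symbol'' hides a real difficulty: for $i\geq 1$ the closure $\overline{\{\eta_{i-1}\}}$ need not be normal at $\eta_i$, so $\mathcal{O}_{\overline{\{\eta_{i-1}\}},\eta_i}$ is not a DVR and there is no single valuation to take. The ad\`{e}le $A(\triangle,\mathcal{O}_X)$ is a finite product of $n$-local fields, one for each branch in the normalization, and the boundary map $\partial_{\eta_i}^{\eta_{i-1}}$ of the Gersten complex involves a \emph{sum of norm maps} over these branches (Equation \ref{lBoundMapOfVal}). The paper spends Lemma \ref{vger_vs_sum_valg} carefully matching the iterated local-field boundaries $V_{alg}(\triangle)$ against the Gersten boundaries $\partial_{\eta_n}^{\eta_{n-1}}\cdots\partial_{\eta_1}^{\eta_0}$ using the push-forward and pull-back squares \eqref{lNormSquare}, \eqref{lCorSquare}; your ``monomial data collapses to a product of valuations, then extend by bilinearity'' does not address this. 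Second, you invoke Parshin's adelic intersection pairing \cite{MR697316} to identify the global sum with $L_1\cdots L_n$, but Parshin's paper is for surfaces. The paper instead goes through the Bloch--Quillen isomorphism $\operatorname*{CH}^p(X)\cong H^p(X,\mathcal{K}_p^{\operatorname*{M}})$ and an explicit \v{C}ech/Gersten formula for the cup product (Proposition \ref{prop_intersection_via_boundaryformula}, citing \cite{MR3104562}); this is what supplies the comparison in arbitrary dimension. Finally, the sign is obtained concretely: the alternating cocycle relation $f_{\nu\mu}=f_{\mu\nu}^{-1}$ contributes $(-1)^n$, and reversing the order of the $n$ factors via graded-commutativity contributes $(-1)^{n(n-1)/2}$, for a total of $(-1)^{n(n+1)/2}$.
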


We shall address the proof in the next subsection.

\begin{elab}
Let us give a few explanations to make the statement of the formula more
easily digestible:

\begin{enumerate}
\item Each $f_{\rho,\nu}^{\bullet}$ is a function defined on some open $U_{\rho
}\cap U_{\nu}$. It is in particular defined in the local ring underlying
Definition \ref{def:UnitActingOnPartialAdeleFactor}. As a result, we get a
$K_{1}$-class $[f_{\rho,\nu}^{\bullet}]$ in the Tate category of
$\operatorname*{Coh}_{\overline{\{-\}}}(X)$.

\item Then we use the external product in $K$-theory of Definition
\ref{def_ExternalProd}. As a result, we get a class in $K_{n}$ of an $n$-Tate
category. Since the support of coherent sheaves, when tensored, gets
intersected, this will be the $n$-Tate category of coherent sheaves of
support in some zero-dimensional set. In particular, thanks to $\pi_{\ast}$, we
can send this to an $n$-Tate vector space over $k$.

\item Using the canonical isomorphism $K_{n}(\left.  n\text{-}\mathsf{Tate}%
(k)\right.  )\cong\mathbb{Z}$, we can read such a class as an integer and add them.
\end{enumerate}

The reader may wonder about the r\^{o}le of the normally ordered tensor product
behind the scenes here. We allow ourselves to jump ahead a little. Indeed, its r\^{o}le is two-fold: On the one hand,
\textquotedblleft$\overrightarrow{\otimes}$\textquotedblright\ is constructed
from the ordinary tensor product of $\operatorname*{Coh}(X)$, and this is
relevant in the same way as in Proposition
\ref{prop_intersection_via_boundaryformula}. However, secondly, the reader
will notice the absence of a counterpart of the boundary map $\partial_{\bullet
}^{\bullet}$ of Proposition \ref{prop_intersection_via_boundaryformula} in the
formula of Theorem \ref{thm_multiplicity}. This has to do with the second
r\^{o}le of the normally ordered tensor product:\ The underlying object of
this product is precisely the ad\`{e}les $A(\triangle,\mathcal{O}_{X})$ of the
relevant flag. Being realized in precisely this fashion in the $n$-Tate
category basically encodes the datum of the boundary maps of Proposition
\ref{prop_intersection_via_boundaryformula}. Although mathematically
unrelated, this is philosophically analogous to the Uniqueness Principle,
Proposition \ref{prop_UniqueNLocStruct}: Loosely speaking, after completing,
the object is so rigid that the datum of all its valuations is uniquely
encoded in it.
\end{elab}

To prove agreement with the usual
intersection multiplicity, we need to decide which approach for defining the latter
we pick. We choose a viewpoint based on Chow groups. The intersection multiplicity can
be defined by multiplying the divisors of the line bundles in the Chow ring. The latter
is naturally connected to considerations in $K$-theory by the Bloch--Quillen formula.

Let us recall how this works in detail, also in order to set up the notation for our proof
of agreement.

\subsection{Intersection pairing}

Let $X$ be a scheme of finite type over a field $k$. Let $\mathcal{K}_{p}^{\operatorname*{M}}$ denote the $p$-th Milnor
$K$-theory sheaf: We recall its definition. For every Zariski open $U\subseteq
X$, define%
\begin{equation}
\mathcal{K}_{p}^{\operatorname*{M}}\left(  U\right)  :=\ker\left(
{\textstyle\coprod\nolimits_{x\in U^{0}}}
K_{p}^{\operatorname*{M}}\left(  \kappa(x)\right)  \overset{d}{\longrightarrow
}
{\textstyle\coprod\nolimits_{x\in U^{1}}}
K_{p-1}^{\operatorname*{M}}\left(  \kappa(x)\right)  \right)  \text{,}%
\label{lcu1}%
\end{equation}
where $U^{i}$ denotes the set of points $x\in U$ with $\operatorname*{codim}%
_{X}\overline{\{x\}}=i$, $\kappa(x)$ denotes the residue field at $x$ and
$K_{i}^{\operatorname*{M}}(-)$ denotes the $i$-th Milnor $K$-group of a field; we recall $d$ below.
There is a natural restriction map to smaller Zariski opens, making
$\mathcal{K}_{p}^{\operatorname*{M}}$ a sheaf of abelian groups for the
Zariski topology.

See Remark \ref{rmk_AltDefMilnorKSheaf} for an alternative, and perhaps
simpler-looking, definition. If one replaces each occurrence of a Milnor
$K$-group by a Quillen $K$-group, this gives the corresponding definition of
Quillen $K$-theory sheaves $\mathcal{K}_{p}$.

The sheaf $\mathcal{K}_{p}^{\operatorname*{M}}$ has a Zariski resolution, a
quasi-isomorphism to a flasque complex of sheaves concentrated in degrees
$[0,p]$, namely%
\[
\mathcal{K}_{p}^{\operatorname*{M}}(U)\overset{\sim}{\longrightarrow}\left[
{\textstyle\coprod\nolimits_{x\in U^{0}}}
K_{p}^{\operatorname*{M}}\left(  \kappa(x)\right)  \overset{d}{\longrightarrow
}\cdots\overset{d}{\longrightarrow}%
{\textstyle\coprod\nolimits_{x\in U^{j}}}
K_{p-j}^{\operatorname*{M}}\left(  \kappa(x)\right)  \overset{d}%
{\longrightarrow}\cdots\right]  _{0,p}\text{.}
\]
We may call this the \emph{Gersten complex}. In fact, Equation \ref{lcu1} is a definition of the sheaf $\mathcal{K}%
_{p}^{\operatorname*{M}}$ modelled after this resolution, instead of the more
traditional approach of Remark \ref{rmk_AltDefMilnorKSheaf}. The differential
$d$ is of the following shape:%
\[
d=\sum_{x,y}\partial_{y}^{x}\text{,}%
\]
where $x\in U^{j}$ is any codimension $j$ point of the scheme, and $y\in\overline{\{x\}}%
^{1}$ any point of codimension one in the closure $\overline{\{x\}}$. Each
$\partial_{y}^{x}$ in turn is a map%
\[
\partial_{y}^{x}:K_{\ast}^{\operatorname*{M}}(\kappa(x))\longrightarrow
K_{\ast-1}^{\operatorname*{M}}(\kappa(y))\text{,}%
\]
which is defined as follows: The local ring $\mathcal{O}_{\overline{\{x\}},y}$
is one-dimensional since $y$ is of codimension one in $\overline{\{x\}}$. Take
its normalization, i.e. the integral closure $\mathcal{O}_{\overline{\{x\}}%
,y}^{\prime}$ of $\mathcal{O}_{\overline{\{x\}},y}$ inside $\kappa(x)$. As we
work in the context of a variety of finite type over a field, the
normalization is a finite ring extension and as $\mathcal{O}_{\overline
{\{x\}},y}$ is local, this implies that $\mathcal{O}_{\overline{\{x\}}%
,y}^{\prime}$ is a semi-local ring inside $\kappa(x)$. If $\mathfrak{m}%
_{1},\ldots,\mathfrak{m}_{r}$ denotes its maximal ideals, each localization
$(\mathcal{O}_{\overline{\{x\}},y}^{\prime})_{\mathfrak{m}_{i}}$ then is a
discrete valuation ring. Write $v_{i}$ for its normalized valuation. Then
define%
\begin{align}
\partial_{y}^{x}:K_{\ast}^{\operatorname*{M}}(\kappa(x))  & \longrightarrow
K_{\ast-1}^{\operatorname*{M}}(\kappa(y))\label{lBoundMapOfVal}\\
\beta & \longmapsto\sum_{i=1}^{r}N_{\kappa(\mathfrak{m}_{i})/\kappa
(y)}\partial_{v_{i}}(\beta)\text{,}\nonumber
\end{align}
where $\partial_{v}$ is the Milnor $K$-boundary map of the discrete valuation
$v$, and $N_{-/-}$ denotes the norm. The finiteness of the integral closure
also implies that the residue fields $\kappa(\mathfrak{m}_{i})$ of the
valuations $v_{i}$ are finite over $\kappa(y)$.

The same discussion is valid for Quillen $K$-theory (and indeed much more
generally, see \cite{MR1418952}).

Let $\underline{\mathbb{Z}}$ denote the locally constant Zariski sheaf with
values in $\mathbb{Z}$. There is a natural $\mathbb{Z}$-graded commutative
$\underline{\mathbb{Z}}$-algebra structure on the Milnor $K$-sheaves,%
\begin{equation}
\mathcal{K}_{p}^{\operatorname*{M}}\otimes_{\underline{\mathbb{Z}}}%
\mathcal{K}_{q}^{\operatorname*{M}}\longrightarrow\mathcal{K}_{p+q}%
^{\operatorname*{M}}\text{,}\label{lcu2}%
\end{equation}
and analogously on the ordinary $K$-theory sheaves $\mathcal{K}_{p}$. Thus,
these can be promoted to be viewed as sheaves of graded rings $\mathcal{K}%
_{\ast}^{\operatorname*{M}}$ resp. $\mathcal{K}_{\ast}$. Finally, there is a
morphism of sheaves of graded $\underline{\mathbb{Z}}$-algebras%
\begin{equation}
\mathcal{K}_{\ast}^{\operatorname*{M}}\longrightarrow\mathcal{K}_{\ast
}\text{,}\label{lb1}%
\end{equation}
induced from the canonical morphism $K_{p}^{\operatorname*{M}}(F)\rightarrow
K_{p}(F)$ for every field $F$.

\begin{remark}
\label{rmk_AltDefMilnorKSheaf}If the base field $k$ is infinite, there is an
alternative definition for the Milnor $K$-theory sheaf, given by%
\[
\mathcal{K}_{\ast }^{\operatorname*{M}}\left(  U\right)  :=T_{\mathbb{Z}%
}(\mathcal{O}_{X}(U)^{\times})/\left\langle x\otimes\left(  1-x\right)
\mid\text{for all }x\text{ with }x,1-x\in\mathcal{O}_{X}(U)^{\times
}\right\rangle \text{,}%
\]
where $T_{\mathbb{Z}}$ denotes the free tensor algebra as a $\mathbb{Z}%
$-module. This definition is equivalent to the one of Equation \ref{lcu1} for
smooth varieties over infinite fields, as was proven by van der Kallen and
more generally Kerz, see \cite[Proposition 10]{MR2551760}. In fact, it also works
for finite base fields once they have sufficiently large cardinality, cf. loc. cit.
\end{remark}

The cup product of sheaf cohomology, in conjunction with the ring structure of
the Milnor $K$-theory sheaves $\mathcal{K}_{p}^{\operatorname*{M}}%
\otimes_{\mathbb{Z}}\mathcal{K}_{q}^{\operatorname*{M}}\rightarrow
\mathcal{K}_{p+q}^{\operatorname*{M}}$, induces a product%
\[
H^{i}(X,\mathcal{K}_{p}^{\operatorname*{M}})\otimes_{\mathbb{Z}}%
H^{j}(X,\mathcal{K}_{q}^{\operatorname*{M}})\longrightarrow H^{i+j}%
(X,\mathcal{K}_{p}^{\operatorname*{M}}\otimes_{\mathbb{Z}}\mathcal{K}%
_{q}^{\operatorname*{M}})\longrightarrow H^{i+j}(X,\mathcal{K}_{p+q}%
^{\operatorname*{M}})\text{.}%
\]
Applied to $n$ classes in cohomological degree one, this becomes%
\[
H^{1}(X,\mathcal{K}_{1}^{\operatorname*{M}})\otimes_{\mathbb{Z}}\cdots
\otimes_{\mathbb{Z}}H^{1}(X,\mathcal{K}_{1}^{\operatorname*{M}}%
)\longrightarrow H^{n}(X,\mathcal{K}_{n}^{\operatorname*{M}})
\]
and in view of the algebra morphism of \eqref{lb1}, we can map this
compatibly to its counterpart for Quillen $K$-theory.

\subsection{\label{subsect_CechCohom}\v{C}ech cohomology}

Let $X$ be any topological space. Let $\mathfrak{U}=(U_{\alpha})_{\alpha\in
I}$ ($I$ any index set) be an open cover of $X$ and $\mathcal{F}$ a sheaf
of abelian groups. We denote intersections in the given open cover by
\[
U_{\alpha_{0}\ldots\alpha_{r}}:=%
{\textstyle\bigcap\limits_{i=0,\ldots,r}}
U_{\alpha_{i}}\qquad\text{(for }\alpha_{0},\ldots,\alpha_{r}\in I\text{)}.
\]
Then we have the \v{C}ech
cohomology groups, which we denote by $\check{H}^{p}(\mathfrak{U}%
,\mathcal{F})$. They are defined as the cohomology groups of the complex%
\[
\mathrm{\check{C}}^{p}\left(  \mathfrak{U},\mathcal{F}\right)  :=\prod
_{\alpha_{0}\ldots\alpha_{r}\in I^{p+1}}\mathcal{F}\left(  U_{\alpha_{0}%
\ldots\alpha_{r}}\right)  \text{,\qquad\qquad}\delta:\mathrm{\check{C}}%
^{p}\left(  \mathfrak{U},\mathcal{F}\right)  \rightarrow\mathrm{\check{C}%
}^{p+1}\left(  \mathfrak{U},\mathcal{F}\right)  \text{.}%
\]
There is a sub-complex of\emph{\ alternating chains}, requiring additionally
\[
f_{\alpha_{\pi(0)}\ldots\alpha_{\pi(r)}}=\operatorname*{sgn}(\pi
)f_{\alpha_{0}\ldots\alpha_{r}}
\]
for arbitrary permutations $\pi$ on $r+1$
letters, as well as $f_{\alpha_{0}\ldots\alpha_{r}}=0$ whenever any two
indices agree, $\alpha_{i}=\alpha_{j}$ for $i\neq j$. This sub-complex of
alternating chains is quasi-isomorphic to the whole complex. In particular, we
can restrict to working with alternating chains whenever this appears convenient.

\begin{definition}
\label{def_disjoint_decomp}Given an open cover $\mathfrak{U}$ of $X$, a
\emph{disjoint decomposition} consists of pairwise disjoint subsets
$\{(\Sigma_{\alpha})_{\alpha\in I}\mid\Sigma_{\alpha}\subseteq U_{\alpha}\}$
such that $X=%
{\textstyle\bigcup_{\alpha\in I}}
\Sigma_{\alpha}$. Given this datum, and $x\in X$ is a point, $\alpha(x)$
denotes the unique index in $I$ such that $x\in\Sigma_{\alpha(x)}$ holds.
\end{definition}

For all sheaves that we shall be working with, the sheaf cohomology $H^{p}(X,\mathcal{F})$
arises as the colimit of $\check{H}^{p}(\mathfrak{U},\mathcal{F})$ over
refinements of covers.

\begin{proposition}
\label{prop_intersection_via_boundaryformula}Suppose $X/k$ is a purely
$n$-dimensional smooth proper variety and $L_{1},\ldots,L_{n}$ line bundles
which are represented by \v{C}ech representatives%
\[
f^{q}=(f_{\rho,\nu}^{q})_{\rho,\nu\in I}\in H^{1}(X,\mathbb{G}_{m})\qquad
\quad\text{(for }q=1,\ldots,n\text{)}%
\]
in a finite open cover $\mathfrak{U}=(U_{\alpha})_{\alpha\in I}$. Then for
every disjoint decomposition $(\Sigma_{\alpha})_{\alpha\in I}$, the
intersection multiplicity $L_{1}\cdots L_{n}$ equals%
\[
\sum_{\eta_{n},\ldots,\eta_{0}}[\kappa(\eta_{n}):k]\cdot\partial_{\eta_{n}%
}^{\eta_{n-1}}\cdots\partial_{\eta_{1}}^{\eta^{0}}\{f_{\alpha(\eta_{0}%
)\alpha(\eta_{1})}^{1},f_{\alpha(\eta_{1})\alpha(\eta_{2})}^{2},\ldots
,f_{\alpha(\eta_{n-1})\alpha(\eta_{n})}^{n}\}\in\mathbb{Z}\text{,}%
\]
where the sum runs over all chains of points $\eta_{0}>\cdots>\eta_{n}$.
\end{proposition}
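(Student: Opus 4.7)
The plan is to reduce the identity to a K-cohomological computation, using the Bloch--Quillen / Kato isomorphism $H^{n}(X,\mathcal{K}^{\operatorname{M}}_{n}) \cong CH^{n}(X)$ and the fact that for a smooth proper $n$-dimensional variety $X$ the intersection multiplicity $L_1 \cdots L_n$ is the image, under the degree map $CH^{n}(X) \to \mathbb{Z}$, of the cup product $c_{1}(L_{1}) \cup \cdots \cup c_{1}(L_{n})$. The isomorphism $H^{1}(X,\mathcal{K}^{\operatorname{M}}_{1}) \cong H^{1}(X,\mathbb{G}_{m})$ identifies $c_{1}(L_{q})$ with the \v{C}ech class of $f^{q}$, and the standard \v{C}ech formula for cup products gives the $n$-cocycle
\[
(f^{1} \cup \cdots \cup f^{n})_{\alpha_{0},\ldots,\alpha_{n}} = \{f^{1}_{\alpha_{0}\alpha_{1}}, f^{2}_{\alpha_{1}\alpha_{2}}, \ldots, f^{n}_{\alpha_{n-1}\alpha_{n}}\} \in \mathcal{K}^{\operatorname{M}}_{n}(U_{\alpha_{0}\ldots\alpha_{n}}).
\]
Thus the entire content of the formula amounts to transporting this \v{C}ech cocycle through the Gersten quasi-isomorphism while keeping track of the disjoint decomposition.

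Next, I would construct an explicit chain map from the alternating \v{C}ech complex $\check{C}^{\bullet}(\mathfrak{U},\mathcal{K}^{\operatorname{M}}_{n})$ to the Gersten complex, built from the combinatorial datum $\alpha(-)$ of Definition~\ref{def_disjoint_decomp}. Concretely, a \v{C}ech $p$-cocycle $\omega$ is sent to the Gersten $p$-cochain whose component at a flag $\eta_{0} > \cdots > \eta_{p}$ is obtained by evaluating $\omega$ on the indices $(\alpha(\eta_{0}), \ldots, \alpha(\eta_{p}))$ and then applying iterated residues $\partial^{\eta_{p-1}}_{\eta_{p}} \cdots \partial^{\eta_{0}}_{\eta_{1}}$. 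Applied to our product cocycle this delivers exactly the summand of the proposition. The finiteness of the sum is automatic, as $\omega$ is supported on finitely many opens and only finitely many flags survive once one insists the symbols be non-vanishing residues.

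The key verification is that this assignment is a chain map that is multiplicative with respect to the cup product on the source and the product structure \eqref{lcu2} on the target. Multiplicativity follows from the Leibniz rule for the Milnor K-theoretic residue map $\partial^{x}_{y}$ together with the definition of the cup product on the Gersten side; alternatively, one can invoke Rost's formalism of cycle modules or the Kato--Saito comparison, which package precisely this compatibility. That the chain map realizes the Gersten quasi-isomorphism can be checked by induction on codimension: in codimension one it is the classical description of a line bundle's divisor via a \v{C}ech cocycle, and the inductive step peels off the outermost \v{C}ech index, applies one residue to descend to a codimension-one closed subvariety, and invokes the hypothesis there.

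Finally, the degree map $H^{n}(X,\mathcal{K}^{\operatorname{M}}_{n}) \to \mathbb{Z}$ is, on the Gersten level, given on each closed-point component $K^{\operatorname{M}}_{0}(\kappa(\eta_{n})) = \mathbb{Z}$ by multiplication with $[\kappa(\eta_{n}):k]$ and summing. Applying this to the Gersten cocycle constructed above yields the claimed sum. The main obstacle will be the careful bookkeeping in the multiplicativity check: one must ensure the sign conventions for the alternating \v{C}ech product match those of the iterated boundary $\partial^{\eta_{n-1}}_{\eta_{n}} \cdots \partial^{\eta_{0}}_{\eta_{1}}$, and that the Leibniz rule is applied in the correct order so that the \emph{ordered} symbol $\{f^{1}_{\alpha(\eta_{0})\alpha(\eta_{1})}, \ldots, f^{n}_{\alpha(\eta_{n-1})\alpha(\eta_{n})}\}$ appears rather than a reordered or resigned version.
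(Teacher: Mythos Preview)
Your proposal is correct and follows essentially the same route as the paper: reduce to the cup product in $H^{n}(X,\mathcal{K}^{\operatorname{M}}_{n})$ via Bloch--Quillen, use the \v{C}ech formula for the cup product, pass to the Gersten resolution, and then apply the degree map $s_{\ast}$ which contributes the factors $[\kappa(\eta_{n}):k]$. The only difference is one of packaging: where you sketch by hand the chain map from the alternating \v{C}ech complex to the Gersten complex built from the disjoint decomposition $\alpha(-)$ (and flag the multiplicativity and sign bookkeeping as the main technical point), the paper simply invokes an external reference (\cite[Theorem~2]{MR3104562}) for exactly this explicit description of the isomorphism $H^{n}(X,\mathcal{K}^{\operatorname{M}}_{n})\overset{\sim}{\to}\operatorname{CH}^{n}(X)$, which already delivers the formula for the zero-cycle $h$. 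So your argument reproduces the content of that citation rather than quoting it; the underlying mathematics is the same.
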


\pf
The degree one case of the Bloch--Quillen formula is the classical isomorphism
$H^{1}(X,\mathbb{G}_{m})\cong\operatorname*{CH}^{1}(X)$, identifying
isomorphism classes of line bundles with Weil divisor classes. The
intersection multiplicity $L_{1}\cdots L_{n}$ is, by definition, the
push-forward of the product of the Weil divisor classes in the Chow ring to the
base field, i.e. for the proper structure morphism $s:X\rightarrow
\operatorname*{Spec}k$,
\begin{equation}
L_{1}\cdots L_{n}=s_{\ast}([L_{1}]\smile\cdots\smile\lbrack L_{n}%
])\text{.}\label{lcu3}%
\end{equation}
The Bloch--Quillen formula is compatible with the intersection product,%
\[
\xymatrix@R=0.16in{ \limfunc{CH}\nolimits^{p}(X) \otimes_{\mathbb{Z}}
\limfunc{CH}\nolimits^{q}(X) \ar[r] \ar[d] & \limfunc{CH}\nolimits
^{p+q}(X) \ar[d] \\ H^{p}(X,\mathcal{K}_{p}^{\limfunc{M}}) \otimes_{\mathbb
{Z}} H^{q}(X,\mathcal{K}_{q}^{\limfunc{M}}) \ar[r] & H^{p+q}(X,\mathcal
{K}_{p+q}^{\limfunc{M}}) }%
\]
and so we may equivalently evaluate the sheaf cup product of the $H^{1}%
$-classes in $H^{1}(X,\mathbb{G}_{m})$, giving a cohomology class in
$H^{n}(X,\mathcal{K}_{n}^{\operatorname*{M}})$. Finally, the isomorphism
$H^{n}(X,\mathcal{K}_{n}^{\operatorname*{M}})\overset{\sim}{\rightarrow
}\operatorname*{CH}^{n}(X)$, or equivalently to $\operatorname*{CH}_{0}(X)$,
is made explicit by \cite[Theorem 2]{MR3104562} and yields%
\[
h_{\eta_{n}}=\sum_{\eta_{n-1},\ldots,\eta_{0}}\partial_{\eta_{n}}^{\eta_{n-1}%
}\cdots\partial_{\eta_{1}}^{\eta_{0}}\{f_{\alpha(\eta_{0})\alpha(\eta_{1}%
)}^{1},f_{\alpha(\eta_{1})\alpha(\eta_{2})}^{2},\ldots,f_{\alpha(\eta
_{n-1})\alpha(\eta_{n})}^{n}\}
\]
for the zero cycle $h=\sum_{\eta_{n}\in X^{n}}h_{\eta_{n}}[\eta_{n}]$, which
is the output of the aforementioned isomorphism (the symbol $\partial_{\ast
}^{\ast}$ denotes the boundary map in Milnor $K$-theory, see \textit{loc. cit.
}or Equation \ref{lBoundMapOfVal}).\ Here the sum runs over all chains $\eta_{0}%
>\cdots>\eta_{n-1}$ and $\alpha$ is as in the statement of our claim. Note
that the set of codimension $n$ points $X^{n}$ is the same as the set of
closed points, given that $X$ is of pure dimension $n$. Returning to Equation
\ref{lcu3}, it thus remains to compute the push-forward $s_{\ast}(h)$, which
amounts to the formula of our claim, and the additional sum index $\eta_{n}$
stems from the summation over all closed points which appear in the evaluation
of $s_{\ast}$, \cite[\S 3]{MR1418952}. This finishes the proof.
\epf

\subsection{Proof of the theorem}

Now we address the proof of Theorem \ref{thm_multiplicity}.

\subsubsection{Step 1: Higher valuations\ (algebraic)}

We shall use the abbreviation DVF\ for discrete valuation fields.

\begin{definition}
Let $k$ be a field.

\begin{enumerate}
\item We call the field $k$ a $0$\emph{-DVF} \emph{with last residue field
}$k$.

\item We call a field $F$ an $n$\emph{-DVF with last residue field} $k$ if it
is a DVF such that its residue field comes equipped with the structure of an
$(n-1)$-DVF with last residue field $k$. Write $\mathcal{O}_{1}$ for the ring
of integers of $F$.
\end{enumerate}
We may write $F$ as a pair $(F,\mathcal{E})$, where $\mathcal{E}$ refers to
the datum of the $n$-DVF\ structure, if we want to stress the choice of the latter.
\end{definition}

Given this datum, it inductively determines a sequence of fields and their
rings of integers. We get the following diagram%
\begin{equation}%
\bfig\node x(0,1200)[F]
\node y(0,900)[\mathcal{O}_{1}]
\node z(300,900)[k_1]
\node w(300,600)[\mathcal{O}_{2}]
\node u(600,600)[k_2]
\node v(600,300)[\vdots,]
\arrow/{^{(}->}/[y`x;]
\arrow/{->>}/[y`z;]
\arrow/{^{(}->}/[w`z;]
\arrow/{->>}/[w`u;]
\arrow[v`u;]
\efig
\label{lcg1}%
\end{equation}
where $\mathcal{O}_{i}$ denotes the rings of integers, and $k_{i}$ the
respective residue fields. Each upward arrow denotes the inclusion of the ring
of integers into its field of fractions, whereas each rightward arrow denotes
a quotient map $\mathcal{O}_{i}\twoheadrightarrow\mathcal{O}_{i}%
/\mathfrak{m}_{i}$, where $\mathfrak{m}_{i}$ denotes the maximal ideal. So all
the rings of integers and these maps canonically come with $\mathcal{E} $.

\begin{definition}
Let $k$ be a field.

\begin{enumerate}
\item We call the field $k$ a $0$\emph{-local field with last residue field} $k$.

\item We call a field $F$ an $n$\emph{-local field with last residue field} $k$ if it
is a complete discrete valuation field such that its residue field comes
equipped with the structure of an $(n-1)$-local field.
\end{enumerate}
\end{definition}

In particular, an $n$-local field is a particular example of an $n$-DVF. The
only difference in the definition is that an $n$-local field is complete with
respect to the topology induced from the discrete valuation, whereas there is
no such requirement for an $n$-DVF. The contrast is quite strong however. We
recall a classical fact due to Parshin:

\begin{proposition}\textsc{(Uniqueness Principle)} \label{prop_UniqueNLocStruct}If a (not necessarily
topologized) field $F$ admits the structure of an $n$-local field with last
residue field $k$, then this structure is unique. Call it $\mathcal{E}%
_{\operatorname*{local}}$.
\end{proposition}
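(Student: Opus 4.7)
My plan is to proceed by induction on $n$. The base case $n=0$ is trivial, as $F=k$ carries only the tautological $0$-local structure.

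For the inductive step, I would suppose that $F$ admits two $n$-local structures $\mathcal{E}, \mathcal{E}'$ with last residue field $k$, giving top-level complete discrete valuation rings $\mathcal{O}_{1}, \mathcal{O}_{1}' \subseteq F$ with residue fields $k_{1}, k_{1}'$, each carrying an $(n-1)$-local structure with last residue field $k$. The strategy is to reduce the problem to showing $\mathcal{O}_{1} = \mathcal{O}_{1}'$ as subrings of $F$. Once this is established, $k_{1}=k_{1}'$ as the quotient field, and the two $(n-1)$-local structures that it inherits from $\mathcal{E}$ and $\mathcal{E}'$ must agree by the inductive hypothesis; combining this with the equality at the top level yields $\mathcal{E}=\mathcal{E}'$.

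The core of the argument is therefore proving $\mathcal{O}_{1} = \mathcal{O}_{1}'$, and this is where I expect the main obstacle to lie. I would invoke F.~K.~Schmidt's classical theorem: \emph{a field which is complete with respect to two inequivalent nontrivial rank-one valuations is algebraically closed.} Here $F$ is by assumption complete with respect to both rank-one valuations $v_{1}, v_{1}'$ associated to $\mathcal{O}_{1}, \mathcal{O}_{1}'$. On the other hand, $F$ is a complete discretely valued field (via either structure) and so admits nontrivial totally ramified separable extensions via Eisenstein polynomials; in particular it is not algebraically closed. Consequently $v_{1}$ and $v_{1}'$ cannot be inequivalent, and since both are discrete of rank one, $\mathcal{O}_{1}=\mathcal{O}_{1}'$ as desired.

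A more constructive alternative would be to apply Cohen's structure theorem to each $\mathcal{O}_{1}$, presenting it canonically as $k_{1}[[\pi]]$ in the equal-characteristic case (or as a finite totally ramified extension of a Cohen ring in mixed characteristic), and then to verify that these two presentations yield the same subring of $F$. Either route reduces the uniqueness for $n$-local structures to the rigidity of a single complete DVR inside its fraction field, and this rigidity statement is the sole nontrivial input; once it is secured, the induction closes cleanly.
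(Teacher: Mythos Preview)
Your argument is correct. The induction on $n$ with the key rigidity step at the top level is the natural structure, and invoking F.~K.~Schmidt's theorem to force $\mathcal{O}_1 = \mathcal{O}_1'$ is the standard way to obtain that rigidity: a field cannot be complete (or even henselian) for two independent nontrivial valuations without being separably closed, and a complete discretely valued field is visibly not separably closed (e.g.\ via an Eisenstein polynomial $X^\ell - \pi$ with $\ell$ prime to the characteristic). Once the top valuation rings coincide, the residue fields agree and the inductive hypothesis closes the induction.

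The paper does not supply its own argument here; it attributes the result to Parshin and simply refers the reader to an external source for the proof. Your write-up therefore goes beyond what the paper itself contains. The alternative route you sketch via Cohen's structure theorem would also work but is more laborious and less uniform across equal and mixed characteristic; the Schmidt argument is the cleaner of the two.
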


For a proof see \cite[Cor. 1.3, (a)]{MR3536437}. In particular, it is not necessary
to specify the higher local field structures on the residue fields $k_{i}$ manually.

\begin{example}
There is a field inclusion $k(s,t)\hookrightarrow k((s))((t))$. The right
field is a $2$-local field with last residue field $k$; concretely Figure
\ref{lcg1} takes the shape%
\[%
%
%
\bfig\node x(0,1200)[{k((s))((t))}]
\node y(0,900)[{k((s))[[t]]}]
\node z(800,900)[k{((s))}]
\node w(800,600)[{k[[s]]}]
\node u(1600,600)[k]
\arrow/{^{(}->}/[y`x;]
\arrow/{->>}/[y`z;]
\arrow/{^{(}->}/[w`z;]
\arrow/{->>}/[w`u;]
\efig
\]
Step by step, we may restrict the discrete valuations to the field $k(s,t)$
and its residue fields, giving $k(s,t)$ the structure of a $2$-DVF. There is a
similar field inclusion $k(s,t)\hookrightarrow k((t))((s))$. Proceeding
analogously, this equips $k(s,t)$ with two \emph{different} $2$-DVF
structures. On the contrary $k((s))((t))$ only carries the $t$-adic valuation
and $k((t))((s))$ only the $s$-adic. Indeed, the sequence $x_{n}:=s^{-n}t^{n}$
in $k(s,t)$ converges to zero in $k((s))((t))$, but diverges in $k((t))((s))$.
\end{example}

For each $n$-DVF $(F,\mathcal{E})$, there is a \emph{boundary map}
$\partial_{v}$ in $K$-theory%
\[
\partial_{v}:K_{\ast}(F)\longrightarrow K_{\ast-1}(k_{1})\text{.}%
\]
The map $\partial_{v}$ can be constructed as follows: The open-closed
complement decomposition of $\operatorname*{Spec}\mathcal{O}_{1}$ into its
generic and special point gives rise to a fiber sequence in $K$-theory, the
localization sequence%
\begin{equation}
\operatorname*{Spec}\mathcal{O}_{1}/\mathfrak{m}_{1}\hookrightarrow
\operatorname*{Spec}\mathcal{O}_{1}\hookleftarrow\operatorname*{Spec}%
F\text{,}\qquad K(k_{1})\longrightarrow K(\mathcal{O}_{1})\longrightarrow
K(F)\text{,}\label{lcg4}%
\end{equation}
and then $\partial_{v}$ denotes the connecting homomorphism of the long exact
sequence of homotopy groups associated to a fiber sequence.

\begin{remark}
Under the map from Milnor to Quillen $K$-theory, this map $\partial_{v}$ is
compatible with the corresponding boundary map in Milnor $K$-theory,
\cite{MR1418952}.
\end{remark}

As we have just explained, we get a map $K_{\ast}(F)\longrightarrow K_{\ast
-1}(k_{1})$ for each $n$-DVF $F$, but since $k_{1}$ is by definition an
$(n-1)$-DVF, we may inductively concatenate these boundary maps all down to
the last residue field, `jumping down' all the steps of the staircase in Figure \ref{lcg1}.

\begin{definition}
\label{def_AlgHigherVal}Let $(F,\mathcal{E})$ be an $n$-DVF with last residue
field $k_{n}$ and such that $k_{n}$ is a finite extension of $k$. We call the
morphism%
\[
V_{F,\mathcal{E}}:K_{\ast}(F)\overset{\partial}{\longrightarrow}K_{\ast
-1}(k_{1})\overset{\partial}{\longrightarrow}\cdots\overset{\partial
}{\longrightarrow}K_{\ast-n}(k_{n})\overset{N_{k_{n}/k}}{\rightarrow}%
K_{\ast-n}(k)
\]
the \emph{(algebraic) higher valuation} of $(F,\mathcal{E})$. Here $N_{k_{n}/k}$ denotes the
norm map along the field extension $k_{n}/k$.
\end{definition}

\subsubsection{Step 2: Higher valuations (via Ind-Pro methods)}

Before we proceed, we need to recall the fundamental delooping result of
Sho\ Saito \cite{MR3317759}. There is a canonical equivalence%
\begin{equation}
D:\Omega K(\left.  n\text{-}\mathsf{Tate}(\mathcal{C})\right.
)\longrightarrow K(\left.  (n-1)\text{-}\mathsf{Tate}(\mathcal{C})\right.
)\label{lcg3}%
\end{equation}
for idempotent complete exact categories $\mathcal{C}$ (where $\Omega$ denotes
the desuspension of a spectrum, i.e. the degree shift inverse to the
suspension). Such an equivalence was first constructed in loc. cit. by Saito,
but for us it is most convenient to define it by $D:=\mathsf{Index}$, with the
latter constructed as in \cite{bgwRelativeTateObjects} \footnote{In
\cite{IndexMapAlgKTheory} it is proven that this equivalence is essentially
the same as Saito's equivalence, so we favour using the map of
\cite{bgwRelativeTateObjects} to avoid logically relying on this comparison
result.\medskip}.

\begin{example}
\label{ex_IndexMapComp}Naturally, the Laurent series $k((s))$ can be
interpreted as an object in $\left.  1\text{-}\mathsf{Tate}(k)\right.  $.
Moreover, every automorphism of an object in an exact category $\mathcal{C}$
induces a canonical element in $K_{1}(\mathcal{C})$. Thus, multiplication by a
series $f\in k((s))$ defines an element $[f]\in K_{1}(\left.  1\text{-}%
\mathsf{Tate}(k)\right.  )$. Using the explicit description of
\cite{IndexMapAlgKTheory} the value of $D([f])\in K_{0}(k)$ is computed as
follows: Pick two lattices $L_{1},L_{2}\hookrightarrow k((s))$ such that%
\[
L_{1}\subseteq L_{2}\qquad\text{and}\qquad fL_{1}\subseteq L_{2}%
\]
and then take the $K_{0}$-class of $[L_{2}/f_{1}L_{1}]-[L_{2}/L_{1}]\in
K_{0}(k)$. Since $K_{0}(k)\cong\mathbb{Z}$, and this isomorphism is just the
dimension of the underlying vector space, we get%
\[
\lbrack L_{2}/f_{1}L_{1}]-[L_{2}/L_{1}]=v(f)\cdot\lbrack k]\text{,}%
\]
where $v(f)\in\mathbb{Z}$ is the $s$-adic valuation, and $[k]$ the class of a
one-dimensional $k$-vector space. Compare this to Elaboration \ref{elab_WeilDisrupt}.
\end{example}

We turn to geometry. Let $k$ be a field and $\pi : X \rightarrow k$ an
integral variety of pure dimension $n$. Let us write $F:=k(X)$ for its
rational function field. If $\triangle:=\{(\eta_{0}>\cdots>\eta_{n})\}\in
S(X)_{n}$ denotes a saturated flag, we have the Parshin--Beilinson ad\`{e}le
ring%
\[
A(\triangle,\mathcal{O}_{X})=A(\{\eta_{0}>\cdots>\eta_{n}\},\mathcal{O}%
_{X})\text{.}%
\]
We may regard this solely as a ring. Then there is a canonical isomorphism of
$k$-algebras%
\[
A(\triangle,\mathcal{O}_{X})\cong\prod_{i=1}^{r}F_{i}%
\]
for some finite $r$, and each $F_{i}$ is an $n$-local field with last residue
field a finite extension of $k$. For a proof, see \cite[\S 3]{MR1213064} or
\cite[Theorem 4.2]{MR3536437}. Moreover, there is a natural ring homomorphism%
\begin{equation}
\operatorname*{diag}:F\longrightarrow A(\triangle,\mathcal{O}_{X}%
)\text{,}\label{lcg7}%
\end{equation}
frequently called the \emph{diagonal embedding}.

In particular, each $F_{i}$ comes equipped with a canonical algebraic higher
valuation as in Definition \ref{def_AlgHigherVal}, using the single and unique
$n$-DVF structure $\mathcal{E}_{\operatorname*{local}}$, Proposition
\ref{prop_UniqueNLocStruct}. Thus, we obtain a canonical homorphism%
\[
V_{alg}(\triangle):K_{n}(F)\overset{\operatorname*{diag}}{\longrightarrow
}\prod_{i=1}^{r}K_{n}(F_{i})\overset{V_{F_{i},\mathcal{E}%
_{\operatorname*{local}}}}{\longrightarrow}K_{0}(k)\text{.}%
\]

On the other hand, instead of viewing the ad\`{e}les as a ring, they also
carry the structure of an $n$-Tate object in finite-dimensional $k$-vector
spaces, \cite[Theorem 7.10]{MR3510209}, alongside an exact functor%
\[
\operatorname*{real}:\operatorname*{Coh}(X)\longrightarrow\left.
n\text{-}\mathsf{Tate}^{el}\right.  (\operatorname*{Coh}\nolimits_{0}%
(X))\text{,}%
\]
where $\operatorname*{Coh}\nolimits_{0}(X)$ denotes the abelian category of
coherent sheaves of zero-dimensional support. This functor is known as the
\emph{Tate realization}.

Once the support is zero-dimensional, it is finite over the target under the
structure map $\pi : X\rightarrow k$. Hence, the push-forward is
an exact functor $\pi_{\ast }: \operatorname*{Coh}\nolimits_{0}(X)\rightarrow
\mathsf{Vect}_{f}(k)$, and by functoriality of $n$-Tate categories,
\cite[Theorem 7.2, (3)]{MR3510209}, we obtain the exact functor composed as%
\[
\operatorname*{Coh}(X)\overset{{\operatorname{real}}}{\longrightarrow} \left.  n\text{-}\mathsf{Tate}%
^{el}\right.  (\operatorname*{Coh}\nolimits_{0}(X))\overset{{\pi_{\ast }}}{\longrightarrow} \left.
n\text{-}\mathsf{Tate}^{el}\right.  (k)\text{.}%
\]
This remains true if we replace $X$ by a smaller Zariski open neighbourhood of
the generic point of $X$, so that we may take the colimit. This only affects
the source of the functor and we obtain an exact functor $\mathsf{Vect}%
_{f}(F)\rightarrow\left.  n\text{-}\mathsf{Tate}^{el}\right.  (\mathsf{Vect}%
_{f}(k))$, as the function field $F$ is the stalk at the generic point.

This leads to a second map%
\begin{equation}
V_{Tate}(\triangle):K_{n}(F)\overset{\operatorname*{real}}{\longrightarrow
}K_{n}(\left.  n\text{-}\mathsf{Tate}^{el}\right.  (k))\overset{D\circ
\cdots\circ D}{\longrightarrow}K_{0}(k)\text{,}\label{eqVTate}%
\end{equation}
where $D$ denotes the equivalence of Equation \ref{lcg3}.

The following holds for all saturated flags $\triangle\in S(X)_{n}$.

\begin{lemma}
\label{vtate_vs_valg}We have $V_{Tate}(\triangle)=V_{alg}(\triangle)$.
\end{lemma}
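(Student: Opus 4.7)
The plan is to proceed by induction on $n$, in each step peeling off the outermost Tate layer on one side and the outermost valuation on the other.

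For $n=0$ there is nothing to prove. For the inductive step, first reduce to the case of a single higher-local factor. The ring decomposition $A(\triangle,\mathcal{O}_X)\cong\prod_{i=1}^r F_i$ is realized by orthogonal idempotents in the ring of ad\`eles, and these idempotents split the $n$-Tate realization into the direct sum of the Tate realizations of the individual $F_i$. Both $V_{alg}(\triangle)$ and $V_{Tate}(\triangle)$ are additive in this decomposition (the latter because $K$-theory sends $\oplus$ to $\oplus$ and the index map $D$ is exact), so it suffices to prove $V_{alg}(F_i)=V_{Tate}(F_i)$ for each $n$-local factor separately.

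Fix such an $n$-local field $F':=F_i$, with outermost valuation ring $\mathcal{O}_1$ and first residue field $k_1$, itself an $(n-1)$-local field. The tubular decomposition (Corollary \ref{cor:tubedecomp}) together with the inductive unfolding of the ad\`eles identifies the Tate realization of $F'$ as an object of $n$-$\mathsf{Tate}^{el}(k)$ with a $1$-Tate object in $(n-1)$-$\mathsf{Tate}^{el}(k)$ whose outermost lattice is precisely the Pro-Ind completion-then-localize construction encoding $\mathcal{O}_1\hookrightarrow F'\twoheadrightarrow F'/\mathcal{O}_1$, with quotient an Ind-object in $(n-1)$-Tate realizations of powers of $k_1$. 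The key technical input is the comparison, at the level of $K$-theory spectra, between the delooping equivalence $D$ applied to this outer $1$-Tate structure and the connecting homomorphism $\partial_v\colon K_\ast(F')\to K_{\ast-1}(k_1)$ of the localization fiber sequence \eqref{lcg4}. This comparison is available in \cite{IndexMapAlgKTheory} (and is illustrated on $K_1$-classes in Example \ref{ex_IndexMapComp}): both $D$ and $\partial_v$ arise as the connecting map of the same fiber sequence, once one identifies the lattice/quotient presentation of the $1$-Tate object $F'$ with the open--closed decomposition of $\operatorname{Spec}\mathcal{O}_1$.

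Granting this comparison, a single outermost application of $D$ in $V_{Tate}(F')$ equals $\partial_v$, after which the remaining composition is $V_{Tate}$ for the $(n-1)$-local field $k_1$. The inductive hypothesis applied to $k_1$ identifies this remaining composition with $V_{alg}$ of $k_1$, and the final norm $N_{k_n/k}$ is absorbed in the last stage of both constructions. The main obstacle is the spectrum-level comparison $D\simeq\partial_v$: verifying that the fiber sequence describing Saito's delooping on a Tate object of the form ``completion then generic fibre'' is naturally equivalent to the $K$-theoretic localization sequence of the corresponding DVF, with matching signs and functoriality. Once this is in hand, the rest is bookkeeping over the flag, the local factors, and the field extensions appearing in the last residue field.
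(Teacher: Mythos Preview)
Your proposal is correct and follows essentially the same inductive strategy as the paper: peel off one Tate layer at a time and identify the index map $D$ with the localization boundary $\partial_v$ by appealing to the external comparison result. The paper invokes this comparison directly via \cite[Theorem~1.1]{bgwRelativeTateObjects} applied to $X=\operatorname{Spec}\mathcal{O}_1$ with $Z$ the closed point (the definition of $D$ as $\mathsf{Index}$ was chosen precisely so this theorem applies verbatim), whereas you route through the tubular decomposition and cite \cite{IndexMapAlgKTheory}; both references supply the needed fact, and your explicit reduction to a single local factor via idempotents is a detail the paper leaves implicit.
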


\pf
Note that the definition of $V_{alg}$ essentially hinges on the concatenation
of $n$ boundary maps, while the definition of $V_{Tate}$ uses a concatenation
of $n$ index maps $D$. Thus, we will show the compatibility of a single
boundary map with a single index map. Once this is done, use this argument
inductively for all $n$ steps. Concretely: in the case at hand, if $F^{\prime
}$ is a $d$-local field with ring of integers $\mathcal{O}_{1}$, use
\cite[Theorem 1.1]{bgwRelativeTateObjects} with the choice
$X:=\operatorname*{Spec}\mathcal{O}_{1}$, and $Z$ the closed sub-scheme defined
by the special point, i.e., $Z:=\operatorname*{Spec}\mathcal{O}_{1}%
/\mathfrak{m}_{1}$. Then the open complement is $U:=\operatorname*{Spec}%
F^{\prime}$, so we are in the situation of Equation \ref{lcg4}. The functor
$\mathbf{T}_{Z}$ of loc. cit. is compatible with the Tate realization functor
of the ad\`{e}les, and the index equivalence $\mathsf{i}$ of loc. cit. relies
on the map $\mathsf{Index}$ of loc. cit., which is what we have taken as our
definition of the map $D$.
\epf

\begin{remark}
Example \ref{ex_IndexMapComp} demonstrates the previous lemma in the simplest
possible case:\ The evaluation of $D$ yields the same as the boundary map
$\partial_{1}:K_{1}(k((s)))\rightarrow K_{0}(k)$, since the latter is nothing
but the valuation map.
\end{remark}

\subsubsection{Step 3: Comparison with geometrically defined multiplicity}

Next, we will compare the algebraic higher valuation with the maps
$\partial_{y}^{x}$ which appear in the Gersten complex:

We define%
\[
V_{Ger}:K_{n}(F)\longrightarrow K_{0}(k)
\]
by the formula%

\[
\beta\longmapsto\sum_{\triangle=(\eta_{n},\ldots,\eta_{0})}[\kappa(\eta
_{n}):k]\cdot\partial_{\eta_{n}}^{\eta_{n-1}}\cdots\partial_{\eta_{1}}%
^{\eta_{0}}\beta\text{,}%
\]
where $\partial_{\ast}^{\ast}$ is defined as in the description of the
differential of the Gersten complex, i.e., as in Equation \ref{lBoundMapOfVal}%
. The sum runs over all saturated flags $\triangle\in S(X)_{n}$. (It is easy
to see that all but finitely many summands are zero, so this is well-defined)

Note that since $X$ is integral, it has only a single generic point, so we
must always have $\overline{\{\eta_{0}\}}=X$ for all flags $\triangle$, and
correspondingly $\kappa(\eta_{0})=F$.

To bridge between $V_{alg}$ and $V_{Ger}$, we need to recall two standard
facts regarding the functoriality of $\partial_{v}$:\medskip

Firstly, the push-forward compatibility of the boundary map with finite field
extensions: Suppose $L$ is a discrete valuation field with valuation $v$ and
residue field $\kappa(v)$. Let $L^{\prime}/L$ be a finite extension, and
$A^{\prime}$ the integral closure of the valuation ring of $L$ inside
$L^{\prime}$. Suppose $A^{\prime}$ is a finite $A$-module (this holds for
example if $A$ is excellent). Then $A^{\prime}$ is semi-local and if $w$ runs
through the discrete valuations extending $v$, the diagram%
\begin{equation}%
\xymatrix{
K_{\ast}(L^{\prime}) \ar[r]^-{\oplus{\partial}_w} \ar[d]_{N_{L^{\prime} /L}}
& \bigoplus_{w} K_{\ast- 1}(\kappa(w)) \ar[d]^{\Sigma N_{\kappa(w) / \kappa
(v)}} \\
K_{\ast}(L) \ar[r]_-{{\partial}_v} & K_{\ast- 1}(\kappa(v))
}%
\label{lNormSquare}%
\end{equation}
commutes (cf. \cite[R3b]{MR1418952}).

Secondly, there is also a pullback compatibility of the boundary map along
arbitrary field extensions: Suppose $L$ is a discrete valuation field with
valuation $v$ and $L^{\prime}/L$ an arbitrary field extension; write
$i:L\hookrightarrow L^{\prime}$ for the inclusion. Suppose $w$ is a discrete
valuation of $L^{\prime}$ extending $v$ with ramification index $e$. Then%
\begin{equation}%
\xymatrix{
K_{\ast}(L^{\prime}) \ar[r]^-{{\partial}_w} & K_{\ast - 1}(\kappa(w)) \\
K_{\ast}(L) \ar[u]_{i} \ar[r]_-{e \cdot{\partial}_v} & K_{\ast - 1}(\kappa
(v)) \ar[u]
}%
\label{lCorSquare}%
\end{equation}
commutes, and the upward arrow on the right is induced from $i$ by the fact that the
valuation $w$ extends $v$ (cf. \cite[R3a]{MR1418952}).

\begin{lemma}
\label{vger_vs_sum_valg}We have $V_{Ger}=\sum_{\triangle\in S(X)_{n}}%
V_{alg}(\triangle)$.
\end{lemma}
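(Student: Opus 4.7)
The plan is to prove the identity by induction on $n$, peeling off the outermost boundary map on each side and using the push-forward compatibility square \eqref{lNormSquare} to identify the resulting sums. For $n = 0$ both sides reduce to the identity on $K_0(k)$, so there is nothing to show.

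For the inductive step the essential geometric input is the structural factorization $A(\triangle, \mathcal{O}_X) \cong \prod_{i=1}^r F_i$ recalled above \eqref{lcg7}. The higher-local factors $F_i$ are in canonical bijection with the chains of branches obtained by iteratively normalizing the local rings $\mathcal{O}_{\overline{\{\eta_{j-1}\}}, \eta_j}$ along the flag and choosing a maximal ideal at each step. This is the ad\`{e}lic shadow of the inductive construction of $A(\triangle, -)$ by completions; see \cite[Theorem 4.2]{MR3536437}.

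Fix $\beta \in K_n(F)$. On the Gersten side, the outermost boundary $\partial_{\eta_1}^{\eta_0}$ equals, by \eqref{lBoundMapOfVal}, the sum $\sum_a N_{\kappa(\mathfrak{m}_a)/\kappa(\eta_1)}\,\partial_{v_a}$ over the maximal ideals $\mathfrak{m}_a$ of the integral closure of $\mathcal{O}_{X, \eta_1}$ in $F$. Under the bijection above, these $\mathfrak{m}_a$ are precisely the first-level branches in $\prod_i F_i$: grouping the $F_i$ by their first residue field $\kappa(\mathfrak{m}_a)$, the sub-product for branch $a$ is itself the Beilinson ad\`{e}le ring of the localization at $\mathfrak{m}_a$ along the induced flag of length $n - 1$ in $\overline{\{\eta_1\}}$. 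One application of \eqref{lNormSquare}, together with the standard invariance of Milnor $K$-theory boundary maps under replacing a discrete valuation field by its completion, converts the pair (boundary $\partial_{v_a}$, norm $N_{\kappa(\mathfrak{m}_a)/\kappa(\eta_1)}$) on the Gersten side into the first-step boundary maps on the corresponding factors $F_i$ on the algebraic side.

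Iterating this branch by branch reduces everything to sums of $V_{alg}(\triangle')$ for length $n - 1$ flags $\triangle'$ starting at the chosen branch, to which the inductive hypothesis applies. The final degree coefficient $[\kappa(\eta_n):k]$ in $V_{Ger}$ matches the terminal norm $N_{\kappa(\eta_n)/k}$ in $V_{alg}$, since on $K_0$ the norm along a finite field extension is multiplication by its degree. The principal obstacle is the combinatorial bookkeeping that identifies the tree of Gersten branches with the factorization $\prod_i F_i$; once this identification is fixed, the proof is a mechanical iteration of \eqref{lNormSquare}.
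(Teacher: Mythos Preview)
Your proposal is correct in substance and uses exactly the same two ingredients as the paper: the push-forward compatibility square \eqref{lNormSquare} and the compatibility of the boundary map with passing to the completion (a special case of \eqref{lCorSquare} with ramification index $e=1$). The difference is purely organizational. You set things up as an induction on $n$, peeling off $\partial_{\eta_1}^{\eta_0}$ and then invoking the hypothesis on the length-$(n-1)$ flags living over each branch $\mathfrak{m}_a$ of the normalization. The paper instead unravels all $n$ boundary maps at once: it repeatedly applies \eqref{lNormSquare} to push every intermediate norm to the far right, arriving at a sum over valuation chains $(v_1,\dots,v_n)$, each of which visibly defines an $n$-DVF structure $\mathcal{E}_{v_1,\dots,v_n}$ on $F$; then a single appeal to \eqref{lCorSquare} identifies this with the sum of the $V_{F_i,\mathcal{E}_{\operatorname{local}}}$ after the diagonal embedding.

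The paper's direct unraveling is slightly cleaner precisely because it sidesteps the issue you flag as ``the principal obstacle'': your inductive hypothesis has to be formulated not over $\overline{\{\eta_1\}}$ but over the individual branches of its normalization (whose function fields are the $\kappa(\mathfrak{m}_a)$), and one must check that the Gersten differentials on $\overline{\{\eta_1\}}$ match, via \eqref{lNormSquare}, the Gersten differentials on those branches. This is true and routine, but the paper's argument avoids having to say it by never leaving $K_*(F)$ until the very end.
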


\pf
(Step 1) Note that the above map $V_{Ger}$ is a concatenation of boundary maps%
\[
\partial_{y}^{x}:K_{\ast}(\kappa(x))\longrightarrow K_{\ast-1}(\kappa
(y))\text{,}%
\]
where $x,y$ are points of the scheme, and $\kappa(-)$ the respective residue
fields.
 Unravelling each summand in%
\begin{align}
\partial_{y}^{x}:K_{\ast}(\kappa(x))  & \longrightarrow
K_{\ast-1}(\kappa(y))\nonumber\\
\beta & \longmapsto\sum_{i=1}^{r}N_{\kappa(\mathfrak{m}_{i})/\kappa
(y)}\partial_{v_{i}}(\beta)\label{lch4}%
\end{align}
individually, and using the norm compatibility of Diagram \ref{lNormSquare} to
move all norms jointly all to the left, we can rewrite%
\begin{align}
& \partial_{\eta_{n}}^{\eta_{n-1}}\cdots\partial_{\eta_{1}}^{\eta_{0}}%
\beta\label{lch5}\\
& \qquad\qquad=\sum_{(v_{1},\ldots,v_{n})}N_{\kappa(v_{n})/k(\eta_{n}%
)}\partial_{v_{n}}\cdots\partial_{v_{2}}\partial_{v_{1}}\beta\text{,}\nonumber
\end{align}
where $v_{1},\ldots,v_{n}$ are the discrete valuations which stem from the
individual valuations of the maximal ideals $\mathfrak{m}_{i}\subseteq
\kappa(x)$ for $x=\eta_{0},\ldots,\eta_{n-1}$ when unravelling $\partial
_{\eta_{n}}^{\eta_{n-1}}\cdots\partial_{\eta_{1}}^{\eta_{0}}$ using Formula
\ref{lch4} (it may be helpful to compare this to the explicit explanation and
description of $\partial_{\ast}^{\ast}$ which we have given around Equation
\ref{lBoundMapOfVal}). These discrete valuations $v_{1},\ldots,v_{n}$ actually
define the structure of an $n$-DVF on the function field $\kappa(\eta_{0})=F$,
since $\eta_{0}$ is the single generic point of $X$. We write
$\mathcal{E}_{v_{1},\ldots,v_{n}}$ to denote this $n$-DVF structure with last
residue field $\kappa(v_{n})$.\newline(Step 2) Now compare the right-hand side of
Equation \ref{lch5} with Definition \ref{def_AlgHigherVal}. As norms along finite field
extensions and the boundary map at a valuation $\partial_{v}$ are compatible
(Diagram \ref{lNormSquare}), we get%
\begin{align}
V_{Ger}:K_{n}(F)  & \longrightarrow K_{0}(k)\text{,}\label{lcips2}\\
\beta & \longmapsto\sum_{\triangle}\sum_{v_{1},\ldots,v_{n}}V_{F,\mathcal{E}%
_{v_{1},\ldots,v_{n}}}\nonumber
\end{align}
relying on Definition \ref{def_AlgHigherVal}. Here the sum runs over (1) all
summands $\triangle$ in the definition of $V_{Ger}$, and (2) over all the
valuation chains $v_{1},\ldots,v_{n}$, which arise from the finitely many
choices of maximal ideals $\mathfrak{m}_{\ast}$ in the individual unravelling
steps of Equation \ref{lch4} (for that one fixed $\triangle$ of the first sum).

Note that the only difference with $V_{alg}$, i.e.,%
\[
V_{alg}(\triangle):K_{n}(F)\overset{\operatorname*{diag}}{\longrightarrow
}\prod_{i=1}^{r}K_{n}(F_{i})\overset{V_{F_{i},\mathcal{E}%
_{\operatorname*{local}}}}{\longrightarrow}K_{0}(k)\text{,}%
\]
is that instead of using $F$ and the $n$-DVF structure $\mathcal{E}$, the map
$V_{alg}(\triangle)$ first goes through the field extension $F\hookrightarrow
F_{i}$, and then uses the (canonical) $n$-DVF\ structure $\mathcal{E}%
_{\operatorname*{local}}$ on the latter. The valuations of $F_{i}$ are
precisely the ones lying over the valuations $v_{1},\ldots,v_{n}$ of
$\mathcal{E}_{v_{1},\ldots,v_{n}}$ (actually: given the Uniqueness Principle,
Proposition \ref{prop_UniqueNLocStruct}, $F_{i}$ only has one single $n$-local field
structure, so we cannot go wrong here anyway). However, since the boundary
maps along valuations are compatible with field extensions (Diagram
\ref{lCorSquare}), both results agree. We merely factor the evaluation of a
concatenated boundary map over $n$-local field completions. Thus, Equation
\ref{lcips2} transforms into%
\[
\beta\longmapsto\sum_{\triangle}V_{alg}(\triangle)\text{,}%
\]
confirming our claim.
\epf

\subsubsection{Step 4: End of the proof}

Now we can prove our formula for the intersection multiplicity.

\pf
[of Theorem \ref{thm_multiplicity}]As the cover is indexed by the totally ordered set $I$, we obtain a disjoint decomposition $({\Sigma_{\alpha})}$ in the sense of Definition \ref{def_disjoint_decomp} by letting ${\Sigma_{\alpha}}$ be the set of those scheme points $x\in U_{\alpha}$ such that for no strictly smaller $\alpha^{\prime}<\alpha$ we have $x\in U_{{\alpha}^{\prime }}$.

Consider the adelic expression which
we aim to evaluate. Making the tacit identification $K_{n}(\left.
n\text{-}\mathsf{Tate}(k)\right.  )\cong\mathbb{Z}$ explicit, it reads:
\begin{align}
&  (-1)^{\frac{n(n+1)}{2}}\sum_{\triangle=(\eta_{n}<\cdots<\eta_{0})}%
\pi_{\ast } (D\circ\cdots\circ D)_{A_{X}(\triangle,\mathcal{O}_{X})}\nonumber\\
&  \qquad\qquad\qquad\qquad\left(  \underset{\circlearrowright\left.
_{\eta_{n}}\Lambda_{\eta_{n-1}}\right.  }{\left[  f_{\alpha(\eta_{n}%
)\alpha(\eta_{n-1})}^{n}\right]  }\left.  \overrightarrow{\otimes}\right.
\cdots\left.  \overrightarrow{\otimes}\right.  \underset{\circlearrowright
\left.  _{\eta_{1}}\Lambda_{\eta_{0}}\right.  }{\left[  f_{\alpha(\eta
_{1})\alpha(\eta_{0})}^{1}\right]  }\right)  \text{.}\label{lcem4}%
\end{align}
Firstly, note that we are using the external product of $K$-theory classes
$K_{1}\times\cdots\times K_{1}\rightarrow K_{n}$ here, as introduced in
Definition \ref{def_ExternalProd}. By Corollary \ref{cor_ProductCompat}
this product commutes with taking the product in the ordinary $K$-theory of
the variety $X$ first, and then performing the Tate realization. We use that
this is not just true for $X$, but also holds on all its open subsets; in
particular on the opens $U_{\ast}$ where the $f_{\ast,\ast}^{\ast}$ of the
open cover $\mathfrak{U}$ are defined. As%
\[
\underset{\circlearrowright(-)}{\left[  f\right]  }%
\]
denotes the Tate realization, we may replace the term in the second line,
Equation \ref{lcem4}, by%
\[
\operatorname*{diag}\nolimits^{\ast}\{f_{\alpha(\eta_{n})\alpha(\eta_{n-1}%
)}^{n},\ldots,f_{\alpha(\eta_{1})\alpha(\eta_{0})}^{1}\}\text{,}%
\]
where $\{-,\cdots,-\}$ now refers to the product in the $K$-theory of the open
$U_{\ast}$ in $\mathfrak{U}$, where the $f_{\ast,\ast}^{\ast}$ are defined. As
alternating \v{C}ech cocycles satisfy $f_{\nu\mu}=-f_{\mu\nu}$ (or $f_{\nu\mu
}=f_{\mu\nu}^{-1}$, written for $\mathbb{G}_{m}$), we may further rewrite this
as%
\[
(-1)^{n}\{f_{\alpha(\eta_{n-1})\alpha(\eta_{n})}^{n},\ldots,f_{\alpha(\eta
_{0})\alpha(\eta_{1})}^{1}\}\text{.}%
\]
Since the product in $K$-theory is graded-commutative, we may re-arrange these
terms as%
\[
\mathrm{Eq.}\thinspace \eqref{lcem4}=\sum_{\triangle=(\eta_{n}<\cdots<\eta_{0})}(D\circ\cdots\circ D)_{A_{X}%
(\triangle,\mathcal{O}_{X})}\operatorname*{diag}\nolimits^{\ast}%
\{f_{\alpha(\eta_{0})\alpha(\eta_{1})}^{1},\ldots,f_{\alpha(\eta_{n-1}%
)\alpha(\eta_{n})}^{n}\}\text{,}%
\]
and the factor $(-1)^{n(n-1)/2}$, which had remained from the previous step,
cancels, as this is the sign of the permutation reversing the order of $n$ letters.

Next, we recognize that this expression is exactly of the form $V_{Tate}$ as
in Equation \ref{eqVTate}, so that%
\[
=\sum_{\triangle\in S(X)_{n}}V_{Tate}(\triangle)(\{f_{\alpha(\eta_{0}%
)\alpha(\eta_{1})}^{1},\ldots,f_{\alpha(\eta_{n-1})\alpha(\eta_{n})}^{n}\}).
\]
By Lemma \ref{vtate_vs_valg} this equals%
\[
=\sum_{\triangle\in S(X)_{n}}V_{alg}(\triangle)(\{f_{\alpha(\eta_{0}%
)\alpha(\eta_{1})}^{1},\ldots,f_{\alpha(\eta_{n-1})\alpha(\eta_{n})}^{n}\})
\]
and by Lemma \ref{vger_vs_sum_valg} this equals $V_{Ger}(\{f_{\alpha(\eta
_{0})\alpha(\eta_{1})}^{1},\ldots,f_{\alpha(\eta_{n-1})\alpha(\eta_{n})}%
^{n}\})$. And using the definition of $V_{Ger}$, we unravel this expression as%
\[
=\sum_{\eta_{n},\ldots,\eta_{0}}[\kappa(\eta_{n}):k]\cdot\partial_{\eta_{n}%
}^{\eta_{n-1}}\cdots\partial_{\eta_{1}}^{\eta_{0}}\{f_{\alpha(\eta_{0}%
)\alpha(\eta_{1})}^{1},\ldots,f_{\alpha(\eta_{n-1})\alpha(\eta_{n})}%
^{n}\}\text{.}%
\]
Now use Proposition \ref{prop_intersection_via_boundaryformula} and we conclude that
this equals the intersection multiplicity $L_{1}\cdots L_{n}$.
\epf

\section{\label{sect:TopGroups}Pontryagin duality}
Let us also address a different application of our methods: The classical inspiration for Tate categories are the Tate vector spaces. For a number of applications, especially when one only needs
$n$-Tate objects for $n=1$, one can work with variants of topological $k$-vector spaces instead. And if $k$
happens to be a \emph{finite} field, one can even work with locally compact abelian (LCA) groups.
A discussion in this spirit is given in \cite{highkmoody}, \S 4.1. Indeed, ad\`eles for curves over finite fields are usually considered in the category of LCA groups and Weil's degree formula from the Introduction would classically be phrased in this context. We briefly discuss this:
\medskip

Let $\mathsf{HAb}$ be the category of Hausdorff topological abelian groups.
Morphisms are continuous group homomorphisms. Let $\mathsf{LCA}$ be the category of locally compact abelian topological
groups.

Let $\mathbb{T}$ denote the standard torus group (i.e. $U(1)$ with its usual
topology, or equivalently $\mathbb{R}/\mathbb{Z}$ with the quotient topology).
For $G \in \mathsf{HAb}$ the \emph{dual group} is defined as $G^{\vee}:=\operatorname*{Hom}%
(G,\mathbb{T})$, the group of continuous group homomorphisms, equipped with
the compact-open topology. There is a canonical continuous homomorphism
$\eta_{G}:G\rightarrow G^{\vee\vee}$ and $G$ is called \emph{reflexive} if
$\eta_{G}$ is an isomorphism in $\mathsf{HAb}$. The central duality result for LCA groups is the following:

\begin{theorem}
\textsc{(Pontryagin Duality)} All groups in $\mathsf{LCA}$ are reflexive and there is a
canonical exact equivalence of exact categories%
\[
\mathsf{LCA}^{op}\overset{\sim}{\longrightarrow}\mathsf{LCA}\text{.}%
\]
Suppose $\mathsf{LCA}^{op}\overset
{F}{\longrightarrow}\mathsf{LCA}$ is any equivalence of categories. Then $F$ is
naturally equivalent to the Pontryagin Duality functor.
\end{theorem}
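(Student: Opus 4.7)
The theorem has two halves: the existence of a canonical exact Pontryagin duality, and the uniqueness of any such duality up to natural isomorphism.

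\emph{Part 1: Existence.} Define $P\colon\mathsf{LCA}^{op}\to\mathsf{LCA}$ by $P(G):=\operatorname{Hom}_{\mathsf{cts}}(G,\mathbb{T})$ with the compact-open topology. That $P(G)\in\mathsf{LCA}$ and that $\eta_G\colon G\to P(P(G))$ is an isomorphism in $\mathsf{HAb}$ for every $G$ is the classical Pontryagin duality theorem, which I would invoke rather than prove. To upgrade this to an equivalence of exact categories, the remaining step is that $P$ sends admissible short exact sequences $0\to A\to B\to B/A\to 0$ (with $A$ closed in $B$ and $B/A$ carrying the topological quotient) to admissible short exact sequences. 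This reduces to the classical extension property: every continuous character of a closed subgroup of an LCA group extends to the whole group, which in turn follows from the injectivity of $\mathbb{T}$ in $\mathsf{LCA}$.

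\emph{Part 2: Uniqueness.} Given any equivalence $F\colon\mathsf{LCA}^{op}\to\mathsf{LCA}$ with quasi-inverse $F^{-1}\colon\mathsf{LCA}\to\mathsf{LCA}^{op}$, set $A:=F^{-1}(\mathbb{Z})\in\mathsf{LCA}$. The first step is to show that $F$ is represented by $A$, i.e.\ $F\cong\operatorname{Hom}_{\mathsf{LCA}}(-,A)$ as contravariant functors $\mathsf{LCA}\to\mathsf{LCA}$. Using that $\mathbb{Z}$ represents the underlying-set functor on $\mathsf{LCA}$ (the canonical identification $\operatorname{Hom}_{\mathsf{LCA}}(\mathbb{Z},X)=X$ holds as LCA groups with the compact-open topology) together with the natural isomorphism $F^{-1}\circ F\cong\mathrm{id}$, one obtains
\begin{equation*}
\operatorname{Hom}_{\mathsf{LCA}}(\mathbb{Z},F(G))\;\cong\;\operatorname{Hom}_{\mathsf{LCA}}(G,A)
\end{equation*}
naturally in $G$. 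An analogous Yoneda computation with $\operatorname{Hom}(K,-)$ for $K$ ranging over compact LCA groups matches the compact-open topology on $\operatorname{Hom}(G,A)$ with the intrinsic LCA topology on $F(G)$, yielding $F\cong\operatorname{Hom}_{\mathsf{LCA}}(-,A)$. The second step is to identify $A\cong\mathbb{T}$. Because $\operatorname{Hom}(-,A)$ is an equivalence, $A$ is a reflexive dualizing object whose endomorphism ring must satisfy $\operatorname{End}_{\mathsf{LCA}}(A)\cong\mathbb{Z}$; combined with the structure theorem for LCA groups, this pinpoints $A$ as the unique one-dimensional compact connected abelian group with infinite-cyclic character group, forcing $A\cong\mathbb{T}$ and hence $F\cong P$.

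\emph{Main obstacle.} Part 1 is classical. In Part 2, the Yoneda representability reduction is formal, but the concluding identification $A\cong\mathbb{T}$ is the nontrivial categorical reconstruction step. This is a classical uniqueness theorem for Pontryagin duality due to Roeder, and in a complete write-up I would cite it directly for this step; the value of the outline above is that it reduces the entire uniqueness claim to that single, well-documented identification.
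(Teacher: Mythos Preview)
Your proposal is correct and aligns with the paper's treatment: the paper does not prove this theorem at all but simply attributes the duality to Pontryagin and van der Kampen and the uniqueness to Roeder \cite[Theorem 5]{MR0279233}. Your sketch supplies more detail than the paper does---in particular the exactness argument via injectivity of $\mathbb{T}$ and the representability reduction in Part~2---but you ultimately defer the decisive identification $A\cong\mathbb{T}$ to Roeder, which is exactly what the paper does.
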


The duality is due to Pontryagin and van Kampen. The uniqueness is due to Roeder \cite[Theorem 5]{MR0279233}.

Next, besides the duality provided by Pontryagin's Duality Theorem, one would like to have a tensor product and internal Homs. This is not readily possible:

\begin{remark}\textsc{(The problem with internal Homs)}
Moskowitz established that if $\operatorname*{Hom}(G,H)$ is locally compact for all
choices of $H$ (resp. $G$), then $G$ must be a finitely generated discrete
group (resp. $H$ must be compact without small sub-groups) \cite[Theorem
4.3]{MR0215016}. Thus, one cannot hope to equip the entire category
$\mathsf{LCA}$ with internal Homs.
\end{remark}

In \cite{MR2329311} Hoffmann and Spitzweck present a different approach to equip $\mathsf{LCA}$
with a closed monoidal structure.

\begin{definition}
An LCA group $A$ has

\begin{enumerate}
\item \emph{finite }$\mathbb{Z}$\emph{-rank} if $\operatorname*{Hom}%
(A,\mathbb{R})$ is a finite-dimensional real vector space,

\item \emph{finite }$\mathbb{S}^{1}$\emph{-rank} if $\operatorname*{Hom}%
(\mathbb{R},A)$ is a finite-dimensional real vector space,

\item \emph{finite }$p$\emph{-rank} if multiplication by $p$, $(-)\cdot
p:A\rightarrow A$ is a strict morphism whose kernel and cokernel are finite
abelian groups.
\end{enumerate}

We say that $A$ has \emph{finite ranks} if its $\mathbb{Z}$-rank is finite,
its $\mathbb{S}^{1}$-rank is finite, and for all prime numbers $p$ its
$p$-rank is finite. Let $\mathsf{FLCA}$ be the full sub-category of
$\mathsf{LCA}$ of groups with finite ranks. (see {\cite[Definition 2.5]{MR2329311}})
\end{definition}

The dual group of a finite ranks group will have finite ranks again, so that
Pontryagin Duality restricts to the category $\mathsf{FLCA}$ and its opposite.

\begin{theorem}\textsc{(Hoffmann--Spitzweck)}
\label{thm:hoffmannspitzweck}The additive categories%
\[
\mathsf{FLCA}\subset\mathsf{LCA}\subset\mathsf{HAb}%
\]
are each quasi-abelian. In particular, they are exact categories in a natural
way. The category $\mathsf{FLCA}$ moreover carries a closed symmetric monoidal
structure with a bi-right exact tensor product%
\[
-\otimes-:\mathsf{FLCA}\times\mathsf{FLCA}\longrightarrow\mathsf{FLCA}\text{.}%
\]
Finally, $\mathsf{FLCA}$ is a fully exact sub-category of $\mathsf{LCA}$.
\end{theorem}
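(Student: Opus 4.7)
The plan is to establish the four claims (quasi-abelian structure on $\mathsf{HAb}$, $\mathsf{LCA}$ and $\mathsf{FLCA}$; closed symmetric monoidal structure on $\mathsf{FLCA}$; full exactness of $\mathsf{FLCA}\hookrightarrow\mathsf{LCA}$) in stages, working outward from the largest category to the most structured one. First I would verify that $\mathsf{HAb}$ is quasi-abelian by explicit construction: kernels exist as closed sub-groups with the subspace topology, cokernels as quotients by the closure of the image with the quotient topology, and a morphism is strict exactly when it is open onto its image. The Schneiders axioms (strict epimorphisms stable under pullback, strict monomorphisms stable under pushout) then reduce to a topological calculation on pullbacks and pushouts computed in $\mathsf{Ab}$ and refined by the subspace or quotient topology.

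Next I would show that $\mathsf{LCA}\subset\mathsf{HAb}$ and $\mathsf{FLCA}\subset\mathsf{LCA}$ are fully exact. For $\mathsf{LCA}\subset\mathsf{HAb}$ one uses that closed sub-groups of LCA groups are LCA, that Hausdorff quotients of LCA groups by closed sub-groups are LCA, and that LCA is closed under extensions (via the open mapping theorem in a second-countable setting, plus a standard compactness reduction). For $\mathsf{FLCA}\subset\mathsf{LCA}$ the crucial point is that each of the three rank functions is additive on short exact sequences: applying $\operatorname{Hom}(-,\mathbb{R})$ and $\operatorname{Hom}(\mathbb{R},-)$ to a short exact sequence in $\mathsf{LCA}$ yields exact sequences of finite-dimensional real vector spaces (combining Pontryagin duality with vanishing of the relevant $\operatorname{Ext}^{1}$), while the $p$-rank is additive by the snake lemma applied to the multiplication-by-$p$ ladder. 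Consequently, finite ranks is preserved under kernels, cokernels and extensions, so $\mathsf{FLCA}$ is a fully exact sub-category of $\mathsf{LCA}$.

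Finally I would construct the tensor product on $\mathsf{FLCA}$. The strategy is to pin it down first on the building blocks $\mathbb{Z}, \mathbb{R}, \mathbb{T}, \mathbb{Z}/p^{k}, \mathbb{Z}_{p}, \mathbb{Q}_{p}$ dictated by the structure theorem, using the tautological rules $\mathbb{Z}\otimes A = A$, $\mathbb{R}\otimes\mathbb{R}=\mathbb{R}$, $\mathbb{R}\otimes\mathbb{T}=0$, $\mathbb{Z}/n\otimes\mathbb{Z}/m=\mathbb{Z}/\gcd(n,m)$ and the analogous $p$-adic formulae, and then to extend the bifunctor by presenting an arbitrary $\mathsf{FLCA}$ object as a finite iterated extension of such pieces and imposing bi-additivity. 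The hard part will be verifying bi-right exactness: the algebraic tensor product of two LCA groups is in general not Hausdorff, so one must Hausdorffify the completed algebraic tensor product at each step and then check that this operation is compatible with the extension-by-extension construction. My proposed route, following Hoffmann--Spitzweck, is to work derivedly, constructing a total left derived functor $-\otimes^{\mathbb{L}}-$ on the bounded derived category of $\mathsf{FLCA}$, showing that it concentrates in degrees $[-1,0]$ on pure objects, and identifying $H_{0}$ with the desired underived tensor product; bi-right exactness then drops out of the long exact sequence of the derived tensor product. The internal Hom can either be defined directly by $\underline{\operatorname{Hom}}(A,B):=(A\otimes B^{\vee})^{\vee}$ with $(-)^{\vee}$ denoting Pontryagin duality, or obtained as a right adjoint by a representability argument.
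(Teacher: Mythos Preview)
The paper does not give an independent argument: its proof of this theorem consists entirely of pointers into Hoffmann--Spitzweck \cite{MR2329311} (Propositions~1.2, 2.9, 3.14, Corollary~2.10, Remark~4.3). So the relevant comparison is between your sketch and the actual Hoffmann--Spitzweck arguments that the paper defers to.

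Your treatment of the quasi-abelian structure on $\mathsf{HAb}$, $\mathsf{LCA}$, $\mathsf{FLCA}$ and of the full exactness of $\mathsf{FLCA}\subset\mathsf{LCA}$ is in line with what Hoffmann--Spitzweck do and is a reasonable plan.

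For the monoidal structure, however, you run the construction in the opposite direction from Hoffmann--Spitzweck, and your direction is the harder one with a real gap. In \cite{MR2329311} the \emph{internal Hom} is built first and concretely: $\underline{\mathrm{Hom}}(A,B)$ is the group of continuous homomorphisms with the compact-open topology, and the work is to show that for $A,B\in\mathsf{FLCA}$ this group again has finite ranks and that the bifunctor is bi-left exact. The tensor product is then \emph{defined} by Pontryagin duality, $A\otimes B:=\underline{\mathrm{Hom}}(A,B^{\vee})^{\vee}$, from which the Hom--tensor adjunction and bi-right exactness of $\otimes$ are formal consequences of bi-left exactness of $\underline{\mathrm{Hom}}$. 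Your proposal inverts this: you want to construct $\otimes$ first on ``building blocks'' and extend, then recover $\underline{\mathrm{Hom}}$ as $(A\otimes B^{\vee})^{\vee}$. Two issues: (i) the assertion that every object of $\mathsf{FLCA}$ is a \emph{finite} iterated extension of $\mathbb{Z},\mathbb{R},\mathbb{T},\mathbb{Z}/p^{k},\mathbb{Z}_{p},\mathbb{Q}_{p}$ is not correct as stated (consider for instance the compact group $\prod_{p}\mathbb{Z}/p$, which lies in $\mathsf{FLCA}$ but is not reached by finitely many such extensions), so ``extend from building blocks'' does not give a definition on all of $\mathsf{FLCA}$; (ii) the derived route you sketch presupposes a way to left-derive $\otimes$ before $\otimes$ has been defined, and $\mathsf{FLCA}$ has no obvious supply of projectives to make this precise. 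The paper's cited approach avoids both problems by starting from the concrete $\underline{\mathrm{Hom}}$.
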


\pf
See \cite{MR2329311}, Proposition 1.2 and Corollary 2.10 for proofs that the
categories are quasi-abelian, Proposition 3.14 for the closed monoidal structure.
The internal Hom is shown to be bi-left exact in \S 3 \textit{loc. cit.} and
the bi-right exactness of the tensor product gets deduced from this, Remark
4.3. \textit{loc. cit. }Finally, $\mathsf{FLCA}$ is fully exact in
$\mathsf{LCA}$ by Proposition 2.9 \textit{loc. cit.}
\epf

Using Corollary \ref{cor_induce_selfequivalences} and the closed monoidal structure of
Theorem \ref{thm:hoffmannspitzweck}, we obtain:

\begin{theorem}
\textsc{(Generalized Pontryagin Duality)} For all $n\geq0$, there is a canonical exact
equivalence of exact categories%
\[
\left.  n\text{-}\mathsf{Tate}(\mathsf{LCA})^{op}\right.  \overset{\sim
}{\longrightarrow}\left.  n\text{-}\mathsf{Tate}(\mathsf{LCA})\right.
\qquad
\left.  n\text{-}\mathsf{Tate}(\mathsf{FLCA})^{op}\right.  \overset{\sim
}{\longrightarrow}\left.  n\text{-}\mathsf{Tate}(\mathsf{FLCA})\right.\text{.}
\]
For any $n,m \geq 1$, there exists a bi-exact functor
            \begin{equation*}
                -\rotimes -\colon \nTateb(\mathsf{FLCA})\times m\text{-}\Tateb(\mathsf{FLCA})\to (n+m)\text{-}\Tateb(\mathsf{FLCA}).
            \end{equation*}
\end{theorem}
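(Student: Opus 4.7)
The plan is to reduce both claims to general results already established in the paper. For the duality statements, I would apply Corollary \ref{cor_induce_selfequivalences} with $\mathcal{C} = \mathsf{LCA}$ and $\mathcal{C} = \mathsf{FLCA}$. This requires three checks: that each is an exact category, that each is idempotent complete (so that the idempotent completion $\mathcal{C}^{ic}$ in the corollary equals $\mathcal{C}$ itself), and that Pontryagin duality provides an exact equivalence $\mathcal{C}^{op} \simeq \mathcal{C}$. The first is supplied by Theorem \ref{thm:hoffmannspitzweck}. For the second, I would invoke the fact that every quasi-abelian category is idempotent complete: given an idempotent $e$ on an object $A$, the subobject $\ker(1-e)$ provides a splitting, and both kernels and cokernels exist in a quasi-abelian category.

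For the third check, Pontryagin duality sends $G \mapsto G^{\vee}$ and is a contravariant equivalence on $\mathsf{LCA}$. To see it is exact for the quasi-abelian structure, I would verify that a strict short exact sequence $0 \to A \to B \to C \to 0$ (i.e., a kernel-cokernel pair in $\mathsf{LCA}$) dualizes to another strict short exact sequence; this is classical and can be found in standard references on Pontryagin duality (e.g.\ via the fact that closed subgroup inclusions dualize to quotient maps and vice versa, and that the duality is involutive). For the restriction to $\mathsf{FLCA}$, the three finiteness conditions dualize: $\mathbb{Z}$-rank and $\mathbb{S}^1$-rank are interchanged under $(-)^{\vee}$ since $\mathbb{R}^{\vee} \cong \mathbb{R}$ and $\mathrm{Hom}(\mathbb{R}, G) \cong \mathrm{Hom}(G^{\vee}, \mathbb{R})$, while the $p$-rank is self-dual. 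Applying Corollary \ref{cor_induce_selfequivalences} then yields both exact equivalences on $n$-Tate objects, in a canonical way.

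For the second assertion, I would invoke directly the final proposition of \S\ref{sect:rightexacttens} with $\mathcal{C} = \mathsf{FLCA}$: Theorem \ref{thm:hoffmannspitzweck} furnishes the bi-right exact tensor product on $\mathsf{FLCA}$, and the proposition then produces the bi-exact functor
\[
-\rotimes-\colon \nTateb(\mathsf{FLCA}) \times m\text{-}\mathsf{Tate}^{\flat}(\mathsf{FLCA}) \longrightarrow (n+m)\text{-}\mathsf{Tate}^{\flat}(\mathsf{FLCA})
\]
as a restriction of the corresponding functor valued in $(n+m)\text{-}\mathsf{Tate}(\mathsf{FLCA})$. The main obstacle I anticipate is the exactness of Pontryagin duality for the quasi-abelian structure: one must be careful that what appears on the surface to be merely a continuous bijection (hence an isomorphism of abstract groups) is in fact strict, and that the duality preserves admissible monics and epics rather than merely continuous injections and surjections. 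Once this is confirmed in $\mathsf{LCA}$, the restriction to $\mathsf{FLCA}$ and the iteration over $n$ through Corollary \ref{cor_induce_selfequivalences} are formal.
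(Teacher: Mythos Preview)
Your proposal is correct and follows essentially the same route as the paper: the paper simply invokes Corollary~\ref{cor_induce_selfequivalences} together with Theorem~\ref{thm:hoffmannspitzweck} and declares the result, while you spell out the hypothesis checks (idempotent completeness via quasi-abelianness, exactness of Pontryagin duality, closure of $\mathsf{FLCA}$ under duals). Note that the exactness of Pontryagin duality and its restriction to $\mathsf{FLCA}$ are already recorded in the paper just before the theorem, so you could cite those statements directly rather than re-arguing them.
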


\begin{remark}
These categories might be the appropriate candidate when one is interested in a category which
contains both $n$-Tate objects as well as the classical ad\`eles of a number field along with its
archimedean places.
\end{remark}

Classical Pontryagin Duality exchanges certain sub-categories, e.g.,%
\[%
\begin{tabular}
[c]{rcl}%
discrete & $\leftrightarrow$ & compact\\
(topological) $p$-torsion & $\leftrightarrow$ & (topological) $p$-torsion\\
compact metrizable & $\leftrightarrow$ & countable\\
compact connected & $\leftrightarrow$ & torsion-free discrete.
\end{tabular}
\]
By Proposition \ref{prop:duals} our extension of an equivalence $\mathcal{C}%
^{op}\overset{\sim}{\rightarrow}\mathcal{C}$ to the Tate category (1)
preserves the property to exchange such sub-categories, and (2) exchanges Ind-
and Pro-objects. Applying this observation to $\mathsf{LCA}$ and the
aforementioned dual pairs, we get a whole panorama of duality assertions, all
from our general principles. For example, in $\mathsf{Tate}(\mathsf{LCA})$ we
will have%
\[%
\begin{tabular}
[c]{rcl}%
pro-discrete & $\leftrightarrow$ & ind-compact\\
pro-(topological) $p$-torsion & $\leftrightarrow$ & ind-(topological)
$p$-torsion\\
Tate-(compact metrizable) & $\leftrightarrow$ & Tate-countable\\
Tate-(compact connected) & $\leftrightarrow$ & Tate-(torsion-free discrete).
\end{tabular}
\]
Of course this list is not exhaustive, and for $n$-Tate categories, we obtain
all analogous variations for $n$ prefixes Ind-, Pro- or Tate.

\begin{remark}
Barr has also constructed complete and co-complete categories containing
$\mathsf{LCA}$ and all of whose groups are reflexive \cite{MR0578533}. This
might be the largest category which extends Pontryagin Duality within the
context of topological groups.
A classification of the reflexive objects in $\mathsf{HAb}$ appears to be
very complicated, see Hern\'{a}ndez' Theorem \cite[Theorem 3]{MR1869695}.
\end{remark}

We briefly comment on the question to what extent the category $\mathsf{LCA}$ might itself have full sub-categories resembling full sub-categories of Tate object categories. For example, parts of the category $\mathsf{LCA}$ have the shape of Ind- and Pro-objects themselves. This
was first isolated by Roeder:

\begin{proposition}\label{prop_indproinlca}
Let $\mathsf{LCA}_{comp}$ denote the full sub-category of compact LCA
groups, and $\mathsf{LCA}_{disc}$ the full sub-category of discrete groups. Moreover,

\begin{enumerate}
\item let $C_{0}$ be the full sub-category of $\mathsf{LCA}$ of groups of the
shape $\mathbb{T}^{n}\oplus($finite abelian$)$,

\item let $A_{0}$ be the full sub-category of $\mathsf{LCA}$ of groups of the
shape $\mathbb{Z}^{n}\oplus($finite abelian$)$,
\end{enumerate}

Then there are equivalences of categories%
\[
\mathsf{LCA}_{comp}\overset{\sim}{\longrightarrow}\mathsf{Pro}^{a}%
(C_{0})\qquad\text{and}\qquad\mathsf{LCA}_{disc}\overset{\sim}{\longrightarrow
}\mathsf{Ind}^{a}(A_{0})\text{.}%
\]

\end{proposition}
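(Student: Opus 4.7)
My plan is to establish the discrete case first by a direct argument, and then deduce the compact case by Pontryagin duality together with the general identification $\mathsf{Ind}^{a}(\mathcal{D})^{op}=\mathsf{Pro}^{a}(\mathcal{D}^{op})$.

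For the discrete case, I would define the candidate equivalence
\[
\Psi\colon \mathsf{Ind}^{a}(A_{0})\longrightarrow \mathsf{LCA}_{disc},\qquad (A_{i})_{i\in I}\longmapsto \mathop{\mathrm{colim}}_{I}A_{i},
\]
where the colimit is taken in $\mathsf{LCA}$ (equivalently, in abelian groups, since a filtered colimit of discrete groups along injections is again discrete). A prospective inverse $\Phi$ sends a discrete group $A$ to the filtered system of its finitely generated subgroups, ordered by inclusion. By the structure theorem for finitely generated abelian groups, each such subgroup lies in $A_{0}$, and every subgroup of a finitely generated abelian group is again finitely generated, so each transition map is an admissible monic in $A_{0}$ (with cokernel again in $A_{0}$). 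The poset of finitely generated subgroups is filtered because $A_{i}+A_{j}$ is finitely generated, so $\Phi(A)$ is an admissible Ind-object.

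The compatibility $\Psi\circ\Phi\simeq \mathrm{id}$ is immediate since every discrete abelian group is the union of its finitely generated subgroups. For $\Phi\circ\Psi\simeq \mathrm{id}$ and full faithfulness, the crucial input is that each object $X\in A_{0}$ is a compact object of $\mathsf{LCA}_{disc}$, in the sense that $\mathrm{Hom}(X,-)$ commutes with filtered colimits along admissible monics; this is clear because $X$ is finitely presented as an abelian group and a colimit of discrete groups along injections is computed set-theoretically. Combined with the definition of Hom in the Ind-category, this gives
\[
\mathrm{Hom}_{\mathsf{Ind}^{a}(A_{0})}\bigl((A_{i}),(B_{j})\bigr)=\lim_{i}\mathop{\mathrm{colim}}_{j}\mathrm{Hom}_{A_{0}}(A_{i},B_{j})=\lim_{i}\mathrm{Hom}_{\mathsf{LCA}}(A_{i},\Psi(B))=\mathrm{Hom}_{\mathsf{LCA}}(\Psi(A),\Psi(B)).
\]
Exactness of $\Psi$ in both directions follows from the straightening of exact sequences in $\mathsf{Ind}^{a}$ and the fact that filtered colimits along admissible monics preserve short exact sequences of discrete groups.

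For the compact case, I would invoke Pontryagin duality, which provides an exact equivalence $(-)^{\vee}\colon \mathsf{LCA}^{op}\overset{\sim}{\to}\mathsf{LCA}$ that restricts to $\mathsf{LCA}_{disc}^{op}\overset{\sim}{\to}\mathsf{LCA}_{comp}$ and to $A_{0}^{op}\overset{\sim}{\to}C_{0}$ (since the dual of $\mathbb{Z}^{n}\oplus F$ is $\mathbb{T}^{n}\oplus\widehat{F}$, and finite groups are self-dual as an abstract category). Combining this with the discrete case and the tautological equality $\mathsf{Ind}^{a}(A_{0})^{op}=\mathsf{Pro}^{a}(A_{0}^{op})$ yields
\[
\mathsf{LCA}_{comp}\simeq \mathsf{LCA}_{disc}^{op}\simeq \mathsf{Ind}^{a}(A_{0})^{op}=\mathsf{Pro}^{a}(A_{0}^{op})\simeq \mathsf{Pro}^{a}(C_{0}).
\]

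The main point requiring care is the bookkeeping of admissibility: namely, checking that the transition morphisms in the system of finitely generated subgroups are indeed admissible monics in the exact structure on $\mathsf{LCA}$ (not merely injections of abstract groups) and that the quotients lie in $A_{0}$. Once this is in place, everything else is a routine matter of filtered colimits of discrete abelian groups, and the compact statement is a purely formal dualization via Pontryagin's theorem and Proposition~\ref{prop:duals}.
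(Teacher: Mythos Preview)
Your argument is correct. The paper itself does not give a self-contained proof: it simply cites Roeder \cite[Prop.~5, Prop.~7]{MR0360918} and remarks that his notion of Ind- and Pro-category coincides with the one used here. So your approach is genuinely different in that you actually supply the argument.

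Your route---writing an arbitrary discrete abelian group as the filtered union of its finitely generated subgroups, invoking the structure theorem to see these lie in $A_0$, using finite presentation to get full faithfulness of the colimit functor, and then dualising via Pontryagin to obtain the compact statement---is the natural direct proof and is essentially what Roeder does as well. One small remark: in the last line you invoke Proposition~\ref{prop:duals}, but that result concerns extending a duality on $\mathcal{C}$ to $\elTate(\mathcal{C})$; here you only need the more elementary functoriality of $\mathsf{Pro}^{a}(-)$ applied to the equivalence $A_0^{op}\simeq C_0$, together with the tautology $\mathsf{Ind}^{a}(A_0)^{op}=\mathsf{Pro}^{a}(A_0^{op})$. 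Otherwise the bookkeeping of admissibility that you flag (inclusions of subgroups being admissible monics in the quasi-abelian structure on $\mathsf{LCA}$, with finitely generated cokernels) is exactly the point that needs checking, and it goes through as you indicate since for discrete groups every injective homomorphism is a closed embedding.
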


This is \cite[Prop. 5, Prop. 7]{MR0360918}, along with an inspection of
Roeder's definition of the Ind- and Pro-category, which turns out to be
compatible with ours.

The categories $C_{0}$ and $A_{0}$ are Pontryagin duals. Many variations of
this theme are possible, e.g. cardinality constraints.
Thus, if $X_{0}$ denotes the full sub-category of $\mathsf{LCA}$ of groups of
the shape $\mathbb{T}^{n}\oplus\mathbb{Z}^{m}\oplus($finite abelian$)$, then $\mathsf{Tate}^{el}(X_{0})$
is an exact category containing full sub-categories equivalent to copies of
$\mathsf{LCA}_{comp}$ (contained in the Pro-objects) and $\mathsf{LCA}_{disc}$
(contained in the Ind-objects).

It is natural to ask whether the underlying Ind- and\ Pro-objects can be
realized as topological groups. The category $\mathsf{LCA}$ is neither
complete, nor cocomplete, so we cannot carry out Ind- or Pro-limits in
$\mathsf{LCA}$ itself. However, this does not rule out the possibility to work
in a larger category of reflexive topological groups. Indeed, Kaplan showed
that Ind- and Pro-limits of LCA groups, indexed over $\mathbb{N}$, exist in
$\mathsf{HAb}$ and the dual group of such a Pro-limit is the corresponding
Ind-limit of the dual groups \cite[\S 5]{MR0049906}.

\begin{proposition}
There is a natural transformation from the duality functor of Proposition \ref{prop:duals} to dualization on $\mathsf{HAb}$,%
\[
\xymatrix{
{\mathsf{Ind}_{\aleph_{0}}^{a}(\mathsf{LCA})^{op}} \ar[r]^{\sim} \ar[d] & {\mathsf{Pro}_{\aleph_{0}}^{a}(\mathsf{LCA})} \ar[d] \\
\mathsf{HAb}^{op} \ar[r]_{G \mapsto G^{\vee }} & \mathsf{HAb}\text{,}
}
\]
where each downward arrow refers to carrying out the Ind- resp. Pro-limit. If we restrict the bottom row to the essential images of the top row, this diagram extends to a natural equivalence of duality functors.
\end{proposition}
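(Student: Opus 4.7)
The plan is to invoke Kaplan's theorem from \cite[\S 5]{MR0049906}, cited in the paragraph preceding the statement, as the analytic input, and then to obtain the natural transformation by the universal properties of colimits and limits. Given any countable admissible Ind-system $\{G_i\}_{i \in \mathbb{N}}$ in $\mathsf{LCA}$, the duality functor of Proposition \ref{prop:duals} takes it to the admissible Pro-system $\{G_i^\vee\}_{i \in \mathbb{N}}$. The right-hand realization produces the Pro-limit $\lim_i G_i^\vee$ in $\mathsf{HAb}$ (which exists by Kaplan), and the left-hand realization produces the Ind-colimit $\operatorname*{colim}_i G_i \in \mathsf{HAb}$ (likewise). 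Dualising the structure maps $G_i \to \operatorname*{colim}_j G_j$ yields a cone $(\operatorname*{colim}_j G_j)^\vee \to G_i^\vee$ compatible with the transition maps of $\{G_j^\vee\}$, so the universal property of the limit produces a canonical comparison morphism
\[
\eta_{\{G_i\}} \colon (\operatorname*{colim}_i G_i)^\vee \longrightarrow \lim_i G_i^\vee
\]
in $\mathsf{HAb}$, and I would define the required natural transformation to be $\eta$.

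Naturality in the Ind-system follows from the uniqueness clause of the universal property: any morphism of admissible Ind-systems induces, after cofinal re-indexing, a compatible map of dualised cones, and the induced square of limits commutes. To upgrade $\eta$ to a natural equivalence on essential images, I would use the full content of Kaplan's theorem, which says exactly that for every countable admissible Ind-system of LCA groups, the canonical comparison $\eta_{\{G_i\}}$ is an isomorphism in $\mathsf{HAb}$, and that Pontryagin duality intertwines such Ind-realisations with Pro-realisations of the dual systems. Hence restricting the bottom row of the square to these essential images yields an exact equivalence, and $\eta$ becomes a natural isomorphism.

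The main obstacle is not the construction itself but the bookkeeping around admissibility: morphisms in $\mathsf{Ind}^a$ are equivalence classes defined after filtered re-indexing, and one must check that $\eta$ descends to these equivalence classes and respects the exact structure, not merely the underlying groups and continuous maps. This reduces to the standard compatibility between Ind/Pro realisations and admissible exact sequences, combined with the exactness of Pontryagin duality on $\mathsf{LCA}$ itself; neither step requires fundamentally new input beyond Kaplan's theorem and Proposition \ref{prop:duals}.
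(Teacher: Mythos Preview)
Your proposal is correct and follows essentially the same route as the paper: the paper's proof simply notes that $\mathsf{LCA}$ is quasi-abelian (hence idempotent complete) so Proposition~\ref{prop:duals} applies to give the top row, and then invokes Kaplan's duality theorem \cite[\S 5, Duality Theorem]{MR0049906} for the comparison. Your version spells out explicitly how the comparison map $\eta$ arises from the universal property of the limit and why Kaplan's theorem upgrades it to an isomorphism on essential images, which is exactly what the paper's terse ``Now apply Kaplan's duality theorem'' is gesturing at.
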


\pf
The category $\mathsf{LCA}$ is quasi-abelian and thus an idempotent complete
exact category. Pontryagin Duality induces an equivalence $\mathsf{LCA}%
^{op}\overset{\sim}{\longrightarrow}\mathsf{LCA}$ and so Proposition
\ref{prop:duals} applies. Now apply Kaplan's duality theorem \cite[\S 5,
Duality Theorem]{MR0049906}.
\epf

\appendix
\section{Construction of a higher Haar-type torsor}

There is a recurring dream in the literature, hoping to establish a good
analogue of harmonic analysis for the ad\`{e}les of a scheme. Unless the
scheme is one-dimensional and has only finite fields as residue fields, the
underlying topological space of the ad\`{e}les fails to be locally compact for
any reasonable choice of a topology on it. This rules out the existence of a
Haar measure. New ideas are needed.

However, whenever a Haar measure exists, it pins down a torsor, encoding the
fact that the Haar measure is only unique up to multiplication with a positive
scalar. We will now give a fairly general construction of the higher
generalization of this torsor, following the idea of \cite{KapranovSemiInfinite} $-$ albeit without addressing the construction
of any form of a generalized measure from which it could stem.\medskip

The papers \cite{MR2473773}, \cite{MR2866188} and \cite{MR3220634} contain interesting
constructions in this respect. In the case of varieties over a field $k$, one
can work with the $C_{n}$-categories of loc. cit., which are a form of Tate
categories. However, when working with number fields or arithmetic surfaces,
one really needs local field factors of the shape $\mathbb{R}$, $\mathbb{C}$,
which exist in $\mathsf{LCA}$, but not in a category like $\mathsf{Tate}%
(\mathbb{F}_{q})$. As a solution, the paper \cite{MR2866188} introduces
categories \textquotedblleft$C_{n}^{\operatorname*{ar}}$\textquotedblright%
\ for $n=0,1,2$. The category $\mathsf{LCA(2)}$ of Zhu \cite{MR3220634} and Liu--Zhu \cite{lca2} is another related candidate. Let $\mathsf{FLCA}_{\aleph}$ be the full sub-category of
$\mathsf{FLCA}$ of second countable groups. This is a quasi-abelian category
by the method of \cite{MR2329311}, and moreover closed under Pontryagin Duality. We consider the category%
\[
\mathcal{C}_{n}^{\operatorname*{Ar}}:=\left.  n\text{-}\mathsf{Tate}\right.
(\mathsf{FLCA}_{\aleph})\text{,}%
\]
which is a quite natural variation/generalization of the categories of
\cite{MR2866188} (note however that the indexing by $n$ has incompatible
meanings! Our $\mathcal{C}_{0}^{\operatorname*{Ar}}$ is philosophically closer
to their $C_{1}^{\operatorname*{ar}}$ than $C_{0}^{\operatorname*{ar}}$). As
an advantage, by its very construction as a Tate category, all the tools of
the paper \cite{MR3510209} are available for $\mathcal{C}_{n}%
^{\operatorname*{Ar}}$. The approach of \cite{MR2866188} sets up the categories
$C_{0}^{\operatorname*{ar}},C_{1}^{\operatorname*{ar}},C_{2}%
^{\operatorname*{ar}}$ by several individual constructions. In particular, we automatically have a
notion of lattices for objects in our $\mathcal{C}_{n}^{\operatorname*{Ar}}$,
we know that any pairs of lattices have a common over- and under-lattice, the
categories are exact and using the tools of the present article, we get a
normally ordered tensor product on the flat Tate objects.

Moreover, we can compute the $K$-theory in terms of the $K$-theory of
$\mathsf{FLCA}_{\aleph}$ using Saito's delooping theorem \cite{MR3317759}.
Unfortunately, the $K$-theory of $\mathsf{FLCA}_{\aleph}$ is not known at the
moment. If $\mathsf{LCA}_{\mathbb{\aleph}}$ denotes the full exact
sub-category of $\mathsf{LCA}$ of second countable groups, Clausen
\cite{clausen} has computed the entire $K$-theory spectrum $K(\mathsf{LCA}%
_{\mathbb{\aleph}})$. Most notably,%
\[
K_{0}(\mathsf{LCA}_{\mathbb{\aleph}})=0\qquad\text{and}\qquad K_{1}%
(\mathsf{LCA}_{\mathbb{\aleph}})\cong\mathbb{R}_{>0}^{\times}\text{,}%
\]
and the latter isomorphism has the following explicit description: If
$G\in\mathsf{LCA}_{\mathbb{\aleph}}$ is an LCA\ group, pick a Haar measure
$\mu$ on it. Now, every automorphism $\gamma$ of $G$ canonically determines an
element in $K_{1}(\mathsf{LCA}_{\mathbb{\aleph}})$, giving a map%
\begin{equation}
C:\operatorname{Aut}_{\mathsf{LCA}_{\mathbb{\aleph}}}(G)\longrightarrow
K_{1}(\mathsf{LCA}_{\mathbb{\aleph}})\overset{\sim}{\longrightarrow}%
\mathbb{R}_{>0}^{\times}\label{gk_2}%
\end{equation}
by Clausen's computation. At the same time, $\mu\circ\gamma$ is again a
bi-invariant measure on $G$, and by the uniqueness of Haar measures up to
rescaling by a positive constant, there is a unique $c_{\gamma}\in
\mathbb{R}_{>0}^{\times}$ such that $(\mu\circ\gamma)(U)=c_{\gamma}\cdot\mu(U)$
holds for all measurable sets $U\subseteq G$. This construction defines a
further map $c:\operatorname{Aut}_{\mathsf{LCA}_{\mathbb{\aleph}}%
}(G)\rightarrow\mathbb{R}_{>0}^{\times}$. According to Clausen's computation,
both maps agree, i.e. $C=c$. Following Weil, $c_{\gamma}$ is called the
\emph{module} of an automorphism, see \cite[Ch. I, \S 2]{MR0427267}.

We may now generalize this to construct a type of higher Haar-type torsor for
objects in the categories $\mathcal{C}_{n}^{\operatorname*{Ar}}$. Saito's
delooping theorem provides a canonical equivalence%
\begin{equation}
K(\mathcal{C}_{n}^{\operatorname*{Ar}})\overset{\sim}{\longrightarrow}%
B^{n}K(\mathsf{FLCA}_{\mathbb{\aleph}})\label{gk_3}%
\end{equation}
and since $\mathsf{FLCA}_{\mathbb{\aleph}}\hookrightarrow\mathsf{LCA}%
_{\mathbb{\aleph}}$ is a fully exact sub-category, the functor of inclusion is
exact and thus we have an induced map in $K$-theory,
\[
K(\mathsf{FLCA}_{\mathbb{\aleph}})\longrightarrow K(\mathsf{LCA}%
_{\mathbb{\aleph}})\text{.}%
\]
The truncation of the $K$-theory spectrum of $\mathsf{LCA}_{\mathbb{\aleph}}$
to degrees $[0,1]$ yields the Eilenberg-MacLane spectrum of the group
$\mathbb{R}_{>0}^{\times}$ (taken with the discrete topology) in degree $1$. Thus, composing with the truncation
map, the above map induces a map%
\begin{equation}
K(\mathcal{C}_{n}^{\operatorname*{Ar}})\longrightarrow B^{n}K(\mathsf{LCA}%
_{\mathbb{\aleph}})\longrightarrow B^{n}\tau_{\leq1}K(\mathsf{LCA}%
_{\mathbb{\aleph}})\overset{\sim}{\longrightarrow}B^{n+1}\mathbb{R}%
_{>0}^{\times}\text{.}\label{gk1}%
\end{equation}
Suppose $G\in\mathcal{C}_{n}^{\operatorname*{Ar}}$. The generalization of the
above construction with automorphisms is the canonical map%
\[
B\operatorname{Aut}_{\mathcal{C}_{n}^{\operatorname*{Ar}}}(G)\longrightarrow
\Omega^{\infty}K(\mathcal{C}_{n}^{\operatorname*{Ar}})\text{,}%
\]
where the left-hand side is the classifying space of the group
$\operatorname{Aut}_{\mathcal{C}_{n}^{\operatorname*{Ar}}}(G)$, and the
right-hand side the infinite loop space attached to the $K$-theory spectrum of
$\mathcal{C}_{n}^{\operatorname*{Ar}}$, which happens to be connective. In the
special case $n=0$, this induces on the level of the fundamental group
$\pi_{1}$ the map $C:\operatorname{Aut}_{\mathcal{C}_{n}^{\operatorname*{Ar}}%
}(G)\rightarrow\mathbb{R}_{>0}^{\times}$ discussed above. Now we pre-compose
the map of Equation \ref{gk1} with this construction, giving the following:

\begin{definition}
\label{def_HigherHaarTorsor}For every $n\geq0$ and every object $G\in
\mathcal{C}_{n}^{\operatorname*{Ar}}$, there is a canonical map of spaces
\[
B\operatorname{Aut}_{\mathcal{C}_{n}^{\operatorname*{Ar}}}(G)\longrightarrow
B^{n+1}\mathbb{R}_{>0}^{\times}\text{.}%
\]
In particular, this defines a canonical group cohomology class%
\[
H\in H_{\operatorname*{grp}}^{n+1}(\operatorname{Aut}_{\mathcal{C}%
_{n}^{\operatorname*{Ar}}}(G),\mathbb{R}_{>0}^{\times})\text{,}%
\]
which we call the (cohomology class of the) \emph{higher Haar torsor} on $G$.
\end{definition}

To construct $H$, we use the standard fact from homotopy theory that for any
group $A$ and abelian group $Z$, group cohomology has the description%
\[
H_{\operatorname*{grp}}^{n}(A,Z)=\pi_{0}\operatorname*{map}(BA,B^{n}Z)\text{,}%
\]
i.e. group cohomology classes correspond to homotopy classes of maps from $BA$
to $B^{n}Z$. As the above constructions produce such a map, this pins down a
cohomology class.

\begin{remark}
\textsc{(Haar-type torsor)} Having constructed the cohomology class $H$, one can also speak
of the actual higher Haar-type torsor, instead of just the map classifying it. For
this one needs to specify what the word \textquotedblleft higher
torsor\textquotedblright\ should mean. For example, following the approach of
Saito to higher $K$-theory torsors \cite{Saito:2014fk}, one can use the
framework of \cite{MR3423073}. This Haar-type torsor thus fits into the pattern of
Saito's general construction.
\end{remark}

\begin{remark}
Unlike \cite{MR2866188}, we only construct higher Haar-type torsor candidates, but we
make no attempt here to construct some generalized measure theory such that
our Haar torsor would arise from it. Even having the higher Haar torsor
available, this does not clarify in any way what, for example, a generalized
Schwartz--Bruhat function should be.
\end{remark}

\begin{proposition}
\textsc{(Agreement)} For $n=0$, $\mathcal{C}_{0}^{\operatorname*{Ar}}=\mathsf{FLCA}%
_{\mathbb{\aleph}}$ is the category of second countable LCA groups of finite
ranks \`{a} la Hoffmann--Spitzweck \cite{MR2329311}. For every such group $G$,
the Haar torsor of Definition \ref{def_HigherHaarTorsor} is a classical
$\mathbb{R}_{>0}^{\times}$-torsor and%
\[
H\in H^{1}(\operatorname{Aut}(G),\mathbb{R}_{>0}^{\times})
\]
is just the map $c:\operatorname{Aut}(G) \rightarrow\mathbb{R}_{>0}^{\times}$,
\begin{align*}
\gamma & \longmapsto\frac{\mu(\gamma U)}{\mu(U)}\text{,}%
\end{align*}
where $\mu$ is any Haar measure on $G$, and $U$ any measurable sub-group of
$G$ of positive measure.
\end{proposition}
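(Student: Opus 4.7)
The plan is to unwind the construction of Definition \ref{def_HigherHaarTorsor} in the case $n=0$, and show it reduces to the composite produced by Clausen's computation. First, for $n=0$ we have $\mathcal{C}_0^{\operatorname{Ar}} = 0\text{-}\mathsf{Tate}(\mathsf{FLCA}_\aleph) = \mathsf{FLCA}_\aleph$ tautologically, and Saito's delooping equivalence \eqref{gk_3} becomes an identity (no suspension). Thus the map \eqref{gk1} specializes to the composite
$$
K(\mathsf{FLCA}_\aleph) \longrightarrow K(\mathsf{LCA}_\aleph) \longrightarrow \tau_{\le 1}K(\mathsf{LCA}_\aleph) \overset{\sim}{\longrightarrow} B\mathbb{R}_{>0}^\times,
$$
where the first arrow is induced by the fully exact inclusion (Theorem \ref{thm:hoffmannspitzweck}), the second is Postnikov truncation, and the last identification uses $K_0(\mathsf{LCA}_\aleph)=0$ together with Clausen's isomorphism $K_1(\mathsf{LCA}_\aleph)\cong\mathbb{R}_{>0}^\times$.

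Next I would track the construction on $\pi_1$. The canonical map $B\operatorname{Aut}(G) \to \Omega^\infty K(\mathcal{C}_0^{\operatorname{Ar}})$ sends, by definition, an automorphism $\gamma$ of $G$ to its associated $K_1$-class in $K_1(\mathsf{FLCA}_\aleph)$. By functoriality of $K_1$ under the exact inclusion $\mathsf{FLCA}_\aleph \hookrightarrow \mathsf{LCA}_\aleph$, the image of this class in $K_1(\mathsf{LCA}_\aleph)$ is the $K_1$-class of the same automorphism $\gamma$ viewed now in $\mathsf{LCA}_\aleph$. Hence on $\pi_1$ the full composite
$$
\operatorname{Aut}(G) \longrightarrow K_1(\mathsf{FLCA}_\aleph) \longrightarrow K_1(\mathsf{LCA}_\aleph) \overset{\sim}{\longrightarrow} \mathbb{R}_{>0}^\times
$$
coincides with the homomorphism $C$ described around \eqref{gk_2}.

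To conclude, I use the standard identification $H^1_{\operatorname{grp}}(\operatorname{Aut}(G), \mathbb{R}_{>0}^\times) = \operatorname{Hom}(\operatorname{Aut}(G), \mathbb{R}_{>0}^\times)$ (valid since $\mathbb{R}_{>0}^\times$ is abelian and the target $B\mathbb{R}_{>0}^\times$ is a $K(\mathbb{R}_{>0}^\times,1)$). Under this identification, the class $H$ corresponds precisely to the homomorphism computed above, namely $C$. By Clausen's theorem, recalled in the paragraph preceding \eqref{gk_3}, we have $C=c$, where $c(\gamma) = \mu(\gamma U)/\mu(U)$ is the classical module; this finishes the identification.

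The main obstacle is really just bookkeeping rather than genuine mathematics: one needs to confirm (i) that Saito's delooping specialises to the identity at level $n=0$ on connective covers, which is formal from its definition via the $\mathsf{Index}$ map of \cite{bgwRelativeTateObjects}, and (ii) that the assignment "automorphism $\mapsto$ $K_1$-class" is natural for exact functors, which is built into the construction of algebraic $K$-theory. Once these two compatibilities are verified, the substance of the proposition is carried entirely by the already-cited theorem of Clausen.
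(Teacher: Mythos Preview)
Your proposal is correct and follows essentially the same approach as the paper: identify $\mathcal{C}_0^{\operatorname{Ar}}=\mathsf{FLCA}_\aleph$ by construction, observe that the class $H$ agrees with the map $C$ of \eqref{gk_2}, and then invoke Clausen's computation $C=c$. The paper's own proof is considerably terser and does not spell out the unwinding on $\pi_1$ or the $H^1=\operatorname{Hom}$ identification, but your added detail is entirely in line with what is implicit there.
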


\pf
The property $\mathcal{C}_{0}^{\operatorname*{Ar}}=\mathsf{FLCA}%
_{\mathbb{\aleph}}$ holds by construction. The class $H$ agrees with $C$ of
Equation \ref{gk_2}, and by\ Clausen's computation in \cite{clausen}, $C=c$,
giving the claim. As a result, the torsor associated to $H$ by the classical
theory of torsors (or compatibly \cite{MR3423073}), is isomorphic to the
torsor of Haar measures coming from classical harmonic analysis.
\epf

\begin{remark}
A description of Saito's equivalence in Equation \ref{gk_3} by an explicit
simplicial model based on lattices in Tate objects is given in
\cite{IndexMapAlgKTheory}. Thus, if one wants to study the higher Haar torsors
in an explicit fashion, this would be a first step.
\end{remark}

\begin{example}
As a concrete example, we get a Haar measure analogue of the tame symbol. Let
us spell out the details: For $n=1$, every object $G\in\mathcal{C}%
_{1}^{\operatorname*{Ar}}$ comes with a canonical central extension%
\[
1\longrightarrow\mathbb{R}_{>0}^{\times}\longrightarrow\widehat
{\operatorname{Aut}(G)}\longrightarrow\operatorname{Aut}(G)\longrightarrow1
\]
of its automorphism group, as defined by $H\in H^{2}(\operatorname{Aut}%
(G),\mathbb{R}_{>0}^{\times})$. As part of this structure for any two
\emph{commuting} automorphisms $f,g$ of $G$, we get an element%
\[
\left\langle f,g\right\rangle \in\mathbb{R}_{>0}^{\times}\text{.}%
\]
When restricting to an abelian sub-group $A\subseteq\operatorname{Aut}(G)$ for
example, all its elements pairwise commute, and then $\left\langle
-,-\right\rangle $ becomes a bi-linear pairing%
\[
\left\langle -,-\right\rangle :A\times A\longrightarrow\mathbb{R}_{>0}%
^{\times}\text{.}%
\]
If we read the symbols $F:=\mathbb{R},\mathbb{C},\mathbb{Q}_{p}$ as referring
to their corresponding objects in $\mathsf{FLCA}$, we can consider the
following object in $\mathcal{C}_{1}^{\operatorname*{Ar}}$,%
\[
F((t)):=\underset{i}{\operatorname*{colim}}\underset{j}{\lim}\left.
t^{-i}F[t]/t^{j}\right.  \text{.}%
\]
Clearly $F((t))$ (now viewed as a ring) acts by multiplication on $F((t))$,
giving the abelian subgroup $F((t))^{\times}$ inside the automorphism group of
$F((t))\in\mathcal{C}_{1}^{\operatorname*{Ar}}$. Then we get the pairing%
\begin{equation}
F((t))^{\times}\times F((t))^{\times}   \longrightarrow\mathbb{R}%
_{>0}^{\times}\qquad\qquad
(f,g) \longmapsto\left\vert \frac{f^{v(g)}}{g^{v(f)}}(0)\right\vert
\text{,}%
\end{equation}
where $v(-)$ refers to the $t$-valuation, so that $f^{v(g)}/g^{v(f)}$ is a
formal power series with non-zero constant coefficient in $F$, and $\left\vert
\cdot\right\vert $ refers to the real absolute value for $F=\mathbb{R}$, the
square of the complex absolute value $\left\vert \cdot\right\vert ^{2}$ for
$F=\mathbb{C}$, or the $p$-adic absolute value for $F=\mathbb{Q}_{p}$.
Moreover, if $\mathbb{A}$ denotes the classical ad\`{e}les of a number field
or a curve over a finite field, viewed as an LCA\ group, we get%
\begin{equation}
\mathbb{A}((t))^{\times}\times\mathbb{A}((t))^{\times} \longrightarrow
\mathbb{R}_{>0}^{\times}\qquad\qquad
(f,g) \longmapsto\prod_{w}\left\vert \frac{f^{v(g)}}{g^{v(f)}%
}(0)\right\vert \text{,}%
\end{equation}
where $w$ runs over the places of the number field/function field, and all but
finitely many factors in this product are equal to one. To prove the
assertions, use the following trick: In the cases $F:=\mathbb{R}%
,\mathbb{C},\mathbb{Q}_{p}$ use that there is an exact functor from
finite-dimensional $F$-vector spaces to $\mathsf{FLCA}_{\aleph}$,%
\[
\mathsf{Vect}_{f}(F)\longrightarrow\mathsf{FLCA}_{\aleph}\text{,}%
\]
which sends a finite-dimensional $F$-vector space to its additive group,
equipped with the topology coming from $F$. In the cases at hand, this is
always a second countable LCA\ group. This functor is easily checked to be
exact. Thus, the functor%
\[
\left.  n\text{-}\mathsf{Tate}\right.  (\mathsf{Vect}_{f}(F))\longrightarrow
\left.  n\text{-}\mathsf{Tate}\right.  (\mathsf{FLCA}_{\aleph})=\mathcal{C}%
_{n}^{\operatorname*{Ar}}%
\]
is exact. This has a great advantage for our computation: If we know the
result for $\left.  n\text{-}\mathsf{Tate}\right.  (\mathsf{Vect}_{f}(F))$, we
can simply make the computation there and then map it to $\mathsf{FLCA}%
_{\aleph}$. As is shown in \cite{bgwRelativeTateObjects} or
\cite{IndexMapAlgKTheory}, for $\left.  n\text{-}\mathsf{Tate}\right.
(\mathsf{Vect}_{f}(F))$, the delooping map is compatible with the boundary map
in $K$-theory of the exact sequence of exact categories%
\[
\operatorname*{Coh}\nolimits_{\overline{\{(t)\}}}F[[t]]\longrightarrow
\operatorname*{Coh}F[[t]]\longrightarrow\operatorname*{Coh}F((t))
\]
and on the level relevant for our computation, this is just the ordinary tame
symbol
\[
(-1)^{v(f)v(g)}f^{v(g)}g^{-v(f)}(0)\in F^{\times}\text{,}
\]
which lives in
$K_{1}(F)\cong K_{1}(\operatorname*{Coh}\nolimits_{\overline{\{(t)\}}}%
F[[t]])$. Once we have this element in $F^{\times}$, the induced map
$K_{1}(F)\longrightarrow K_{1}(\mathsf{LCA}_{\aleph})$ is%
\begin{align*}
F^{\times}  & \longrightarrow\mathbb{R}_{>0}^{\times}\\
\alpha & \longmapsto\left\vert \alpha\right\vert _{\mathbb{R}}\text{ (resp.
}\left\vert \alpha\right\vert _{\mathbb{C}}^{2}\text{, resp. }\left\vert
\alpha\right\vert _{p}\text{).}%
\end{align*}
This observation is due to Andr\'{e} Weil, and in a sense lies at the roots of
the subject: For all the fields which occur in the classical ad\`{e}les,
namely locally compact topological fields, one can uniquely reconstruct their
natural notions of absolute value (resp. its square) by studying the module of automorphisms with
respect to any Haar measure. We refer to \cite[Chapter 1]{MR0427267}, which is
the ultimate treatment using this perspective. This proves our claim for
$F:=\mathbb{R},\mathbb{C},\mathbb{Q}_{p}$. We leave the case of the ad\`{e}les
to the reader.
\end{example}

\begin{example}
Following Clausen, we also get vanishing statements in the context of the previous example. If we let $F:=\mathbb{Z}$ or $\mathbb{T}$ as an object of $\mathsf{FLCA}$, the
corresponding Haar central extensions on%
\[
\mathbb{Z}((t))\qquad\text{resp.}\qquad\mathbb{T}((t))
\]
are trivial, i.e. they split. This can be seen as follows: Just as the above computation factored the classifying map to
$\mathsf{LCA}_{\aleph}$ over $\mathsf{Vect}_{f}(F)$, we may now factor over
$\mathsf{LCA}_{disc,\aleph}$ resp. $\mathsf{LCA}_{comp,\aleph}$, and the
$K$-theory of these categories is contractible. Indeed, these are an Ind- and
a Pro-category themselves, by a mild variation of Proposition \ref{prop_indproinlca}.
\end{example}

\bibliographystyle{amsalpha}
\bibliography{ollinewbib,master}

\end{document}